\newcommand{\CYRbb}{\text{\char193}}
\newcommand{\CYRe}{\text{\char196}}
\newcommand{\CYRk}{\text{\char234}}
\newcommand{\CYRz}{\text{\char200}}
\newcommand{\CYRu}{\text{\char244}}
\newcommand{\CYRb}{\text{\char203}}
\newcommand{\CYRPhi}{\text{\char212}}
\title{Velocity surface disorder of large deviation rate functions of the random walk in strongly mixing environment}
\author{Jiaming Chen\thanks{chen.jiaming@cims.nyu.edu}}
\affil{Courant Institute of Mathematical Sciences, New York University}
\date{\today}
\numberwithin{equation}{section}
\titleformat{\subsection}[runin]
  {\normalfont\large\bfseries}{\thesubsection}{1em}{}
\numberwithin{equation}{section}
\newtheorem{theorem}{Theorem}[section]
\newtheorem{lemma}[theorem]{Lemma}
\newtheorem{proposition}[theorem]{Proposition}
\theoremstyle{definition}
\theoremstyle{remark}
\newcommand\normx[1]{\lVert#1\rVert}
\newcommand\normy[1]{\big\lVert#1\big\rVert}
\renewenvironment{proof}[1][\proofname]{%
  \par\pushQED{\qed}\normalfont%
  \topsep6\p@\@plus6\p@\relax
  \trivlist\item[\hskip\labelsep\bfseries#1\@addpunct{.}]%
  \ignorespaces
}{%
  \popQED\endtrivlist\@endpefalse
}
\begin{document}
\maketitle

\begin{abstract}
    In this work, we establish the existence of large deviation principles of random walk in strongly mixing environments. The quenched and annealed rate functions have the same zero set whose shape is either a singleton point or a line segment, with an illustrative example communicated and given by F. Rassoul-Agha. Whenever the level of disorder is controlled, the two rate functions are shown to conform on compact sets at the boundary and in the interior both under strongly mixing conditions.
\end{abstract}



\section{Introduction}
    Consider a discrete-time random walk in mixing and uniformly elliptic random environments (RWRE). The stetting of RWRE is a natural highlight for the study of statistical mechanics in random medium. Emerging from a wide range of applications such as DNA chain replication \cite{Chernov} and crystal growth \cite{Temkin1,Temkin2} as well as a prototype for the study of turbulent behavior in fluids through a Lorentz gas description \cite{Sinai2}, this model has attracted profound attention in recent decades from mathematicians and physicists.\par
    The one-dimensional RWRE was first proposed by Solomon \cite{Solomon} and later extended by Sinaî \cite{Sinai}. Its transience and recurrence criteria was essentially due to Solomon \cite{Solomon} in i.i.d. environment and generalized to ergodic environment by Alili \cite{Alili}. The equivalence of invariant densities was shown by Kozlov \cite{Kozlov} and a subsequent proof can be found in Bolthausen/Sznitman \cite{Bolthausen/Sznitman}. Besides, the quenched moderate deviations on the line has been worked out by Hong/Wang \cite{Hong/Wang}. One-dimensional RWRE is by now fairly well-understood. For its other limit theorems, see Zeitouni \cite{Zeitouni} for a comprehensive review in $d=1$.\par
    Multi-dimensional RWRE turns out to be much harder to analyse and has remained challenging. In this work, assuming fairly general mixing conditions on the environment, we are able to quantify the exponential bounds of rare events from both quenched and annealed laws, and discern when they are equal. It is natural to ask whether the classical limit theorems hold in $d\geq1$. Over the years there has been progress on the study of law of large numbers (LLN) by Berger \cite{Berger}, Kozlov\cite{Kozlov}, Zerner \cite{Zerner} and especially by Comets/Zeitouni \cite{Comets/Zeitouni} under cone-mixing conditions with extra ballistic and integrability requirements. The central limit theorem (CLT) has been successfully investigated by Berger/Zeitouni \cite{Berger/Zeitouni}, Kipnis/Varadhan \cite{Kipnis/Varadhan}, Lawler \cite{Lawler}, Papanicolaou/Varadhan \cite{Papanicolaou/Varadhan}, Rassoul-Agha/Seppäläinen \cite{Rassoul-Agha/Seppalainen2}, and especially by Guerra Aguilar \cite{Guerra Aguilar} under strong mixing conditions with transience requirements.\par
    Other related RWRE models have also been analyzed in the context of large deviations, including random walks in balanced environments, see Lawler \cite{Lawler}, Guo/Zeitouni \cite{Guo/Zeitouni}, Berger/Deuschel \cite{Berger/Deuschel}; random walks in isotropic environments, see Bricmont/Kupiainen \cite{Bricmont/Kupiainen}, Bolthausen/Zeitouni \cite{Bolthausen/Zeitouni}; random environments in the form of perturbations of simple random walks with invariance under under reflections and balance in one coordinate direction, see Baur \cite{Baur}; and RWRE models where the equivalent ballisticity conditions fails to hold, see Drewitz/Ramírez \cite{Drewitz/Ramirez}, Guerra Aguilar/Ramírez \cite{Guerra Aguilar/Ramirez2}.\par
    Although the quenched and the annealed laws exhibit the same limiting behavior on the level of LLN, the resulting asymptotics can differ concerning CLT or large deviation principles (LDP). Denoting the RWRE on $\mathbb{Z}^d$ with $d\geq1$ by $(X_n)_{n\geq0}$, the LDP deals formally with the asymptotics $\lim n^{-1}\log P_{0,\omega}(n^{-1}X_n\approx x)\simeq I_q(x)$ and $\lim n^{-1}\log P_{0}(n^{-1}X_n\approx x)\simeq I_a(x)$ where $P_{0,\omega}$ denotes the quenched law of $(X_n)_{n\geq0}$ with rate function $I_q(\vdot)$ [$P_{0}$ the annealed law with rate function $I_a(\vdot)$, resp]. Even if in a wide class of environments the quenched and annealed LDP are non-trivial, their rate functions may still have drastically different behaviors. The goal of this work is to prove that the LDP exists and is non-trivial in a fairly general class of environments, and that the difference between $I_q(\vdot)$ and $I_a(\vdot)$ is subject to the level of disorder.\par
    The existence of quenched and annealed LDP was displayed by Rassoul-Agha \cite{Rassoul-Agha} in a mild class of mixing environments, which was introduced by Dobrushin/Shlosman \cite{Dobrushin/Shlosman2} in the context of spin-glass systems at high temperature. To reconstruct or generalize Rassoul-Agha's work, our first contribution is to derive the existence of LDP on a slightly extended class of mixing environments, called strong mixing, i.e.~$\textbf{(SM)}_{C,g}$, or Guo's strong mixing, i.e.~$\textbf{(SMG)}_{C,g}$. Along with simplified arguments, this part of our work conforms with Richard Feynman's phrase: “There is pleasure in recognising old things from a new viewpoint” \cite{Feynman}.\par
    To this recognition, we further study the delicate question of how different the quenched and annealed rate functions behave. Indeed, unless on some particular regions [corners, origin, etc.] of the domain of $I_q(\vdot)$ and $I_a(\vdot)$, the equality of $I_q(\vdot)$ and $I_a(\vdot)$ on compacts should hold below and fail above a certain threshold disorder, revealing the profound interplay between the random walk and the impurities of the mixing environment. Using the techniques developed in Section \ref{sec: chapter 4}, it should also be possible to establish similar results for RWRE in time-correlated spacetime models as well as time-correlated polymer models, see Yilmaz \cite{Yilmaz2} for an LDP on such model with i.i.d. spacetime. See also \cite{Bazaes/Mukherjee/Ramirez/Sagliett0,Bazaes/Mukherjee/Ramirez/Sagliett} for pioneering breakthroughs of similar type in i.i.d. environments.

\textbf{Acknowledgement.} I am grateful to my PhD advisor Prof. Dr. Alejandro Ramírez at NYU-ECNU Institute of New York University for pointing out the problem and for many useful discussions. I acknowledge Firas Rassoul-Agha for discussions on the zero set of rate functions and for giving me an illustrative example. I also thank Maximilian Nitzschner for reading the preliminary version of my manuscript, and Vlad Margarint for comments on structure of Brownian motion and pointing out Dynkin's formula.

\section{Random walk in strongly  mixing random environment}
    The random environment on $\mathbb{Z}^d$ is denoted as probability vectors $(\omega_x)_{x\in\mathbb{Z}^d}$ with $\sum_{e\in\mathbb{V}}\omega(x,e)=1$, $\forall~x\in\mathbb{Z}^d$. Here we use $\mathbb{V}\coloneqq\{e\in\mathbb{Z}^d:\,\abs{e}_1=1\}$. We use $\mathscr{F}_\Omega$ for the canonical product $\sigma$-algebra on $\mathbb{Z}^d$, and the exact distribution $(\mathbb{P},\mathscr{F}_\Omega)$, $\mathbb{E}\coloneqq E_{\mathbb{P}}$ will be specified later under different explicit mixing conditions. Throughout this paper, the environment $(\omega_x)_{x\in\mathbb{Z}^d}$ is assumed \textbf{uniformly elliptic}, i.e.~$\mathbb{P}(\omega(x,e)\geq\kappa)=1$ with any $x\in\mathbb{Z}^d$, $e\in\mathbb{V}$, for some absolute constant $0<\kappa<1/(2d)$.\par 
    Given any environmental law $(\mathbb{P},\mathscr{F}_\Omega)$, the disorder is defined as $\text{dis}(\mathbb{P})\coloneqq\inf\{\epsilon>0:\,\xi(x,e)\in[1-\epsilon,1+\epsilon],\,\mathbb{P}\text{-a.s.}\,\forall~x\in\mathbb{Z}^d,\,e\in\mathbb{V}\}$ with $\xi(x,e)\coloneqq\omega(x,e)/\mathbb{E}[\omega(x,e)]$ for each $x\in\mathbb{Z}^d$ and $e\in\mathbb{V}$. To descibe explicitly the mixing conditions on $(\mathbb{P},\mathscr{F}_\Omega)$, we first introduce some new notations. The distance $d_1(A,B)$ stands for the $\ell^1$-distance between sets $A,B\subseteq\mathbb{Z}^d$. For an absolute constant $r>0$, the law $(\mathbb{P},\mathscr{F}_\Omega)$ throughout this paper is assumed $r$-\textbf{Markovian}, i.e.
    \[
        \mathbb{P}\big((\omega_x)_{x\in V}\in\vdot\big|\mathscr{F}_{V^c}\big) = \mathbb{P}\big((\omega_x)_{x\in V}\in\vdot\big|\mathscr{F}_{\partial^r V}\big),\qquad\forall~\text{finite}~V\in\mathbb{Z}^d,\qquad\mathbb{P}\text{-a.s.,}
    \]
    where $\partial^rV\coloneqq\{z\in\mathbb{Z}^d\backslash V:\,\exists~y\in V~\text{s.t.}~\abs{z-y}_1\leq r\}$ and $\mathscr{F}_\Lambda\coloneqq\sigma(\omega_x,\,x\in\Lambda)$ for any $\Lambda\subseteq\mathbb{Z}^d$.\par
    The simplest scenario is when there is no correlation among $(\omega_x)_{x\in\mathbb{Z}^d}$, and the condition $\textbf{(IID)}$ is satisfied if $(\omega_x)_{x\in\mathbb{Z}^d}$ are i.i.d. under $\mathbb{P}$. It is significantly more involved when the correlation emerges. Given absolute positive constants $C$ and $g$, the environment $(\mathbb{P},\mathscr{F}_\Omega)$ satisfies the strong mixing condition $\textbf{(SM)}_{C,g}$ if 
    \[
        \frac{d\mathbb{P}((\omega_x)_{x\in\Delta}\in\vdot|\eta)}{d\mathbb{P}((\omega_x)_{x\in\Delta}\in\vdot|\eta^\prime)} \leq \exp\bigg(C\sum_{x\in\partial^r\Delta,y\in\partial^rA}e^{-g\abs{x-y}_1}\bigg),\qquad\forall~\text{finite}~\Delta\subseteq V\subseteq\mathbb{Z}^d\quad\text{with}~d_1(\Delta,V^c)\geq r
    \]
    and $A\subseteq V^c$, simultaneously for all pairs of configurations $\eta,\eta^\prime\in\Omega$ which agree on $V^c\backslash A$, $\mathbb{P}$-a.s. An alternate slightly different mixing condition, proposed by X. Guo \cite{Guo}, is as follows. Given $C$ and $g$ as above, $(\mathbb{P},\mathscr{F}_\Omega)$ satisfies the Guo's strong mixing condition $\textbf{(SMG)}_{C,g}$ if
    \[
        \frac{d\mathbb{P}((\omega_x)_{x\in\Delta}\in\vdot|\eta)}{d\mathbb{P}((\omega_x)_{x\in\Delta}\in\vdot|\eta^\prime)} \leq \exp\bigg(C\sum_{x\in\Delta,y\in A}e^{-g\abs{x-y}_1}\bigg),\qquad\forall~\text{finite}~\Delta\subseteq V\subseteq\mathbb{Z}^d\quad\text{with}~d_1(\Delta,V^c)\geq r
    \]
    and $A\subseteq V^c$, simultaneously for all pairs of configurations $\eta,\eta^\prime\in\Omega$ which agree on $V^c\backslash A$, $\mathbb{P}$-a.s. A more specific mixing condition is that the the correlation strength $C$ varies with the disjoint domains and diminishes ar large distances. In this regard we say $(\mathbb{P},\mathscr{F}_\Omega)$ satisfies the specific strong mixing condition $\textbf{(SMX)}_{C,g}$ if
    \[
        \frac{d\mathbb{P}((\omega_x)_{x\in\Delta}\in\vdot|\eta)}{d\mathbb{P}((\omega_x)_{x\in\Delta}\in\vdot|\eta^\prime)} \leq \exp\bigg(C(\Delta,V^c)\sum_{x\in\Delta\cup\partial^r\Delta,y\in A\cup\partial^rA}e^{-g\abs{x-y}_1}\bigg),\qquad\forall~\text{finite}~\Delta\subseteq V\subseteq\mathbb{Z}^d\quad\text{with}~d_1(\Delta,V^c)\geq r
    \]
    and $A\subseteq V^c$, simultaneously for all pairs of configurations $\eta,\eta^\prime\in\Omega$ which agree on $V^c\backslash A$, $\mathbb{P}$-a.s., where $C(\Delta,V^c)>0$ if and only if $d_1(\Delta,V^c)< L_0$ for some absolute $L_0>r$. It looks like $\textbf{(SMG)}_{C,g}$ is asymptotically more general. But $\textbf{(SMG)}_{C,g}$ is not implied by $\textbf{(SM)}_{C,g}$, nonetheless in asymptotic terms the former is harder to work with.
    \par
    Having defined the first layer of our process, the random walk $(X_n)_{n\geq0}$ now travels on $\mathbb{Z}^d$ with jump probabilities given by $\omega$, i.e.~ for each $x\in\mathbb{Z}^d$ the law $P_{x,\omega}$ and $E_{x,\omega}\coloneqq E_{P_{x,\omega}}$ of this random walk starting from $x$ is prescribed by $P_{x,\omega}(X_0=x)=1$ and $P_{x,\omega}(X_{n+1}=y+e|X_n=y)=\omega(y,e)$ for all $y\in\mathbb{Z}^d$, $e\in\mathbb{V}$ and $n\in\mathbb{N}$. We call $P_{x,\omega}$ the quenched law of the RWRE. Averaging $P_{x,\omega}$ over $\omega$, we call the semi-direct product
    \[
        P_x( B)\coloneqq\int_{\Omega} P_{x,\omega}(B)\,d\mathbb{P(\omega)},\qquad\forall~B\in\mathscr{B}((\mathbb{Z}^d)^{\mathbb{N}})
    \]
    the annealed law of the RWRE. Here $\mathscr{B}((\mathbb{Z}^d)^{\mathbb{N}})$ stands for the Borel $\sigma$-algebra on $(\mathbb{Z}^d)^{\mathbb{N}}$.\par
    The nearest-neighbor trajectories imply that the velocities $|X_N/N|_1\leq1$ for all $N\geq1$. And thus we address $\mathbb{D}\coloneqq\{x\in\mathbb{R}^d:\,\abs{x}_1\leq1\}$ as the velocity surface and we denote $\partial\mathbb{D}_{d-2}\coloneqq\{x\in\partial\mathbb{D}:\,x_j=0\;\,\text{for some}~1\leq j\leq d\}$. In \cite{Varadhan1}, when $\mathbb{P}$ satisfies $\textbf{(IID)}$ the RWRE verifies the following quenched and annealed large deviation principles on $\mathbb{Z}^d$ for any $d\geq1$:
    \begin{itemize}
        \item There exists the rate function $I_a:\mathbb{D}\to[0,\infty)$ which is convex and continuous on $\text{int}(\mathbb{D})$ so that for any Borel $G\subseteq\mathbb{R}^d$,
        \[
            -\inf_{x\in G^{\circ}} I_a(x)\leq \varliminf_{N\to\infty} \frac{1}{N}\log P_0\bigg(\frac{X_N}{N}\in A\bigg)\leq\varlimsup_{N\to\infty} \frac{1}{N}\log P_0\bigg(\frac{X_N}{N}\in A\bigg) \leq -\inf_{x\in\overline{G}} I_a(x).
        \]
        \item There exists the deterministic rate function $I_a:\mathbb{D}\to[0,\infty)$ which is convex and continuous on $\text{int}(\mathbb{D})$ such that $\mathbb{P}$-a.s. for any Borel $G\subseteq\mathbb{R}^d$,
        \[
            -\inf_{x\in G^{\circ}} I_q(x)\leq \varliminf_{N\to\infty} \frac{1}{N}\log P_{0,\omega}\bigg(\frac{X_N}{N}\in A\bigg)\leq\varlimsup_{N\to\infty} \frac{1}{N}\log P_{0,\omega}\bigg(\frac{X_N}{N}\in A\bigg) \leq -\inf_{x\in\overline{G}} I_q(x).
        \]
    \end{itemize}\par
    Recent advances \cite{Bazaes/Mukherjee/Ramirez/Sagliett,Bazaes/Mukherjee/Ramirez/Sagliett0} have revealed how the level of disorder can force the equality between $I_a(\vdot)$ and $I_q(\vdot)$ on $\mathbb{D}$. When $\mathbb{P}$ satisfies $\textbf{(IID)}$ the RWRE verifies the control on the threshold of disorder\footnote{Given any ergodic environment $\mathbb{P}$, the disorder $\text{dis}(\mathbb{P})\coloneqq\inf\{\epsilon>0:\,\frac{\omega(x,e)}{\mathbb{E}[\omega(x,e)]}\in[1-\epsilon,1+\epsilon],\,\forall~e\in\mathbb{V}\;\;\text{and}\;\;x\in\mathbb{Z}^d\}$ is defined as the level of randomness of the environment.} below which the two rate functions conform: 
    \begin{itemize}
        \item For any $d\geq4$ and compact $\mathcal{K}\subseteq\partial\mathbb{D}\backslash\partial\mathbb{D}_{d-2}$, there exists $\epsilon_1=\epsilon_1(\mathcal{K})>0$ such that $I_a(x)=I_q(x)$ for any $x\in\mathcal{K}$, whenever $\text{dis}(\mathbb{P})<\epsilon_1$.
        \item For any $d\geq4$ and compact $\mathcal{K}\subseteq\text{int}(\mathbb{D})\backslash\{0\}$, there exists $\epsilon_2=\epsilon_2(\mathcal{K})>0$ such that $I_a(x)=I_q(x)$ for any $x\in\mathcal{K}$, whenever $\text{dis}(\mathbb{P})<\epsilon_2$.
    \end{itemize}
    Our paper manifests such knowledge to the entangled random medium regarding large deviations of limiting velocities. The first result of this paper establishes the general LDP when $\mathbb{P}$ satisfies either $\textbf{(SM)}_{C,g}$ or $\textbf{(SMG)}_{C,g}$, extending \cite{Rassoul-Agha} to more general dependent environments.
    \begin{theorem}\label{thm: existence of LDP}
        \normalfont
        For any $d\geq1$ and $\kappa>0$, when $\mathbb{P}$ verifies either $\textbf{(SM)}_{C,g}$ or $\textbf{(SMG)}_{C,g}$, \begin{itemize}
        \item There exists the rate function $I_a:\mathbb{D}\to[0,\infty)$ which is convex and continuous on $\text{int}(\mathbb{D})$ so that for any Borel $G\subseteq\mathbb{R}^d$,
        \[
            -\inf_{x\in G^{\circ}} I_a(x)\leq \varliminf_{N\to\infty} \frac{1}{N}\log P_0\bigg(\frac{X_N}{N}\in A\bigg)\leq\varlimsup_{N\to\infty} \frac{1}{N}\log P_0\bigg(\frac{X_N}{N}\in A\bigg) \leq -\inf_{x\in\overline{G}} I_a(x).
        \]
        \item There exists the deterministic rate function $I_a:\mathbb{D}\to[0,\infty)$ which is convex and continuous on $\text{int}(\mathbb{D})$ such that $\mathbb{P}$-a.s. for any Borel $G\subseteq\mathbb{R}^d$,
        \[
            -\inf_{x\in G^{\circ}} I_q(x)\leq \varliminf_{N\to\infty} \frac{1}{N}\log P_{0,\omega}\bigg(\frac{X_N}{N}\in A\bigg)\leq\varlimsup_{N\to\infty} \frac{1}{N}\log P_{0,\omega}\bigg(\frac{X_N}{N}\in A\bigg) \leq -\inf_{x\in\overline{G}} I_q(x).
        \]
    \end{itemize}
    \end{theorem}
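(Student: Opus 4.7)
My strategy follows the Varadhan--Rassoul-Agha route, reducing the LDP to the existence and regularity of the logarithmic moment generating functions
\[
    \Lambda_{N,\omega}(\lambda) \coloneqq \frac{1}{N}\log E_{0,\omega}\big[e^{\lambda\cdot X_N}\big], \qquad \Lambda_N(\lambda) \coloneqq \frac{1}{N}\log E_{0}\big[e^{\lambda\cdot X_N}\big], \qquad \lambda\in\mathbb{R}^d,
\]
and then extracting the rate functions as their convex conjugates via the Gärtner--Ellis theorem. Uniform ellipticity and the step-bound $\abs{X_N}_1\leq N$ force $|\Lambda_{N,\omega}(\lambda)|,|\Lambda_N(\lambda)|\leq\abs{\lambda}_\infty$, so all quantities below are finite; exponential tightness is automatic since $X_N/N$ is confined to the compact set $\mathbb{D}$.

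The first step is to extract the $\mathbb{P}$-a.s.\ pointwise limit $\Lambda_q(\lambda)=\lim_N\Lambda_{N,\omega}(\lambda)$. The Markov property under $P_{0,\omega}$ yields the decomposition
\[
    E_{0,\omega}\big[e^{\lambda\cdot X_{M+N}}\big]=\sum_{y\in\mathbb{Z}^d}e^{\lambda\cdot y}\,P_{0,\omega}(X_M=y)\,E_{y,\omega}\big[e^{\lambda\cdot(X_N-y)}\big],
\]
from which $\log$-subadditivity of $\mathbb{E}[\Lambda_{N,\omega}(\lambda)]$ is immediate. Since either $\textbf{(SM)}_{C,g}$ or $\textbf{(SMG)}_{C,g}$ forces ergodicity of the lattice-shift action on $(\Omega,\mathscr{F}_\Omega,\mathbb{P})$, Kingman's subadditive ergodic theorem applied to the environment-seen-from-the-walker process produces $\Lambda_q(\lambda)$ almost surely and identifies it as deterministic with $\Lambda_q(\lambda)=\inf_N\mathbb{E}[\Lambda_{N,\omega}(\lambda)]$. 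For the annealed limit $\Lambda_a(\lambda)$ I would decouple two blocks of the trajectory separated by a spatial shell of width $L\gg 1/g$: either mixing hypothesis couples the conditional law of the environment across the shell to an independent copy up to multiplicative error $1+O(e^{-cL})$, yielding $\log E_0[e^{\lambda\cdot X_{M+N}}]\leq\log E_0[e^{\lambda\cdot X_M}]+\log E_0[e^{\lambda\cdot X_N}]+o(M+N)$ and existence of $\Lambda_a(\lambda)$ by a Fekete-type argument.

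Convexity of both limits in $\lambda$ is inherited from Hölder's inequality at the prelimit, and finiteness and continuity on all of $\mathbb{R}^d$ deliver essential smoothness. The Gärtner--Ellis theorem then produces both quenched and annealed LDP with rate functions
\[
    I_q(x)=\sup_{\lambda\in\mathbb{R}^d}\bigl(\lambda\cdot x-\Lambda_q(\lambda)\bigr),\qquad I_a(x)=\sup_{\lambda\in\mathbb{R}^d}\bigl(\lambda\cdot x-\Lambda_a(\lambda)\bigr),
\]
whose convexity and continuity on $\mathrm{int}(\mathbb{D})$ are the standard properties of the Legendre transform of a finite convex function.

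The principal technical obstacle lies in the annealed decoupling, particularly under $\textbf{(SMG)}_{C,g}$: the Guo bound sums only over $\Delta\times A$ rather than over $\partial^r\Delta\times\partial^r A$, so the coupling error across a time-partition boundary is quantitatively tighter but harder to absorb into a clean subadditive inequality. Furthermore, the random walk may revisit spatial regions, so one has to control the event that trajectory segments belonging to distinct time blocks interleave spatially. I would handle this by filtering the walker through telescoping boxes of side $L\gg 1/g$ and applying the strong Markov property at successive first exit times, iterating the mixing estimate along the way, until the accumulated coupling error is $o(N)$ uniformly in $\lambda$ on compact sets. This uniformity is precisely what is needed for the Gärtner--Ellis machinery to deliver a continuous rate function, and organising it correctly under the weaker \textbf{(SMG)} form is where the bulk of the technical work resides.
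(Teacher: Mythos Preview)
Your proposal contains a genuine gap. The assertion that ``finiteness and continuity on all of $\mathbb{R}^d$ deliver essential smoothness'' is false: a finite continuous convex function need not be differentiable, and without differentiability (or at least steepness plus a full set of exposed points) the G\"artner--Ellis lower bound holds only on the exposed points of the Legendre transform, which may fail to exhaust $\mathrm{int}(\mathbb{D})$. Proving differentiability of $\Lambda_q$ or $\Lambda_a$ for RWRE is itself a substantial problem and is not a consequence of subadditivity or mixing; it is typically obtained \emph{a posteriori}, not as an input to the LDP.

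The paper bypasses this obstacle on both fronts. For the quenched LDP it works pathwise with the point-to-point quantity
\[
    G_{u,\omega}(t,x,y) \coloneqq -\log\sup_{n\in\mathbb{N}}\bigl\{e^{-u|n-t|}P_{x,\omega}(X_n=y)\bigr\},
\]
which is Lipschitz and genuinely subadditive along space-time increments; the subadditive ergodic theorem produces a deterministic convex limit $\tilde G_u$, and $I_q=\lim_{u\to\infty}\tilde G_u$ directly, with the upper and lower LDP bounds read off from this construction without any MGF. For the annealed LDP the paper passes to the process level: the trajectory is encoded as a Feller Markov chain on a compactified path space $\overline{\mathbb W}$ with transition kernel $q(w,z)$ (the mixing hypotheses enter precisely to prove continuity of $q$), the Donsker--Varadhan level-3 LDP for empirical processes is invoked, and $I_a$ is obtained by contraction from a relative-entropy functional $\mathcal J(\mu)$. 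Neither argument requires smoothness of a limiting MGF.

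There is also a secondary flaw in your quenched step. The decomposition you write yields only
\[
    \log E_{0,\omega}\bigl[e^{\lambda\cdot X_{M+N}}\bigr] \leq \log E_{0,\omega}\bigl[e^{\lambda\cdot X_M}\bigr] + \max_{|y|_1\leq M}\log E_{0,\vartheta_y\omega}\bigl[e^{\lambda\cdot X_N}\bigr],
\]
and the maximum over shifts blocks a direct application of Kingman's theorem; taking expectations does not give subadditivity of $\mathbb{E}[\Lambda_{N,\omega}(\lambda)]$ either, since $\mathbb{E}[\max_y\cdots]\neq\max_y\mathbb{E}[\cdots]$. This is exactly why the paper works with point-to-point probabilities $P_{x,\omega}(X_n=y)$, for which the needed subadditivity $G_{u,\omega}(t+t',0,x+y)\leq G_{u,\omega}(t,0,x)+G_{u,\vartheta_x\omega}(t',0,y)$ holds cleanly.
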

    Emerging from the general LDP framework, the zero set of $I_a$ and $I_q$ corresponds precisely to the limiting velocities that are least improbable under the annealed and quenched distributions. Whence the geometry of the zero set of $I_a$ and $I_q$ is the first statistical quantity of interest.  Within this scenario, we establish the following result concerning the shape of such zero sets.
    \begin{theorem}\label{thm: shape of zero set}
        \normalfont
        For any $d\geq1$ and $\kappa>0$, when $\mathbb{P}$ verifies either $\textbf{(SM)}_{C,g}$ or $\textbf{(SMG)}_{C,g}$, the rate functions $I_a$ and $I_q$ have the same zero set, i.e.~$\{I_q=0\}=\{I_a=0\}$ in $\mathbb{D}$. Furthermore, such zero set is either a single point or a line segment through the origin.
    \end{theorem}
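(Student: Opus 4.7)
My plan decouples the theorem into (i) the equality of zero sets and (ii) the geometric classification of the common zero set.

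\emph{Equality of zero sets.} The containment $\{I_q = 0\}\subseteq\{I_a = 0\}$ is immediate: Jensen's inequality applied to $P_0(\vdot)=\E[P_{0,\omega}(\vdot)]$ yields $\Lambda_a\ge\Lambda_q$ for the two logarithmic moment generating functions, and Legendre duality gives $I_a\le I_q$ on $\mathbb{D}$. For the reverse inclusion, I would identify both zero sets with the closed convex hull of the set of $v\in\mathbb{D}$ arising as $P_0$-positive-probability limits of $X_n/n$. The annealed identification is standard from the characterisation of the zero set of an LDP rate function. The quenched identification exploits Theorem \ref{thm: existence of LDP} (which makes $I_q$ deterministic) together with the ergodicity of the environmental shift under $\textbf{(SM)}_{C,g}$ or $\textbf{(SMG)}_{C,g}$: conditioning on $\mathscr{F}_\Omega$ transfers $P_0$-positive-probability limits to $\mathbb{P}$-a.s.\ $P_{0,\omega}$-positive-probability limits. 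Convexity of $I_q$ then propagates this to the full convex hull, yielding $\{I_a = 0\}\subseteq\{I_q = 0\}$.

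\emph{Shape.} Let $Z := \{I_q=0\} = \{I_a=0\}$; it is closed and convex in $\mathbb{D}$ as a sublevel set of the lower semicontinuous convex function $I_a$. Assume $Z$ has at least two distinct points. I would invoke a directional-transience $0$-$1$ law adapted to the strongly mixing regime: for each $\ell\in S^{d-1}$, the event $\{\lim_n X_n\vdot\ell = +\infty\}$ has $P_{0,\omega}$-probability in $\{0,1\}$, $\mathbb{P}$-a.s. Granting this, if $v_1, v_2 \in Z$ were not colinear through the origin, one could pick $\ell$ with $v_1\vdot\ell > 0 > v_2\vdot\ell$; both $v_i$ being positive-probability quenched limits would then force $P_{0,\omega}\{X_n\vdot\ell\to+\infty\} = 1$ and $P_{0,\omega}\{X_n\vdot\ell\to-\infty\} = 1$ simultaneously, which is impossible since the two events are disjoint. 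Hence $Z$ lies on a single line through $0$, and by convexity is a closed line segment on that line, which is exactly the geometric shape asserted.

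\emph{Main obstacle.} The most delicate step is the directional-transience $0$-$1$ law under $\textbf{(SM)}_{C,g}$ or $\textbf{(SMG)}_{C,g}$. In the i.i.d.\ setting, the Kalikow--Zerner--Merkl argument exploits the independence of slabs perpendicular to $\ell$; here slabs are only approximately independent, with coupling error decaying like $e^{-g|\cdot|_1}$. I would run the regeneration construction inside enlarging boxes, replacing slab independence with the quantitative Radon--Nikodym coupling supplied by the mixing hypothesis, so that the tail $\sigma$-algebra of $(X_n\vdot\ell)$ remains trivial modulo a mixing defect that vanishes with the box size. A secondary subtlety is verifying the ergodicity of the environmental shift used in the equality step, which should follow from the exponential decay of correlations in $\textbf{(SM)}_{C,g}$ or $\textbf{(SMG)}_{C,g}$ but requires an explicit check consistent with the mixing-type concentration machinery developed in Section \ref{sec: chapter 4}.
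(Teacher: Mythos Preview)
Your plan diverges substantively from the paper's. The paper does not pass through a directional-transience $0$--$1$ law at all; it stays inside the process-level framework of Section~\ref{sec: annealed LDP existence}, where $I_a(\eta)=\inf\{\mathcal{J}(\mu):\mu\in\mathcal{E},\ \langle\mu\rangle=\eta\}$, so the zero set is governed directly by the $q$-invariant ergodic measures on the compactified path space $\overline{\mathbb W}$. The structural input is Lemma~\ref{lem: kkkk} combined with \cite[Theorem~4.2]{Varadhan2}, whose proof is insensitive to any product structure on $\mathbb{P}$: any two such ergodic measures with nonzero mean must satisfy $\langle\mu\rangle=-C\langle\lambda\rangle$ for some $C>0$, and both the line-segment shape and the equality $\{I_a=0\}=\{I_q=0\}$ drop out at once (with appeals to \cite[Theorem~8.1]{Varadhan2} and \cite[Remark~2.4]{Rassoul-Agha}). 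This recycles machinery already built in Section~3 and requires no new probabilistic input beyond those citations.

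Your Kalikow/Sznitman--Zerner route is natural in the i.i.d.\ world, but besides the $0$--$1$ law you correctly flag there is a second, more basic gap. You identify $\{I_a=0\}$ with the closed convex hull of $\{v: P_0(X_n/n\to v)>0\}$ and call this standard; it is not. The LDP only says that $I_a(v)=0$ forces $P_0(|X_n/n-v|<\varepsilon)$ to decay subexponentially; it does \emph{not} manufacture an event $\{X_n/n\to v\}$ of positive probability, nor even $\{\langle X_n,\ell\rangle\to+\infty\}$, unless you already possess an LLN guaranteeing a.s.\ convergence of $X_n/n$. Consequently the contradiction you derive in the shape step---simultaneously $P_{0,\omega}(\langle X_n,\ell\rangle\to+\infty)=1$ and $P_{0,\omega}(\langle X_n,\ell\rangle\to-\infty)=1$---presupposes exactly the at-most-two-limiting-velocities structure that the theorem is meant to supply. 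Under bare $\textbf{(SM)}_{C,g}$ or $\textbf{(SMG)}_{C,g}$ the available LLN results (e.g.\ \cite{Comets/Zeitouni}) need extra cone-mixing or ballisticity hypotheses not assumed here, so you would have to build that theory from scratch alongside the $0$--$1$ law. The paper's invariant-measure argument sidesteps both obstacles by reading the zero set directly off $\{\mu\in\mathcal{E}:\mathcal{J}(\mu)=0\}$.
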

    When such zero set reduces to a singleton point in the velocity surface $\mathbb{D}$, it does not necessarily collide with the origin. Consider an RWRE satisfying $E_{0,\omega}[\langle X_1,e\rangle]\geq\delta$, $\mathbb{P}$-a.s., $\exists~e\in\mathbb{V}$ and $\delta>0$. This forces the projection $\lim_{N\to\infty}\langle e, X_N/N\rangle\geq\delta$ so that the $I_a$ and $I_q$ do not vanish at the origin, whereas there is a unique zero point locating at the nonzero limiting velocity. This illustrative example is told by Firas Rassoul-Agha in private conversation.\par
    Beyond the geometry of the zero sets, it is of fundamental interest to understand the conformality between the annealed and quenched rate functions on the velocity surface $\mathbb{D}$, especially whether they agree with each other at sufficiently low disorder. The following result characterizes such striking phenomena at the boundary of the velocity surface $\mathbb{D}$.
    \begin{theorem}\label{thm: equality of LDP at boundary}
        \normalfont
        For any $d\geq4$ and compact $\mathcal{K}\subseteq\partial\mathbb{D}\backslash\partial\mathbb{D}_{d-2}$, when $\mathbb{P}$ verifies $\textbf{(SMX)}_{C,g}$ there exists $\epsilon_1=\epsilon_1(\mathcal{K})>0$ such that $I_a(x)=I_q(x)$ for any $x\in\mathcal{K}$, whenever $\text{dis}(\mathbb{P})<\epsilon_1$.
    \end{theorem}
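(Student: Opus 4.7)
Existence of $I_a$ and $I_q$ on $\mathbb{D}$ under $\textbf{(SMX)}_{C,g}$ is provided by Theorem \ref{thm: existence of LDP}. Jensen's inequality applied to $P_{0,\omega}(X_N/N\in\vdot)$ after averaging gives $I_q(x)\geq I_a(x)$ pointwise, so the content of the theorem is the reverse inequality on $\mathcal{K}$. The approach will be a second-moment estimate on the quenched point-to-point partition function followed by a concentration upgrade. Fix $x\in\mathcal{K}\subseteq\partial\mathbb{D}\setminus\partial\mathbb{D}_{d-2}$; then $|x|_1=1$ with every coordinate of $x$ nonzero and of a common sign, so every path $\pi$ contributing to
\[
    Z_N(x):=P_{0,\omega}(X_N=y_N),\qquad y_N\in\mathbb{Z}^d,\quad|y_N-Nx|_1=o(N),
\]
is monotone in every coordinate, hence self-avoiding, and confined to a $d$-dimensional thick cone around $\mathbb{R}_+x$. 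This rigidity is the geometric engine.

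The first main step is the uniform second-moment bound
\[
    \sup_{x\in\mathcal{K}}\;\sup_{N\geq1}\;\frac{\mathbb{E}[Z_N(x)^2]}{\mathbb{E}[Z_N(x)]^2}\leq K(\mathcal{K})<\infty
\]
for $\text{dis}(\mathbb{P})<\epsilon_1(\mathcal{K})$. Writing the numerator as a sum over ordered pairs of directed paths $(\pi,\pi')$ and invoking the length-$L_0$ decorrelation built into $\textbf{(SMX)}_{C,g}$, the environmental expectation factorises across $L_0$-separated components of $\pi\cup\pi'$. Normalising by $\mathbb{E}[Z_N(x)]^2$ and expanding around $\bar{\omega}(e):=\mathbb{E}[\omega(0,e)]$ in powers of $\text{dis}(\mathbb{P})$ reduces the ratio to a bound of the form
\[
    \frac{\mathbb{E}[Z_N(x)^2]}{\mathbb{E}[Z_N(x)]^2}\leq\bar{E}_{0,0}^{\otimes2}\Big[\exp\!\big(\eta(\text{dis}(\mathbb{P}))\,\mathcal{I}_{L_0}(\pi,\pi')\big)\Big],
\]
where $\bar{E}_{0,0}^{\otimes2}$ is expectation under two independent walks with transition $\bar{\omega}$, $\mathcal{I}_{L_0}(\pi,\pi')$ counts pairs of sites $(\pi_i,\pi'_j)$ within $\ell^1$-distance $L_0$, and $\eta(\delta)\downarrow0$ as $\delta\downarrow0$. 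Parametrising by time, the difference $Y_t=\pi_t-\pi'_t$ is a centred random walk lying in a $(d-1)$-dimensional hyperplane (each step changes exactly one coordinate of one of the two walks); the endpoint constraint $\pi_N=\pi'_N$ turns it into a bridge, and a standard Green's-function/bridge computation gives $\bar{E}_{0,0}^{\otimes2}[\mathcal{I}_{L_0}]=O_{L_0,\mathcal{K}}(1)$ uniformly in $N$ as soon as $d-1\geq3$, i.e.\ $d\geq4$. Choosing $\epsilon_1(\mathcal{K})$ small enough closes the bound, and Paley--Zygmund yields $\mathbb{P}(Z_N(x)\geq\tfrac12\mathbb{E}[Z_N(x)])\geq(4K)^{-1}$.

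The second step promotes this positive-probability lower bound to $\log Z_N(x)=\log\mathbb{E}[Z_N(x)]+o(N)$, $\mathbb{P}$-a.s., uniformly on $\mathcal{K}$. A McDiarmid/Azuma-type martingale decomposition of $\log Z_N(x)$ along a lexicographic enumeration of the $O(N^d)$ environment sites in the cone, with uniform ellipticity bounding each local increment by $\log\kappa^{-1}$ and $\textbf{(SMX)}_{C,g}$ supplying exponential decay of remote influences, yields Gaussian concentration of order $\sqrt{N\log N}=o(N)$. A Borel--Cantelli argument over a polynomial net in $\mathcal{K}$, together with the continuity of $I_a$ and $I_q$ on $\mathcal{K}$ granted by Theorem \ref{thm: existence of LDP}, then delivers $I_q(x)\leq I_a(x)$ simultaneously for all $x\in\mathcal{K}$.

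The main obstacle is the decorrelation bookkeeping in the second-moment step: the non-i.i.d.\ nature of $\mathbb{P}$ forces tracking all $L_0$-connected clusters of $\pi\cup\pi'$, not merely sharp intersections $\pi_i=\pi'_j$, and the Radon--Nikodym cost of each cluster must be absorbed against its combinatorial weight. A Dobrushin--Shlosman-type cluster expansion, in which the smallness of $\text{dis}(\mathbb{P})$ and the exponential decay in $\textbf{(SMX)}_{C,g}$ jointly make cluster activities summable, should do the job, after which the $d\geq4$ bridge/intersection estimate plays exactly the role it does in the i.i.d.\ treatment of \cite{Bazaes/Mukherjee/Ramirez/Sagliett}.
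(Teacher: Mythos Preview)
Your second-moment/Paley--Zygmund/concentration strategy is a legitimate alternative to the paper's route, but the concentration step as written does not deliver what you claim. Enumerating the $O(N^d)$ environment sites in the cone and bounding each martingale increment by $\log\kappa^{-1}$ gives, via Azuma, fluctuations of order $N^{d/2}\log\kappa^{-1}$, not $\sqrt{N\log N}$; for $d\ge4$ this dwarfs $N$ and the upgrade from positive probability to almost-sure fails. The repair is to exploit the directed structure you already identified: every contributing path is monotone, so at time $k$ it lies on the hyperplane $\{|z|_1=k\}$, and one should filter by time slices (or, under $\textbf{(SMX)}_{C,g}$, by $L_0$-thick slabs to recover exact independence). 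That yields $O(N/L_0)$ increments each bounded by $L_0\log\kappa^{-1}$, hence the $O(\sqrt{N})$ concentration you actually need. Without this correction the Paley--Zygmund step is stranded.

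The paper takes a different path that avoids a separate concentration argument altogether. It works with the logarithmic moment generating functions $\Lambda_{q,\omega}(\theta)$, $\Lambda_a(\theta)$ on $\mathbb{R}^{d-1}$ rather than with the point-to-point $Z_N(x)$, introduces an auxiliary i.i.d.\ sequence $(\epsilon_j)$ and renewal times $(\tau_n^{(L)})$ which, under $\textbf{(SMX)}_{C,g}$ with $L>L_0$, produce exact independence across slabs (Lemma~\ref{lem: exponential mixing inequality}), and shows that the ratio $\mathcal{H}_{n,\theta}=\overline{E}^0_{\omega,\epsilon}[e^{\langle\theta,S(\tau_n^{(L)})\rangle},\mathbb{B}(\tau_n^{(L)})]\big/\overline{\mathbb{E}}^0[\cdots]$ is a nonnegative $(\mathscr{G}_n)$-martingale that is $L^2$-bounded for small disorder and has strictly positive limit (Lemmas~\ref{lem: square integrable of the martingale H}--\ref{lem: H is positive}). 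Martingale convergence then gives $\Lambda_{q,\omega}=\Lambda_a$ on compacts in one stroke (Lemma~\ref{lem: log L goes to zero}), and Lemma~\ref{lem: transfer the compact sets} plus Gärtner--Ellis transfers this to $I_q=I_a$ on $\mathcal{K}$. Both approaches share the same technical heart---the $L^2$ bound reduces to the two-replica intersection estimate in effective dimension $d-1\ge3$ that you also invoke---but the paper's auxiliary-$\epsilon$ renewal construction replaces the cluster expansion you flag as the main obstacle by an exact decoupling, and its martingale packaging merges your Paley--Zygmund and concentration steps into a single convergence statement.
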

    Our next result concerning the conformality of $I_a$ and $I_q$ asserts that under suitably weaker conditions, these rate functions coincide on nontrivial open subsets of $\partial\mathbb{D}$, provided $\mathbb{P}$ exhibits sufficiently low imbalance\footnote{Given any ergodic environment $\mathbb{P}$, the imbalance $\text{imb}_s(\mathbb{P})\coloneqq\inf\{\epsilon>0:\,\frac{\sum_{j=1}^d\omega(x,s_je_j)}{\sum_{j=1}^d\mathbb{E}[\omega(x,s_je_j)]}\in[1-\epsilon,1+\epsilon],\,\forall~e\in\mathbb{V}\;\;\text{and}\;\;x\in\mathbb{Z}^d\}$ is defined as the level of randomness of the environment unanimously in the axial direction $s\in\{-1,1\}^d$.}. Intuitively, the imbalance quantifies how unanimously the environment allows randomness in a given axial direction $s$. We now state the main result under this notion of imbalance below.
    \begin{theorem}\label{thm: imbalance of LDP at boundary}
        \normalfont
        For any $d\geq4$ and $\kappa>0$, when $\mathbb{P}$ verifies $\textbf{(SMX)}_{C,g}$ there exist nontrivial open set $\mathcal{O}\subseteq\partial\mathbb{D}(s) \backslash \partial\mathbb{D}_{d-2}$ and $\epsilon^\dagger=\epsilon^\dagger(\kappa)>0$ such that $I_a(x)=I_q(x)$ for any $x\in\mathcal{O}$, whenever $\text{imb}_s(\mathbb{P})<\epsilon^\dagger$, for any $s\in\{-1,1\}^d$.
    \end{theorem}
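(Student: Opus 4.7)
The plan is to follow the same architecture as the proof of Theorem~\ref{thm: equality of LDP at boundary} --- a fractional-moment comparison that forces $I_q=I_a$ --- but to adapt it to the weaker imbalance hypothesis. Since $\text{imb}_s(\mathbb{P})$ only controls the aggregate forward mass $\Sigma_s(y):=\sum_{j=1}^d\omega(y,s_je_j)$ rather than each coordinate $\omega(y,s_je_j)$ individually, the open set $\mathcal{O}$ must be chosen on the specific face $\partial\mathbb{D}(s)\setminus\partial\mathbb{D}_{d-2}$ determined by the sign pattern $s$, rather than on an arbitrary compact of $\partial\mathbb{D}\setminus\partial\mathbb{D}_{d-2}$.

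First, by the reflection symmetry of the lattice it is enough to treat $s=(1,\dots,1)$, where $\partial\mathbb{D}(s)\setminus\partial\mathbb{D}_{d-2}$ coincides with the relative interior of the standard simplex. For any $x$ in this simplex, every $N$-step nearest-neighbor path from $0$ to $Nx$ is strictly forward. Setting $\tilde\omega(y,e_j):=\omega(y,e_j)/\Sigma_s(y)$, I would factor
\[
P_{0,\omega}(X_N=Nx)=\sum_{\pi:\,0\to Nx}\prod_{k=0}^{N-1}\Sigma_s(\pi_k)\cdot\prod_{k=0}^{N-1}\tilde\omega(\pi_k,e_{j_k}),
\]
and invoke $\text{imb}_s(\mathbb{P})<\epsilon^\dagger$ to bound $\Sigma_s(y)\in[\mu_s(1-\epsilon^\dagger),\mu_s(1+\epsilon^\dagger)]$ for $\mu_s:=\mathbb{E}[\Sigma_s(0)]$. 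The first product is then pinned to $\mu_s^N$ up to $(1\pm\epsilon^\dagger)^N$, contributing a deterministic $-\log\mu_s$ to both rate functions. This reduces the original question to the comparison of quenched and annealed rate functions of the forward walk $\tilde X$ driven by $\tilde\omega$, which is uniformly elliptic on the $d$ forward directions with constant $\kappa/((1+\epsilon^\dagger)\mu_s)$.

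Second, for this forward walk I would apply the fractional-moment method of \cite{Bazaes/Mukherjee/Ramirez/Sagliett0,Bazaes/Mukherjee/Ramirez/Sagliett}, aiming at
\[
\mathbb{E}\!\left[\tilde P_{0,\omega}(\tilde X_N=Nx)^\alpha\right]\leq e^{o(N)}\,\mathbb{E}\!\left[\tilde P_{0,\omega}(\tilde X_N=Nx)\right]^\alpha
\]
for some $\alpha\in(0,1]$ and $x$ in an open set $\mathcal{O}$. Expanding the left side as a double sum over two independent forward paths $(\pi,\pi')$, the $\textbf{(SMX)}_{C,g}$ condition gives an exponential-in-$|\pi\cap\pi'|$ correction to the expectation at correlated sites, while the decorrelated part reproduces $\mathbb{E}[\tilde P_{0,\omega}]^\alpha$. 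For $d\geq 4$ and $x$ in the relative interior of the simplex, two independent forward walks conditioned on endpoint $Nx$ have expected intersection $\sum_{k=1}^N k^{-(d-1)/2}=O(1)$, so the overlap correction is subexponential. A Borel--Cantelli argument then yields $\tilde I_q(x)=\tilde I_a(x)$ on $\mathcal{O}$, and combined with the first step this gives $I_q(x)=I_a(x)$ on $\mathcal{O}$ after letting $\epsilon^\dagger\to 0$.

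The main obstacle will be the fractional-moment bound itself. In Theorem~\ref{thm: equality of LDP at boundary} every $\omega(y,e)$ sits within $\epsilon_1$ of its mean, so the on-site factor $\mathbb{E}[\omega(y,e_j)\omega(y,e_{j'})]/(\mu_j\mu_{j'})$ at an overlapping site is $1+O(\epsilon_1)$ and the correction decays naturally. Under imbalance only $\sum_j\omega(y,e_j)$ is pinned, so individual covariances $\text{Cov}(\omega(y,e_j),\omega(y,e_{j'}))$ can be of order one even for arbitrarily small $\text{imb}_s(\mathbb{P})$, provided they sum to something small. Absorbing this on-site contribution therefore requires combining the uniform ellipticity of $\tilde\omega$, the exponential decay of correlations from $\textbf{(SMX)}_{C,g}$, and the forward-walk geometry in $d\geq 4$ that keeps pairwise intersections subexponential. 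Tuning the fractional exponent $\alpha$ jointly with the choice of $\mathcal{O}$ so that all three ingredients close up with room to spare is the delicate technical step of the argument.
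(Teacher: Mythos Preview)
Your factorization $P_{0,\omega}(X_N=Nx)=\sum_\pi\bigl(\prod_k\Sigma_s(\pi_k)\bigr)\bigl(\prod_k\tilde\omega(\pi_k,e_{j_k})\bigr)$ is natural, but the reduction it suggests does not close. The obstacle you flag in your last paragraph is not a delicate technical step to be tuned away --- it is fatal to the scheme as written. The normalized environment $\tilde\omega$ has order-one disorder regardless of how small $\text{imb}_s(\mathbb{P})$ is, and the fractional-moment/second-moment comparison for $\tilde\omega$ produces a per-intersection cost $e^c$ with $c$ of order one. In $d\geq 4$ the expected number of intersections of two forward walks is $O(1)$, but the bound you need is on the \emph{exponential moment} $\hat E[e^{c\cdot\#\text{intersections}}]$, and there is no mechanism here to make $c$ small. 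This is exactly the role played by $\text{dis}(\mathbb{P})$ in Lemma~\ref{lem: square integrable of the martingale H}, where the final bound $(1-\bar C L^2\text{dis}(\mathbb{P}))^{-1}$ requires a small multiplicative constant in front of the intersection count. Replacing $\text{dis}(\mathbb{P})$ by an order-one quantity breaks that inequality. A secondary issue: the $(1\pm\epsilon^\dagger)^N$ factor from $\prod_k\Sigma_s(\pi_k)$ contributes $\pm\log(1+\epsilon^\dagger)$ to each rate function, and these contributions are path-dependent and not obviously equal for the quenched and annealed laws, so ``letting $\epsilon^\dagger\to 0$'' at the end does not recover exact equality at fixed $\epsilon^\dagger$.

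The paper's route is different and avoids the reduction to $\tilde\omega$ entirely. It stays within the martingale framework of Section~\ref{sec: chapter 4}: by the proof of Theorem~\ref{thm: equality of LDP at boundary}, it suffices to show $\sup_{n,\,|\theta|\le\eta}\normx{\mathcal{H}_{n,\theta}}_{L^2(\mathbb{P}\otimes Q)}<\infty$ for some $\eta=\eta(d,\kappa)>0$ whenever $\text{imb}_s(\mathbb{P})$ is small. The point is that the two-replica interaction $V(\vec X_j^{(\theta)},\vec Y_j^{(\theta)})$ appearing in $\normx{\mathcal{H}_{n,\theta}}_{L^2}^2$ can be bounded, not by $\text{dis}(\mathbb{P})$ times the overlap, but via a recursion $\mathscr{V}^{(k+1)}$ in terms of $\mathscr{V}^{(k)}$ (following \cite[pp.~21--22]{Bazaes/Mukherjee/Ramirez/Sagliett}) whose fixed point $\mathscr{V}^{(\infty)}_{\kappa,s,\theta}$ satisfies $\mathscr{V}^{(\infty)}_{\kappa,s,\theta}\le C_1(|\theta|+\text{imb}_s(\mathbb{P}))$ once $|\theta|$ is small enough that $K_{\kappa,\theta}:=e^{2(d-1)|\theta|}(1-2\kappa)<1$. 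The algebraic reason is that for $\theta$ near zero the tilted weight $e^{\langle\theta,\pi(e)\rangle}$ is nearly constant across the $d$ forward directions, so the on-site factor in $V$ depends to leading order only on the aggregate $\Sigma_s$ --- which is precisely what imbalance controls --- and the residual individual-coordinate fluctuations are damped geometrically by the recursion. This yields the small constant needed in front of the intersection count without ever assuming the individual $\omega(y,e_j)$ are close to their means. The open set $\mathcal{O}$ then arises as $\{\nabla\Lambda_a(\theta):|\theta|<\eta\}$ via Lemma~\ref{lem: transfer the compact sets}.
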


    While the conformality of $I_a$ and $I_q$ on the boundary $\partial\mathbb{D}$ highlights quantitatively the role of disorder, it is equally important to understand whether such regularity persists into the interior of the velocity surface. In particular, one seeks conditions under which quenched and annealed LDP coincide in $\text{int}(\mathbb{D})$, which is yielded in the assertion displayed below.
    \begin{theorem}\label{thm: equality of LDP in interior}
        \normalfont
        For any $d\geq4$ and compact $\mathcal{K}\subseteq\text{int}(\mathbb{D})\backslash\{0\}$, when $\mathbb{P}$ verifies $\textbf{(SMX)}_{C,g}$ there exists $\epsilon_2=\epsilon_2(\mathcal{K})>0$ such that $I_a(x)=I_q(x)$ for any $x\in\mathcal{K}$, whenever $\text{dis}(\mathbb{P})<\epsilon_2$.
    \end{theorem}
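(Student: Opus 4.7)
The plan is to lift the desired equality from the rate functions to their Legendre transforms, establish cumulant equality on an open set of tilts via a second-moment concentration of the quenched partition function, and transfer back through convex duality. Define
\[
Z_N(\lambda,\omega)\coloneqq E_{0,\omega}\big[e^{\lambda\cdot X_N}\big],\qquad \Lambda_q(\lambda)\coloneqq \lim_{N\to\infty}\frac{1}{N}\log Z_N(\lambda,\omega),\qquad \Lambda_a(\lambda)\coloneqq \lim_{N\to\infty}\frac{1}{N}\log E_{\mathbb{P}}[Z_N(\lambda,\omega)];
\]
the limits exist (the first $\mathbb{P}$-a.s.\ and deterministic) by Theorem \ref{thm: existence of LDP} combined with Varadhan's lemma, and serve as the convex conjugates of $I_q$ and $I_a$. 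Jensen yields $\Lambda_a\geq\Lambda_q$ hence $I_a\leq I_q$, so only the reverse inequality $I_q(x)\leq I_a(x)$ on $\mathcal{K}$ remains.

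First I would pass from velocities to tilts. For each $x\in\mathcal{K}\subseteq\text{int}(\mathbb{D})\setminus\{0\}$, strict convexity and smoothness of $\Lambda_a$ on $\mathbb{R}^d$ (afforded by Theorem \ref{thm: existence of LDP} and uniform ellipticity) produce a unique $\lambda_x$ with $\nabla\Lambda_a(\lambda_x)=x$ and $I_a(x)=\lambda_x\cdot x-\Lambda_a(\lambda_x)$. By compactness of $\mathcal{K}$ and continuity of $x\mapsto\lambda_x$ the tilts lie in a compact set $\Lambda\subseteq\mathbb{R}^d$ which I would enclose in a bounded open neighborhood $U$. Should one establish $\Lambda_a\equiv\Lambda_q$ on $U$, then for every $x\in\mathcal{K}$,
\[
I_q(x)\;\leq\;\lambda_x\cdot x-\Lambda_q(\lambda_x)\;=\;\lambda_x\cdot x-\Lambda_a(\lambda_x)\;=\;I_a(x),
\]
reducing the theorem to cumulant equality on $U$.

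To obtain $\Lambda_q(\lambda)\geq\Lambda_a(\lambda)$ throughout $U$, I would prove the $L^2$-concentration
\[
E_{\mathbb{P}}\big[Z_N(\lambda,\omega)^2\big]\;\leq\;C(\lambda)\,E_{\mathbb{P}}\big[Z_N(\lambda,\omega)\big]^2,
\]
uniformly in $N\geq1$ and $\lambda\in U$. Paley-Zygmund, together with the decorrelation between far-apart blocks supplied by $\textbf{(SMX)}_{C,g}$ and a subadditivity/Borel-Cantelli argument, would then upgrade this bound to the $\mathbb{P}$-a.s.\ inequality $\Lambda_q\geq\Lambda_a$ on $U$. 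Writing the ratio in terms of two independent copies of the $\lambda$-tilted annealed walk $\bar P^{\lambda,\otimes 2}_0$ sharing the environment yields
\[
\frac{E_{\mathbb{P}}[Z_N(\lambda,\omega)^2]}{E_{\mathbb{P}}[Z_N(\lambda,\omega)]^2}\;=\;\bar E^{\lambda,\otimes 2}_0\big[\exp\mathcal{R}_N(X,Y;\omega)\big],
\]
with $\mathcal{R}_N$ the log-correlation functional inherited from sharing $\omega$. I would then deploy a Dobrushin/cluster expansion in the spirit of Section \ref{sec: chapter 4}, tailored to $\textbf{(SMX)}_{C,g}$, to control $\mathcal{R}_N$ by a weighted double sum $\sum_{x,y}\mathbf{1}_{\{x\in X,\,y\in Y\}}e^{-g|x-y|_1}$ with an $O(\text{dis}(\mathbb{P})^2)$ prefactor. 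Under $\bar P^{\lambda,\otimes 2}_0$ the walks carry a common nonzero drift $\nabla\Lambda_a(\lambda)\in\text{int}(\mathbb{D})\setminus\{0\}$ and satisfy an invariance principle, so after ballistic restriction $|m-k|=O(\sqrt n)$ the expected weighted intersection count is dominated by $\sum_n n^{-(d-1)/2}$, which converges precisely for $d\geq 4$. Choosing $\epsilon_2=\epsilon_2(\mathcal{K})>0$ small enough to absorb the $O(\text{dis}(\mathbb{P})^2)$ coefficient uniformly over the compact $\Lambda$ would deliver the required second-moment bound.

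The decisive and most delicate step will be executing the cluster expansion of $\mathcal{R}_N$ under $\textbf{(SMX)}_{C,g}$ and coupling it with the tilted two-walk invariance principle. In the i.i.d.\ setting of \cite{Bazaes/Mukherjee/Ramirez/Sagliett,Bazaes/Mukherjee/Ramirez/Sagliett0} the correlation kernel sits on coincident sites and the intersection estimate applies directly, whereas under $\textbf{(SMX)}_{C,g}$ the $e^{-g|\cdot|_1}$-spread of the off-diagonal correlations must be absorbed into the transverse CLT estimate uniformly over the compact tilt set $\Lambda$ arising from $\mathcal{K}$. This requires a tight interplay between the mixing decay developed in Section \ref{sec: chapter 4} and the ballistic invariance principle for the two-walk system.
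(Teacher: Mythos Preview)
Your overall architecture --- reduce to cumulant equality, prove an $L^2$ bound on the partition function via a two-walk intersection estimate, and transfer back by duality --- is indeed the skeleton of the paper's argument. But there is a genuine gap at the heart of your second-moment step.

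You write the ratio $E_{\mathbb{P}}[Z_N^2]/E_{\mathbb{P}}[Z_N]^2$ as an expectation over two copies of ``the $\lambda$-tilted annealed walk'' and then assert that these walks carry a nonzero drift $\nabla\Lambda_a(\lambda)$ and obey an invariance principle, so that the transverse $(d-1)$-dimensional intersection count $\sum_n n^{-(d-1)/2}$ converges for $d\geq 4$. The difficulty is that under $\textbf{(SMX)}_{C,g}$ the annealed law $P_0$ is \emph{not} Markovian (cf.\ Section~\ref{sec: abstract path space}: it is only Markov on the enlarged path space $\overline{\mathbb{W}}$), so the $\lambda$-tilted annealed walk is not a random walk in any conventional sense. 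There is no direct ballisticity statement, no regeneration structure, and no invariance principle available for it; these would each be substantial results in their own right. Your cluster expansion of $\mathcal{R}_N$ also presupposes that the two-walk law factors across well-separated time blocks, which again fails without regeneration.

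The paper closes exactly this gap with the auxiliary $Q^z$-walk of Section~5.1: for each target velocity $z$, one constructs a \emph{genuine} simple random walk $(Z_n)_{n\geq 0}$ with deterministic transition vector $u_z(\cdot)$ and mean $z$, together with a change-of-measure identity (Eqn.~\eqref{eqn: Q walk form1, annealed}) that expresses $E_{0,\omega}[e^{\langle\theta+\theta^{\mathbb{P}}_z,X_n\rangle}]$ as a $Q^z$-expectation weighted by bounded disorder factors $\xi(Z_{j-1},\Delta_j(Z))$. Because $(Z_n)$ is a bona fide ballistic random walk, it admits cone-based regeneration times $(\tau^{(L)}_k)$ with exponential tails uniformly in $L$ (Proposition~\ref{prop: mixing exponential of tau < infinity}), and the $\textbf{(SMX)}_{C,g}$ condition provides the renewal-type separation of Proposition~\ref{prop: mixing separation information}. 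The $L^2$ bound is then proved not for $Z_N$ itself but for the renewal-level martingale $\CYRPhi^z_n$ (Propositions~\ref{prop: limiting expectation of Phi is positive}--\ref{prop: supremum square of Phi is finite}), and the two-walk intersection estimate is run for the $Q^z$-walks, where it is legitimate. Your proposal is missing this auxiliary-walk/regeneration machinery; without it the second-moment computation does not get off the ground. The Paley--Zygmund$\to$a.s.\ passage you flag as vague is also handled differently in the paper, via the positivity argument of Proposition~\ref{prop: equality between log moment generating functions} rather than Borel--Cantelli.
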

    Although controlling the level of disorder allows us to enforce the equality $I_a=I_q$ uniformly on compact subsets of $\mathbb{D}$, it is striking that they can never collide simultaneously on the entire velocity surface whenever the environment $\mathbb{P}$ exhibits genuine randomness. This fundamental non-conformality persists even under strongly mixing condition $\textbf{(SMX)}_{C,g}$ and extends a classical result of Yilmaz \cite[Proposition 4]{Yilmaz}, originally established for i.i.d.\ environments. The following results is shown in \cite{Chen2} and is displayed here for pedagogical completeness.
    \begin{theorem}\label{thm: non-equality of rate functions}
        \normalfont
        For any $d\geq1$ and $\kappa>0$, whenever $\mathbb{P}$ verifies $\textbf{(SMX)}_{C,g}$ and whose support $\mathbb{P}$ is not a singleton, i.e.~$\text{dis}(\mathbb{P})>0$, then $I_a(x)<I_q(x)$ at some interior point $x\in\text{int}(\mathbb{D})$ of the velocity surface.
    \end{theorem}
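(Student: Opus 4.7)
The plan is to argue by contradiction and exploit the Legendre duality between the rate functions and their log-moment generating functions. Assume that $I_a(x)=I_q(x)$ for every $x\in\text{int}(\mathbb{D})$; I will deduce from this that $\mathbb{P}$ must be deterministic, contradicting $\text{dis}(\mathbb{P})>0$. An application of Varadhan's lemma to Theorem~\ref{thm: existence of LDP} produces the convex dual functions
\[
    \Lambda_q(\lambda)\coloneqq\lim_{N\to\infty}\tfrac{1}{N}\log E_{0,\omega}\bigl[e^{\langle\lambda,X_N\rangle}\bigr],\qquad
    \Lambda_a(\lambda)\coloneqq\lim_{N\to\infty}\tfrac{1}{N}\log E_0\bigl[e^{\langle\lambda,X_N\rangle}\bigr],
\]
which are finite on $\mathbb{R}^d$, with $\Lambda_q$ $\mathbb{P}$-a.s.\ constant, and Jensen's inequality forces $\Lambda_a\geq\Lambda_q$ with equality at $\lambda=0$. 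Under the standing assumption the two functions must then agree on a nonempty open set $U$ of tilts dual to $\text{int}(\mathbb{D})$.

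Next I would translate the equality $\Lambda_a\equiv\Lambda_q$ on $U$ into a vanishing specific entropy. Here I would invoke the variational formulas underlying Theorem~\ref{thm: existence of LDP} in the spirit of Rassoul-Agha--Seppäläinen: the gap $I_q(x)-I_a(x)$ can be written as an infimum over stationary environment-seen-from-the-walker measures $\mu$ with prescribed drift $x$ of a specific relative entropy $H(\mu\,\|\,\mathbb{P}\otimes\kappa_\mu)$; dually, $\Lambda_a(\lambda)-\Lambda_q(\lambda)$ equals the normalized log-$\mathbb{E}$ fluctuation of the quenched log-partition function about its almost-sure limit, which is again a specific entropy per step between $\mathbb{P}$ and its tilted path-marginal. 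Both must therefore vanish simultaneously on $U$.

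The main obstacle, and where I expect the heavy lifting to occur, is to show that vanishing of this specific entropy across a continuum of tilts $\lambda\in U$ forces the single-site kernel to be $\mathbb{P}$-a.s.\ constant. Here the strong mixing hypothesis $\textbf{(SMX)}_{C,g}$ enters in an essential way: exponential decorrelation lets me decouple the specific entropy into a sum of local single-site contributions up to summable correction terms, paralleling the cluster-expansion arguments that feed the construction of $I_q$ and $I_a$ in Theorem~\ref{thm: existence of LDP}. Once the vanishing is reduced to each local contribution, identifiability of the Laplace transform of a probability measure on the finite set $\mathbb{V}$ by its values on a nonempty open set of dual parameters forces $\omega(0,\cdot)=\mathbb{E}[\omega(0,\cdot)]$ $\mathbb{P}$-almost surely, i.e.~$\text{dis}(\mathbb{P})=0$. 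The resulting contradiction yields the desired interior point $x\in\text{int}(\mathbb{D})$ with $I_a(x)<I_q(x)$; as a sanity check, the argument degenerates into Yilmaz's original strict-Jensen computation in the i.i.d.\ regime, where the local decoupling is exact rather than merely approximate.
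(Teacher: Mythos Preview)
The paper does not actually prove this theorem; it explicitly defers the proof to \cite{Chen2} and only records the statement ``for pedagogical completeness.'' The one substantive hint the paper gives is the sentence immediately following the theorem: the non-conformality ``occurs exactly because they cannot be equal at one point of the edge-vertex set $\partial\mathbb{D}_{d-2}$.'' In other words, the intended argument (extending Yilmaz's \cite[Proposition~4]{Yilmaz}) is \emph{constructive}: one computes both rate functions at a point of $\partial\mathbb{D}_{d-2}$ (where the walk is essentially pinned and the answers are $-\mathbb{E}[\log\omega(0,e)]$ versus $-\log\mathbb{E}[\omega(0,e)]$), obtains strict inequality from strict Jensen, and then uses convexity/continuity to push the strict inequality into $\text{int}(\mathbb{D})$. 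Your proposal goes in the opposite direction---a global contradiction argument via Legendre duality and specific entropy---so the approaches are genuinely different.

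That said, your argument has a real gap at the step you yourself flag as the ``heavy lifting.'' You assert that under $\textbf{(SMX)}_{C,g}$ the specific relative entropy between $\mathbb{P}$ and its tilted path-marginal ``decouples into a sum of local single-site contributions up to summable correction terms,'' and that vanishing of the total forces each local term to vanish. This is not something the mixing condition delivers: exponential decay of correlations controls \emph{covariances}, but specific entropy is not a sum of site-wise entropies plus a correction that mixing makes summable---there is no cluster expansion for relative entropy of this kind in the paper or in the references you invoke. Without a concrete mechanism to isolate a single site, the concluding step (Laplace-transform identifiability on $\mathbb{V}$ forcing $\omega(0,\cdot)$ to be deterministic) has nothing to act on. A second, smaller issue: from $I_a=I_q$ on $\text{int}(\mathbb{D})$ you infer $\Lambda_a=\Lambda_q$ on an open set of tilts, but the Legendre transform sees boundary values of $I$ as well; you would need to argue that for $\lambda$ near $0$ the supremum in $\Lambda(\lambda)=\sup_x\{\langle\lambda,x\rangle-I(x)\}$ is attained in $\text{int}(\mathbb{D})$, which is plausible but not automatic. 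The constructive boundary-point route avoids both difficulties by producing an explicit $x$ rather than reasoning about all of $\text{int}(\mathbb{D})$ at once.
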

    The non-conformality of two rate functions $I_a$ and $I_q$ occurs exactly because they cannot be equal at one point of the edge-vertex set $\partial\mathbb{D}_{d-2}$. However, this property remains open in the more general $\textbf{(SM)}_{C,g}$ or $\textbf{(SMG)}_{C,g}$ scenarios.

\section{Existence of quenched and annealed large deviation principles}
    In this section, we formulate both the quenched and annealed LDP of the RWRE when $\mathbb{P}$ verifies $\textbf{(SM)}_{C,g}$ or $\textbf{(SMG)}_{C,g}$. The proof strategy presented in this section is streamlined, yet closely follows the approach in \cite{Varadhan1}, with notable distinctions that our mixing condition is more general than the Dobrushin-type condition considered in \cite{Rassoul-Agha}. Readers primarily interested in the quantitative control of LDP rate functions via the level of disorder may wish to skip this section at first reading.\par
    \subsection{Abstract path space and continuous maps.}\label{sec: abstract path space}
    Viewed from the random walk, the environment at $X_N$ becomes $\omega_N=\vartheta_{X_N}\omega$, with $\vartheta$ the canonical shifts on $\mathbb{Z}^d$. From this perspective the trajectory viewed from $X_N$ becomes $\{X_{-N}=-X_N,\,X_{-N+1}=X_1-X_N,\ldots,\,X_{-1}=X_{N-1}-X_N,\,X_0=0 \}$, $\forall~N\in\mathbb{N}$. We whence consider the set of increments taking the form
    \[
        \mathbb{W}_N\coloneqq\{w_N\in\mathbb{V}^{N}:\,w_N=(z_{-N+1}=x_{-N+1}-x_{-N},\ldots,\,z_0=x_0-x_{-1})\},\qquad\forall~N\in\mathbb{N}.
    \]
    Here $\mathbb{W}_0=\{\phi\}$ is a singleton. And $\mathbb{W}_{\infty}\coloneqq(\mathbb{V})^{-\mathbb{N}}$ stands for the set of infinite paths ending at the origin. We also let $\mathbb{W}\coloneqq\cup_{N\geq0}\mathbb{W}_N\cup\mathbb{W}_{\infty}$ and $x_j(w)\coloneqq-(z_{-N+1}+\cdots+z_{-n-N})$,  $\forall~w\in\mathbb{W}_N$ and $-N\leq j\leq0$. Extending such notion to infinite paths, $\forall~w\in\mathbb{W}_{\infty}$ we use $x_j(w)$ for the $j$th coordinate of $w$ for all $j\leq0$. Now to each vertex $x\in\mathbb{Z}^d$ we count its hitting times on oriented edges by
    \[
        n_x(w)\coloneqq\sum_{z\in\mathbb{V}}n_{x,z}(w)\qquad\text{where}\qquad n_{x,z}(w)\coloneqq\sum_{j\leq-1}\mathbbm{1}_{\{x_j=x,\,x_{j+1}=x+z\}}(w),\qquad\forall~w\in\mathbb{W}\quad\text{and}\quad z\in\mathbb{V}.
    \]
    Counting the hitting times on vertices encodes the conditional information of the transition dynamics in the space $\cup_{N\geq0}\mathbb{W}_N$. To this end, we specify exactly the plausible transitions among all finite paths with different length. For each $w\in\mathbb{W}$ and $z\in\mathbb{V}$, define the shifts $\vartheta^*$ and $\vartheta_z$ via 
    \[
        z_{j+1}(\vartheta^*(w))=z_j(w),\quad z_0(\vartheta_z(w))=z,\quad\text{and}\quad z_j(\vartheta_z(w))=z_{j+1}(w),\quad\forall~j\leq-1.
    \]
    Consistent with the above notation, for each $\gamma=(z_j)_{-N<j\leq0}\in\mathbb{W}_N$ we write $\vartheta_\gamma=\vartheta_{z_{-N+1}}\cdots\vartheta_{z_0}$. The annealed law of the RWRE corresponds naturally to transition probabilities $q:\cup_{N\geq0}\mathbb{W}_N\times\mathbb{V}\to[\kappa,1]$ on paths, which are defined via
    \begin{equation}\label{eqn: transition probability of finite paths}
        q(w,z) = \frac{\mathbb{E}[ \prod_{x,y\in\mathbb{V}}\omega(x,x+y)^{n_{x,y}(w)}\omega(0,z) ]}{\mathbb{E}[ \prod_{x,y\in\mathbb{V}}\omega(x,x+y)^{n_{x,y}(w)} ]},\qquad\forall~w\in \cup_{N\geq0}\mathbb{W}_N\quad\text{and}\quad z\in\mathbb{V}.
    \end{equation}
    Starting from some $w\in\cup_{N\geq0}\mathbb{W}_N$, the transition (\ref{eqn: transition probability of finite paths}) induce a Markov chain $Q_w$ on $\cup_{N\geq0}\mathbb{W}_N$ which jumps to the next state $\vartheta_z(w)$ with probability $q(w,z)$. Notice that we can alternatively view $Q_w$ as a random walk $(\Tilde{X}_N)_{N\geq0}$ with past $w$ and future increments $Z_N\coloneqq\Tilde{X}_N-\Tilde{X}_{N-1}$, $\forall~N\geq1$.
\begin{figure}
    \includegraphics[width=\textwidth]{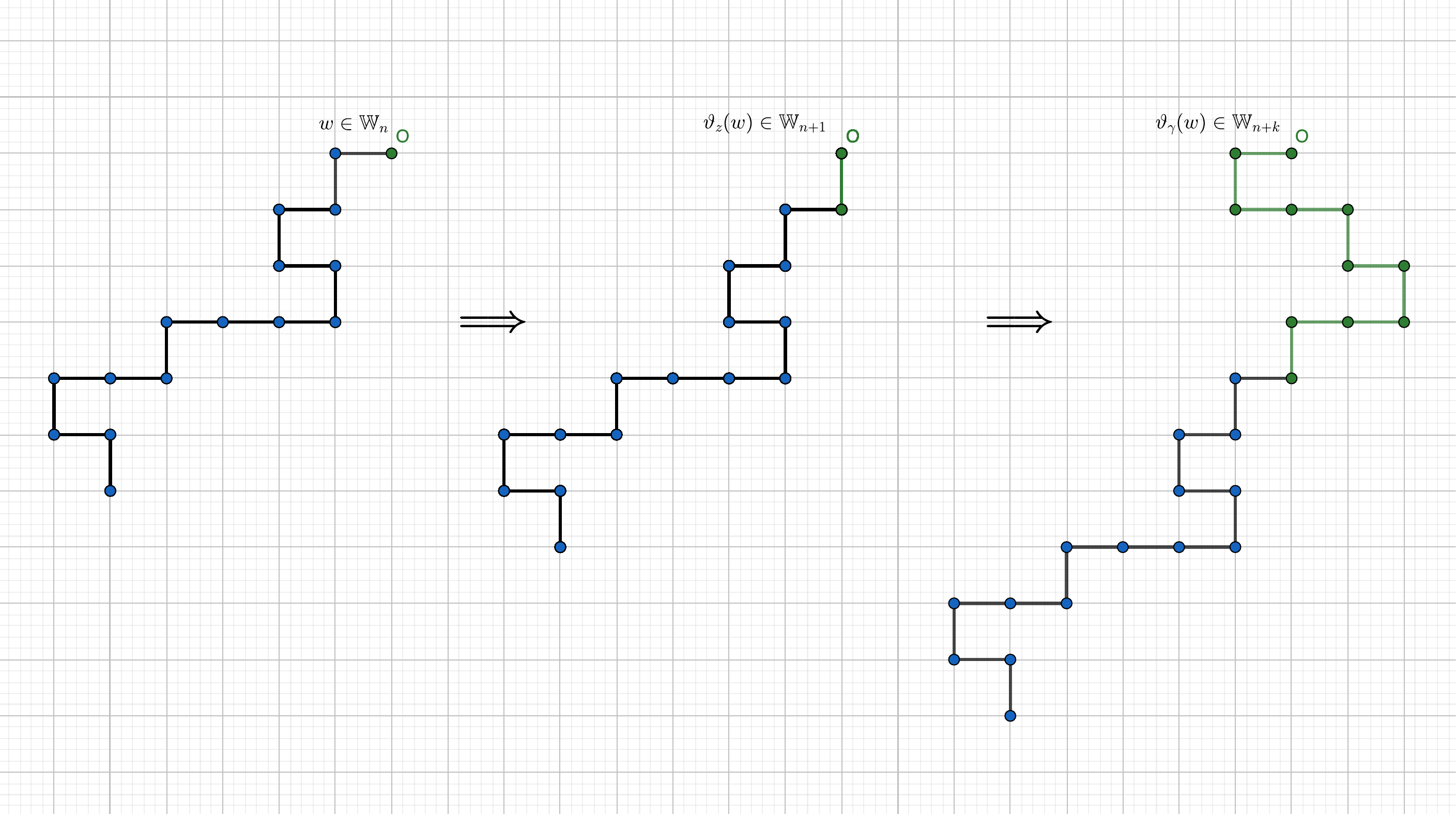}
    \caption{Transitions with respect to shifts $\vartheta_z$ and $\vartheta_\gamma$ for $\gamma\in\mathbb{W}_k$ in the abstract path space.}
    \label{RWRE}
\end{figure}
    In particular, the Markov chain $Q_{\phi}$ corresponds to the annealed law $P_0$. Moreover, such transitions $q$ can actually be extended to a suitable class of infinite paths, which is specified as $\mathbb{W}^{\text{tr}}\coloneqq\cup_{N\geq0}\mathbb{W}_N\cup\mathbb{W}^{\text{tr}}_{\infty}$ with $\mathbb{W}^{\text{tr}}_{\infty}\coloneqq\{w\in\mathbb{W}_{\infty}:\,\abs{x_j(w)}\to~\text{as}~-j\to\infty\}$. We have the following result.
    \begin{lemma}
        \normalfont
        When $\mathbb{P}$ verifies either $\textbf{(SM)}_{C,g}$ or $\textbf{(SMG)}_{C,g}$, the transient kernels are extended to $\mathbb{W}^{\text{tr}}_{\infty}$ while preserving the ellipticity condition. And henceforth $q:(w,z)\in\mathbb{W}^{\text{tr}}\times\mathbb{V}\mapsto[\kappa,1]$.
    \end{lemma}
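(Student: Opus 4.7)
My plan is to construct $q(w,z)$ for $w\in\mathbb{W}^{\mathrm{tr}}_{\infty}$ as the pointwise limit of $q(w^{(N)},z)$ as $N\to\infty$, where $w^{(N)}=(z_{-N+1},\ldots,z_0)$ is the truncation of $w$ to its $N$ most recent increments. Ellipticity is immediate once the limit is shown to exist: the $\mathbb{P}$-a.s.\ bounds $\kappa\leq\omega(0,z)\leq 1$ factor out of (\ref{eqn: transition probability of finite paths}) to give $\kappa\leq q(w^{(N)},z)\leq 1$ uniformly in $N$, and these bounds pass to any pointwise limit.

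For existence of the limit, I would introduce the tilted probability measures $\mu_N(d\omega):=f_N(\omega)\,\mathbb{P}(d\omega)/\mathbb{E}[f_N]$ with weight $f_N(\omega):=\prod_{x,y\in\mathbb{V}}\omega(x,x+y)^{n_{x,y}(w^{(N)})}$, so that $q(w^{(N)},z)=E_{\mu_N}[\omega(0,z)]$; uniform ellipticity ensures $\kappa^N\leq f_N\leq 1$ and $\mu_N$ is well defined. Passing from $w^{(N)}$ to $w^{(N+1)}$ multiplies $f_N$ by the single factor $\omega(x_{-N-1},x_{-N})\in[\kappa,1]$, yielding the identity
\[
q(w^{(N+1)},z)-q(w^{(N)},z)=\frac{\mathrm{Cov}_{\mu_N}\!\bigl(\omega(x_{-N-1},x_{-N}),\,\omega(0,z)\bigr)}{E_{\mu_N}[\omega(x_{-N-1},x_{-N})]},
\]
whose denominator is at least $\kappa$.

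It remains to bound the covariance numerator. The crucial structural observation is that $f_N$ factorizes site-wise, $f_N=\prod_x g_x(\omega_x)$, so no single factor couples $\omega_0$ and $\omega_{x_{-N-1}}$: all correlation between them under $\mu_N$ is inherited from the $\mathbb{P}$-correlation, filtered through site-local tilts. Applying $\textbf{(SM)}_{C,g}$ (or $\textbf{(SMG)}_{C,g}$) with $\Delta=\{0\}$ inside a ball $V\supseteq B_r(0)$ and $A=\{x_{-N-1}\}$ bounds the ratio $d\mathbb{P}(\omega_0\in\cdot\mid\eta)/d\mathbb{P}(\omega_0\in\cdot\mid\eta')$ for $\eta,\eta'$ agreeing outside $A$ by $\exp(C_1 e^{-g|x_{-N-1}|_1})$; because the tilt is site-local, the same bound propagates to the conditional expectations under $\mu_N$, giving $|\mathrm{Cov}_{\mu_N}|\leq C'e^{-g|x_{-N-1}|_1}$. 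Transience of $w$ forces $|x_{-N-1}|_1\to\infty$, so these differences are summable and $(q(w^{(N)},z))_N$ is Cauchy. The principal obstacle is this transfer of decay of correlations from $\mathbb{P}$ to the tilted measures $\mu_N$, which proceeds cleanly precisely because $f_N$ has a product-over-sites structure and does not introduce new long-range couplings on top of those already present in $\mathbb{P}$. Once the limit $q(w,z)\in[\kappa,1]$ is established, its measurability in $w$ and independence of the truncation scheme follow from a standard monotone-class argument, completing the extension $q:\mathbb{W}^{\mathrm{tr}}\times\mathbb{V}\to[\kappa,1]$.
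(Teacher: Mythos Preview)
Your approach is genuinely different from the paper's. The paper truncates in \emph{space} rather than time: for each box $V_m=[-m,m]^d\cap\mathbb{Z}^d$ it defines a tilted marginal $\mathbb{P}_{w,V_m}$ with density proportional to $\prod_{x\in V_m,\,y\in\mathbb{V}}\omega(x,x+y)^{n_{x,y}(w)}$ (well defined because transience forces every $n_{x,y}(w)<\infty$), asserts these form a Kolmogorov-consistent family, extends to a measure $\mathbb{P}_w$ on $\Omega$, and sets $q(w,z)=E_{\mathbb{P}_w}[\omega(0,z)]$. Notably that argument never invokes mixing; the hypotheses $\textbf{(SM)}_{C,g}$/$\textbf{(SMG)}_{C,g}$ first appear only in the \emph{next} lemma, on continuity of $q$.

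Your temporal-truncation route, by contrast, does use mixing, and it has a real gap. The covariance identity is correct, and the transfer of the exponential mixing bound to the site-locally tilted measure $\mu_N$ is routine, so $|\mathrm{Cov}_{\mu_N}(\omega(x_{-N-1},\cdot),\omega(0,z))|\lesssim e^{-g|x_{-N-1}|_1}$ is fine. What fails is the summability: membership in $\mathbb{W}^{\mathrm{tr}}_\infty$ guarantees only $|x_{-N}|_1\to\infty$, with no rate whatsoever. A path that dwells $2^k$ steps near distance $k$ before moving outward is transient, yet $\sum_N e^{-g|x_{-N-1}|_1}\asymp\sum_k 2^k e^{-gk}$ diverges as soon as $g\le\log 2$; telescoping over single increments therefore cannot deliver Cauchy. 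The repair is to abandon telescoping and compare $q(w^{(N)},z)$ with $q(w^{(M)},z)$ directly: the tilts $\mu_M,\mu_N$ differ only at the \emph{set} of sites $A_{M,N}:=\{x_j:-N\le j<-M\}$, transience places $A_{M,N}\subseteq\{|y|_1>R\}$ for all $N>M\ge M_0(R)$, and the mixing condition with $\Delta=\{0\}$ and $A=A_{M,N}$ gives
\[
\bigl|q(w^{(N)},z)-q(w^{(M)},z)\bigr|\ \lesssim\ \sum_{a\in A_{M,N}}e^{-g|a|_1}\ \le\ \sum_{|y|_1>R}e^{-g|y|_1}\ \xrightarrow[R\to\infty]{}\ 0
\]
uniformly in $N>M$. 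The essential correction is to sum over visited \emph{sites} (each counted once, as a set) rather than over \emph{steps} of the path; the latter overcounts revisits and need not converge.
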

    \begin{proof}
        For each $m\in\mathbb{N}$, let $V_m\coloneqq\mathbb{Z}^d\cap[-m,m]^d$. For any $w\in\mathbb{W}^{\text{tr}}$ and $m\in\mathbb{N}$, define the probability measure $\mathbb{P}_{w,V_m}$ via its Radon--Nikodým derivative
        \begin{equation}\label{eqn: derivative of Pw}        
            d\mathbb{P}_{w,V_m} = \frac{\prod_{y,x\in V_m} \omega(x,x+y)^{n_{x,y}(w)}}{ \mathbb{E}\big[\prod_{y,x\in V_m} \omega(x,x+y)^{n_{x,y}(w)}\big] }\,d\mathbb{P}|_{V_m},
        \end{equation}
        where $\mathbb{P}|_{V_m}$ stands for the marginal of $\mathbb{P}$ on $V_m$. It is easily seen that $(\mathbb{P}_{w,V_m})_{m\in\mathbb{N}}$ is a consistent family of probability distributions, and thus by the Kolmogorov Extension Theorem \cite[Theorem 6.3]{LeGall}, there exists a unique probability measure $\mathbb{P}_w$ on $\Omega$ such that $\mathbb{P}_w|_{V_m}=\mathbb{P}_{w,V_m}$, $\forall~m\geq1$. Now we denote $\mathfrak{q}(w,z)\coloneqq E_{\mathbb{P}_w} [\omega(0,z) ]$, $\forall~w\in\mathbb{W}^{\text{tr}}$, $z\in\mathbb{V}$. Notice that $\mathfrak{q}$ is not well-defined on $\mathbb{W}\backslash\mathbb{W}^{\text{tr}}$. Indeed, $\exists~x,z$ s.t. $n_{x,z}(w)=\infty$ whenever $w\notin\mathbb{W}^{\text{tr}}$, forcing the marginal distributions (\ref{eqn: derivative of Pw}) meaningless. On the other hand, when $w\in\cup_{N\geq0}\mathbb{W}_N$, we take sufficiently large $V_m$ and observe 
        \[       
            \mathfrak{q}(w,z) = E_{\mathbb{P}_w}[\omega(0,z)] = \lim_{m\to\infty} \frac{\mathbb{E}[\prod_{y,x\in V_m} \omega(x,x+y)^{n_{x,y}(w)}\omega(0,z)]}{ \mathbb{E}[\prod_{y,x\in V_m} \omega(x,x+y)^{n_{x,y}(w)}]}= \frac{\mathbb{E}\big[\prod_{y,x\in \mathbb{Z}^d} \omega(x,x+y)^{n_{x,y}(w)}\omega(0,z)\big]}{ \mathbb{E}\big[\prod_{y,x\in \mathbb{Z}^d} \omega(x,x+y)^{n_{x,y}(w)}\big]} = q(w,z).
        \]
        Hence, $\mathfrak{q}(\vdot,z):\mathbb{W}^{\text{tr}}\to[\kappa,1]$ indeed extends $q(\vdot)$ on $\cup_{N\geq0}\mathbb{W}_N\times\mathbb{V}$. And we continue to use $q(\vdot)$ to denote such transitions.
    \end{proof}    
    The Tychonoff Theorem \cite[Chapter 4]{Kelley} says that $\mathbb{W}$ will be compact with the product topology. However, the functions $n_{x,z}:\mathbb{W}\to\mathbb{N}\cup\{\infty\}$ and $q(\vdot,z):\mathbb{W}^{\text{tr}}\to[\kappa,1]$ will not be continuous. An alternative approach is to equip $\mathbb{W}$ with the topology $\mathcal{U}$ generated by $x_j:\mathbb{W}\to\mathbb{Z}^d$ and $n_{x,z}:\mathbb{W}\to\mathbb{N}\cup\{\infty\}$ for all $x,z,j$. When $\mathbb{P}$ verifies $\textbf{(IID)}$, the transition kernel $q(\vdot,z):\mathbb{W}^{\text{tr}}\to[\kappa,1]$ is naturally continuous with respect to the topology $\mathcal{U}$. In general, however, this continuity does not always hold. Nevertheless, we show that under the mixing assumptions $\textbf{(SM)}_{C,g}$ or $\textbf{(SMG)}_{C,g}$, the continuity of $q(\vdot)$ can still be established.
    \begin{lemma}
        \normalfont
        When $\mathbb{P}$ verifies either $\textbf{(SM)}_{C,g}$ or $\textbf{(SMG)}_{C,g}$, the transition kernels $q:\mathbb{W}^{\text{tr}}\times\mathbb{V}\to[\kappa,1]$ are continuous with respect to the subspace topology of $\mathcal{U}$ on $\mathbb{W}$.
    \end{lemma}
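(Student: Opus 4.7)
The plan is a truncation-plus-mixing scheme. Suppose $w^{(k)}\to w$ in $\mathcal{U}$ with $w\in\mathbb{W}^{\text{tr}}$; the goal is to show $q(w^{(k)},z)\to q(w,z)$. The key preliminary observation is that each generator $n_{x,y}:\mathbb{W}\to\mathbb{N}\cup\{\infty\}$ of the topology $\mathcal{U}$ is integer-valued, while $w\in\mathbb{W}^{\text{tr}}$ forces every $n_{x,y}(w)$ to be finite; therefore the convergence $w^{(k)}\to w$ in $\mathcal{U}$ implies that for each fixed pair $(x,y)\in\mathbb{Z}^d\times\mathbb{V}$ there exists $k_0=k_0(x,y)$ such that $n_{x,y}(w^{(k)})=n_{x,y}(w)$ for all $k\geq k_0$.

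For each $m\geq 2r$ let $V_m=[-m,m]^d\cap\mathbb{Z}^d$ and introduce the finite-volume approximants
\[
    q_m(w,z)\coloneqq E_{\mathbb{P}_{w,V_m}}[\omega(0,z)] = \frac{\mathbb{E}\big[\omega(0,z)\prod_{x,y\in V_m}\omega(x,x+y)^{n_{x,y}(w)}\big]}{\mathbb{E}\big[\prod_{x,y\in V_m}\omega(x,x+y)^{n_{x,y}(w)}\big]},
\]
which depend on $w$ only through the finite tuple $(n_{x,y}(w))_{x\in V_m,y\in\mathbb{V}}$. By the preliminary observation applied to these finitely many pairs, this tuple stabilizes along $(w^{(k)})$: there is a common threshold $k_0=k_0(m)$ beyond which $q_m(w^{(k)},z)=q_m(w,z)$, so $q_m(\cdot,z)$ is $\mathcal{U}$-continuous on $\mathbb{W}^{\text{tr}}$.

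The crucial step is the uniform approximation
\[
    \sup_{w'\in\mathbb{W}^{\text{tr}}}\,\big|q_m(w',z)-q(w',z)\big|\leq\eta_m,\qquad\eta_m\downarrow 0,
\]
which is where the mixing hypothesis enters. Applying $\textbf{(SM)}_{C,g}$ (or $\textbf{(SMG)}_{C,g}$) with $\Delta=\{0\}$, $V=V_{m-r}$, and $A=V_{m-r}^c$, one obtains for any two configurations $\eta,\eta'\in\Omega$,
\[
    \frac{d\mathbb{P}(\omega_0\in\vdot\,|\,\eta)}{d\mathbb{P}(\omega_0\in\vdot\,|\,\eta')}\leq\exp\bigg(C\sum_{x\in\partial^r\{0\},\,y\in\partial^rA}e^{-g|x-y|_1}\bigg)\leq\exp\big(C_1\,m^{d-1}e^{-gm}\big),
\]
since $\partial^r\{0\}$ has cardinality $O(r^d)$ and the points of $\partial^rA$ lie at $\ell^1$-distance at least $m-r$ from the origin, with $O(m^{d-1})$ of them on that innermost shell. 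Averaging the envelope $\eta'$ against $\mathbb{P}|_{V_{m-r}^c}$ converts this into a pointwise $[e^{-\epsilon_m},e^{\epsilon_m}]$ bound, with $\epsilon_m=C_1m^{d-1}e^{-gm}$, on the Radon--Nikod\'ym derivative between $\mathbb{P}(\omega_0\in\vdot\,|\,\omega|_{V_{m-r}^c})$ and the unconditional $\omega_0$-marginal. The ``missing'' path weight $\prod_{x\in V_m^c}\omega(x,\cdot)^{n_{x,y}(w)}$ distinguishing $\mathbb{P}_w$ from $\mathbb{P}_{w,V_m}$ is a positive function of $\omega|_{V_m^c}\subseteq\omega|_{V_{m-r}^c}$, so integrating it against the above mixing bound transfers the multiplicative estimate to the tilted marginals of $\omega_0$, yielding $\eta_m=O(m^{d-1}e^{-gm})$ uniformly in $w'$.

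Combining, given $\varepsilon>0$ one picks $m$ with $\eta_m<\varepsilon/3$ and then $k\geq k_0(m)$ to get
\[
    |q(w^{(k)},z)-q(w,z)|\leq|q(w^{(k)},z)-q_m(w^{(k)},z)|+|q_m(w^{(k)},z)-q_m(w,z)|+|q_m(w,z)-q(w,z)|<\varepsilon.
\]
I expect the uniform approximation to be the main obstacle: the path tilt $\prod_{V_m^c}\omega^n$ is \emph{not} bounded uniformly in $w$, since particular sites may be visited many times by the underlying walk, so uniformity in $w'$ is not automatic. The saving feature is that $q(w,z)$ only involves the $\omega_0$-marginal of $\mathbb{P}_w$, and since the tilt is $\omega|_{V_m^c}$-measurable whereas mixing controls the conditional distribution of $\omega_0$ given any configuration on $V_{m-r}^c$ uniformly, the magnitude of the tilt drops out of the final estimate.
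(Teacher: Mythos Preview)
Your truncation-plus-uniform-approximation strategy is sound and closely parallels the paper's argument in spirit, but the justification of the uniform bound $\sup_{w'}|q_m(w',z)-q(w',z)|\leq\eta_m$ has a gap. You apply mixing with $\Delta=\{0\}$ to the \emph{untilted} law $\mathbb{P}$, obtaining $\mathbb{P}(\omega_0\in\cdot\,|\,\omega|_{V_{m-r}^c})\approx\mathbb{P}(\omega_0\in\cdot)$, and then argue that since the missing weight $W_{\text{out}}=\prod_{x\in V_m^c}\omega(x,\cdot)^{n_{x,\cdot}(w)}$ is $\omega|_{V_m^c}$-measurable, tilting by it cannot move the $\omega_0$-marginal much. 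But you are not comparing $\mathbb{P}$ with its $W_{\text{out}}$-tilt; you are comparing $\mathbb{P}_{w,V_m}$ (already tilted by the \emph{inside} weight $W_{\text{in}}=\prod_{x\in V_m}\omega(x,\cdot)^{n_{x,\cdot}(w)}$) with its further $W_{\text{out}}$-tilt $\mathbb{P}_w$. For that you need $\mathbb{P}_{w,V_m}(\omega_0\in\cdot\,|\,\omega|_{V_m^c})\approx\mathbb{P}_{w,V_m}(\omega_0\in\cdot)$, i.e.\ mixing for the \emph{tilted} measure, and the inside tilt $W_{\text{in}}$ depends on all of $\omega|_{V_m}$, so this does not follow from your $\Delta=\{0\}$ bound for $\mathbb{P}$ alone.

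The missing ingredient---which is exactly what the paper supplies---is that a site-wise product tilt preserves the strong-mixing condition up to a constant: if $d\mathbb{P}'/d\mathbb{P}\propto\prod_x\phi_x(\omega_x)$ then
\[
    \frac{d\mathbb{P}'(\omega|_\Delta\,|\,\eta)}{d\mathbb{P}'(\omega|_\Delta\,|\,\eta')}=\frac{d\mathbb{P}(\omega|_\Delta\,|\,\eta)}{d\mathbb{P}(\omega|_\Delta\,|\,\eta')}\cdot\frac{\int\prod_{x\in\Delta}\phi_x\,d\mathbb{P}(\omega|_\Delta\,|\,\eta')}{\int\prod_{x\in\Delta}\phi_x\,d\mathbb{P}(\omega|_\Delta\,|\,\eta)}\in[e^{-2\epsilon},e^{2\epsilon}],
\]
since the tilt on $\Delta^c$ cancels. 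Once you have this, your scheme goes through cleanly: $\mathbb{P}_{w,V_m}$ satisfies $\textbf{(SM)}_{2C,g}$, hence $\mathbb{P}_w(\omega_0\,|\,\tau)=\mathbb{P}_{w,V_m}(\omega_0\,|\,\tau)\in[e^{-2\epsilon_m},e^{2\epsilon_m}]\cdot\mathbb{P}_{w,V_m}(\omega_0)$ for every $\tau$, and averaging over $\tau$ under $\mathbb{P}_w$ gives the uniform bound. The paper uses this same preservation fact but organizes the argument as a direct triangle inequality $\|\mathbb{P}_{w',\Delta}-\mathbb{P}_{w,\Delta}\|$ through the conditional law $\mathbb{P}_w(\,\cdot\,|\,\mathscr{F}_{V^c})$, rather than through your fixed approximant $q_m$; once the preservation step is made explicit, the two routes are essentially equivalent.
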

    \begin{proof}
        To verify that $q:(w,z)\mapsto E_{\mathbb{P}_{w}}[\omega(0,z)]$ is continuous with respect to $\mathcal{U}$ on $\mathbb{W}^{\text{tr}}$, it is sufficient to show that for any $\Delta\subseteq\mathbb{Z}^d$ and $\delta>0$, there exists an open neighborhood $\mathcal{O}(w)$ of $w$ in $\mathcal{U}$ such that $\forall~w^\prime\in\mathcal{O}(w)\Longrightarrow\normx{\mathbb{P}_{w^\prime,\Delta}-\mathbb{P}_{w,\Delta}}_{FV}<\delta$, where $\normx{\vdot}_{FV}$ denotes the finite variation norm of Borel measures. Indeed, $\forall~V\supseteq\Delta$, $\normx{\mathbb{P}_{w^\prime,\Delta}-\mathbb{P}_{w,\Delta}}_{FV}$ is no less than
        \begin{equation*}\begin{aligned}        \normy{\mathbb{P}_{w^\prime}\big((\omega_x)_{x\in\Delta}\in\vdot\big|\mathscr{F}_{V^c}\big) - \mathbb{P}_{w}\big((\omega_x)_{x\in\Delta}\in\vdot\big|\mathscr{F}_{V^c}\big)} + \normy{\mathbb{P}_{w^\prime,\Delta}- \mathbb{P}_{w^\prime}\big((\omega_x)_{x\in\Delta}\in\vdot\big|\mathscr{F}_{V^c}\big)} + \normy{\mathbb{P}_{w,\Delta}- \mathbb{P}_{w}\big((\omega_x)_{x\in\Delta}\in\vdot\big|\mathscr{F}_{V^c}\big)},
        \end{aligned}\end{equation*}
        where we omit the $FV$-script in the above writing. Since the Radon--Nikodým derivative (\ref{eqn: derivative of Pw}) of $\mathbb{P}_w$ only adds weights to the one-dimensional marginals of $\Omega$ and does not alter the correlations among different sites, under the mixing assumptions $\textbf{(SM)}_{C,g}$ or $\textbf{(SMG)}_{C,g}$, $\exists~C_w,g_w$ s.t. $\mathbb{P}_w$ verifies  $\textbf{(SM)}_{C_w,g_w}$ or $\textbf{(SMG)}_{C_w,g_w}$, $\forall~w\in\mathbb{W}^{\text{tr}}$. On the other hand, there exists an open neighborhood $\mathcal{O}_1(w)$ of $w$ in $\mathcal{U}$ such that when $w^\prime\in\mathcal{O}_1(w)$, we could take $C_{w^\prime}=2C_w$ and $g_{w^\prime}=g_w/2$. Hence,
        \[
            \normy{\mathbb{P}_{w^\prime,\Delta}- \mathbb{P}_{w^\prime}\big((\omega_x)_{x\in\Delta}\in\vdot\big|\mathscr{F}_{V^c}\big)}_{FV}+1 \leq \sup_{\Lambda\in\mathscr{F}_{\Delta}}\mathbb{P}_{w^\prime,\Delta}(\Lambda)  \frac{\mathbb{P}_{w^\prime}((\omega_x)_{x\in\Delta}\in\Lambda|\mathscr{F}_{V^c})}{\mathbb{P}_{w^\prime,\Delta}(\Lambda)} \leq \exp\bigg(2C_w\sum_{x\in\partial^r\Delta,y\in\partial^r V^c}e^{-\frac{1}{2}g_w\abs{x-y}_1}\bigg),
        \]
        when $\mathbb{P}$ verifies $\textbf{(SM)}_{C,g}$.
        And when $\mathbb{P}$ alternatively verifies $\textbf{(SMG)}_{C,g}$,
        \[
             \normy{\mathbb{P}_{w^\prime,\Delta}- \mathbb{P}_{w^\prime}\big((\omega_x)_{x\in\Delta}\in\vdot\big|\mathscr{F}_{V^c}\big)}_{FV} \leq \exp\bigg(2C_w\sum_{x\in\Delta,y\in V^c}e^{-\frac{1}{2}g_w\abs{x-y}_1}\bigg) -1.
        \]
        Henceforth, under either mixing assumptions of $\mathbb{P}$, $\normx{\mathbb{P}_{w^\prime,\Delta}- \mathbb{P}_{w^\prime}\big((\omega_x)_{x\in\Delta}\in\vdot\big|\mathscr{F}_{V^c}\big)}_{FV}\to0$ uniformly in $w^\prime\in\mathcal{O}_1(w)$ as $d_1(\Delta,V^c)\to\infty$.
        Similarly, we have $\normx{\mathbb{P}_{w,\Delta}- \mathbb{P}_{w}\big((\omega_x)_{x\in\Delta}\in\vdot\big|\mathscr{F}_{V^c}\big)}_{FV}\to0$ as $d_1(\Delta,V^c)\to\infty$.
        Therefore 
        \[
            \normy{\mathbb{P}_{w^\prime,\Delta}- \mathbb{P}_{w^\prime}\big((\omega_x)_{x\in\Delta}\in\vdot\big|\mathscr{F}_{V^c}\big)}_{FV} <\delta/2\qquad\text{and}\qquad\normy{\mathbb{P}_{w,\Delta}- \mathbb{P}_{w}\big((\omega_x)_{x\in\Delta}\in\vdot\big|\mathscr{F}_{V^c}\big)}_{FV}< \delta/2.
        \]
        if we choose a sufficiently large $V$. On other hand, there exists an open neighborhood $\mathcal{O}_2(w)$ of $w$ such that if $w^\prime\in\mathcal{O}_2(w)$, then the hitting trajectories of $w$ and $w^\prime$ will be the same within $V$, yielding 
        \[
            \normy{\mathbb{P}_{w^\prime}\big((\omega_x)_{x\in\Delta}\in\vdot\big|\mathscr{F}_{V^c}\big) - \mathbb{P}_{w}\big((\omega_x)_{x\in\Delta}\in\vdot\big|\mathscr{F}_{V^c}\big)}_{FV}=0.
        \]
        Choosing $\mathcal{O}(w)\coloneqq\mathcal{O}_1(w)\cap\mathcal{O}_2(w)$, the assertion is then verified.
    \end{proof}
    At any rate, $(\mathbb{W},\mathcal{U})$ is no longer compact. Still in light of the Stone--Čech Compactification Theorem \cite[Chapter 5]{Kelley}, we obtain $(\overline{\mathbb{W}},\mathcal{U})$, the compactified space of $\mathbb{W}$, guaranteeing simultaneously the continuity of the extensions $x_j,n_{x,z},\vartheta_z,\vartheta^*$ on $\overline{\mathbb{W}}$ and $q(\vdot,z):\overline{\mathbb{W}}^{\text{tr}}\to[\kappa,1]$, where $\overline{\mathbb{W}}^{\text{tr}}$ is the compact closure of $\mathbb{W}^{\text{tr}}$ in $(\overline{\mathbb{W}},\mathcal{U})$. In fact, $\overline{\mathbb{W}}^{\text{tr}}$ is $\overline{\mathbb{W}}$ because $\mathbb{W}^{\text{tr}}$ is dense in $\mathbb{W}$. Such compactness will be crucial when we discuss the process-level LDP of the empirical distributions on $\overline{\mathbb{W}}$. More details will be presented in Section \ref{sec: annealed LDP existence}. See Gantert/König/Shi \cite{Gantert/Konig/Shi}, Kubota \cite{Kubota} and Mathieu/Piatnitski \cite{Mathieu/Piatnitski} for related other types LDP and related models.

\subsection{The quenched large deviation principle.}
    The proof of the quenched large deviation principle is based on a pathwise argument, following the approach in \cite[Section 2]{Varadhan1}. In the present work, the exposition is streamlined to be more concise and accessible to the reader. For any $u>0$ and $t\geq0$, let
    \begin{equation}\label{eqn: definition of G}    
        G_{u,\omega}(t,x,y) \coloneqq-\log\,\sup_{n\in\mathbb{N}}\big\{e^{-u\abs{n-t}} P_{x,\omega}\big(X_n=y\big) \big\},\qquad\forall~x,y\in\mathbb{Z}^d.
    \end{equation}
    Then, it is obvious that $G_{u,\omega}(t,x+z,y+z) = G_{u,\vartheta_z\omega}(t,x,y)$, $\forall~z\in\mathbb{Z}^d$. Also, $\abs{G_{u,\omega}(t,x,y) - G_{u,\omega}(t^\prime,x,y)}\leq C(\kappa) \abs{t-t^\prime}$ and that $G_{u,\omega}(t,x+x^\prime,y+y^\prime)$ is equal to
    \[
          -\log\sup_{n,m,\ell\in\mathbb{N}}\big\{ e^{-u\abs{n+m+\ell-t}} P_{x+x^\prime,\omega}\big(X_{n+m+\ell}=y+y^\prime\big) \big\} \leq -(|x^\prime|_1+|y^\prime|_1)\log\kappa - \log\,\sup_{n\in\mathbb{N}} \big\{ e^{-u\abs{n-t}} P_{x,\omega}\big(X_n = y\big) \big\},
    \]
    where we use the fact $P_{x+x^\prime,\omega}(X_{n+m+\ell}=y+y^\prime) = P_{x+x^\prime,\omega}(X_{m}=x ) P_{x,\omega}(X_{n}=y) P_{y,\omega}(X_{\ell}=y+y^\prime)$ and that in particular we could set $m=|x^\prime|_1$, $\ell=|y^\prime|_1$. Hence, $G_{u,\omega}(t,x+x^\prime,y+y^\prime) \leq (|x^\prime|_1+|y^\prime|_1)\log\kappa^{-1} + G_{u,\omega}(t,x,y)$, yielding that $G_{u,\omega}$ is Lipschitz continuous on $[0,\infty)\times\mathbb{Z}^d\times\mathbb{Z}^d$ with some Lipschitz constant $\text{Lip}(G)>0$ depending on $\kappa$, i.e.
    \begin{equation}\label{eqn: G is Lipschitz}
        \abs{G_{u,\omega}(t+t^\prime,x+x^\prime,y+y^\prime) - G_{u,\omega}(t,x,y)} \leq \text{Lip}(G)\big(t^\prime+|x^\prime|_1+|y^\prime|_1\big).
    \end{equation}
    Moreover, $G_{u,\omega}(t+t^\prime,0,x+y) $ is equal to $ -\log\sup_{n,m\in\mathbb{N} } e^{-u\abs{n+m-(t+t^\prime)}} \sum_{z\in\mathbb{Z}^d} P_{0,\omega}(X_n=z) P_{z,\omega}(X_m=x+y)$, which is less than or equal to $-\log\sup_{n,m\in\mathbb{N} } e^{-u\abs{n+m-(t+t^\prime)}}  P_{0,\omega}(X_n=x)$. Henceforth,
    \begin{equation}\label{eqn: G is subaddictive}
        G_{u,\omega}(t+t^\prime,0,x+y)\leq G_{u,\omega}(t,0,x) +G_{u,\omega}(t^\prime,x,x+y),\qquad\forall~t^\prime\geq0.
    \end{equation}
    In light of the convergence properties of stochastic processes with sufficient regularity and the properties of ergodic dynamics, we have the following lemma.
    \begin{lemma}\label{lem: quenched LDP lemma}
        \normalfont
        When $\mathbb{P}$ verifies either $\textbf{(SM)}_{C,g}$ or $\textbf{(SMG)}_{C,g}$, $\forall~u>0$,  $\exists~\Tilde{G}_u:\mathbb{R}^d\to\mathbb{R}$ convex deterministic on $\mathbb{R}^d$ such that
        \[
            \lim_{\substack{N\to\infty\\y_N/N\to\eta}} \tfrac{1}{N} G_{u,\omega}(tN,0,y_N) = t\Tilde{G}_u(t^{-1}\eta),\qquad\mathbb{P}\text{-a.s.,}\qquad\forall~\eta\in\mathbb{R}^d.
        \]
    \end{lemma}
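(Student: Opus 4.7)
The plan is to apply Kingman's subadditive ergodic theorem to $G_{u,\omega}(t,0,\cdot)$ along rational rays through the origin, and then use the Lipschitz bound (\ref{eqn: G is Lipschitz}) to interpolate in both the temporal parameter $t$ and the spatial parameter $\eta$. The translation identity $G_{u,\vartheta_z\omega}(t,x,y) = G_{u,\omega}(t,x+z,y+z)$, the subadditivity (\ref{eqn: G is subaddictive}), and the fact that either $\textbf{(SM)}_{C,g}$ or $\textbf{(SMG)}_{C,g}$ implies that $\mathbb{P}$ is mixing (hence ergodic) under the lattice shifts $\{\vartheta_z\}_{z\in\mathbb{Z}^d}$, together supply all the ingredients.

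First, I would fix $\eta\in\mathbb{Q}^d$ and an integer $N_0$ with $N_0\eta\in\mathbb{Z}^d$, and along the subsequence of multiples of $N_0$ define the random array
\[
S_{m,n}(\omega)\coloneqq G_{u,\omega}(n-m,\,m\eta,\,n\eta),\qquad 0\le m\le n.
\]
Subadditivity (\ref{eqn: G is subaddictive}) with $t=\ell-m$, $t'=n-\ell$, applied after translating by $-m\eta$, yields $S_{m,n}\le S_{m,\ell}+S_{\ell,n}$. Stationarity of $(S_{m+k,n+k})_{m\le n}$ under $\vartheta_{k\eta}$ is immediate from the translation identity, and integrability $\mathbb{E}|S_{0,N_0}|<\infty$ follows because (\ref{eqn: G is Lipschitz}) gives $|G_{u,\omega}(t,0,y)|\le\mathrm{Lip}(G)(t+|y|_1)+|G_{u,\omega}(0,0,0)|$ and the latter is bounded using uniform ellipticity. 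Kingman's theorem then delivers $N^{-1}G_{u,\omega}(N,0,N\eta)\to\tilde{G}_u(\eta)$, $\mathbb{P}$-a.s., for a deterministic scalar $\tilde{G}_u(\eta)$.

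Second, I would extend the limit in the spatial variable. For any sequence $y_N\in\mathbb{Z}^d$ with $y_N/N\to\eta$ and any rational $\eta'$ close to $\eta$, the Lipschitz estimate (\ref{eqn: G is Lipschitz}) applied componentwise yields
\[
\Bigl|\tfrac{1}{N}G_{u,\omega}(N,0,y_N)-\tfrac{1}{N}G_{u,\omega}(N,0,N\eta')\Bigr|\le \tfrac{\mathrm{Lip}(G)}{N}|y_N-N\eta'|_1,
\]
and the right-hand side is at most $\mathrm{Lip}(G)(|\eta-\eta'|_1+o(1))$. Running over a countable dense set of rational directions yields a single $\mathbb{P}$-null set outside which $\tilde{G}_u$ extends continuously to all $\eta\in\mathbb{R}^d$ and the almost-sure convergence holds for every $\eta$ and every approximating sequence $y_N/N\to\eta$. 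For the temporal parameter, substituting $M=\lfloor tN\rfloor$ gives
\[
\tfrac{1}{N}G_{u,\omega}(tN,0,y_N)=t\cdot\tfrac{1}{M}G_{u,\omega}(M,0,y_N)+O(N^{-1}),
\]
where the error uses (\ref{eqn: G is Lipschitz}) in $t$; since $y_N/M\to t^{-1}\eta$, this limit equals $t\,\tilde{G}_u(t^{-1}\eta)$ by the spatial statement already established. Convexity of $\tilde{G}_u$ is then extracted from subadditivity: writing $\eta=\lambda\eta_1+(1-\lambda)\eta_2$ and splitting the path at $\lfloor\lambda N\rfloor\eta_1$,
\[
G_{u,\omega}(N,0,N\eta)\le G_{u,\omega}(\lambda N,0,\lfloor\lambda N\rfloor\eta_1)+G_{u,\omega}((1-\lambda)N,\,\lfloor\lambda N\rfloor\eta_1,\,N\eta),
\]
after which dividing by $N$, invoking translation invariance on the second term, and passing to the limit gives $\tilde{G}_u(\eta)\le\lambda\tilde{G}_u(\eta_1)+(1-\lambda)\tilde{G}_u(\eta_2)$.

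The main obstacle will be the uniform spatio-temporal upgrade to a single $\mathbb{P}$-null set. Pointwise application of Kingman's theorem only handles one direction $\eta$ and one time $t$ at a time, producing an exceptional set that a priori depends on $(t,\eta)$; I need a separability argument in which the Lipschitz bound (\ref{eqn: G is Lipschitz}) is used both to reduce to a countable dense set of parameters and to control the fluctuations of $(t,y_N)\mapsto N^{-1}G_{u,\omega}(tN,0,y_N)$ uniformly on compacts. A secondary subtlety is that Kingman requires $\mathbb{P}$-stationarity under $\vartheta_{N_0\eta}$, which holds by the lattice stationarity but needs to be checked simultaneously with the sufficiency of ergodicity (guaranteed here because the strong mixing conditions make the tail $\sigma$-algebra trivial) in order to conclude that the limit is constant rather than merely $\vartheta_{N_0\eta}$-invariant.
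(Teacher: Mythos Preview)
Your proposal is correct and follows essentially the same route as the paper: both apply the subadditive ergodic theorem to $G_{u,\omega}$ along lattice rays (using the translation identity, (\ref{eqn: G is subaddictive}), and ergodicity under the shifts), then use the uniform Lipschitz bound (\ref{eqn: G is Lipschitz}) to interpolate to all $(t,\eta)$ on a single full-measure set, and finally extract convexity of $\tilde G_u$ from subadditivity. Your organization is in fact slightly more direct than the paper's---the paper first passes through a tightness/subsequential distributional limit $G^\infty_{u,\omega}$ before invoking Kingman, whereas you apply Kingman immediately at $t=1$ and recover general $t$ by the rescaling $M=\lfloor tN\rfloor$; the only point where you should be a bit more careful is the convexity step, since after ``invoking translation invariance on the second term'' the shift $\vartheta_{\lfloor\lambda N\rfloor\eta_1}$ depends on $N$, so the cleanest way to conclude is to take expectations (or argue in law), exactly as the paper implicitly does when it identifies $G^\infty_{u,\omega}(t,\xi,\eta)=t\tilde G_u((\eta-\xi)/t)$ via stationarity.
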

    \begin{proof}
        Define the sequence of functions $(G^{(N)}_{u,\omega})_{N\geq1}$ on $[0,\infty)\times \mathbb{Z}^d/N\times \mathbb{Z}^d/N$ by $G^{(N)}_{u,\omega}(t,\xi,\eta)\coloneqq \frac{1}{N}G_{u,\omega}(tN,\xi N,\eta N)$. In light of (\ref{eqn: G is Lipschitz}), we know $(G^{(N)}_{u,\omega})_{N\geq1}$ is uniformly $\text{Lip}(G)$-Lipschitz in $t,\xi,\eta$. In other words, for any $N\geq1$,
        \[
            |G^{(N)}_{u,\omega}(t,\xi,\eta) - G^{(N)}_{u,\omega}(t^\prime,\xi^\prime,\eta^\prime)| \leq \text{Lip}(G)\big(|t-t^\prime|_1 + |\xi-\xi^\prime|_1 +|\eta-\eta^\prime|_1 \big),\qquad\forall~t,t^\prime>0\quad\text{and}\quad\xi,\xi^\prime,\eta,\eta^\prime\in \mathbb{Z}^d/N.
        \]
        Therefore, $(G^{(N)}_{u,\omega})_{N\geq1}$ admits a subsequence which converges in distribution to a $\mathbb{P}$-a.s. continuous stochastic process $G^\infty_{u,\omega}:[0,\infty)\times\mathbb{R}^d\times\mathbb{R}^d$, see \cite[Chapters 3, 6]{Pollard}. It is clear from the above arguments that $G^\infty_{u,\omega}(t,\xi+\zeta,\eta+\zeta) = G^\infty_{u,\vartheta_{\zeta}\omega}(t,\xi,\eta)$ and from (\ref{eqn: G is subaddictive}) that $ G^\infty_{u,\omega}(t+t^\prime,\xi,\eta)\leq G^\infty_{u,\omega}(t,\xi,\zeta) + G^\infty_{u,\omega}(t^\prime,\zeta,\eta)$, $\mathbb{P}$-a.s. Hence $\forall~\eta\in\mathbb{Z}^d$, we notice the limit $\lim_{n\to\infty} G^{(n)}_{u,\omega}(t,0,\eta) \eqqcolon g(t,\eta)$ is deterministic by the Subadditive Ergodic Theorem \cite[Theorem 1.10]{Liggett}. And for each $k\in\mathbb{N}$,
        \[
            g(kt,k\eta) = \lim_{N\to\infty}\tfrac{1}{N} G_{u,\omega}(Nkt,0,Nk\eta) = k\lim_{N\to\infty} G^{(kN)}(t,0,\eta) = k g(t,\eta),\qquad\forall~t\in[0,\infty),\quad\eta\in\mathbb{Z}^d.
        \]
        We can therefore extend $g$ to all $\eta\in\mathbb{Q}^d$ via setting $g(t,\frac{q}{p}\xi) \coloneqq \frac{1}{p}g(pt,q\xi)$, $\forall~\frac{q}{p}\in\mathbb{Q}$ and $\xi\in\mathbb{Z}^d$. Henceforth, the $\text{Lip}(G)$-Lipschitz continuity of $g$ furthermore allows a natural extension of $g$ to $[0,\infty)\times\mathbb{R}^d$, which can be expressed by $g(t,\eta) \coloneqq t\Tilde{G}_u (t^{-1}\eta)$, $\forall~t\in[0,\infty)$ and $\eta\in\mathbb{R}^d$. To prove the limiting identity of the lemma with $\Tilde{G}_u$, we first focus on $\eta=\lim_{N\to\infty}y_N/N$ with rational coefficients. Notice that we could find a prime $p$ such that $p\eta\in\mathbb{Z}^d$. And $\forall~N\geq1$, $\exists~N^\prime$ and $0\leq r\leq p-1$ such that $N=N^\prime p+r$. Here,
        \[
            \tfrac{1}{N}|G_{u,\omega}(tN,0,y_N) - G_{u,\omega}( pt N^\prime,0, p\eta N^\prime)| \leq \text{Lip}(G)\big(|N-p N^\prime | t + |y_N- p\eta N^\prime|_1\big)\to0\qquad\text{as}\quad N\to\infty,
        \]
        whereas at the same time we have
        \begin{equation}\label{eqn: G tilde limit}
            \lim_{N\to\infty}\tfrac{1}{N}G_{u,\omega}( pt N^\prime ,0, p\eta N^\prime ) = \lim_{N\to\infty} G^{(N)}_{u,\omega}(t,0,\eta) = g(t,\eta) = t\Tilde{G}_u(t^{-1}\eta)\qquad\Longrightarrow\quad \lim_{N\to\infty} \tfrac{1}{N}G_{u,\omega}(tN,0,y_N) = t\Tilde{G}_u(t^{-1}\eta).
        \end{equation}
        Approximating arbitrary $\eta\in\mathbb{R}^d$ with rationals in $\mathbb{Q}^d$, we see (\ref{eqn: G tilde limit}) holds for all $\eta\in\mathbb{R}^d$, $\mathbb{P}$-a.s. In particular,
        \[
            G^\infty_{u,\omega}(t,0,\eta) = t\Tilde{G}_u(t^{-1}\eta)\qquad\Longrightarrow\quad G^\infty_{u,\omega}(t,\xi,\eta) = t\Tilde{G}_u(\tfrac{\eta-\xi}{t}),\qquad\forall~t\in[0,\infty),\quad\xi,\eta\in\mathbb{R}^d.
        \]
        The subadditivity of $G^\infty_{u,\omega}(\vdot)$ then gives $(t+t^\prime)\Tilde{G}_u(\tfrac{\eta}{t+t^\prime}) \leq t\Tilde{G}_u(\tfrac{\xi}{t}) + t^\prime\Tilde{G}_u(\tfrac{\eta-\xi}{t^\prime})$, $\forall~t,t^\prime\in[0,\infty)$ and $\xi,\eta\in\mathbb{R}^d$, which corresponds to the convexity of $\Tilde{G}_u:\mathbb{R}^d\to\mathbb{R}$.
    \end{proof}
    For each $\eta \in \mathbb{R}^d$, since the map $u \mapsto \Tilde{G}_u(\eta)$ on $[0,\infty)$ is nondecreasing, it follows that $\Tilde{G}_u(\cdot) \to I_q(\cdot)$ pointwise as $u \to \infty$. The limiting function $I_q : \mathbb{R}^d \to \mathbb{R} \cup {\infty}$ is deterministic, and can be readily verified to be convex and lower semicontinuous. Its continuity on $\mathbb{D}$ then follows from \cite[Theorem 10.2]{Rockafellar}.
    \begin{proof}[Proof of the quenched LDP]
        We first prove the quenched LDP upper-bound. For any closed $F\subseteq\mathbb{R}^d$, we observe that $\varlimsup_{N\to\infty}\frac{1}{N}\log P_{0,\omega}(X_N/N \in F ) = \varlimsup_{N\to\infty} \frac{1}{N}\log \sum_{z\in NF\cap\mathbb{Z}^d} P_{0,\omega}(X_N = z)$. Since it is clear that
        \[
            0\leq \frac{1}{N}\log \sum_{z\in NF\cap\mathbb{Z}^d} P_{0,\omega}(X_N = z) - \max_{z\in NF\cap\mathbb{Z}^d} \frac{1}{N}\log P_{0,\omega}(X_N = z) \leq \frac{1}{N}\abs{N F\cap\mathbb{Z}^d}\to0\qquad\text{as}\quad N\to\infty,
        \]
        we then have for any $u>0$, 
        \begin{equation*}\begin{aligned}
            \varlimsup_{N\to\infty}\tfrac{1}{N}\log P_{0,\omega}\big(\tfrac{X_N}{N} \in F \big) = \varlimsup_{N\to\infty} \max_{z\in NF\cap\mathbb{Z}^d} \tfrac{1}{N}\log P_{0,\omega}(X_N = z) \leq \varlimsup_{N\to\infty} \max_{z\in N F\cap\mathbb{Z}^d} (-1/N) G_{u,\omega}(N,0,z) \underset{u\searrow0}{\leq} -\inf_{\xi\in F} I_q(\xi),
        \end{aligned}\end{equation*}
        where we invoke Lemma \ref{lem: quenched LDP lemma} in the last inequality, and this verifies the quenched LDP upper-bound. We now prove the quenched LDP lower-bound. For any open $G\subseteq\mathbb{R}^d$, if $\inf_{\xi\in G}I_q(\xi)=\infty$, then the LDP lower-bound is trivially attained. Otherwise, we fix $\xi\in G$ so that $I_q(\xi)<\infty$. For any small $\delta>0$ satisfying $\{\eta\in\mathbb{R}^d:\,\abs{\eta-\xi}_2<\delta\}\subseteq G$, choose $u(\delta)>0$ large enough with $u\delta> I_q(\xi)+1$. Clearly, $\Tilde{G}_u(\xi)\leq I_q(\xi)$. For any $(x_N)_{N\geq1}\subseteq(\mathbb{Z}^d)^{\mathbb{N}}$ so that $x_N/N\to\xi$ as $N\to\infty$,
        \[
            N^{-1}\log\,\sup_{k\in\mathbb{N}}\big\{e^{-u\abs{k-N}}P_{0,\omega}\big(X_k=x_N\big) \big\}\to-\Tilde{G}_u(\xi)\qquad\text{as}\quad N\to\infty,\qquad\mathbb{P}\text{-a.s.}
        \]
        Hence $\exists~N(\omega)\in\mathbb{N}$ s.t. $\forall~N\geq N(\omega)$, the supremum in (\ref{eqn: definition of G}) is attained in $\mathbb{K}_N\coloneqq\{k\in\mathbb{N}:\,\abs{k-N}\leq (I_q(\xi)+1)N/u\}$. Then, $\sup_{k\in\mathbb{K}_N} P_{0,\omega}(X_k=x_N) \geq  \sup_{k\in\mathbb{K}_N} \{e^{-u\abs{k-N}}P_{0,\omega}(X_k=x_N) \}\geq e^{-I_q(\xi)N+o(N)}$, $\mathbb{P}$-a.s. And since $(X_N)_{N\geq0}$ takes only unit jumps, we know that if it reaches $x_N$ at time $k$ with $\abs{k-N}\leq (I_q(\xi)+1)N/u$, then at time $N$ the $(X_N)_{N\geq0}$ has $\abs{X_N-x_N}\leq  (I_q(\xi)+1)N/u<\delta N$. Hence,
        \[
            \varliminf_{N\to\infty}\tfrac{1}{N}\log P_{0,\omega}\big(|\tfrac{X_N}{N}-\xi|<\delta\big) \geq \varliminf_{N\to\infty}\tfrac{1}{N}\log\,\sup_{k\in\mathbb{K}_N} P_{0,\omega}(X_k=x_N) \geq -I_q(\xi),\qquad\mathbb{P}\text{-a.s.}
        \]
        Since $\delta$ is taken arbitrarily in $(0,\infty)$, this verifies the quenched LDP lower-bound.
    \end{proof}

\subsection{The annealed large deviation principle.}\label{sec: annealed LDP existence}
    In Section~\ref{sec: abstract path space}, we denote by $\overline{\mathbb{W}}$ the compactification of $\mathbb{W}$, and by $\overline{\mathbb{W}}^{\text{tr}}$ the compact closure of the transient subspace $\mathbb{W}^{\text{tr}}$, both with respect to the topology $\mathcal{U}$. The continuity of the transition map $q(\vdot, z):\overline{\mathbb{W}}\to[\kappa,1]$ ensures that the Markov process $Q_\phi$ is Feller on the compact space $\overline{\mathbb{W}}$. Let $\overline{\mathcal{I}}$ and $\overline{\mathcal{E}}$ denote, resp., the sets of $\vartheta^*$-invariant and $\vartheta^*$-ergodic probability measures on $\overline{\mathbb{W}}$. Likewise, let $\mathcal{I}$ and $\mathcal{E}$ denote the corresponding sets on $\mathbb{W}$. In light of the standard LDP theory for empirical processes \cite{Donsker/Varadhan,Varadhan2}, consider the sequence
    \[
        \mathcal{R}_N\coloneqq\frac{1}{N}\sum_{1\leq k\leq N}\delta_{\vartheta_{z_j}\cdots\vartheta_{z_1}\phi},\qquad \forall~N\in\mathbb{N},
    \]
    The rate function $\mathcal{J} : \mu \in \overline{\mathcal{I}} \mapsto [0, \infty]$ is defined as the relative entropy of the stationary process $(\hat{Z}_N)_{N \geq 0}$ induced by a measure $\mu \in \overline{\mathcal{I}}$, whose conditional transitions are given by $\hat{q}_\mu(w,z)\coloneqq E_\mu[\hat{Z}_1=z|w]$. This relative entropy is taken with respect to the reference Feller process $Q_{\phi}$ on $\overline{\mathbb{W}}$. Since distinct $\vartheta^*$-ergodic measures are supported on disjoint subsets of $\overline{\mathbb{W}}$, and since they constitute the extremal points of the convex set of $\vartheta^*$-invariant measures, we define the averaged transition kernel $\hat{q}(w,z)\coloneqq\sum_{\mu~\text{ergodic}}\hat{q}_\mu(w,z)$, $\forall~w\in\overline{\mathbb{W}}$ and $z\in\mathbb{V}$. Now we can write the rate function $\mathcal{J}$ by
    \[
        \mathcal{J}(\mu)=\int_{\overline{\mathbb{W}}}\sum_{z\in\mathbb{V}}\hat{q}(w,z)\log\frac{\hat{q}}{q}(w,z)\,d\mu(w),\qquad\forall~\mu\in\overline{\mathcal{I}}.
    \]
    Notice that the stationary process $(\hat{Z}_N)_{N\geq0}$ generated by every $\mu\in\overline{\mathcal{I}}$ admits its mean $ \langle\mu\rangle=E_\mu[\hat{Z}_0]=E_\mu[-\hat{X}_{-1}]$. The contraction principle then suggests that the annealed rate function $I_a$ takes the form
    \begin{equation}
        I_a(\eta) \coloneqq \inf_{\substack{\mu\in\mathcal{E}\\ \langle\mu\rangle=\eta}} \mathcal{J}(\mu)\quad\text{for}\;\;\eta\neq0\qquad\text{and}\qquad I_a(0)\coloneqq-\log\inf_{\theta\in\mathbb{R}^d}\sup_{p\in\overline{\mathcal{K}}}\sum_{z\in\mathbb{V}}e^{\langle\theta,z\rangle}p(z),
    \end{equation}
    where $\overline{\mathcal{K}}$ is the closure of the convex hull of the set of transitions $\{q(w,\vdot):\,w\in\overline{\mathbb{W}}\}$. To prove the annealed LDP for $(X_N)_{N\geq0}$, we still need a few technical lemmas.
    \begin{lemma}\label{lem: kkkk}
        \normalfont
        When $\mathbb{P}$ verifies either $\textbf{(SM)}_{C,g}$ or $\textbf{(SMG)}_{C,g}$, for each $N\in\mathbb{N}$, $S\subseteq\mathbb{Z}^d$ and $\mathcal{Z}=(z_i)_{i\geq0}$, set the function $H(\vdot)$ by $H(N,S,\mathcal{Z})\coloneqq \sum_{y\in\partial^r\{x_1,\ldots,x_N\}}\sum_{z\in S} e^{-g\abs{y-z}_1}$ under the condition $(\textbf{SM})_{C,g}$.
        And under the alternative mixing condition $(\textbf{SMG})_{C,g}$ of $\mathbb{P}$, we set the function $H(\vdot)$ by $H(N,S,\mathcal{Z})\coloneqq \sum_{1\leq i\leq N}\sum_{z\in S} e^{-g\abs{x_i-z}_1}$. 
        Then $H(N,S,\mathcal{Z})$ depends only on $z_1,\ldots,z_n$. If $\exists~\ell\in\mathbb{S}^{d-1}$ with $\langle\eta,\ell\rangle>0$ s.t. $\lim_{N\to\infty}\frac{1}{N}\sum_{i=1}^N z_i=\eta\neq0$, $\sup_{x\in S}\langle x,\ell\rangle<\infty$, then $\sup_{N\in\mathbb{N}}H(N,S,\mathcal{Z})<\infty$. Moreover, there exists constant $C>0$ such that for any $w_1,w_2\in\mathbb{W}^{\text{tr}}$, $|\log\frac{dQ_{w_1}}{dQ_{w_2}}|_{\mathscr{F}_N}(\mathcal{Z})| \leq C H(N,S(w_1)\cup S(w_2),\mathcal{Z})$, where $\mathscr{F}_N$ is the $\sigma$-field generated by $Z_1,\ldots,Z_N$ and $S(w)\coloneqq\{x\in\mathbb{Z}^d:\,n_{x}(w)>0\}$ for each $w\in\mathbb{W}^{\text{tr}}$.
    \end{lemma}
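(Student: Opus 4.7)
The plan is to handle the three assertions of the lemma in sequence. Claim (1) is immediate: the vertices $x_i = z_1 + \cdots + z_i$ are explicit functions of $(z_j)_{1 \leq j \leq N}$, and both definitions of $H$ depend on the $z_j$'s only through these $x_i$'s. I then turn to (2), the uniform boundedness of $H$, via an elementary exponential-decay estimate, and finally to (3), the Radon--Nikodým bound, by invoking the mixing condition on $\mathbb{P}$ globally on the trajectory region.

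For (2) I will exploit the half-space assumption $M \coloneqq \sup_{x \in S}\langle x, \ell\rangle < \infty$ together with the eventual drift $\langle x_i, \ell\rangle \geq c i$ for $i$ large (valid since $x_i/i \to \eta$ with $\langle \eta, \ell\rangle > 0$, for some $c > 0$). For any $y \in \mathbb{Z}^d$, using $|y-z|_1 \geq \langle y-z, \ell\rangle$ (since $|\ell|_\infty \leq 1$), I split
\[
\sum_{z \in S} e^{-g|y-z|_1} \leq e^{-\frac{g}{2}(\langle y, \ell\rangle - M)_+} \sum_{z \in \mathbb{Z}^d} e^{-\frac{g}{2}|y-z|_1} = C(g,d)\, e^{-\frac{g}{2}(\langle y, \ell\rangle - M)_+}.
\]
For $y \in \partial^r\{x_i\}$ one has $\langle y, \ell\rangle \geq c i - r$, while $|\partial^r\{x_i\}|$ is bounded by a constant depending only on $r,d$. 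Summing over $i \in \mathbb{N}$ produces a convergent geometric series that bounds $H(N, S, \mathcal{Z})$ uniformly in $N$ under $\textbf{(SM)}_{C,g}$; the $\textbf{(SMG)}_{C,g}$ case is identical with $x_i$ itself in place of $\partial^r\{x_i\}$.

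For (3) the first step is to telescope the definition $q(w, z) = \mathbb{E}[\mathcal{W}(w)\omega(0,z)]/\mathbb{E}[\mathcal{W}(w)]$ with $\mathcal{W}(w) \coloneqq \prod_{x,y} \omega(x,x+y)^{n_{x,y}(w)}$, obtaining $Q_w(\mathcal{Z}) = \mathbb{E}[\mathcal{W}(w) K_\mathcal{Z}]/\mathbb{E}[\mathcal{W}(w)] = E_{\mathbb{P}_w}[K_\mathcal{Z}]$, where $K_\mathcal{Z}(\omega) \coloneqq \prod_{k=1}^N \omega(x_{k-1}, x_k)$ is $\mathscr{F}_\Delta$-measurable on the trajectory region $\Delta \coloneqq \{x_0, \ldots, x_{N-1}\}$. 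The target ratio thus equals $E_{\mathbb{P}_{w_1}}[K_\mathcal{Z}]/E_{\mathbb{P}_{w_2}}[K_\mathcal{Z}]$. I then invoke the mixing condition \emph{once, globally} with region $\Delta$, enlargement $V = \Delta \cup \partial^r \Delta$, and $A = V^c \cap (S(w_1) \cup S(w_2))$, which gives a bound $\exp\bigl(C \sum_{x \in \partial^r \Delta, y \in \partial^r A} e^{-g|x-y|_1}\bigr) \leq \exp\bigl(C' H(N, S(w_1)\cup S(w_2), \mathcal{Z})\bigr)$ on the Radon--Nikodým derivative between the conditional marginals of $\omega|_\Delta$ for any extensions of $\omega|_{V^c}$ agreeing on $V^c \setminus A$. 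To translate this conditional bound into the claimed one on the $\mathbb{P}_{w_1}$- versus $\mathbb{P}_{w_2}$-expectations of $K_\mathcal{Z}$, I factor $\mathcal{W}(w_i) = \mathcal{W}^\Delta_i \cdot \mathcal{W}^{\Delta^c}_i$ into the pieces inside and outside $\Delta$, absorb $\mathcal{W}^\Delta_i$ into a modified test function of $\omega|_\Delta$, and integrate the mixing estimate against the external weights normalized to probability measures. The $\textbf{(SMG)}_{C,g}$ analog follows identically after removing the $\partial^r$ decorations.

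The hard part will be the bookkeeping in (3). It is crucial to invoke the mixing condition \emph{once} on the full $\Delta$ rather than telescoping $q$-by-$q$: the step-by-step approach would sum contributions over $i = 1, \ldots, N$, each inflated by the number of revisits at the corresponding site, producing a $\sum_i$ in place of the sharper $\sum_{y \in \partial^r\{x_1, \ldots, x_N\}}$ that appears in $H$ under $\textbf{(SM)}_{C,g}$. Moreover, $\Delta$ and $S(w_1) \cup S(w_2)$ generally share more than just the origin whenever the forward walk revisits ancestral sites, so the tilts contribute partly inside $\Delta$; the factorization $\mathcal{W}(w_i) = \mathcal{W}^\Delta_i \cdot \mathcal{W}^{\Delta^c}_i$ must be executed cleanly, with the internal pieces -- which cannot yield any mixing gain -- absorbed into the test function and the mixing gain $\exp(CH)$ extracted purely from the external pieces.
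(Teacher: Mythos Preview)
Your treatment of claims (1) and (2) is fine and matches what the paper leaves implicit.

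The gap is in (3). You invoke the mixing condition with the \emph{trajectory} $\Delta$ as the inner region and the past $A=V^c\cap(S(w_1)\cup S(w_2))$ as the location where the boundary configurations may differ. This bounds $\mathbb{E}[F\mid\eta]/\mathbb{E}[F\mid\eta']$ for any $\mathscr{F}_\Delta$-measurable $F$ and any $\eta,\eta'$ agreeing on $B\coloneqq V^c\setminus A$. But $E_{\mathbb{P}_{w_i}}[K_\mathcal{Z}]$ is an average of $\mathbb{E}[\mathcal{W}^\Delta_i K_\mathcal{Z}\mid\eta]/\mathbb{E}[\mathcal{W}^\Delta_i\mid\eta]$ against the \emph{$i$-dependent} boundary law $\nu_i(d\eta)\propto \mathcal{W}^{\Delta^c}_i(\eta)\,\mathbb{P}|_{V^c}(d\eta)$. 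Your mixing bound freezes $\eta|_B$, so after applying it you are left with comparing integrals against the two $B$-marginals $\nu_1|_B$ and $\nu_2|_B$. These are tilts of $\mathbb{P}|_B$ by $\mathbb{E}[\mathcal{W}^{\Delta^c}_i\mid\eta^B]$, and since $B$ and $S(w_i)\subseteq A$ can be adjacent inside $V^c$, nothing you have written controls $d\nu_1|_B/d\nu_2|_B$. The ``absorb $\mathcal{W}^\Delta_i$ into the test function'' step compounds the problem: the resulting test functions are $i$-dependent, and mixing gives no comparison between expectations of \emph{different} nonnegative functions under \emph{different} measures.

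The paper runs the mixing in the opposite direction. It first strips off the sites of the forward path lying in $S(w_1)\cup S(w_2)$ at an ellipticity cost $C(\kappa)^{\sum_{j\le N}\mathbbm{1}_{S(w_1)\cup S(w_2)}(x_j)}$, leaving a test function $K'$ supported on $\bar S=S\setminus(S(w_1)\cup S(w_2))$ that is common to both $i$. The quantity to control is then $d\mathbb{P}_{w_1}|_{\bar S}/d\mathbb{P}_{w_2}|_{\bar S}$, which the paper rewrites (its equation corresponding to (3.6)) as a product of ratios $\mathbb{E}[\prod\omega^{n_{x,y}(w_i)}\mid\mathscr{F}_{V^c}]/\mathbb{E}[\prod\omega^{n_{x,y}(w_i)}\mid\mathscr{F}_{V^c\cup\bar S}]$ and bounds each by applying the mixing hypothesis with \emph{inner region} $V\cap S(w_i)$ and \emph{varying} boundary $\bar S$. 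That is the step that produces exactly the sum $H(N,S(w_1)\cup S(w_2),\mathcal{Z})$. To repair your argument you must likewise put the past, not the trajectory, in the role of $\Delta$ in the mixing hypothesis.
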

    \begin{proof}
        Let $\Bar{w}=(z_1,\ldots,z_N)$, $S=\{x_1,\ldots,x_N\}$ and $\Bar{S}=S-S(w_1)\cup S(s_2)$. Then, we know that $dQ_{w_1}/dQ_{w_2}|_{\mathscr{F}_N} (\mathcal{Z}) $ is equal to $ E_{\mathbb{P}_{w_1}}[\prod_{y,x\in S}\omega(x,y)^{n_{x,y}(\Bar{w})}]/E_{\mathbb{P}_{w_2}}[\prod_{y,x\in S}\omega(x,y)^{n_{x,y}(\Bar{w})}]$. Because each $\omega(x,y)\geq\kappa$, there exists some $C(\kappa)>0$ such that
        \begin{equation}\label{eqn: annealed (a)}
            \frac{dQ_{w_1}}{dQ_{w_2}}\bigg|_{\mathscr{F}_N}(\mathcal{Z}) \leq C(\kappa)^{\sum_{j\leq N}\mathbbm{1}_{S(w_1)\cup S(w_2)}(x_j)} \vdot\sup_{\substack{V\supseteq S\\ V\cap\partial^rS=\emptyset }} \frac{E_{\mathbb{P}_{w_1}}[\prod_{y,x\in \Bar{S}}\omega(x,y)^{n_{x,y}(\Bar{w})}\big|\mathscr{F}_{V^c}]}{E_{\mathbb{P}_{w_2}}[\prod_{y,x\in \Bar{S} }\omega(x,y)^{n_{x,y}(\Bar{w})}  \big|\mathscr{F}_{V^c}]}.
        \end{equation}
        From (\ref{eqn: annealed (a)}), it suffices to estimate the derivative $d\mathbb{P}_{w_1}((\omega_x)_{x\in\Bar{S}} \in \vdot\big|\mathscr{F}_{V^c}) / d\mathbb{P}_{w_2}((\omega_x)_{x\in\Bar{S}}\in\vdot\big|\mathscr{F}_{V^c})$, which equals to
        \[                        E_{\mathbb{P}_{w_2}}\bigg[\frac{d\mathbb{P}_{w_1}((\omega_x)_{x\in V}\in\vdot\big|\mathscr{F}_{V^c})}{d\mathbb{P}_{w_2}((\omega_x)_{x\in V}\in\vdot\big|\mathscr{F}_{V^c})}\bigg|\mathscr{F}_{\Bar{S}}\bigg] = \frac{\mathbb{E}[\prod_{y,x\in V\backslash\Bar{S}} \omega(x,y)^{n_{x,y}(w_2)}\big|\mathscr{F}_{V^c} ]}{\mathbb{E}[\prod_{y,x\in V\backslash\Bar{S}} \omega(x,y)^{n_{x,y}(w_1)}\big|\mathscr{F}_{V^c} ]}\vdot E_{\mathbb{P}_{w_2}}\bigg[\frac{\prod_{y,x\in V\backslash\Bar{S}}\omega(x,y)^{n_{x,y}(w_1)}}{\prod_{y,x\in V\backslash\Bar{S}}\omega(x,y)^{n_{x,y}(w_2)}}\bigg|\mathscr{F}_{V^c\cup\Bar{S}}\bigg],
        \]
        with an arbitrary $V\supseteq S$ satisfying $V\cap\partial^rS=\emptyset$. Henceforth,
        \begin{equation}\label{eqn: annealed (b)}
            \frac{d\mathbb{P}_{w_1}          ((\omega_x)_{x\in\Bar{S}} \in\vdot\big|\mathscr{F}_{V^c})}{d\mathbb{P}_{w_2}((\omega_x)_{x\in\Bar{S}}\in\vdot\big|\mathscr{F}_{V^c})} = \frac{\mathbb{E}[\prod_{y,x\in V\backslash\Bar{S}} \omega(x,y)^{n_{x,y}(w_2)}\big|\mathscr{F}_{V^c} ]}{\mathbb{E}[\prod_{y,x\in V\backslash\Bar{S}} \omega(x,y)^{n_{x,y}(w_1)}\big|\mathscr{F}_{V^c} ]}  \frac{\mathbb{E}[\prod_{y,x\in V\backslash\Bar{S}}\omega(x,y)^{n_{x,y}(w_1)}\big|\mathscr{F}_{V^c\cup\Bar{S}}]}{\mathbb{E}[\prod_{y,x\in V\backslash\Bar{S}}\omega(x,y)^{n_{x,y}(w_2)}\big|\mathscr{F}_{V^c\cup\Bar{S}}]}.
        \end{equation}
        From observing (\ref{eqn: annealed (b)}), we know it is sufficient to estimate $\mathbb{E}[\prod_{y,x\in V\backslash\Bar{S}} \omega(x,y)^{n_{x,y}(w_i)}\big|\mathscr{F}_{V^c} ] / \mathbb{E}[\prod_{y,x\in V\backslash\Bar{S}} \omega(x,y)^{n_{x,y}(w_i)}\big|\mathscr{F}_{V^c\cup\Bar{S}} ]$ with $i=1,2$. 
        Notice that the numerator is in integral of the denominator and that the above is actually a product over the vertices $V\cap S(w_i)$. Hence we only need to bound the following Radon--Nikodým derivative
        \begin{equation}\label{eqn: annealed (c)}
            \frac{d\mathbb{P}\big((\omega_x)_{x\in V\cap S(w_i)}\in\vdot\big|\eta\big)}{d\mathbb{P}\big((\omega_x)_{x\in V\cap S(w_i)}\in\vdot\big|\eta^\prime\big)} \leq \exp\bigg(C\sum\limits_{\substack{y\in\partial^r\Bar{S} \\ x\in\partial^rV\cap S(w_i)}}e^{-g\abs{x-y}_1}\bigg),\quad \frac{d\mathbb{P}\big((\omega_x)_{x\in V\cap S(w_i)}\in\vdot\big|\eta\big)}{d\mathbb{P}\big((\omega_x)_{x\in V\cap S(w_i)}\in\vdot\big|\eta^\prime\big)} \leq \exp\bigg(C\sum\limits_{\substack{y\in \Bar{S}\\ x\in V\cap S(w_i)}}e^{-g\abs{x-y}_1}\bigg)   
        \end{equation}
        respectively under $(\textbf{SM})_{C,g}$ and $(\textbf{SMG})_{C,g}$. Here $\eta$ and $\eta^\prime$ are environmental configurations on $V^c\cup\Bar{S}$ with $\eta=\eta^\prime$ on $V^c\backslash\Bar{S}$. Taking (\ref{eqn: annealed (c)}) back to (\ref{eqn: annealed (b)}), and then to (\ref{eqn: annealed (a)}), we observe that
        \[
            \log\frac{dQ_{w_1}}{dQ_{w_2}}\bigg|_{\mathscr{F}_N}(\mathcal{Z}) \leq C^\prime(\kappa) + {\sum_{1\leq j\leq N}\mathbbm{1}_{S(w_1)\cup S(w_2)}(x_j)}+{CH(N,S,\mathcal{Z})},\qquad\forall~N\in\mathbb{N},
        \]
        where $C^\prime(\kappa)\coloneqq\log C(\kappa)$. The rest claims are obvious and hence the assertion is verified.
    \end{proof}
    Before completing the proof of the annealed LDP, we first verify that the rate function $I_a$ is convex and continuous on the velocity surface $\mathbb{D}$. We begin by establishing convexity at the origin. Let $\eta_1,\eta_2\in\mathbb{D}$ and $\theta\in[0,1]$ with $\theta\eta_1+(1-\theta)\eta_2=0$. Choose ergodic measures $\mu_1,\mu_2\in\mathcal{E}$ with resp. means $\langle\mu_1\rangle=\eta_1$, $\langle\mu_2\rangle=\eta_2$. Then the convex combination $\mu\coloneqq\theta\mu_1+(1-\theta)\mu_2$ belongs to $\mathcal{I}$ and satisfies $\langle\mu\rangle=0$. By the linearity of $\mathcal{J}$, we have $\theta\mathcal{J}(\mu_1)+(1-\theta)\mathcal{J}(\mu_2)=\mathcal{J}(\mu)\geq\inf_{\mu\in\Bar{\mathcal{I}},\langle\mu\rangle=0}\mathcal{J}(\mu)\geq I_a(0)$, where the final inequality follows from a standard argument as in \cite[Lemma 7.2]{Varadhan2}, using the definition of $\mathcal{J}$. Taking infimum over over all admissible $\mu_1,\mu_2$ with resp. means $\eta_1$, $\eta_2$, we obtain $\theta I_a(\eta_1)+(1-\theta)I_a(\eta_2)\geq I_a(0)$, demonstrating convexity at the origin.\par
    It remains to show convexity away from zero. Let $\theta\in[0,1]$ and $\eta_1,\eta_2\in\mathbb{D}\backslash\{0\}$  lying in the same half-space not containing the origin. Select ergodic measures $\mu_1,\mu_2\in\mathcal{E}$ with $\langle\mu_i\rangle=\eta_i$ for $i=1,2$. Then the convex combination $\mu=\theta\mu_1+(1-\theta)\mu_2\in\mathcal{I}$ has mean $\langle\mu\rangle=\theta\eta_1+(1-\theta)\eta_2$. Invoking the combinatorial construction of \cite[Theorem 6.1]{Varadhan2}, which applies independently of whethe $\mathbb{P}$ is a product measure, we can partition space into blocks of length $\ell\in\mathbb{N}$, define a product measure over disjoint blocks, and average over all $\ell$-shifts to obtain a stationary and ergodic distribution. This yields a sequence $(\mu_{\ell})_{\ell\in\mathbb{N}}\subseteq\mathcal{E}$ such that $\langle\mu_\ell\rangle=\langle\mu\rangle$ for each $\ell\in\mathbb{N}$ and $\mu_\ell\to\mu$ as $\ell\to\infty$. Then,
    \begin{equation*}\begin{aligned}  I_a(\eta)&=\inf_{\mu\in\mathcal{E},\langle\mu\rangle=\eta}\mathcal{J}(\mu) \leq\inf_{\ell\in\mathbb{N}}\mathcal{J}(\mu_\ell)\leq \mathcal{J} \leq \inf_{\substack{\mu_1,\mu_2\in\mathcal{E}\\ \langle\mu_i\rangle=\eta_i } } \theta\mathcal{J}(\mu_1)+(1-\theta)\mathcal{J}(\mu_2) \leq \theta I_a(\eta_1)+(1-\theta)I_a(\eta_2),
    \end{aligned}\end{equation*}
    which establishes the convexity of $I_a$ on the velocity surface $\mathbb{D}$. Since $I_a$ is convex and lower semicontinuous, its continuity on the convex domain $\mathbb{D}$ follows from \cite[Theorem 10.2]{Rockafellar}. We now proceed to prove the annealed LDP for $(X_N)_{N\geq1}$.
    \begin{proof}[Proof of the annealed LDP]
        We first prove the annealed LDP upper-bound. From the definition of $I_a(\vdot)$, it is immediate that $\frac{1}{N}\log P_0\big(X_N=0\big)\leq \inf_{\theta\in\mathbb{R}^d}\frac{1}{N}\log E_0[e^{\langle\theta,X_N\rangle}]\leq-I_a(0)$, $\forall~N\in\mathbb{N}$. Therefore, we only need to prove the annealed upper-bound away from the origin. Indeed, for any closed set $F\subseteq\mathbb{D}\backslash\{0\}$,
        \[
            \varlimsup_{N\to\infty}\tfrac{1}{N}\log Q_\phi\big(\tfrac{X_N}{N}\in F\big)\leq -\inf_{\eta\in F}\inf_{0<\theta<1}\big\{(1-\theta)I_a(0)+\theta I_a(\theta^{-1}\eta)\big\} \leq -\inf_{\eta\in F}I_a(\eta),
        \]
        verifying the annealed LDP upper-bound. We now prove the annealed LDP lower-bound. For any open $G\subseteq\mathbb{D}$, we pick an arbitrary $\eta\in G\backslash\{0\}$ and select $\mu\in\mathcal{E}$ with $\langle\mu\rangle=\eta$. Define the empirical rate function
        \[
            \Bar{I}_N\coloneqq\frac{1}{N}\sum_{1\leq j\leq N}\log\frac{E_\mu[\Hat{Z}_1=z_j|w_{j-1}]}{q(w_{j-1},z_j)},\qquad\forall~N\geq1,
        \]
        by the Ergodic Theorem. For any $\delta>0$, we have $\mu(| X_N/N-\eta|<\delta,\,|\Bar{I}_N-\mathcal{J}(\mu)|<\delta)\to 1$ as $N\to\infty$. Moreover, we can take $k\in\mathbb{N}$ sufficiently large such that $\mu(H(N,S(w),\mathcal{Z})\leq k)\geq1/2$ for all $N\geq1$. Hence,
        \[
            Q_\phi\big(|\tfrac{1}{N}X_N-\eta|<\delta\big) \geq Q_\phi\big(|\tfrac{1}{N}X_N-\eta|<\delta,\,H(N,S(w),\mathcal{Z})\leq k\big) \geq e^{-Ck}\sup_{w\in\mathbb{W}}\mathbb{P}_w\big(|\tfrac{1}{N}X_N-\eta|<\delta,\,H(N,S(w),\mathcal{Z})\leq k\big).
        \]
        Henceforth, $Q_\phi(|\frac{1}{N}X_N-\eta|<\delta)$ is no less than
        \[
           e^{-Ck}\sup_{w\in\mathbb{W}}\mathbb{P}_w\big(|\tfrac{1}{N}X_N-\eta|<\delta,\,H(N,S(w),\mathcal{Z})\leq k\big) \geq e^{-Ck}\int_{\mathbb{W}} \mathbb{P}_w\big(|\tfrac{1}{N}X_N-\eta|<\delta,\,H(N,S(w),\mathcal{Z})\leq k\big)\,d\mu(w).
        \]
        Then, $Q_\phi(|X_N/N-\eta|<\delta) \geq e^{-Ck-N(\mathcal{J}(\mu)+\delta)}\mu(|X_N/N-\eta|<\delta,\,|\Bar{I}_N-\mathcal{J}(\mu)|<\delta,\,H(N,S(w),\mathcal{Z})\leq k)$. Taking infimum over all $\mu\in\mathcal{E}$ with $\langle\mu\rangle=\eta$, we get $\varliminf_{N\to\infty}\frac{1}{N}\log P_0(|X_N/N-\eta|<\delta)\geq -I_a(\eta)$. Since we already have the continuity of $I_a$ on $\mathbb{D}$, choosing a sequence $\eta_j\to0$, we can thus run the following infimum over all $G$ including the origin,
        \[
            \varliminf_{N\to\infty}\tfrac{1}{N}\log P_0\big(\tfrac{1}{N}X_N\in G\big)\geq -\inf_{\eta\in G}I_a(\eta),
        \]
        verifying the assertion.
    \end{proof}

\subsection{Shape of the zero-sets of the quenched and annealed rate functions.}
    Another question that is worthy to investigate concerns the shape of the zero-sets where the quenched and annealed rate functions $I_q$, $I_a$ vanish. 
    \begin{proof}[\textbf{Proof of Theorem \ref{thm: shape of zero set}}.]
        Let $\mu,\lambda\in\mathcal{E}$ be two $\vartheta^*$-ergodic measure for $q(\vdot)$ such that $\langle\mu\rangle\neq0$ and $\langle\lambda\rangle\neq0$. Invoking Lemma \ref{lem: kkkk} and \cite[Theorem 4.2]{Varadhan2} whose proof does not rely on the i.i.d. assumption of $\mathbb{P}$, we conclude that there exists some constant $C>0$ such that $\langle\mu\rangle=-C\langle\lambda\rangle$. Now, if there exist $\mu,\lambda\in\mathcal{E}$ with $\langle\mu\rangle\neq0$ and $\langle\lambda\rangle\neq0$, then by the above arguments we can find $C>0$ with $\langle\mu\rangle=-C\langle\lambda\rangle$. If furthermore $\not\exists~\mu^\prime\in\mathcal{E}$ such that $\langle\mu^\prime\rangle=C^\prime\langle\mu\rangle$ with $C^\prime>1$ and $\not\exists~\lambda^\prime\in\mathcal{E}$ such that $\langle\lambda^\prime\rangle=C^{\prime\prime}\langle\lambda\rangle$ with $C^{\prime\prime}>1$, then the zero sets $\{I_q=0\}$ and $\{I_a=0\}$ are simultaneously the line segment through the origin connecting $\langle\mu\rangle$ and $\langle\lambda\rangle$. If otherwise there is only one $\nu\in\mathcal{E}$ with $\langle\nu\rangle\neq0$, then both $I_a,I_q$ vanish only at the single point $\nu\in\mathbb{D}$. These are the only two possibilities, see also \cite[Remark 2.4]{Rassoul-Agha}, \cite[Theorem 8.1]{Varadhan2}.
    \end{proof}

\section{Threshold of disorder at the boundary of the velocity surface}\label{sec: chapter 4}
    When $\mathbb{P}$ verifies $\textbf{(SMX)}_{C,g}$, the existence of a nontrivial threshold window of disorder is established and assigned for any compact subset along the boundary $\partial\mathbb{D}\backslash\partial\mathbb{D}_{d-2}$ of the velocity surface. Below this threshold, the rate functions $I_a,I_q$ are forced to conform. The key technique here is an auxiliary random walk that disentangles the intertwined information from the mixing environment.

\subsection{Constructing an auxiliary random walk on $(d-1)$-dimensional hyperplane.}\label{sec: section 4.2}
    For any $s$ in $\{\pm1\}^d$, let $\mathbb{V}_s\coloneqq\big\{s_ie_i:\,1\leq i\leq d\big\}\subseteq\mathbb{V}$ stand for all $s$-allowed jumps, where $\{x:\,s_1x_1+\cdots s_dx_d= 1\}\supseteq\mathbb{V}_s$ is the unique hyperplane affinely generated by $(s_ie_i)_{i=1,\ldots,d}$. Let $\pi:\mathbb{R}^d\to\mathbb{R}^d$ be the affine transform which maps $\{x:\,s_1x_1+\cdots+s_dx_d=1\}$ onto $\{x\in\mathbb{R}^d,\,x_d=0\}\simeq\mathbb{R}^{d-1}$ given by $\pi(s)\coloneqq(d-1)/d$, $\pi(s_de_d)\coloneqq-\sum_{j<d}e_j$ and $\pi(s_je_j)\coloneqq e_j$ for all $j<d$. Built from the traces of $(X_n)_{n\geq0}$, the projected process $(S_n)_{n\geq0}$ is given by $S_0=0$ and $S_n\coloneqq\sum_{j=1}^n\pi(X_j-X_{j-1})$, $\forall~n\geq1$. The supporting event that forces $(X_n)_{n\geq0}$ to move in $\mathbb{V}_s$ is written as $\mathbb{B}_{m,n}\coloneqq\{X_j-X_{j-1}\in\mathbb{V}(s),\,\forall~m< j\leq n\}$ for all $1\leq m<n<\infty$. And we write $\mathbb{B}_n$ for $\mathbb{B}_{0,n}$, $\forall~n\geq1$ to ease notation.\par
    To disentangle the correlated coordinates of the mixing environment, we need to introduce an auxiliary random walk with the auxiliary probability measure $\overline{\mathbb{P}}^x$ given by $\overline{\mathbb{P}}^x \coloneqq \mathbb{P}\otimes Q\otimes\overline{P}^x_{\omega,\epsilon}$ on the space $\Omega\times(\mathcal{W})^{\mathbb{N}}\times(\mathbb{Z}^d)^{\mathbb{N}}$, where $\mathcal{W}\coloneqq\mathbb{V}\cup\{0\}$ and the product measure $Q$ on $(\mathcal{W})^{\mathbb{N}}$ is defined such that: with each $\epsilon=(\epsilon_1,\epsilon_2,\ldots)\in(\mathcal{W})^{\mathbb{N}}$, we have $Q(\epsilon_1=\pm e_i)=\kappa$ and $Q(\epsilon_1=0)=1-2d\kappa$. The auxiliary random walk with law $\overline{P}^x_{\omega,\epsilon}$ starting from $x\in\mathbb{Z}^d$ has the transition probabilities
    \[ 
        \overline{P}^x_{\omega,\epsilon}(X_{n+1}=X_n+e|X_n)= \Bar{\omega}_{n+1}(X_n,e) \coloneqq \mathbbm{1}_{\{\epsilon_{n+1}=e\}}+\frac{\mathbbm{1}_{\{\epsilon_{n+1}=0\}}}{1-2d\kappa}\big(\omega(X_n,e)-\kappa\big),\qquad\forall~e\in\mathbb{V}.
    \]
    The law of $(X_n)_{n\geq0}$ under $P_{x,\omega}$ coincides with the law under $Q\otimes\overline{P}^x_{\omega,\epsilon}$. And therefore the law under $\overline{\mathbb{P}}^x$ coincides with $P_x$. Throughout this work, we adopt the notation $\overline{E}^x_{\omega,\epsilon}\coloneqq E_{\overline{P}^x_{\omega,\epsilon}}$ and $\overline{\mathbb{E}}^x\coloneqq E_{\overline{\mathbb{P}}^x}$. For any $L>r$, where $\mathbb{P}$ is assumed an $r$-Markovian field, we introduce the $Q$-stopping sequence $(\tau^{(L)}_n)_{n\geq0}$ by $\tau^{(L)}_0=0$ and
    \[
        \tau^{(L)}_{n}\coloneqq\inf\{j\geq\tau^{(L)}_{n-1}+L:\,(\epsilon_{j-L},\ldots,\epsilon_{j-1}) = +s,\ldots,s\}, \qquad\forall~n\geq1.
    \]
    Meanwhile, we introduce the following $\sigma$-algebras $(\mathscr{G}_n)_{n\geq0}$ on $\Omega\times(\mathcal{W})^{\mathbb{N}}$ by $\mathscr{G}_0\coloneqq(\omega_y:\,\langle y,s\rangle\leq\abs{s}_1-L/d^{1/2})$ and for each $n\geq1$, $\mathscr{G}_n\coloneqq\sigma(\tau_1^{(L)},\ldots,\tau_n^{(L)},\,\omega_y:\,\langle y, s\rangle<\tau^{(L)}_n-\abs{s}_1-L/d^{1/2},\,\epsilon_i:\,{i\leq\tau_n^{(L)}})$. And we also define the localized $\sigma$-algebras by $\mathscr{F}^{(L)}_{k}\coloneqq\sigma(\omega_y:\,\langle y, s\rangle\leq k+\abs{s}_1-L/d^{1/2}+,\,\epsilon_i:\,i\leq k)$, $\forall~k\geq1$. The following lemma indicates that the sequence $(\tau^{(L)}_n)_{n\geq0}$ can disentangle the intertwined information from the mixing environment.

    \begin{lemma}\label{lem: exponential mixing inequality}
        \normalfont
        When $\mathbb{P}$ verifies $\textbf{(SMX)}_{C,g}$, for any bounded $\sigma(X_n:\,n\in\mathbb{N})$-measurable $f:(\mathbb{Z}^{d-1})^{\mathbb{N}}\to\mathbb{R}$, $\mathbb{P}\otimes Q$-a.s. we have
        \[ 
        \overline{\mathbb{E}}^0\big[f(S^{\tau,1}_{\vdot})I_{\{\mathbb{B}(\tau_1^{(L)})\}} \big] = E_{\mathbb{P}\otimes Q}\big[\,\overline{E}^0_{\omega,\epsilon}\big(f(S^{\tau,n}_{\vdot})I_{\{\mathbb{B}(\tau_n^{(L)})\} }\big|\mathbb{B}_{\tau^{(L)}_{n-1}}\big)\big|\mathscr{G}_{n-1}\big],\qquad\forall~L>L_0,
        \]
        for any $n\geq1$, where we abbreviate the finite-time process $(S_i-S(\tau^{(L)}_{n-1}):\,\tau^{(L)}_{n-1}\leq i\leq\tau^{(L)}_n)$ by $S^{\tau,n}_{\vdot}$.
    \end{lemma}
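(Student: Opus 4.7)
The plan is to prove the identity by showing that, conditionally on $\mathscr{G}_{n-1}$ and on $\mathbb{B}_{\tau^{(L)}_{n-1}}$, the pair $(S^{\tau,n}_{\cdot},\,\mathbb{B}(\tau_n^{(L)}))$ has exactly the same joint law as $(S^{\tau,1}_{\cdot},\,\mathbb{B}(\tau_1^{(L)}))$ under the unconditional law $\overline{\mathbb{P}}^0$. Two ingredients will drive this: a geometric confinement forced by the event $\mathbb{B}$ and the auxiliary stopping rule $(\tau_n^{(L)})$, and the finite-range collapse of the Radon--Nikodym bound in $\textbf{(SMX)}_{C,g}$, which becomes trivial whenever the regions involved are separated by distance at least $L_0$.

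First I would carry out the geometric check. On $\mathbb{B}_{\tau_n^{(L)}}$ every step satisfies $X_j - X_{j-1} \in \mathbb{V}_s$, whence $\langle X_j - X_{j-1},\,s\rangle = 1$ for every such $j$; consequently the trajectory $(X_j)_{j \leq \tau_{n-1}^{(L)}}$ is trapped in the half-space $\{y : \langle y,s\rangle \leq \tau_{n-1}^{(L)}\}$. The $\sigma$-algebra $\mathscr{G}_{n-1}$ by construction encodes only those environment sites with $\langle y, s\rangle < \tau_{n-1}^{(L)} - |s|_1 - L/\sqrt d$, while the excursion $S^{\tau,n}_{\cdot}$ together with the indicator $\mathbb{B}(\tau_n^{(L)})$ can only probe $\omega_y$ with $\langle y, s\rangle > \tau_{n-1}^{(L)}$. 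These two regions are therefore separated spatially by at least $L/\sqrt d$, which by the choice $L > L_0$ exceeds the effective correlation range of $\textbf{(SMX)}_{C,g}$.

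Next I would apply the mixing condition. Since $\tau_{n-1}^{(L)}$ is a $Q$-stopping time, the future epsilons $(\epsilon_i)_{i > \tau_{n-1}^{(L)}}$ are $Q$-independent of $\mathscr{G}_{n-1}$. For the environment, taking $\Delta$ to be the forward slab actually read by the excursion and $A \subseteq V^c$ the backward support of $\mathscr{G}_{n-1}$, the hypothesis $C(\Delta, V^c) = 0$ whenever $d_1(\Delta, V^c) \geq L_0$ forces the Radon--Nikodym bound in $\textbf{(SMX)}_{C,g}$ to collapse to $1$, so the forward environment is conditionally independent of $\mathscr{G}_{n-1}$ and distributed as the restriction of $\mathbb{P}$ itself. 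Pushing forward by the shift $\vartheta_{X_{\tau_{n-1}^{(L)}}}$, which on the event $\mathbb{B}_{\tau_{n-1}^{(L)}}$ carries the forward environment onto an environment at the origin, and invoking the shift-invariance of $\mathbb{P}$ and $Q$, one obtains the claimed distributional identity, so that integrating over $\mathscr{G}_{n-1}$ yields the lemma.

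The hard part will be the geometric bookkeeping in the first step: carefully tracking precisely which sites are read by the excursion during $(\tau_{n-1}^{(L)}, \tau_n^{(L)}]$, and verifying that the buffer $|s|_1 + L/\sqrt d$ baked into the definition of $\mathscr{G}_{n-1}$ is both necessary and sufficient to produce an $L_0$-separation between $\Delta$ and $A$, accounting also for the $r$-range of the $r$-Markovian assumption on $\mathbb{P}$. Once this separation is rigorously in place, the mixing collapse and the subsequent stationarity argument are immediate.
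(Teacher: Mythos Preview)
Your proposal is correct and follows essentially the same approach as the paper: both arguments rest on the geometric separation between the environment recorded in $\mathscr{G}_{n-1}$ and the slab visited by the excursion during $(\tau_{n-1}^{(L)},\tau_n^{(L)}]$, the finite-range collapse $C(\Delta,V^c)=0$ in $\textbf{(SMX)}_{C,g}$ once $d_1(\Delta,V^c)\geq L_0$, and the shift-invariance of $\mathbb{P}\otimes Q$. The paper packages this slightly differently---it tests against an arbitrary bounded $\mathscr{G}_{n-1}$-measurable $h$, decomposes over $\{\tau_{n-1}^{(L)}=k\}$, replaces $h$ by an $\mathscr{F}^{(L)}_k$-measurable $h_k$ on each piece, and then invokes the mixing collapse in one line---but the underlying mechanism is identical to yours. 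One point to tighten when you execute the proof: the quenched conditional expectation $\overline{E}^0_{\omega,\epsilon}(\cdot\mid\mathbb{B}_{\tau^{(L)}_{n-1}})$ does carry a residual dependence on the \emph{past} environment through the conditional law of the starting point $X_{\tau_{n-1}^{(L)}}$, so your sentence ``can only probe $\omega_y$ with $\langle y,s\rangle>\tau_{n-1}^{(L)}$'' is slightly too strong; what actually happens is that after integrating the forward environment (now independent of the past by $\textbf{(SMX)}_{C,g}$) the shift-invariance of $\mathbb{P}$ makes the result constant in $x$, so the random starting weight sums out to~$1$. You have already earmarked this bookkeeping as the hard part, so this is only a caution about phrasing.
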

    \begin{proof}
        We have $\mathbb{E}E_{Q}[\,\overline{E}^0_{\omega,\epsilon}(f(S^{\tau,n}_{\vdot})I_{\{\mathbb{B}(\tau^{(L)}_n) \} }|\mathbb{B}_{\tau^{(L)}_{n-1}})h] = \sum_{k\in\mathbb{N}} \mathbb{E} E_{ Q}[\,\overline{E}^0_{\omega,\epsilon}(f(S^{\tau,n}_{\vdot})I_{\{ \mathbb{B}(\tau^{(L)}_n) \} }|\mathbb{B}_{\tau^{(L)}_{n-1}})h_k,\,\tau^{(L)}_{n-1}=k]$ for any bounded $\mathscr{G}_{n-1}$-measurable $h:\Omega\times(\mathcal{W})^{\mathbb{N}}\to\mathbb{R}$,because on the event $\{\nu^{(L)}_{n-1}=k\}$, we can find bounded and $\mathscr{F}^{(L)}_{k}$-measurable $h_{k}$ which coincides with $h$ on this event. Then, we observe that
        \begin{equation*}\begin{aligned}
            &\mathbb{E}E_{ Q}\big[\,\overline{E}^0_{\omega,\epsilon}(f(S^{\tau,n}_{\vdot})I_{ \{ \mathbb{B}(\tau^{(L)}_n) \} }|\mathbb{B}_{\tau^{(L)}_{n-1}})h\big] = \sum_{k\in\mathbb{N}} \mathbb{E}E_{ Q} \big[I_{\{\tau^{(L)}_{n-1}=k\}}h_{k}  \mathbb{E}E_{ Q}\big[\,\overline{E}^0_{\omega,\vartheta_k\epsilon}\big(f(S_{k+i}-S_k,\,i\leq \tau^{(L)}_1)I_{ \{ \mathbb{B}(k+\tau^{(L)}_1) \} }\big|\mathbb{B}_k\big)\big|\mathscr{F}^{(L)}_{k}\big]\big]\\
            &\qquad = \sum_{k\in\mathbb{N}} \mathbb{E}E_{ Q} \big[I_{\{\tau^{(L)}_{n-1}=k\}}h_{k} \mathbb{E}E_Q \overline{E}^0_{\omega,\epsilon}\big(f(S_{i},\,i\leq \tau^{(L)}_1)I_{ \{ \mathbb{B}(\tau^{(L)}_1) \} }\big)\big] = \overline{\mathbb{E}}^0[h] \overline{\mathbb{E}}^0\big[f(S^{\tau,1}_{\vdot})I_{\{\mathbb{B}(\tau_1^{(L)})\}} \big],
        \end{aligned}\end{equation*}
        where $\vartheta_k$ stands for the time-shift on $(\mathcal{W})^{\mathbb{N}}$, and the last equality is due to the nature of $\textbf{(SMX)}_{C,g}$ which says that any noneligible correlation among the environmental coordinates vanishes at length $L>L_0$. Since the function $h$ is chosen arbitrarily, the assertion is verified.  
    \end{proof}

\subsection{Equality between quenched and annealed logarithmic moment generating functions.}
    The comparison between the quenched and annealed log-MGFs relies on estimating the limiting distributon of $\mathcal{L}^x_{n,\theta}(\omega,\epsilon)\coloneqq \overline{E}^x_{\omega,\epsilon}[e^{\langle\theta,S_n,\rangle},\,\mathbb{B}_n]/\overline{\mathbb{E}}^x[e^{\langle\theta,S_n,\rangle},\,\mathbb{B}_n]$, $\forall~\theta\in\mathbb{R}^{d-1}$, $x\in\mathbb{Z}^d$ and $n\geq1$. And we write
    \[
        \mathcal{H}_{n,\theta}(\omega,\epsilon)\coloneqq  \mathcal{L}^x_{\tau^{(L)}_n,\theta}(\omega,\epsilon) = \frac{\overline{E}^x_{\omega,\epsilon}[e^{\langle\theta,S(\tau^{(L)}_n)\rangle},\,\mathbb{B}(\tau^{(L)}_n)]}{\overline{\mathbb{E}}^x[e^{\langle\theta,S(\tau^{(L)}_n)\rangle},\,\mathbb{B}(\tau^{(L)}_n)]},\qquad\forall~n\geq1.
    \]
    With $L\geq L_0$, Lemma \ref{lem: exponential mixing inequality} yields $\mathbb{E}E_Q[\mathcal{H}_{n+1,\theta}(\omega,\epsilon)|\mathscr{G}_{n}]=\mathbb{E}E_Q[\mathcal{L}^x_{\tau^{(L)}_{n+1},\theta}|\mathscr{G}_{n}] = \mathcal{H}_{n,\theta}$, $\forall~n\in\mathbb{N}$. And we thus call $(\mathcal{H}_{n,\theta})_{n\geq1}$ an \textit{approximate martingale}. The nonnegativity of this martingale then gives $\mathcal{H}_{n,\theta}\to\mathcal{H}_{\infty,\theta}$ as $n\to\infty$, $\mathbb{P}\otimes Q$-a.s., together with $\mathbb{E}E_Q[\mathcal{H}_{n,\theta}] =\mathbb{E}E_Q[\mathcal{H}_{1,\theta}]=1$, $\forall~n\geq1$.

    \begin{lemma}\label{lem: square integrable of the martingale H}
        \normalfont
        When $\mathbb{P}$ verifies $\textbf{(SMX)}_{C,g}$, for any compact $\Theta\subseteq\mathbb{R}^{d-1}$, $\exists~\epsilon_0 = \epsilon_0(\kappa,\Theta) > 0$ s.t. $\sup_{n\in\mathbb{N},\theta\in\Theta}\normx{\mathcal{H}_{n,\theta}}_{L^2(\mathbb{P}\otimes Q)} < \infty$ whenever $\text{dis}(\mathbb{P})<\epsilon_0$.
    \end{lemma}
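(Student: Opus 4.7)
The plan is a replica second-moment computation combined with the renewal decomposition at $(\tau^{(L)}_n)$. Let $S^{(1)},S^{(2)}$ denote two conditionally independent copies of the projected auxiliary walk driven by the same environment $\omega$ and the same $Q$-sequence $\epsilon$. Then
\[
    \mathbb{E}E_Q\big[\mathcal{H}_{n,\theta}^{2}\big] = \frac{\mathbb{E}E_Q\big[\overline{E}^{0}_{\omega,\epsilon}\otimes\overline{E}^{0}_{\omega,\epsilon}\big(e^{\langle\theta,\,S^{(1)}(\tau_n^{(L)})+S^{(2)}(\tau_n^{(L)})\rangle}\mathbbm{1}_{\mathbb{B}^{(1)}(\tau_n^{(L)})\cap\mathbb{B}^{(2)}(\tau_n^{(L)})}\big)\big]}{\big(\overline{\mathbb{E}}^{0}\big[e^{\langle\theta,S(\tau_n^{(L)})\rangle}\mathbbm{1}_{\mathbb{B}(\tau_n^{(L)})}\big]\big)^{2}}.
\]
The ratio quantifies the overlap-induced correlation between the two replicas sharing the same random environment, and the aim is to show that sufficiently small disorder forces it to stay uniformly bounded in $n$ and $\theta\in\Theta$.

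Applying Lemma \ref{lem: exponential mixing inequality} to the product walk yields the approximate-martingale identity $\mathbb{E}E_Q[\mathcal{H}_{n+1,\theta}|\mathscr{G}_n]=\mathcal{H}_{n,\theta}$, so orthogonality of martingale increments yields
\[
    \mathbb{E}E_Q[\mathcal{H}_{n,\theta}^{2}]\leq c_{\mathrm{mix}}\Big(1+\sum_{k=1}^{n}\mathbb{E}E_Q[(\mathcal{H}_{k,\theta}-\mathcal{H}_{k-1,\theta})^{2}]\Big),
\]
with $c_{\mathrm{mix}}$ absorbing the finite-range cross-block residuals from $\textbf{(SMX)}_{C,g}$. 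Writing $\mathcal{H}_{k,\theta}-\mathcal{H}_{k-1,\theta}=\mathcal{H}_{k-1,\theta}(M_k-1)$ with $M_k$ the block increment, the task reduces to bounding $\mathbb{E}E_Q[(M_k-1)^{2}\,|\,\mathscr{G}_{k-1}]$ by $C\epsilon^{2}$ times a quantity that is summable in $k$.

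For a single block, set $\xi(x,e)\coloneqq\omega(x,e)/\mathbb{E}[\omega(x,e)]\in[1-\epsilon,1+\epsilon]$, and let $n^{(i)}_{x,e}$ denote the number of jumps of the $i$-th replica across $(x,x+e)$ during $[\tau_{k-1}^{(L)},\tau_k^{(L)}]$. Averaging $\omega$ converts $(M_k-1)^{2}$ into a polynomial in the centered variables $(\xi-1)$, and a second-order Taylor expansion at $\xi\equiv1$ yields $\mathbb{E}E_Q[(M_k-1)^{2}\,|\,\mathscr{G}_{k-1}]\leq C_1(\kappa,\Theta)\epsilon^{2}\cdot\mathcal{V}_k$, where $\mathcal{V}_k$ is the expected mutual overlap of $(S^{(1)},S^{(2)})$ during the $k$-th block. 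Because each step of the projected walk is drift-forward inside $\mathbb{V}_s$, the difference walk $S^{(1)}-S^{(2)}$ is centered with linearly growing variance in $\mathbb{Z}^{d-1}$, so a standard Green's function estimate makes the total intersection $\sum_{k\geq1}\mathcal{V}_k$ finite when $d-1\geq3$, matching the $d\geq4$ regime relevant in Section \ref{sec: chapter 4}. Choosing $\epsilon_0(\kappa,\Theta)$ small enough delivers the uniform bound $\sup_{n,\theta\in\Theta}\mathbb{E}E_Q[\mathcal{H}_{n,\theta}^{2}]<\infty$.

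The main obstacle will be making the block factorization rigorous while controlling the residual cross-block couplings of $\textbf{(SMX)}_{C,g}$: even at scale $L\geq L_0$ the environment is not exactly product across renewal blocks, and one must absorb factors of the form $\exp(C(\Delta,V^c)\sum e^{-g|x-y|_1})$ coming from the mixing inequality. Because the exponential kernel is $\ell^{1}(\mathbb{Z}^d)$-summable and $C(\Delta,V^c)$ vanishes for $d_1(\Delta,V^c)\geq L_0$, these residuals compound into the single finite constant $c_{\mathrm{mix}}$ independent of $n$, together with the compactness of $\Theta$ and the uniform ellipticity providing the boundedness of all moments $\overline{\mathbb{E}}^{0}[e^{2\langle\theta,S(\tau_1^{(L)})\rangle}\mathbbm{1}_{\mathbb{B}(\tau_1^{(L)})}]$, which is what closes the argument.
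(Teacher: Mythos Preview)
Your martingale--increment route has a genuine circularity that the paper's proof avoids. After writing $\mathcal{H}_{k,\theta}-\mathcal{H}_{k-1,\theta}=\mathcal{H}_{k-1,\theta}(M_k-1)$ and conditioning on $\mathscr{G}_{k-1}$, the quantity $\mathbb{E}E_Q[(M_k-1)^2\mid\mathscr{G}_{k-1}]$ equals $\int\!\!\int \rho_{k-1}(dx)\rho_{k-1}(dy)\,\mathrm{Cov}\big(Z_1^x/Z_1,\,Z_1^y/Z_1\big)$, where $\rho_{k-1}$ is the \emph{polymer} endpoint law at time $\tau^{(L)}_{k-1}$, not the free tilted walk. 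Your overlap $\mathcal{V}_k$ therefore depends on where $\rho_{k-1}$ sits, and you have no a~priori control on $\rho_{k-1}$: if the quenched measure concentrates the two replicas near each other, the one--block overlap is of order $E_Q[\tau_1^{(L)}]$ uniformly in $k$, not decaying. The Green's--function transience you invoke for $S^{(1)}-S^{(2)}$ is valid only under the free $(d-1)$--dimensional walk, but inside $\sum_k\mathbb{E}[\mathcal{H}_{k-1,\theta}^2(M_k-1)^2]$ the replicas are weighted by the size--biased polymer measure; the summability of $\sum_k\mathcal{V}_k$ is exactly the weak--disorder statement you are trying to prove. Even if you had a deterministic bound $\mathbb{E}[(M_k-1)^2\mid\mathscr{G}_{k-1}]\le b$, iteration only yields $\mathbb{E}[\mathcal{H}_n^2]\le(1+b)^n$, which is useless.

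The paper sidesteps this by a change of measure \emph{before} any decomposition: it writes $\mathbb{E}E_Q[\mathcal{H}_{n,\theta}^2]=\hat{E}^{(\theta)}_0\big[\exp\sum_{j<n}V(\Vec{X}^{(\theta)}_j,\Vec{Y}^{(\theta)}_j)\big]$, where $\hat{P}^{(\theta)}_0$ is the law of two \emph{independent} walks whose block increments $(\Vec{X}^{(\theta)}_j)_j$, $(\Vec{Y}^{(\theta)}_j)_j$ are genuinely i.i.d., and $V$ is an interaction bounded by $\text{dis}(\mathbb{P})$ times the number of $L$--close encounters inside the block. Under this free tilted law the difference really is a centered $(d-1)$--dimensional walk, a Fourier/Green computation gives $\sup_{\theta\in\Theta}\sum_j\hat{E}^{(\theta)}[N_j^{L}\mathbbm{1}_{\{d_1\le L\}}]\le \bar{C}L^2$ uniformly over starting points, and a Khas'minskii--type expansion converts this linear bound into $\sup_n\mathbb{E}E_Q[\mathcal{H}_{n,\theta}^2]\le(1-\bar{C}L^2\text{dis}(\mathbb{P}))^{-1}$. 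The step you are missing is precisely this passage to the free tilted replica measure; without it the transience input cannot be fed back into the second--moment recursion.
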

    \begin{proof}
        Choose the random sequence $(\Vec{X}^{(\theta)}_j)_{j\geq1}$ with increments i.i.d. to $\Vec{X}^{(\theta)}_1=(X^{(\theta)}_{1,1},\ldots,X^{(\theta)}_{1,\tau^{(L)}_1})\in\cup_{k=1}^\infty(\mathbb{Z}^d)^k$ via 
        \[
            \hat{P}^{(\theta)}_0(\Vec{S}^X_1=s) = e^{\langle\theta,s_k\rangle}\frac{\overline{\mathbb{P}}^0(\mathbb{B}(\tau^{(L)}_1),\,(S_1,\ldots,S(\tau^{(L)}_1))=s)}{\mathbb{E}E_Q[e^{\langle\theta,S(\tau^{(L)}_1)\rangle},\,\mathbb{B}(\tau^{(L)}_1) ]},\qquad\forall~s\in\mathcal{S},
        \]
        where $\Vec{S}^X_1\in\cup_{k=1}^\infty(\mathbb{Z}^{d-1})^k$ is the $\pi$-transform of $\Vec{X}^{(\theta)}_1$ and $\mathcal{S}$ stands for all finite paths in $\mathbb{Z}^{d-1}$. Let $(\Vec{Y}^{(\theta)}_j)_{j\geq1}$ be an independent copy of $(\Vec{X}^{(\theta)}_j)_{j\geq1}$. Choose $\mathscr{T}^X_j$ for $(\Vec{X}^{(\theta)}_j)_{j\geq1}$ and similarly $\mathscr{T}^Y_j$ for $(\Vec{Y}^{(\theta)}_j)_{j\geq1}$, $\forall~j<n$ defined as follows
        \[
            V(\Vec{X}^{(\theta)}_j,\Vec{Y}^{(\theta)}_j)\coloneqq \log \frac{ \mathbb{E}E_Q[\mathscr{T}^X_j\mathscr{T}^Y_j] }{E_Q[ \mathbb{E}\mathscr{T}^X_j \mathbb{E}\mathscr{T}^Y_j ]}\qquad\text{where}\quad \mathscr{T}^X_j\coloneqq \prod_{k=\tau^{(L)}_j+1}^{\tau^{(L)}_{j+1}}\Bar{\omega}_{\tau^{(L)}_j+k}(X_{j,k-1},X_{j,k}-X_{j,k-1}),\qquad\forall~j\in\mathbb{N}.
        \]
        Then we have $\mathbb{E}E_Q[\mathcal{H}_{n,\theta}^2] = \hat{E}^{(\theta)}_0[\exp \sum_{j<n} V(\Vec{X}^{(\theta)}_j,\Vec{Y}^{(\theta)}_j)]$, $\forall~n\geq1$ and $\theta\in\mathbb{R}^{d-1}$. The nature of the $\textbf{(SMX)}_{C,g}$ condition implies $\hat{E}^{(\theta)}_0[e^{\sum_{j<n} V(\Vec{X}^{(\theta)}_j,\Vec{Y}^{(\theta)}_j)}] = \hat{E}^{(\theta)}_0 [ e^{\sum_{j<n} V(\Vec{X}^{(\theta)}_j,\Vec{Y}^{(\theta)}_j)\mathbbm{1}\{ d_1(\Vec{X}^{(\theta)}_j,\Vec{Y}^{(\theta)}_j)<L\} }]$. Henceforth,
        \[
            \hat{E}^{(\theta)}_0\big[e^{\sum_{j<n} V(\Vec{X}^{(\theta)}_j,\Vec{Y}^{(\theta)}_j)}\big] \leq \hat{E}^{(\theta)}_0 \big[ e^{\sum_{j=1}^\infty \text{dis}(\mathbb{P}) N^{L}_j(\Vec{X}^{(\theta)},\Vec{Y}^{(\theta)}) \mathbbm{1}\{ d_1(\Vec{X}^{(\theta)}_j,\Vec{Y}^{(\theta)}_j)<L\} }\big],\qquad\forall~n\geq1,
        \]
        where $N^{L}_j(\Vec{X}^{(\theta)},\Vec{Y}^{(\theta)}) \coloneqq\sum_{k=\tau^{(L)}_{j}+1}^{\tau^{(L)}_{j+1}}\mathbbm{1}_{\{|X^{(\theta)}_{j,k}-X^{(\theta)}_{j,k}|_1<L\}}$, $\forall~j\geq1$. We claim for each $\ell\in\mathbb{N}$,
        \begin{equation}\label{eqn: left to prove in step III}
            \sup_{\theta\in\Theta}\sum_{j=1}^\infty \hat{P}^{(\theta)}_{x,y}\big(d_1(\Vec{X}^{(\theta)}_j,\Vec{Y}^{(\theta)}_j)\leq \ell\big)\leq \sup_{\theta\in\Theta} \sum_{j=1}^\infty \hat{E}^{(\theta)}_{x,y} \big[ N^{\ell}_j(\Vec{X}^{(\theta)},\Vec{Y}^{(\theta)})\mathbbm{1}_{\{d_1(\Vec{X}^{(\theta)}_j,\Vec{Y}^{(\theta)}_j)\leq \ell\}} \big]  \leq \Bar{C}\ell^2
        \end{equation}
        for some constant $\Bar{C}\coloneqq \Bar{C}(\Theta)$ and for any $x,y\in\mathbb{Z}^{d}$. Indeed, assuming for now (\ref{eqn: left to prove in step III}), we have
        \begin{equation*}\begin{aligned}
            &\hat{E}^{(\theta)}_0\big[ e^{\sum_{j<n} V(\Vec{X}^{(\theta)}_j,\Vec{Y}^{(\theta)}_j)\mathbbm{1}\{ d_1(\Vec{X}^{(\theta)}_j,\Vec{Y}^{(\theta)}_j)<L\} }\big] \leq  \hat{E}^{(\theta)}_0\big[ e^{\sum_{j=1}^\infty \text{dis}(\mathbb{P}) N^{L}_j(\Vec{X}^{(\theta)},\Vec{Y}^{(\theta)}) \mathbbm{1}\{ d_1(\Vec{X}^{(\theta)}_j,\Vec{Y}^{(\theta)}_j)<L\} }\big]\\
            &\qquad \leq  \sum_{n=0}^\infty  \frac{2^n\text{dis}(\mathbb{P})^n}{n!} \hat{E}^{(\theta)}_0 \big[   \big(\sum_{j=1}^\infty N^{L}_j(\Vec{X}^{(\theta)},\Vec{Y}^{(\theta)})\mathbbm{1}_{\{ d_1(\Vec{X}^{(\theta)}_j,\Vec{Y}^{(\theta)}_j)<L \}}\big)^n \big]\leq \frac{1}{1-\Bar{C}L^2\text{dis}(\mathbb{P})}<\infty,\qquad\forall~\theta\in\Theta.
        \end{aligned}\end{equation*}
        Here we take $\text{dis}(\mathbb{P})<1/\Bar{C}L^2$ and the last inequality follows similarly as \cite[Lemma A.2]{Chen} from (\ref{eqn: left to prove in step III}). It is left to prove the assertion (\ref{eqn: left to prove in step III}). Let $(X^*_j,Y^*_j)\coloneqq\arg\min\{|X^{(\theta)}_i-Y^{(\theta)}_i|:\,\tau^{(L)}_{j-1}\leq i\leq\tau^{(L)}_{j}-1\}$ and $Z^*_j\coloneqq X^*_j-Y^*_j$. Then, we observe that $\sum_{j=1}^\infty\hat{P}^{(\theta)}_{x,y} (d_1(\Vec{X}^{(\theta)}_j,\Vec{Y}^{(\theta)}_j)\leq\ell) = \sum_{j=1}^\infty\hat{P}^{(\theta)}_{x,y} (Z_j^*\in \Bar{B}_\ell)$, where $\Bar{B}_\ell\coloneqq\{z\in\mathbb{R}^{d-1}:\,\abs{z}_1\leq\ell\}$. Denote by $\mu$ the law of $Z^*_1$, then $\sum_{j=1}^\infty\hat{P}^{(\theta)}_{x,y}(Z_j^*\in \Bar{B}_\ell) \leq \sum_{j=1}^\infty\mu^{*j}(\Bar{B}_\ell)$ by the inclusion of events. Here $\mu^{*j}$ denotes the $j$-fold convolution of $\mu$. Consider the function $\prod_{i=1}^d f(\ell^{-1}x_i)$, where $f(x_i)=\max\{1-\abs{x_i},0\}$. Via the Fourier inversion formula \cite[Section 5.1]{Stein}, we have
        \[
            \widehat{\prod_{i=1}^{d-1}f(x_i)} = \prod_{i=1}^{d-1}\widehat{f}(\xi_i)\qquad\Longrightarrow\quad \int_{\mathbb{R}^{d-1}}\widehat{\prod_{i=1}^{d-1}f(\ell^{-1}x_i)}\,d\mu^{*j}(x) = \ell^{d-1}\int_{\mathbb{R}^{d-1}} \prod_{i=1}^{d-1} f(\ell\xi_i)\chi_\mu^{(\theta)}(\xi)^n\,d\xi,
        \]
        where $\widehat{f}(\xi_i)=2(1-\cos\xi_i)/\xi_i^2$ and $\chi_\mu^{(\theta)}(\xi)\coloneqq E_{\hat{P}^{(\theta)}_x\otimes\hat{P}^{(\theta)}_y}[\exp\langle i\xi,Z^*_1\rangle]$ denotes the characteristic function of $Z^*_1$. Hence,
        \[
            \int_{\mathbb{R}^{d-1}}\widehat{\prod_{i=1}^{d-1}f(\ell^{-1}x_i)}\sum_{j\in\mathbb{N}}a^n\,d\mu^{*j}(x) = \ell^{d-1} \int_{\mathbb{R}^{d-1}} \frac{\prod_{i=1}^{d-1}f(\ell x_i)}{1-a\chi^{(\theta)}_\mu(\xi)}\,d\xi.
        \]
        And hence $\sum_{j=0}^\infty\mu^{*j}(\Bar{B}_\ell)\leq C_d \int_{\mathbb{R}^{d-1}}\widehat{\prod_{i=1}^{d-1}f(\ell^{-1}x_i)}\sum_{j=0}^\infty a^n\,d\mu^{*j}(x) \leq C_d \ell^{d-1} \sup_{0<a<1}\int_{\mathbb{R}^{d-1}} \prod_{i=1}^{d-1}f(\ell x_i)/(1-a\chi^{(\theta)}_\mu(\xi))\,d\xi$, which then implies $\sum_{j=1}^\infty\hat{P}^{(\theta)}_{x,y}(d_1(\Vec{X}^{(\theta)}_j,\Vec{Y}^{(\theta)}_j)\leq\ell) \leq C_d \ell^{d-1} \int_{\Bar{B}_{1/\ell}} 1/(1-\chi^{(\theta)}_\mu(\xi))\,d\xi$. Notice that $|Z^*_1|$ is uniformly bounded by $\tau^{(L)}_1$ over all $\theta\in\Theta$. Hence the Taylor expansion gives $\chi_\mu^{(\theta)}(\xi)\leq 1-\frac{1}{2}\sum_{j,k=1}^{d-1} a^{(\theta)}_{L,jk}\xi_j\xi_k +C(L)\abs{\xi}^3$ for some $C(L)>0$ independent of $\theta\in\Theta$, where $(a^{(\theta)}_{L,jk})_{j,k}$ is the covariance matrix of $Z^*_1$. Since this process $(Z^*_n)_{n\in\mathbb{N}}$ has effective dimension $d-1$, the matrix $(a^{(\theta)}_{L,jk})_{j,k}$ is positive-definite for each $\theta\in\Theta$. Thus for any compact $\Theta\subseteq\mathbb{R}^{d-1}$, there exists $\ell_0=\ell_0(L,\Theta)\in\mathbb{N}$ and $c_0=c_0(L,\Theta)>0$ such that $c_0\abs{\xi}^2\leq1-\chi^{(\theta)}_\mu(\xi)$ for any $\xi$ such that $\abs{\xi}\leq1/\ell_0$. Therefore, $\forall~\ell\geq\ell_0(L,\Theta)$ we get        
        \[
            \sum_{j=1}^\infty\hat{P}^{(\theta)}_{x,y}\big(Z_j^*\in \Bar{B}_\ell\big) \leq C_d\frac{\ell^{d-1}}{c_0(L,\Theta)}\int_{\Bar{B}_{1/\ell}}\frac{1}{\abs{\xi}^2}\,d\xi\leq C^\prime(\Theta)\ell^{d-1}\int_0^{1/\ell}r^{d-4}\,dr\leq C^\prime(\Theta)\ell^2.
        \]
        For any $1\leq\ell\leq\ell_0-1$, this inequality still holds since $\sum_{j=1}^\infty\hat{P}^{(\theta)}_{x,y}(Z_j^*\in \Bar{B}_\ell) \leq \sum_{j=1}^\infty\hat{P}^{(\theta)}_{x,y}(Z_j^*\in \Bar{B}_{\ell_0}) \leq C^\prime(\Theta)(\ell_0^2/\ell^2)\ell^2$ which is less than or equal to $ C^{\prime\prime}(\Theta)\ell^2$, where $C^{\prime\prime}\coloneqq C^\prime(\Theta)\ell_0^2$. Therefore, for any $\ell\in\mathbb{N}$ and $\theta\in\Theta$,
        \[
            \sum_{j=1}^\infty E_{\hat{P}^{(\theta)}_x\otimes\hat{P}^{(\theta)}_y}\bigg[ N^{\ell}_j(\Vec{X}^{(\theta)},\Vec{Y}^{(\theta)})\mathbbm{1}_{\{d_1(\Vec{X}^{(\theta)}_j,\Vec{Y}^{(\theta)}_j)\leq \ell\}} \bigg] \leq \sum_{j=1}^\infty E_Q\big[\tau^{(L)}_1\big] \hat{P}^{(\theta)}_x\otimes\hat{P}^{(\theta)}_y\big(Z_j^*\in \Bar{B}_\ell\big)\leq C(L,\Theta)\ell^2,
        \]
        verifying (\ref{eqn: left to prove in step III}), which is the only assertion left above. 
    \end{proof}
    \begin{lemma}\label{lem: H is positive}
        \normalfont
        When $\mathbb{P}$ verifies $\textbf{(SMX)}_{C,g}$, $\forall~\Theta\subseteq\mathbb{R}^{d-1}$ compact, $\exists~\epsilon_0 = \epsilon_0(\kappa,\Theta) > 0$ s.t. $\mathcal{H}_{\infty,\theta}>0$, $\mathbb{P}\otimes Q$-a.s., whenever $\theta\in\Theta$ and $\text{dis}(\mathbb{P})<\epsilon_0$.
    \end{lemma}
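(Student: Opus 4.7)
The plan is to combine the martingale convergence supplied by Lemma~\ref{lem: square integrable of the martingale H} with a 0-1 law driven by the mixing condition. Recall that $(\mathcal{H}_{n,\theta})_{n\geq 1}$ is a nonnegative $(\mathscr{G}_n)$-martingale with $\mathbb{E}E_Q[\mathcal{H}_{n,\theta}]=1$ (established just before Lemma~\ref{lem: square integrable of the martingale H}). First I would use the $L^2$-bound from Lemma~\ref{lem: square integrable of the martingale H} to pass to the $L^2$-limit: as long as $\mathrm{dis}(\mathbb{P})<\epsilon_0(\kappa,\Theta)$, the martingale converges in $L^2$, hence in $L^1$, to $\mathcal{H}_{\infty,\theta}$ with $\mathbb{E}E_Q[\mathcal{H}_{\infty,\theta}]=1$, so in particular $\mathbb{P}\otimes Q(\mathcal{H}_{\infty,\theta}>0)>0$. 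The task then reduces to upgrading this positive-probability statement to an almost-sure one.

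Second I would carry out the upgrade through a 0-1 law for the event $A_\theta\coloneqq\{\mathcal{H}_{\infty,\theta}=0\}$. Decomposing $\mathcal{H}_{\infty,\theta}=\mathcal{H}_{n,\theta}\cdot\widetilde{\mathcal{H}}^{(n)}_{\infty,\theta}$ along the regeneration sequence $(\tau^{(L)}_n)$, where $\widetilde{\mathcal{H}}^{(n)}_{\infty,\theta}$ is the analogous infinite product formed from the regeneration increments after time $\tau^{(L)}_n$, one checks that $A_\theta$ agrees up to a $\mathscr{G}_n$-negligible set with $\{\widetilde{\mathcal{H}}^{(n)}_{\infty,\theta}=0\}$ for every $n\in\mathbb{N}$, and hence lies in the tail $\sigma$-field of the regeneration stream. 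The forced $L$-block of $s$-steps before every $\tau^{(L)}_j$, combined with $(\mathbf{SMX})_{C,g}$, produces exactly the slab-decoupling already exploited in Lemma~\ref{lem: exponential mixing inequality}, which promotes the regeneration stream into a stationary sequence that is Rosenblatt-mixing under the shift $T(\omega,\epsilon)\coloneqq(\vartheta_{X(\tau_1^{(L)})}\omega,\vartheta_{\tau_1^{(L)}}\epsilon)$. Ergodicity of $T$ then forces the tail $\sigma$-field to be trivial, so $\mathbb{P}\otimes Q(A_\theta)\in\{0,1\}$, and the strict inequality $\mathbb{P}\otimes Q(A_\theta)<1$ from the first step finishes the argument.

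The main obstacle is the rigorous execution of the 0-1 law: the stopping times $\tau^{(L)}_j$ are random, so $T$ is a \emph{random} shift rather than a deterministic translation of $\mathbb{Z}^d$, and one cannot directly invoke ergodicity of the spatial shifts of $\mathbb{P}$. The standard remedy is to pass to a Palm-type enlargement of $\Omega\times(\mathcal{W})^{\mathbb{N}}$ on which $T$ acts measurably and measure-preservingly, exploit the geometric tail of $\tau^{(L)}_1$ under $Q$ together with the exponential decay of correlations in $(\mathbf{SMX})_{C,g}$ to verify a quantitative Rosenblatt-type mixing condition for $T$, and then invoke the fact that Rosenblatt-mixing transformations have trivial tail $\sigma$-fields. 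Uniformity in $\theta\in\Theta$ follows from the uniform $L^2$-bound of Lemma~\ref{lem: square integrable of the martingale H} together with the continuity of $\theta\mapsto\mathcal{H}_{n,\theta}$.
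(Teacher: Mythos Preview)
Your overall strategy matches the paper's: use the $L^2$ bound from Lemma~\ref{lem: square integrable of the martingale H} to obtain uniform integrability and hence $\mathbb{E}E_Q[\mathcal{H}_{\infty,\theta}]=1$, then combine with a 0--1 law for the event $\{\mathcal{H}_{\infty,\theta}=0\}$. The difference lies entirely in how the 0--1 law is obtained, and there your route has a gap.

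The product decomposition $\mathcal{H}_{\infty,\theta}=\mathcal{H}_{n,\theta}\cdot\widetilde{\mathcal{H}}^{(n)}_{\infty,\theta}$ does not hold as you state it. The quenched numerator $\overline{E}^0_{\omega,\epsilon}[e^{\langle\theta,S(\tau^{(L)}_{n+m})\rangle},\,\mathbb{B}(\tau^{(L)}_{n+m})]$ does not factorize over regeneration blocks, because the transverse position $X(\tau^{(L)}_n)$ is random and different endpoints invoke different spatial shifts of $\omega$. What actually holds is the recursion
\[
\mathcal{H}_{n,\theta}(\omega,\epsilon)=\sum_{k,x}\frac{\overline{P}^0_{\omega,\epsilon}\bigl(\mathbb{B}_k,\,\tau^{(L)}_1=k,\,S_k=x\bigr)\,e^{\langle\theta,x\rangle}}{\overline{\mathbb{E}}^0[e^{\langle\theta,S(\tau^{(L)}_1)\rangle},\,\mathbb{B}(\tau^{(L)}_1)]}\;\mathcal{H}_{n-1,\theta}(\vartheta_x\omega,\vartheta_k\epsilon),
\]
a \emph{positive combination} of shifted copies rather than a product. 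As a consequence $\widetilde{\mathcal{H}}^{(n)}:=\mathcal{H}_{\infty,\theta}/\mathcal{H}_{n,\theta}$ is not measurable with respect to the post-$\tau^{(L)}_n$ data alone, and the event $\{\mathcal{H}_{\infty,\theta}=0\}$ is not placed in the tail $\sigma$-field of the regeneration stream in the way you describe. The Palm enlargement and Rosenblatt mixing you anticipate are therefore not the right obstacles.

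The paper's route is both shorter and avoids these issues: from the recursion above and uniform ellipticity, every weight is strictly positive, so $\mathcal{H}_{\infty,\theta}(\omega,\epsilon)=0$ forces $\mathcal{H}_{\infty,\theta}(\vartheta_x\omega,\vartheta_k\epsilon)=0$ for every admissible $(k,x)$. Iterating, the zero set is invariant under the joint (space--time) shifts on $\Omega\times(\mathcal{W})^{\mathbb{N}}$. Ergodicity of $\mathbb{P}\otimes Q$ under these shifts---immediate from the mixing of $\mathbb{P}$ and the i.i.d.\ structure of $Q$---then gives $\mathbb{P}\otimes Q(\mathcal{H}_{\infty,\theta}=0)\in\{0,1\}$ directly, with no Palm construction or quantitative mixing estimate required.
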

    \begin{proof}
        It is obvious that $\mathcal{H}_{n,\theta} =\sum_{k\in\mathbb{N}}\sum_{x\in\mathbb{Z}^d} \overline{P}^0_{ \omega,\epsilon}(\mathbb{B}_k,\,\tau_1^{(L)}=k,\,S_k=x)e^{\langle\theta,x\rangle} \mathcal{H}_{n-1,\theta}(\vartheta_x\omega,\vartheta_k\epsilon)/\mathbb{E}E_Q[\mathcal{L}^0_{\tau^{(L)}_1,\theta}]$. Invoking Lemma \ref{lem: exponential mixing inequality}, we observe the event $\{\mathcal{H}_{\infty,\theta}(\omega,\epsilon)=0\}$ is translation invariant, and whence $\mathbb{P}\otimes Q(\mathcal{H}_{\infty,\theta}=0)\in\{0,1\}$. Applying Lemma \ref{lem: square integrable of the martingale H}, $\forall~\Theta\subseteq\mathbb{R}^{d-1}$ compact, $\exists~\epsilon_0$ s.t. $\sup_{n\in\mathbb{N},\,\theta\in\Theta}\normx{\mathcal{H}_{n,\theta}}_{L^2(\mathbb{P}\otimes Q)}<\infty$ when $\text{dis}(\mathbb{P})<\epsilon_0$. Hence this uniformly integrable martingale $(\mathcal{H}_{n,\theta})_{n\in\mathbb{N}}$ satisfies $\mathbb{E}E_Q[\mathcal{H}_{\infty,\theta}] = \lim_{n\to\infty} \mathbb{E}E_Q[\mathcal{H}_{n,\theta}] > 0$, yielding  $\mathbb{P}\otimes Q(\mathcal{H}_{\infty,\theta}>0)=1$. 
    \end{proof}

    And whence, taking the limit $n\to\infty$, we have
    \begin{equation}\label{eqn: first inequality of log L}
        -C e^{-gtL} \leq \varliminf_{n\to\infty}\frac{1}{n}\log\mathcal{L}^0_{\nu_n^{(L)},\theta}(\omega,\epsilon) \leq \varlimsup_{n\to\infty}\frac{1}{n}\log\mathcal{L}^0_{\nu_n^{(L)},\theta}(\omega,\epsilon) \leq C^\prime e^{-gt L},\quad\mathbb{P}\otimes Q\text{-a.s.}
    \end{equation}

    \begin{lemma}\label{lem: log L goes to zero}
        \normalfont
         When $\mathbb{P}$ verifies $\textbf{(SMX)}_{C,g}$, $\forall~\Theta\subseteq\mathbb{R}^{d-1}$ compact, $\exists~\epsilon_0 = \epsilon_0(\kappa,\Theta) > 0$ s.t. $\lim_{n\to\infty}\frac{1}{n} \log E_Q[\mathcal{L}^0_{n,\theta}(\omega,\vdot)]=0$, $\mathbb{P}$-a.s., whenever $\theta\in\Theta$ and $\text{dis}(\mathbb{P})<\epsilon_0$.
    \end{lemma}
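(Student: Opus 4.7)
The plan is to control $E_Q[\mathcal{L}^0_{n,\theta}(\omega,\vdot)]$ from both sides, with each bound ultimately shown to be of order $O(e^{-gtL}/\bar\tau_L)$ where $\bar\tau_L\coloneqq E_Q[\tau_1^{(L)}]$. Since $L>L_0$ is a free parameter that does not enter the quantity $E_Q[\mathcal{L}^0_{n,\theta}]$ itself, intersecting the corresponding $\mathbb{P}$-full-measure events over a countable sequence $L\uparrow\infty$ will force the limit to vanish on a single $\mathbb{P}$-full-measure event.

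The upper bound is immediate. Since $\mathbb{E}E_Q[\mathcal{L}^0_{n,\theta}(\omega,\vdot)]=1$ by construction, Markov's inequality gives $\mathbb{P}(E_Q[\mathcal{L}^0_{n,\theta}(\omega,\vdot)]>e^{n\delta})\leq e^{-n\delta}$ for any $\delta>0$. Borel--Cantelli then yields $\varlimsup_n n^{-1}\log E_Q[\mathcal{L}^0_{n,\theta}(\omega,\vdot)]\leq\delta$ $\mathbb{P}$-a.s. Letting $\delta$ run through a countable sequence $\delta_j\downarrow 0$ gives $\varlimsup\leq 0$ $\mathbb{P}$-a.s.

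For the lower bound, fix $L>L_0$. By Lemma \ref{lem: square integrable of the martingale H} the approximate martingale $(\mathcal{H}_{k,\theta})_{k\geq 1}$ is $L^2(\mathbb{P}\otimes Q)$-bounded, so together with the martingale property it converges to $\mathcal{H}_{\infty,\theta}$ both in $L^2(\mathbb{P}\otimes Q)$ and $\mathbb{P}\otimes Q$-a.s. Lemma \ref{lem: H is positive} upgrades this to $\mathcal{H}_{\infty,\theta}>0$ $\mathbb{P}\otimes Q$-a.s., and Fubini yields $E_Q[\mathcal{H}_{\infty,\theta}]>0$ $\mathbb{P}$-a.s. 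The Jensen estimate $\|E_Q[\mathcal{H}_{k,\theta}]-E_Q[\mathcal{H}_{\infty,\theta}]\|_{L^2(\mathbb{P})}^2\leq\|\mathcal{H}_{k,\theta}-\mathcal{H}_{\infty,\theta}\|_{L^2(\mathbb{P}\otimes Q)}^2\to 0$ then gives $E_Q[\mathcal{H}_{k,\theta}]\to E_Q[\mathcal{H}_{\infty,\theta}]>0$ $\mathbb{P}$-a.s.\ along a subsequence, and combined with $\tau_n^{(L)}/n\to\bar\tau_L$ $Q$-a.s.\ together with $(\ref{eqn: first inequality of log L})$ one deduces $n^{-1}\log E_Q[\mathcal{L}^0_{\tau_n^{(L)},\theta}(\omega,\vdot)]\to 0$ $\mathbb{P}$-a.s.\ along that subsequence. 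To extend from the stopping-time skeleton to arbitrary $m$, write $m=\tau_{k(m)}^{(L)}+r_m$ with $0\leq r_m<\tau_{k(m)+1}^{(L)}-\tau_{k(m)}^{(L)}$ and use the Markov property of the auxiliary walk at time $\tau_{k(m)}^{(L)}$: both the numerator and the denominator of $\mathcal{L}^0_{m,\theta}$ differ from those at the last skeleton point by a factor controlled uniformly in $\omega$ by $\kappa^{-r_m}e^{|\theta|_1 r_m}$, thanks to uniform ellipticity and $|S_m-S_{\tau_{k(m)}^{(L)}}|_1\leq r_m$. Since $r_m$ is $Q$-stationary with finite mean under $Q$ and $r_m/m\to 0$ $Q$-a.s., this excess contributes only $o(m)$ to $\log E_Q[\mathcal{L}^0_{m,\theta}]$. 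Plugging $(\ref{eqn: first inequality of log L})$ into the skeleton delivers $\varliminf_m m^{-1}\log E_Q[\mathcal{L}^0_{m,\theta}(\omega,\vdot)]\geq -Ce^{-gtL}/\bar\tau_L$ $\mathbb{P}$-a.s., and sending $L\to\infty$ along a countable cofinal sequence concludes.

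The main obstacle will be this last interpolation, specifically commuting the $\mathbb{P}\otimes Q$-a.s.\ statement $(\ref{eqn: first inequality of log L})$ with the $Q$-average without losing control on exceptional $Q$-sets on which $\mathcal{L}^0_{m,\theta}(\omega,\vdot)$ may vanish or become atypically large. This is precisely where the $L^2(\mathbb{P}\otimes Q)$-boundedness of Lemma \ref{lem: square integrable of the martingale H} is indispensable: it produces the uniform integrability needed so that the subsequential $L^2(\mathbb{P})$-convergence of $E_Q[\mathcal{H}_{k,\theta}]$ upgrades, in combination with the mixing-driven interpolation, to the two-sided control $|m^{-1}\log E_Q[\mathcal{L}^0_{m,\theta}(\omega,\vdot)]|\leq Ce^{-gtL}/\bar\tau_L+o(1)$ that is uniform in $m$ and eventually sent to zero through $L$.
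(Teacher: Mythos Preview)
Your upper bound via Markov's inequality and Borel--Cantelli is clean and correct; the paper's closing ``Jensen's inequality'' remark is essentially the same mechanism.

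The lower bound, however, has a genuine gap in the interpolation step. You want to pass from control of $E_Q[\mathcal{H}_{k,\theta}]$ (where $\mathcal{H}_{k,\theta}=\mathcal{L}^0_{\tau_k^{(L)},\theta}$ with $\tau_k^{(L)}$ a function of $\epsilon$) to control of $E_Q[\mathcal{L}^0_{m,\theta}]$ for deterministic $m$. Your interpolation---Markov property at $\tau_{k(m)}^{(L)}$, error governed by $r_m=m-\tau_{k(m)}^{(L)}$---is valid only for a \emph{fixed} realization of $\epsilon$; once you integrate in $Q$, both $k(m)$ and $r_m$ become random in $\epsilon$, and the correction factor $\kappa^{-r_m}e^{|\theta|_1 r_m}$ sits inside the $Q$-integral entangled with $\mathcal{H}_{k(m),\theta}$ rather than separating from it. The display~\eqref{eqn: first inequality of log L} you cite is likewise a pointwise $\mathbb{P}\otimes Q$-a.s.\ statement and does not directly bound the $Q$-average. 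Note also that under $\textbf{(SMX)}_{C,g}$ with $L>L_0$ the martingale property is exact (Lemma~\ref{lem: exponential mixing inequality}), so the $e^{-gtL}$ error terms and the final $L\to\infty$ are both superfluous.

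The paper avoids this difficulty by reversing the order of operations: it first proves the \emph{pointwise} limit $n^{-1}\log\mathcal{L}^0_{n,\theta}(\omega,\epsilon)\to 0$ $\mathbb{P}\otimes Q$-a.s., where the skeleton interpolation is legitimate because $\epsilon$ (hence the skeleton $(\tau_k^{(L)})_k$) is frozen and the only input needed is $\mathcal{H}_{k_n,\theta}\to\mathcal{H}_{\infty,\theta}\in(0,\infty)$ from Lemma~\ref{lem: H is positive}. Only afterwards does it take the $Q$-average, combining the pointwise convergence with a truncation on the $\epsilon$-set of $Q$-measure $>1-\delta$ where the limit has already taken hold, together with Jensen's inequality $\log E_Q[\mathcal{L}]\geq E_Q[\log\mathcal{L}]$ for the lower direction. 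To repair your argument you should adopt this order: establish the $(\omega,\epsilon)$-pointwise convergence first, and only then average over $Q$.
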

    \begin{proof}
        We know by the law of large numbers that $\tau_n^{(L)}/n\to E_Q[\tau_1^{(L)}]<\infty$ as $n\to\infty$, $Q$-a.s. Take a nondeterministic sequence $(k_n)_{n\geq1}$ such that $\tau^{(L)}_{k_n}\leq n < \tau^{(L)}_{k_n+1}$ for each $n\in\mathbb{N}$. It is obvious then $k_n/n\to1/E_Q[\tau_1^{(L)}]$ as $n\to\infty$, $Q$-a.s. By the definition of $\mathcal{L}^0_{n,\theta}$, we have $\log\overline{E}^x_{\omega,\epsilon}[e^{\langle\theta,S_n\rangle},\,\mathbb{B}_n] \leq \abs{\theta}_2(\tau^{(L)}_{k_n+1} - \tau^{(L)}_{k_n}) + \log \overline{E}^0_{\omega,\epsilon}[e^{\langle\theta,S(\tau^{(L)}_{k_n})\rangle},\,\mathbb{B}_{\tau^{(L)}_{k_n}}]$ and
        \[
            \overline{\mathbb{E}}^0 \big[e^{\langle\theta,S_n\rangle},\,\mathbb{B}_n\big] \geq \mathbb{E}E_Q\big[e^{-\abs{\theta}_2(\tau^{(L)}_{k_n+1}-\tau^{(L)}_{k_n})}\overline{E}^0_{\omega,\epsilon}\big(e^{\langle\theta,S(\tau^{(L)}_{k_n})\rangle},\,\mathbb{B}_{\tau^{(L)}_{k_n}}\big)\big] = E_Q\big[e^{-\abs{\theta}_2\tau_1^{(L)}}\big] \overline{\mathbb{E}}^0\big[e^{\langle\theta,S(\tau^{(L)}_{k_n})\rangle},\,\mathbb{B}_{\tau^{(L)}_{k_n}}\big],\qquad\forall~n\geq1.
        \]
        We therefore get from the above estimates that
        \[
            \frac{1}{n}\log\mathcal{L}^0_{n,\theta} \leq \frac{\abs{\theta}_2}{n}(\tau^{(L)}_{k_n+1} - \tau^{(L)}_{k_n}) + \frac{1}{n}\log\mathcal{H}_{k_n}  - \frac{1}{n}\log E_Q\big[e^{-\abs{\theta}_2\tau^{(L)}_1}\big] - \frac{1}{n}\log\overline{\mathbb{E}}^0\big[e^{\langle\theta,S(\tau^{(L)}_1)\rangle},\,\mathbb{B}_{\tau^{(L)}_1}\big],\qquad\forall~n\geq1.
        \]
        Notice also that $(\tau^{(L)}_{k_n+1} - \tau^{(L)}_{k_n})/n = (\tau^{(L)}_{k_n+1}/k_n+1)(k_n+1/n) - (\tau^{(L)}_{k_n}/k_n)(k_n/n) \to0$ as $n\to\infty$, $\mathbb{P}\otimes Q$-a.s. Henceforth, with $\theta\in\Theta$ and invoking Lemma \ref{lem: H is positive}, we have $\varlimsup_{n\to\infty}\frac{1}{n}\log\mathcal{L}^0_{n,\theta} \leq 0$, $\mathbb{P}\otimes Q$-a.s. Via similar arguments, we estimate
        \[
            \log\overline{E}^0_{\omega,\epsilon}\big[e^{\langle\theta,S_n\rangle},\,\mathbb{B}_n\big] \geq -\abs{\theta}_2(\tau^{(L)}_{k_n+1} - \tau^{(L)}_{k_n}) + \log\overline{E}^0_{\omega,\epsilon}\big[e^{\langle\theta,S(\tau^{(L)}_{k_n+1})},\,\mathbb{B}_{\tau^{(L)}_{k_n+1}}\big],\qquad\mathbb{P}\otimes Q\text{-a.s.,}\qquad\forall~n\geq1.
        \]
        And a similar upper-bound for $\overline{\mathbb{E}}^0[e^{\langle\theta,S_n\rangle},\,\mathbb{B}_n]$ gives $\varliminf_{n\to\infty}\frac{1}{n}\log\mathcal{L}^0_{n,\theta} \geq 0$, which yields $\lim_{n\to\infty}\frac{1}{n}\log\mathcal{L}^0_{n,\theta} = 0$, $\mathbb{P}\otimes Q$-a.s. Therefore, $\forall~\delta>0$, $\exists~N_1(\omega,\epsilon)\in\mathbb{N}$ such that for all $n\geq N_1(\omega,\epsilon)$, we have $-\delta<\frac{1}{n} \log\mathcal{L}^0_{n,\theta}(\omega,\epsilon) < \delta$, $\mathbb{P}\otimes Q$-a.s. Because $N_1(\omega,\epsilon)$ is $\mathbb{P}\otimes Q$-a.s. finite, $\exists~N_2(\omega)\in\mathbb{N}$ such that $\forall~n\geq N_2(\omega)$, $Q(-\delta n<\log\mathcal{L}^0_{n,\theta}(\omega,\vdot)<\delta n) > 1-\delta$, $\mathbb{P}$-a.s. Henceforth, for any $n\geq N_2(\omega)$, we have $-\delta(1+\abs{\theta}_2) < \frac{1}{n}E_Q[\log \mathcal{L}^0_{n,\theta}(\omega,\vdot)] < \delta(1+\abs{\theta}_2)$, $\mathbb{P}$-a.s. First let $n\to\infty$, and then let $\delta\to0$, 
        \[
            \varliminf_{n\to\infty} \frac{1}{n}\log E_Q\big[\mathcal{L}^0_{n,\theta}(\omega,\vdot)\big] \geq \lim_{n\to\infty}\frac{1}{n}E_Q\big[\log \mathcal{L}^0_{n,\theta}(\omega,\vdot)\big] = 0,\quad\mathbb{P}\text{-a.s.}
        \]
        Jensen's inequality yields $\varlimsup_{n\to\infty}n^{-1}\log E_Q[\mathcal{L}^0_{n,\theta}(\omega,\vdot)]\leq0$ for all $\theta$, verifying the assertion.
    \end{proof}

\subsection{Differentiable mappings on the boundary of velocity surface.}
    We first prove the general LDP limits at the boundary $\partial\mathbb{D}$, both in the quenched and in the annealed scenarios.
    \begin{lemma}\label{lem: boundary LDP general existence}
        \normalfont
        When $\mathbb{P}$ verifies $\textbf{(SMX)}_{C,g}$, $\exists~\Omega_0\subseteq\Omega$ a full $\mathbb{P}$-probability event s.t. $\forall~\omega\in\Omega_0$ the limit $ -\infty < \Lambda_{q,\omega}(\theta)\coloneqq\lim_{n\to\infty}\frac{1}{n}\log E_{0,\omega}[e^{\langle\theta,S_n\rangle}I_{\mathbb{B}_n}] < \infty$ exists and is finite, with $\Lambda_{q,\omega}$ convex continuous on $\mathbb{R}^{d-1}$. If $y=\nabla\Lambda_{q,\omega}(\eta)$ for some $\eta\in\mathbb{R}^{d-1}$, then $\langle\eta,y\rangle - \Lambda_{q,\omega}(\eta) = \sup_{\theta\in\mathbb{R}^{d-1}} \big\{ \langle\theta,y\rangle - \Lambda_{q,\omega}(\theta) \big\} \eqqcolon \Lambda_{q,\omega}^*(y)$. Furthermore, $y$ is an exposed point of $\Lambda^*_{q,\omega}$ with $\eta$ its exposing hyperplane, i.e.~$\langle\eta,y\rangle - \Lambda^*_{q,\omega}(y) > \langle\eta,x\rangle - \Lambda^*_{q,\omega}(x)$, $\forall~x\neq y$. Moreover, its convex conjugate $\Lambda^*_{q,\omega}$ is convex  lower-semicontinuous on $\mathbb{R}^{d-1}$. And $\forall~F\subseteq\mathbb{R}^{d-1}$ closed and $\forall~G\subseteq\mathbb{R}^{d-1}$ open,
        \[
            \varlimsup_{n\to\infty}\tfrac{1}{n}\log P_{0,\omega}\big(\tfrac{1}{n}S_n\in F,\,\mathbb{B}_n\big) \leq -\inf_{x\in F}\Lambda^*_{q,\omega}(x),\qquad \varlimsup_{n\to\infty}\tfrac{1}{n}\log P_{0,\omega}\big(\tfrac{1}{n}S_n\in G,\,\mathbb{B}_n\big) \geq -\inf_{x\in G\cap\mathcal{F}^\omega}\Lambda^*_{q,\omega}(x),
        \]
        where $\mathcal{F}^\omega$ denotes the set of exposed points of $\Lambda_{q,\omega}$. Similar to the quenched scenario, the annealed limit $-\infty<\Lambda_a(\theta)\coloneqq \lim_{n\to\infty}\frac{1}{n}\log E_0[e^{\langle\theta,S_n\rangle}I_{\mathbb{B}_n}] < \infty$ exists and is finite $\forall~\theta\in\Theta$ as well. And the above properties for $\Lambda_{q,\omega}$ hold as well for $\Lambda_a$ with $\Lambda^*_{q,\omega}$ and $P_{0,\omega}$ replaced by $\Lambda^*_{a}$ and $P_{0}$ everywhere.
    \end{lemma}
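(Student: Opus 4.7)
The plan is to identify the quenched log-moment generating function with its annealed counterpart $\mathbb{P}$-almost surely via the auxiliary random walk decomposition, and then invoke the classical Gärtner--Ellis theorem to obtain the LDP.

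\emph{Step 1 (Annealed limit).} Since the law of $(X_n)_{n\geq0}$ under $P_0$ coincides with its law under $\overline{\mathbb{P}}^0$ by the construction of Section~\ref{sec: section 4.2}, it suffices to study $\overline{\mathbb{E}}^0[e^{\langle\theta,S_n\rangle}I_{\mathbb{B}_n}]$. Set
\[
\psi^{(L)}_n(\theta) \coloneqq \log\overline{\mathbb{E}}^0\big[e^{\langle\theta,S(\tau^{(L)}_n)\rangle}I_{\mathbb{B}(\tau^{(L)}_n)}\big].
\]
Applying Lemma~\ref{lem: exponential mixing inequality} to the factorisation $I_{\mathbb{B}(\tau^{(L)}_{n+m})}=I_{\mathbb{B}(\tau^{(L)}_n)}I_{\mathbb{B}_{\tau^{(L)}_n,\tau^{(L)}_{n+m}}}$ combined with the geometric mixing control of $\textbf{(SMX)}_{C,g}$ at separation $>L_0$ yields the near-subadditivity
\[
\psi^{(L)}_{n+m}(\theta)\leq\psi^{(L)}_n(\theta)+\psi^{(L)}_m(\theta)+\delta(L),\qquad\delta(L)=O(e^{-gL})\to0\text{ as }L\to\infty,
\]
so Fekete's lemma produces $\lim_n n^{-1}\psi^{(L)}_n(\theta)$, and sending $L\to\infty$ yields an $L$-independent common limit. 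Interpolating between $\tau^{(L)}_{k_n}\leq n<\tau^{(L)}_{k_n+1}$, where the i.i.d.\ gaps $\tau^{(L)}_{k+1}-\tau^{(L)}_k$ have exponential moments and thus $\max_{k\leq n}(\tau^{(L)}_{k+1}-\tau^{(L)}_k)=o(n)$ $Q$-a.s., transfers this to
\[
\Lambda_a(\theta)\coloneqq\lim_{n\to\infty}\tfrac{1}{n}\log E_0\big[e^{\langle\theta,S_n\rangle}I_{\mathbb{B}_n}\big],
\]
with $|\Lambda_a(\theta)|\leq|\theta|_1$ thanks to $|S_n|_1\leq n$.

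\emph{Step 2 (Quenched equals annealed).} Decomposing $P_{0,\omega}=Q\otimes\overline{P}^0_{\omega,\epsilon}$ and using the definition of $\mathcal{L}^0_{n,\theta}$,
\[
E_{0,\omega}\big[e^{\langle\theta,S_n\rangle}I_{\mathbb{B}_n}\big]=E_Q\big[\mathcal{L}^0_{n,\theta}(\omega,\epsilon)\big]\cdot\overline{\mathbb{E}}^0\big[e^{\langle\theta,S_n\rangle}I_{\mathbb{B}_n}\big].
\]
Taking $n^{-1}\log$, Step~1 controls the second factor and Lemma~\ref{lem: log L goes to zero} forces the first factor to vanish $\mathbb{P}$-a.s., yielding $\Lambda_{q,\omega}(\theta)=\Lambda_a(\theta)$ $\mathbb{P}$-a.s. for each fixed $\theta$. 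To produce a single full-measure event $\Omega_0$ independent of $\theta$, fix a countable dense $\Theta_0\subseteq\mathbb{R}^{d-1}$ and intersect the $\theta$-indexed exceptional sets. Convexity of $\Lambda_{q,\omega}$ and $\Lambda_a$ is inherited from Hölder applied to finite-$n$ log-moment generating functions, and combined with finiteness on all of $\mathbb{R}^{d-1}$ forces continuity~\cite[Theorem~10.2]{Rockafellar}, so the equality propagates from $\Theta_0$ to every $\theta\in\mathbb{R}^{d-1}$ on $\Omega_0$.

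\emph{Step 3 (LDP via Gärtner--Ellis).} Write $\Lambda\equiv\Lambda_{q,\omega}=\Lambda_a$ on $\Omega_0$. Convex lower-semicontinuity of $\Lambda^*$ and the duality $\Lambda^*(y)=\langle\eta,y\rangle-\Lambda(\eta)$ at $y=\nabla\Lambda(\eta)$ are standard convex analysis, as is the fact that such $y$ is an exposed point of $\Lambda^*$ with exposing hyperplane $\eta$. The upper bound on closed $F$ follows from exponential Chebyshev $P_{0,\omega}(n^{-1}S_n\in F,\mathbb{B}_n)\leq e^{n(\Lambda(\theta)-\langle\theta,y\rangle)+o(n)}$ and optimising over $\theta\in\mathbb{R}^{d-1}$. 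For the lower bound at $y\in G\cap\mathcal{F}^\omega$ with exposing hyperplane $\eta$, tilt by $\exp(\langle\eta,S_n\rangle-n\Lambda(\eta))$ to produce a new probability measure under which $n^{-1}S_n\to y$ in probability, so that a small neighbourhood $\{|n^{-1}S_n-y|<\rho\}\subseteq G$ captures the tilted mass and delivers the desired lower bound restricted to $\mathcal{F}^\omega$. The annealed assertion is identical with $P_0,\Lambda_a,\Lambda_a^*$ in place of $P_{0,\omega},\Lambda_{q,\omega},\Lambda^*_{q,\omega}$. The main obstacle is twofold: first, one must quantify $\delta(L)=O(e^{-gL})$ in Step~1 by summing the exponential mixing bounds over the boundary geometry of the hit-range; second, the exceptional $\mathbb{P}$-nullset must be made uniform in $\theta$, which rules out a naive union bound and forces the countable-dense-to-continuum extension via the continuity of finite convex functions on $\mathbb{R}^{d-1}$.
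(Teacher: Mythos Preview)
Your Step~2 contains a genuine error that invalidates the argument. You invoke Lemma~\ref{lem: log L goes to zero} to conclude $\Lambda_{q,\omega}(\theta)=\Lambda_a(\theta)$ $\mathbb{P}$-a.s., but that lemma carries the hypothesis $\text{dis}(\mathbb{P})<\epsilon_0(\kappa,\Theta)$, which Lemma~\ref{lem: boundary LDP general existence} does \emph{not} assume. Worse, the equality $\Lambda_{q,\omega}=\Lambda_a$ is in general false: the entire point of Theorems~\ref{thm: equality of LDP at boundary}--\ref{thm: non-equality of rate functions} is that the two rate functions, and hence the two log-moment generating functions, differ unless disorder is sufficiently small. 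So you cannot derive the existence of the quenched limit by identifying it with the annealed one; the lemma asks you to establish both limits \emph{separately}, with no claim of coincidence.

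The paper's route is quite different and much shorter. For the quenched limit it simply cites \cite[Theorems~2.2,~2.4]{Rassoul-Agha/Seppalainen}, a general result on quenched point-to-point free energies for random walks in random potentials, which directly gives the $\mathbb{P}$-a.s.\ existence of $\Lambda_{q,\omega}(\theta)$ simultaneously for all $\theta$; finiteness follows from the deterministic bound $n^{-1}\log E_{0,\omega}[e^{\langle\theta,S_n\rangle}I_{\mathbb{B}_n}]\le(d-1)|\theta|_2$. For the annealed limit it does not attempt a fresh subadditivity argument at all: instead it invokes the already-established annealed LDP (Theorem~\ref{thm: existence of LDP}) together with Varadhan's lemma \cite[Theorem~4.3.1]{Dembo/Zeitouni}, checking the moment condition $\varlimsup_n n^{-1}\log E_0[e^{\gamma\langle\theta,S_n\rangle}I_{\mathbb{B}_n}]\le\gamma|\theta|_2<\infty$ for $\gamma>1$. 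Your Step~1 near-subadditivity sketch is not needed, and your Step~3 is essentially what the paper does once the limits are in hand (standard Gärtner--Ellis).
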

    \begin{proof}
        The existence of the quenched limit $\frac{1}{n}\log E_{0,\omega} [e^{\langle\theta,S_n\rangle}I_{\mathbb{B}_n}] \to\Lambda_{q,\omega}(\theta) $ as $n\to\infty$ simultaneously for all $\theta\in\mathbb{R}^{d-1}$, $\mathbb{P}$-a.s. follows directly from \cite[Theorems 2.2, 2.4]{Rassoul-Agha/Seppalainen}, and its finiteness is due to $n^{-1}\log P_{0,\omega}[e^{\langle\theta,S_n\rangle},\,\mathbb{B}_n] \leq \abs{\theta}_2(d-1)$ for all $n\in\mathbb{N}$. The annealed limit $\Lambda_a(\theta)$ also exists and is finite due to Theorem \ref{thm: existence of LDP} and Varadhan's lemma \cite[Theorem 4.3.1]{Dembo/Zeitouni}. Notice $\forall~\gamma>1$, we always have $\varlimsup_{n\to\infty}\frac{1}{n} \log E_{0} [e^{\langle\gamma\theta,S_n\rangle}I_{\mathbb{B}_n}] \leq \gamma\abs{\theta}_2$, satisfying the condition \cite[Eqn. (4.3.3)]{Dembo/Zeitouni}.  Once we have established the quenched and annealed limiting LMGs, the remaining assertions follow from a standard argument via the Gärtner--Ellis theorem \cite[Section 2.3]{Dembo/Zeitouni}. 
    \end{proof}
    \begin{lemma}\label{lem: transfer the compact sets}
        \normalfont
        When $\mathbb{P}$ verifies $\textbf{(SMX)}_{C,g}$, $\forall~\mathcal{K} \subseteq \partial\mathbb{D}(s)\backslash\partial\mathbb{D}_{d-2}$ compact, $\exists~R_{\mathcal{K}} = R_{\mathcal{K}}(d, \kappa, \mathcal{K}) > 0 $ which satisfies $\pi(\mathcal{K}) \subseteq \{\nabla\Lambda_a(\theta) :\, \theta \in D_{R_{\mathcal{K}}}\}$ where $D_{R_{\mathcal{K}}}\coloneqq\{\theta\in\mathbb{R}^{d-1}:\,\abs{\theta}<R_{\mathcal{K}}\}$ .
    \end{lemma}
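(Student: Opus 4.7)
The plan is to reduce the claim to a convex-analytic statement about the image of the gradient map $\nabla\Lambda_a$, and then to extract the quantitative radius by a coercivity argument at infinity. First, I will pin down the target geometry: the affine map $\pi$ sends the $(d-1)$-simplex $\partial\mathbb{D}(s) = \mathrm{conv}\{s_i e_i : 1 \leq i \leq d\}$ bijectively onto the simplex $\Sigma \coloneqq \mathrm{conv}\{e_1,\ldots,e_{d-1},-(e_1+\cdots+e_{d-1})\} \subseteq \mathbb{R}^{d-1}$, and in particular sends $\partial\mathbb{D}_{d-2} \cap \partial\mathbb{D}(s)$ onto the relative boundary $\partial\Sigma$ (vanishing of any coordinate in $\partial\mathbb{D}(s)$ is faithfully tracked by $\pi$). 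Consequently $\pi(\mathcal{K})$ is a compact subset of $\mathrm{int}(\Sigma)$, so by compactness $\mathrm{dist}(\pi(\mathcal{K}),\partial\Sigma) \geq \delta$ for some $\delta = \delta(\mathcal{K}) > 0$.

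Next, I will verify that $\nabla\Lambda_a : \mathbb{R}^{d-1} \to \mathrm{int}(\Sigma)$ is well-defined, continuous, and surjective. Convexity and finiteness of $\Lambda_a$ are already provided by Lemma \ref{lem: boundary LDP general existence}, so it suffices to check strict convexity (hence $C^1$-regularity via \cite[Theorem 25.5]{Rockafellar}). The uniform ellipticity hypothesis is the key input: exponential tilting of the annealed one-step distribution by any $\theta \in \mathbb{R}^{d-1}$ preserves a positive lower bound on the weight of each direction in $\mathbb{V}_s$, so the covariance of $\pi(X_1)$ under the tilted measure is uniformly nondegenerate on $\mathbb{R}^{d-1}$, forcing strict convexity of $\Lambda_a$. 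Standard convex duality (\cite[Theorem 26.5]{Rockafellar}) then identifies $\nabla\Lambda_a(\mathbb{R}^{d-1})$ with the relative interior of $\mathrm{dom}(\Lambda_a^*)$. Since on $\mathbb{B}_n$ the walk satisfies $S_n/n \in \Sigma$, Varadhan's lemma applied to Theorem \ref{thm: existence of LDP} exactly as in Lemma \ref{lem: boundary LDP general existence} identifies $\mathrm{dom}(\Lambda_a^*) = \Sigma$, whence $\nabla\Lambda_a(\mathbb{R}^{d-1}) = \mathrm{int}(\Sigma) \supseteq \pi(\mathcal{K})$.

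It remains to produce the uniform radius $R_{\mathcal{K}}$. I will argue by contradiction: if the preimage $(\nabla\Lambda_a)^{-1}(\pi(\mathcal{K}))$ is unbounded, pick a sequence $(\theta_n)_{n\geq1}$ in it with $|\theta_n|_2 \to \infty$ and extract $\theta_n/|\theta_n|_2 \to u$ on the unit sphere of $\mathbb{R}^{d-1}$. As $|\theta_n|_2 \to \infty$ the tilted one-step distribution concentrates on those $\pi(s_i e_i)$ maximising $\langle u, \cdot\rangle$, and an ergodic-type averaging argument forces $\nabla\Lambda_a(\theta_n)$ to accumulate on the corresponding face of $\Sigma$, which lies inside $\partial\Sigma$, contradicting $\mathrm{dist}(\pi(\mathcal{K}),\partial\Sigma) \geq \delta$. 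The main obstacle is turning this heuristic into a quantitative coercivity estimate $\mathrm{dist}(\nabla\Lambda_a(\theta),\partial\Sigma) \to 0$ as $|\theta|_2 \to \infty$. I would derive it from the dual blow-up $\Lambda_a^*(y) \to \infty$ as $y \to \partial\Sigma$: reaching a $\pi$-velocity close to a facet of $\Sigma$ requires a linear fraction of one-step jumps to avoid a prescribed direction in $\mathbb{V}_s$, each such avoidance costing at least $-\log(1-\kappa) > 0$ by uniform ellipticity, so $\Lambda_a^*$ diverges on approach to $\partial\Sigma$, which by Legendre duality yields the required coercivity on $\nabla\Lambda_a$ and hence the desired $R_{\mathcal{K}}$.
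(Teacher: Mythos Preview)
Your convex-duality strategy is genuinely different from the paper's and is salvageable, but as written it contains two real errors. First, ``strict convexity (hence $C^1$-regularity via \cite[Theorem 25.5]{Rockafellar})'' has the implication backwards: strict convexity does not force differentiability, and your one-step tilted-covariance sketch of strict convexity is in any case too quick, since under $\textbf{(SMX)}_{C,g}$ the annealed $n$-step expectation does not factor and $\Lambda_a$ is only obtained as a limit. Differentiability of $\Lambda_a$ is, however, supplied directly by Lemma~\ref{lem: B0}, so you can cite it instead. Second, the claim $\Lambda_a^*(y)\to\infty$ as $y\to\partial\Sigma$ is false: from the ellipticity lower bound $E_0[e^{\langle\theta,S_n\rangle}I_{\mathbb{B}_n}]\ge \kappa^n\big(\sum_{e\in\mathbb{V}_s}e^{\langle\theta,\pi(e)\rangle}\big)^n$ one gets $\Lambda_a^*\le -\log\kappa+(\log\sum_e e^{\langle\cdot,\pi(e)\rangle})^*$, and the right-hand side is the Cram\'er entropy of a single uniform step on $\{\pi(e):e\in\mathbb{V}_s\}$, which stays bounded on all of $\Sigma$ (vertices included). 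So the boundary-blowup heuristic cannot close the argument. The clean fix bypasses both strict convexity and blowup: once $\Lambda_a$ is differentiable, $y=\nabla\Lambda_a(\theta)$ gives $\theta\in\partial\Lambda_a^*(y)$ by Fenchel duality, and \cite[Theorem 24.7]{Rockafellar} asserts that the subdifferential of a closed proper convex function is bounded on any compact subset of the interior of its domain; since your first paragraph already puts $\pi(\mathcal{K})\subset\mathrm{int}(\Sigma)=\mathrm{int}(\mathrm{dom}\,\Lambda_a^*)$, this yields the radius $R_{\mathcal{K}}$ immediately.

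The paper's own route is quite different: it never touches $\Lambda_a^*$, but compares $\nabla\Lambda_a$ pointwise with the gradient of the explicit one-site averaged generator $\psi^s(\theta)=\sum_{e\in\mathbb{V}_s}e^{\langle\theta,\pi(e)\rangle}\mathbb{E}[\omega(0,e)]$ through the disorder sandwich $(1\pm\text{dis}(\mathbb{P}))\nabla\log\psi^s$ attributed to Lemma~\ref{lem: B0}, and then imports \cite[Lemma 3.8]{Bazaes/Mukherjee/Ramirez/Sagliett} for the deterministic map $\nabla\log\psi^s$. Your route, once repaired as above, is more self-contained (no external lemma, no gradient sandwich); the paper's is more explicit about how $R_{\mathcal{K}}$ interacts with $\text{dis}(\mathbb{P})$.
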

    \begin{proof}
        Let $\psi^s:\theta\in\mathbb{R}^{d-1}\mapsto\sum_{e\in\mathbb{V}_s}e^{\langle\theta,\pi(e)\rangle}\mathbb{E}[\omega(0,e)]$. According to Lemma \ref{lem: B0}, $\Lambda_a$ is differentiable on $\mathbb{R}^{d-1}$ and that $(1-\text{dis}(\mathbb{P}))\nabla\log\psi^s(\theta)\leq\nabla\Lambda_a(\theta)\leq(1+\text{dis}(\mathbb{P}))\nabla\log\psi^s(\theta)$, $\forall~\theta\in\mathbb{R}^{d-1}$. Invoking \cite[Lemma 3.8]{Bazaes/Mukherjee/Ramirez/Sagliett}, we can find $R_{\mathcal{K}}>0$ so that $(1-\text{dis}(\mathbb{P}))^{-1}\pi(\mathcal{K})\subseteq \{\nabla\log\psi^s(\theta) :\, \theta \in D_{R_{\mathcal{K}}}\}$, which implies $\pi(\mathcal{K}) \subseteq \{\nabla\Lambda_a(\theta) :\, \theta \in D_{R_{\mathcal{K}}}\}$, verifying the assertion.
    \end{proof}
    \begin{proof}[Proof of Theorem \ref{thm: equality of LDP at boundary}]
        \textbf{Step I.} By Lemma \ref{lem: log L goes to zero}, $\forall~R>0$, $\exists~\epsilon_R = \epsilon_R(d,\kappa,R)>0$ such that whenever $\text{dis}(\mathbb{P})<\epsilon_R$, then for each $\theta^\prime\in D_R\coloneqq\{\theta\in\mathbb{R}^{d-1}:\,\abs{\theta}_1\leq R\}$, we have $n^{-1}\log E_Q[\mathcal{L}^0_{n,\theta^\prime}(\omega,\vdot)]\to0$ as $n\to\infty$, $\mathbb{P}$-a.s. And then, there exists a full $\mathbb{P}$-probability event $\Omega_R=\Omega_R(\mathbb{P},R)\subseteq\Omega_0$ so that $\lim_{n\to\infty}\frac{1}{n} \log E_Q[\mathcal{L}^0_{n,\theta}(\omega,\vdot)]=0$, $\forall~\theta\in\Theta_R$ and $\omega\in\Omega_R$, where $\Theta_R\subseteq D_R$ is fixed and dense in $D_R$. Henceforth, 
        \begin{equation}\label{eqn: L vanishes on countable subsets}
            \Lambda_{q,\omega}(\theta) = \Lambda_a(\theta) + \lim_{n\to\infty}\frac{1}{n} \log E_Q[\mathcal{L}^0_{n,\theta}(\omega,\vdot)] = \Lambda_a(\theta),\qquad\forall~\theta\in\Theta_R.
        \end{equation}
       Since$\Lambda_{q,\omega}$ is continuous on $D_R$ when $\omega\in\Omega_0$, the equality $\Lambda_{q,\omega}(\theta) = \Lambda_a(\theta)$ in (\ref{eqn: L vanishes on countable subsets}) holds $\forall~\theta\in D_R$ when $\omega\in\Omega_R\subseteq\Omega_0$ and when $\text{dis}(\mathbb{P})<\epsilon_R$. For any compact $\mathcal{K} \subseteq \partial\mathbb{D}(s)\backslash\partial\mathbb{D}_{d-2}$, by Lemma, \ref{lem: transfer the compact sets} $\exists~R_{\mathcal{K}}$ s.t. $\pi(\mathcal{K})\subseteq\{\nabla\Lambda_a(\theta):\,\theta\in D_{R_{\mathcal{K}}}\}$. Let the open set $ \mathcal{O}_{R_{\mathcal{K}}} \coloneqq\pi^{-1} \{ \nabla\Lambda_a(\theta):\,\theta\in D_{R_{\mathcal{K}}} \}$ and consider $\epsilon = \epsilon_{R_{\mathcal{K}}+1}>0$. Take $\text{dis}(\mathbb{P})<\epsilon$. Then there exists a full $\mathbb{P}$-probability event $\Omega_{R_{\mathcal{K}}+1}$ such that $\Lambda_{q,\omega}(\theta) = \Lambda_a(\theta)$ simultaneously for all $\theta\in D_{R_{\mathcal{K}}+1}$ if $\omega\in \Omega_{R_{\mathcal{K}}+1}$. It follows that $\pi(\mathcal{O}_{R_{\mathcal{K}}}) =  \{ \nabla\Lambda_{q,\omega}(\theta):\,\theta\in D_{R_{\mathcal{K}}} \}$ on the event $\Omega_{R_{\mathcal{K}}+1}$. By Lemma \ref{lem: boundary LDP general existence}, when $\omega\in \Omega_{R_{\mathcal{K}}+1}$ the average $(S_n/n)_{n\geq0}$ under $P_{0,\omega}$ satisfies an LDP on $\pi(\mathcal{O}_{R_{\mathcal{K}}})$ with rate function  $\Lambda_{q,\omega}^*(x) = \Lambda^*(x) \coloneqq \langle\theta_x,x\rangle - \nabla\Lambda_a(\theta_x)$, where $\theta_x\in D_{R_{\mathcal{K}}}$ is determined via $x = \nabla\Lambda_a(\theta_x)$. Here the LDP inside $\pi(\mathcal{O}_{R_{\mathcal{K}}})$ is interpreted as 
       \[
                \varlimsup_{n\to\infty} \tfrac{1}{n}\log P_{0,\omega}\big(\tfrac{1}{n}S_n\in F,\,\mathbb{B}_n\big) \leq -\inf_{x\in F} \Lambda^*(x),\qquad \varliminf_{n\to\infty} \tfrac{1}{n}\log P_{0,\omega}\big(\tfrac{1}{n}S_n\in G,\,\mathbb{B}_n\big) \geq -\inf_{x\in G} \Lambda^*(x),
            \]
            for any closed $F\subseteq\pi(\mathcal{O}_{R_{\mathcal{K}}})$ and open $G\subseteq\pi(\mathcal{O}_{R_{\mathcal{K}}})$. On the other hand, the affinity of $\pi:\partial\mathbb{D}(s)\to\mathbb{R}^{d-1}$ implies the events $\{n^{-1}S_n\in\pi(H)\}\cap\mathbb{B}_n$ and $\{n^{-1}X_n\in H\}$ are equivalent, for any Borel set $H\subseteq\partial\mathbb{D}(s)$. Hence the above relations induce an LDP for the moving average, interpreted as
            \[
                \varlimsup_{n\to\infty} \tfrac{1}{n}\log P_{0,\omega}\big(\tfrac{1}{n}X_n\in F \big) \leq -\inf_{x\in F} \Lambda^*(x),\qquad \varliminf_{n\to\infty} \tfrac{1}{n}\log P_{0,\omega}\big(\tfrac{1}{n}X_n\in G\big) \geq -\inf_{x\in G} \Lambda^*(x),
            \]
            for any closed $F\subseteq\mathcal{O}_{R_{\mathcal{K}}}$ and open $G\subseteq\mathcal{O}_{R_{\mathcal{K}}}$. Finally, if we are able to prove $\Lambda^*\circ\pi$ coincides with both $I_q$ and $I_a$ on $\mathcal{O}_{R_{\mathcal{K}}}$, then this verifies the theorem because $\mathcal{K}\subseteq\mathcal{O}_{R_{\mathcal{K}}}$. Indeed, suppose first that $\Lambda^*(\pi(x))< I_q(x)$ for some $x\in\mathcal{O}_{R_{\mathcal{K}}}$. The lower-semicontinuity of $I_q$ implies that there exists a neighborhood $B$ of $x$ such that $\inf_{y\in\overline{B}}I_q(y) > \Lambda^*(\pi(x))$. We further denote by $G_x = \pi(B)\cap\{y\in\mathbb{R}^d:\,y_d=0\}$. Then, by Lemma \ref{lem: boundary LDP general existence},  
        \begin{equation*}\begin{aligned}
            -\Lambda^*(\pi(x)) &= -\Lambda^*_{q,\omega}(\pi(x)) \leq -\inf_{y\in G_x\cap\mathcal{F}^\omega} \Lambda^*_{q,\omega}(y) \leq \varliminf_{n\to\infty} \frac{1}{n}\log P_{0,\omega}\big(\tfrac{1}{n}X_n\in \pi^{-1}(G_x) \big)\\
            &\leq \varlimsup_{n\to\infty} \frac{1}{n}\log P_{0,\omega}\big(\tfrac{1}{n}X_n\in\overline{B}\big)\leq -\inf_{y\in\overline{B}}I_q(y) < -\Lambda^*(\pi(x)),
        \end{aligned}\end{equation*}
        which is impossible, showing that $I_q(x) \leq \Lambda^*(\pi(x))$, $\forall~x\in\mathcal{O}_{R_{\mathcal{K}}}$.  Furthermore, $\forall~0<\delta<\frac{1}{2}d_1(x,\partial\mathbb{D}_{d-2})$, let $B_\delta(x)\coloneqq\{y\in\mathbb{R}^d:\,\abs{y-x}_2<\delta\}$ and $B_{\delta\times\delta^\prime}(x)\coloneqq(B_\delta(x)\cap\partial\mathbb{D}_s)\times s(-\delta,\delta)\subseteq \partial\mathbb{D}_s\times\mathbb{R}$, $\forall~\delta^\prime>0$. Since $I_q$ is continuous on $\mathbb{D}$, then $\forall~\epsilon>0$,   $\exists~\delta=\delta(\epsilon)>0$ small enough so that $ -I_q(x)\leq -\inf_{y\in B_{\delta\times\delta}(x)} I_q(y) + \epsilon/2 \leq \varliminf_{n\to\infty}\frac{1}{n}\log P_{0,\epsilon}(\tfrac{1}{n}X_n\in B_{\delta\times\delta}(x) ) +\epsilon/2$. Hence, $-I_q(x)\leq \varlimsup_{n\to\infty}\frac{1}{n}\log P_{0,\epsilon}(\tfrac{1}{n}X_n\in \overline{B_{\delta\times\delta}(x)}\, ) +\epsilon/2$. Notice that as $t\searrow0$, 
        \[
            \varlimsup_{n\to\infty}\tfrac{1}{n}\log P_{0,\epsilon}\big(\tfrac{1}{n}X_n\in \overline{B_{\delta\times t}(x)}\, \big)\searrow \varlimsup_{n\to\infty}\tfrac{1}{n}\log P_{0,\epsilon}\big(\tfrac{1}{n}X_n\in \overline{B_{\delta}(x)}\cap\partial\mathbb{D}_s \big),\quad\forall~0 < \delta < \tfrac{1}{2}d_1(x,\partial\mathbb{D}_{d-2}).
        \]
        Therefore, there exists sufficiently small $\delta^\prime(\epsilon)<\delta$ such that $-I_q(x)$ is less than or equal to
        \[     
            \varlimsup_{n\to\infty}\tfrac{1}{n}\log P_{0,\epsilon}\big(\tfrac{1}{n}X_n\in \overline{B_{\delta\times\delta^\prime}(x)}\, \big) +\tfrac{\epsilon}{2}\leq \varlimsup_{n\to\infty}\tfrac{1}{n}\log P_{0,\epsilon}\big(\tfrac{1}{n}X_n\in \overline{B_{\delta}(x)}\cap\partial\mathbb{D}_s \big) + \epsilon \leq -\inf_{y\in\overline{B_{\delta}(x)}\cap\partial\mathbb{D}_s} \Lambda^*_{q,\omega}(\pi(y)) + \epsilon.
        \]
        First letting $\delta\to0$ and then letting $\epsilon\to0$, we get $I_q(x) = \Lambda^*_{q,\omega}(\pi(x))$ for any $\omega\in\Omega_{R_{\mathcal{K}}+1}$ and any $x\in\mathcal{O}_{R_{\mathcal{K}}}$ where $\mathcal{K}\subseteq\mathcal{O}_{R_{\mathcal{K}}}$. And similar results also hold for $I_a$ and $\Lambda_a\circ\pi$ on $\mathcal{O}_{R_{\mathcal{K}}}$, which verifies the assertion.
    \end{proof}

\subsection{Effect of imbalance on the rate functions at the boundary.}
    To prove Theorem \ref{thm: imbalance of LDP at boundary}, the argument of Theorem \ref{thm: equality of LDP at boundary} already reveals that it is suffices to show that  $\exists~\epsilon^*=\epsilon^*(d,\kappa)$ and $\eta=\eta(d,\kappa)$ such that whenever $\text{imb}_s(\mathbb{P})<\epsilon^*$, 
    $\sup_{n\in\mathbb{N},\,\abs{\theta}\leq\eta}\normx{\mathcal{H}_{n,\theta}}_{L^2(\mathbb{P}\otimes Q)}<\infty$. Hence the rest of this section is dedicated to prove the above inequality. And from now on we choose $\text{imb}_s(\mathbb{P})<1/2$.
    \begin{proof}[\textbf{Proof of Theorem \ref{thm: imbalance of LDP at boundary}}.]
       Let $\tilde{C}(U)\coloneqq \sup_{\ell\in\mathbb{N},\theta\in U} \sum_{j=1}^\infty \ell^{-2}\hat{E}^{(\theta)}_{x,y} [ N^{\ell}_j(\Vec{X}^{(\theta)},\Vec{Y}^{(\theta)})I_{\{d_1(\Vec{X}^{(\theta)}_j,\Vec{Y}^{(\theta)}_j)\leq \ell\}} ]  $ where we let $U$ stand for $U\coloneqq\{\theta\in\mathbb{R}^{d-1}:\,\abs{\theta}\leq1\}$. Then we have
        \[
            \normx{\mathcal{H}_{n,\theta}}^2_{L^2(\mathbb{P}\otimes Q)} = \hat{E}^{(\theta)}_0\big[e^{\sum_{j=1}^n  V(\Vec{X}^{(\theta)}_j,\Vec{Y}^{(\theta)}_j)}\big] = \hat{E}^{(\theta)}_0 \big[e^{\sum_{j=1}^n V(\Vec{X}^{(\theta)}_j,\Vec{Y}^{(\theta)}_j) I\{d_1(\Vec{X}^{(\theta)}_j,\Vec{Y}^{(\theta)}_j)<L\} }\big].
        \]
        Define $\mathscr{V}^{(0)}_{\kappa,s,\theta}\coloneqq(d-1)\abs{\theta}+\frac{1}{2}\text{log}(\tfrac{1+\text{imb}_s(\mathbb{P})}{1-\text{imb}_s(\mathbb{P})})$ and $\mathscr{V}^{(k)}_{\kappa,s,\theta}$ for each $k\geq1$ by $e^{\frac{1}{2}\mathscr{V}^{(k+1)}_{\kappa,s,\theta}}=e^{ \frac{1}{2} \mathscr{V}^{(k)}_{\kappa,s,\theta} }+(e^{ \frac{1}{2} \mathscr{V}^{(k)}_{\kappa,s,\theta} }-1)K_{\kappa,\theta}$ with $K_{\kappa,\theta}\coloneqq e^{2(d-1)\abs{\theta}}(1-2\kappa)$. Then, following from a similar argument as \cite[Pages 21--22]{Bazaes/Mukherjee/Ramirez/Sagliett}, we have the following estimate $\normx{\mathcal{H}_{n,\theta}}^2_{L^2(\mathbb{P}\otimes Q)} \leq \hat{E}^{(\theta)}_0[ \exp \sum_{j<n} \sum_{i=\tau^{(L)}_{j-1}}^{\tau^{(L)}_j-1} \mathscr{V}^{(n-i-1)}_{\kappa,s,\theta} I\{|X^{(\theta)}_i-Y^{(\theta)}_i|_1<L\} ]$. Notice that $\exists~0<\eta^\prime(d,\kappa)<1$ such that $\forall~\theta$ with $\abs{\theta}<\eta^\prime(d,\kappa)$, we have $K_{\kappa,\theta}<1$. For any such $\theta$ and any $k\geq1$, 
        \[
            e^{\frac{1}{2}\mathscr{V}^{(k)}_{\kappa,s,\theta}} = e^{\frac{1}{2}\mathscr{V}^{(0)}_{\kappa,s,\theta}}+\big( e^{\frac{1}{2}\mathscr{V}^{(0)}_{\kappa,s,\theta}} - 1 \big)(K_{\kappa,\theta}+ K_{\kappa,\theta}^2+\cdots+ K_{\kappa,\theta}^{k-1} ) \leq e^{\frac{1}{2}\mathscr{V}^{(\infty)}_{\kappa,s,\theta}}\coloneqq e^{\frac{1}{2}\mathscr{V}^{(0)}_{\kappa,s,\theta}}+\big( e^{\frac{1}{2}\mathscr{V}^{(0)}_{\kappa,s,\theta}} - 1 \big)\tfrac{K_{\kappa,\theta}}{1-K_{\kappa,\theta}}.
        \]
        Therefore, $\normx{\mathcal{H}_{n,\theta} }^2_{L^2(\mathbb{P}\otimes Q)}\leq  \hat{E}^{(\theta)}_0 [ e^{\sum_{j=1}^\infty 2N^{L}_j(\Vec{X}^{(\theta)},\Vec{Y}^{(\theta)})\mathscr{V}^{(\infty)}_{\kappa,s,\theta} \mathbbm{1}\{ d_1(\Vec{X}^{(\theta)}_j,\Vec{Y}^{(\theta)}_j)<L\} }]$,
        where for some $\eta^\prime(d,\kappa)$, we have $K_{\kappa,\theta}\leq1-\kappa$ and $2(d-1)\abs{\theta}\leq1$ when $\abs{\theta}\leq\eta^\prime(d,\kappa)$. In that case,
        \[
            \mathscr{V}^{(\infty)}_{\kappa,s,\theta}\leq C^\prime_1\big(\abs{\theta}+\text{imb}_s(\mathbb{P})\big)\frac{1+\text{imb}_s(\mathbb{P})}{1-\text{imb}_s(\mathbb{P})} \leq C_1 \big(\abs{\theta}+\text{imb}_s(\mathbb{P})\big)\big(1+\text{imb}_s(\mathbb{P})\big)
        \]
        for some constant $C_1=C_1(d,\kappa)>0$, where the second inequality is due to the fact that $\text{imb}_s(\mathbb{P})\leq1/2$. On the other hand, using (\ref{eqn: left to prove in step III}) we know that $\sup_{\abs{\theta}\leq\eta^\prime}\sum_{j=1}^\infty \hat{E}^{(\theta)}_{x,y}[ N^{L}_j(\Vec{X}^{(\theta)},\Vec{Y}^{(\theta)})I_{\{d_1(\Vec{X}^{(\theta)}_j,\Vec{Y}^{(\theta)}_j)\leq L\}} ] \leq \tilde{C}L^2\coloneqq C_2(d,\kappa)$. 
        It then follows from a similar argument as the proof of Lemma \ref{lem: square integrable of the martingale H} that there exists $\eta(d,\kappa)<\eta^\prime$ and $\epsilon^*(d,\kappa)$ such that square-integrability of $(\mathcal{H}_{n,\theta})_{n\geq1}$, provided $\text{imb}_s(\mathbb{P})\leq \epsilon^*$.
    \end{proof}

\section{Threshold of disorder in the interior of the velocity surface}
    When $\mathbb{P}$ verifies $\textbf{(SMX)}_{C,g}$, the existence of a nontrivial threshold of disorder is shown and assigned for any compact subset in $\text{int}(\mathbb{D})\backslash\{0\}$ of the velocity surface. Below this threshold, the rate functions $I_a,I_q$ are forced to conform. The key technique here is an approximate renewal sequence that separates the entangled information from the  environment.

\subsection{Construction of an auxiliary Markovian $Q^z$-random walk.}
    Take an arbitrary $z\in\text{int}(\mathbb{D})\backslash\{0\}$, choose $C^{\mathbb{P}}_{z}>0$ which ensures $ f(C^{\mathbb{P}}_{z})\coloneqq \frac{1}{2}\sum_{e\in\mathbb{V}} \sqrt{\abs{\langle  z,e\rangle}^2+ 4 C^{\mathbb{P}}_{z} \mathbb{E}[\omega(0,e)]\mathbb{E}[\omega(0,-e)]}=1$. Such $0<C^{\mathbb{P}}_{z}<\infty$ exists because $f(0)=\abs{z}<1$ and $f(\infty)=\infty$. Define the probability vector $(u_\lambda(e))_{e\in\mathbb{V}}$ by 
    \[
        2u_z(e)\coloneqq \langle z,e\rangle + \sqrt{ \abs{\langle  z,e\rangle}^2+ 4 C^{\mathbb{P}}_{z} \mathbb{E}[\omega(0,e)]\mathbb{E}[\omega(0,-e)]},\qquad\forall~e\in\mathbb{V}.
    \]
    Define the $Q^z$-walk $(Z_n)_{n\geq0}$ on $\mathbb{Z}^d$ starting from any $x\in\mathbb{Z}^d$ with law $Q^z_x$ prescribed by $ Q^z_x(Z_{n+1}=Z_n+e) = u_z(e)$ for any $e\in\mathbb{V}$ and $n\geq0$. With this $(u_z(e))_{e\in\mathbb{V}}$, there exists $c_z=c_z(\kappa)>0$ such that $u_z(e)\geq c_z$ for all $e\in\mathbb{V}$. Also $E^{Q^z}_x[Z_{n+1}-Z_n]= z$ for all $n\in\mathbb{N}$. Proceeding in analogous steps of \cite[Lemma 3.2]{Bazaes/Mukherjee/Ramirez/Sagliett0}, for each $n\geq1$ and $\theta\in\mathbb{R}^d$ we have
    \begin{equation}\label{eqn: Q walk form1, annealed}    
        E^{Q^{z}}_0\bigg[ e^{\langle\theta,Z_n\rangle}\mathbb{E} \prod_{j=1}^n\xi(Z_{j-1},\Delta_j(Z)) \bigg] = \mathcal{D}_{z,\mathbb{P}}^n E_{0}\bigg[ e^{\langle\theta+\theta^{\mathbb{P}}_{z},X_n\rangle} \bigg],\quad E^{Q^{z}}_0\bigg[ e^{\langle\theta,Z_n\rangle}\prod_{j=1}^n\xi(Z_{j-1},\Delta_j(Z)) \bigg] = \mathcal{D}_{z,\mathbb{P}}^n E_{0,\omega}\bigg[ e^{\langle\theta+\theta^{\mathbb{P}}_{z},X_n\rangle} \bigg]
    \end{equation}
    where $\mathcal{D}_{z,\mathbb{P}}\coloneqq (C^{\mathbb{P}}_{z})^{1/2}$ and the vector $\theta^{\mathbb{P}}_{z}\in\mathbb{R}^d$ is defined via $ \langle \theta^{\mathbb{P}}_{z},e_j\rangle\coloneqq \text{log}(\mathcal{D}_{z,\mathbb{P}}^{-1}u_z(e_j)/\mathbb{E}[\omega(0,e_j)])$, $\forall~j=1,\ldots,d$. And $\Delta_j(Z)$ stands for $Z_j-Z_{j-1}$ with $j=1,\ldots,n$. Following \cite[Theorem 4.3.1]{Dembo/Zeitouni} for the annealed scenario and \cite[Theorem 2.6]{Rassoul-Agha/Seppalainen} for the quenched scenario, from the above (\ref{eqn: Q walk form1, annealed}) it yields the limiting identities 
    \[
        \overline{\Lambda}^a_z(\theta)\coloneqq\lim_{N\to\infty}\frac{1}{N}\log E^{Q^{z}}_0\bigg[ e^{\langle\theta,Z_N\rangle}\mathbb{E} \prod_{j=1}^N\xi(Z_{j-1},\Delta_j(Z)) \bigg],\;\; \overline{\Lambda}^q_{z}(\theta)\coloneqq\lim_{N\to\infty}\frac{1}{N}\log E^{Q^{z}}_0\bigg[ e^{\langle\theta,Z_N\rangle} \prod_{j=1}^N\xi(Z_{j-1},\Delta_j(Z)) \bigg],\;\;\forall~\theta\in\mathbb{R}^d
    \]
    and satisfy $\overline{\Lambda}^a_{z}(\theta) = \log(C^{\mathbb{P}}_{z,})^{1/2} + \Lambda_a(\theta+\theta^{\mathbb{P}}_{z})$, $ \overline{\Lambda}^q_{z}(\theta) = \log(C^{\mathbb{P}}_{z})^{1/2} + \Lambda_q(\theta+\theta^{\mathbb{P}}_{z})$. We also use $\overline{\Lambda}^*_z(\theta)\coloneqq\log \sum_{z\in\mathbb{V}}e^{\langle\theta,e\rangle}u_z(e)$ to stand for the limiting free energy in the zero-disorder scenario, $\forall~\theta\in\mathbb{N}$. Fix $\ell\in\mathbb{V}$ such that $\langle z,\ell\rangle>0$. Let $\mathcal{W}\coloneqq\mathbb{V}\cup\{0\}$. Consider the probability measure $\overline{Q}^z_0$ given by $\overline{Q}^z_0\coloneqq U\otimes\overline{Q}^z_{\epsilon,0}$ on $  (\mathcal{W})^{\mathbb{N}}\times(\mathbb{Z}^d)^{\mathbb{N}}$, where $U$ is product measure on $(\mathcal{W})^{\mathbb{N}}$ s.t. $U(\epsilon_1=e)=\CYRk$, $\forall~e\in\mathbb{V}$, $U(\epsilon_1=0)=1-2d\CYRk$, $\forall~\epsilon=(\epsilon_1,\epsilon_2,\ldots)\in(\mathcal{W})^{\mathbb{N}}$ for some small $\CYRk=\CYRk(\kappa)>0$, and the Markov chain $\overline{Q}^z_{\epsilon,0}$ on $\mathbb{Z}^d$ is defined via 
    \[
        \overline{Q}^z_{\epsilon,0}(Z_{n+1}=x+e|Z_n=x) = \mathbbm{1}_{\{ \epsilon_{n+1}=e \} } +\frac{\mathbbm{1}_{\{ \epsilon_{n+1}=0 \} }}{1-2d\CYRk}\big(u_z(e)-\CYRk\big),\qquad\forall~e\in\mathbb{V},\quad x\in\mathbb{Z}^d.
    \]
    So here we can actually take $2\CYRk(\kappa)\coloneqq d^{-1}\wedge\inf\{u_z(e):\,e\in\mathbb{V}\}$. A crucial remark is that the laws $\overline{Q}^z_0 = U\otimes\overline{Q}^z_{\epsilon,0}$ and $Q^z_0$ coincide. We also write the sequence $\Bar{\epsilon}\coloneqq\ell$ and the cone $\CYRz(x,\zeta,\ell)\coloneqq\{\nu\in\mathbb{Z}^d:\,\langle\nu-x,\ell\rangle\geq\zeta\abs{\ell}\abs{\nu-x}\}$, $\forall~x\in\mathbb{Z}^d$ and $0<\zeta<1$. And let $\Bar{\epsilon}^{(L)}\coloneqq(\Bar{\epsilon},\ldots,\Bar{\epsilon})$ stand for the $L$-repetition of $\Bar{\epsilon}$, $\forall~L\geq L_0>r$ while $\mathbb{P}$ assumed $r$-Markovian. For $u\in\mathbb{R}$,
    \[
        H^{(L)}_u\coloneqq\inf\{ n\geq L:\,\langle Z_{n-L},\ell\rangle > u,\,(\epsilon_{n-L},\ldots,\epsilon_{n-1})=\Bar{\epsilon}^{(L)} \},\quad S_0=0,\quad \beta^{(\zeta)}_0\coloneqq \inf\{n\geq1:\,Z_n\not\in\CYRz(Z_0,\zeta,\ell)\}
    \]
    and $R_0\coloneqq\langle Z_0,\ell\rangle$. And we define the sequences of stopping times $(S_k)_{k\geq0}$, $(\beta^{(\zeta)}_k)_{k\geq0}$ and $(R_k)_{k\geq0}$ inductively by
    \[
        S_{k+1}\coloneqq H^{(L)}_{R_k},\;\;\;\beta^{(\zeta)}_{k+1}\coloneqq\inf\{n>S_{k+1}:\,Z_n\not\in\CYRz(Z_{S_{k+1}},\zeta,\ell)\},\;\;\; R_{k+1}\coloneqq\sup\{\langle Z_n,\ell\rangle:\,0\leq n\leq \beta^{(\zeta)}_{k+1}-L\}\;\;\;\text{if}\;\; \beta^{(\zeta,)}_{k+1}<\infty,
    \]
    and $R_{k+1}\coloneqq\langle Z_{S_{k+1}},\ell\rangle$ if $\beta^{(\zeta)}_{k+1}=\infty$. And we need the renewal sequence $(\tau^{(L)}_k)_{k\geq1}$ defined via $\tau^{(L)}_k\coloneqq S(W^{(L)}_k)$ with $W^{(L)}_0\coloneqq0$ and $W^{(L)}_{k+1}\coloneqq\inf\{n>W^{(L)}_{k}:\,\beta^{(\zeta)}_n=\infty\}$ for any $k\geq1$ Here  $E^{\overline{Q}^z}_0\coloneqq E_{\overline{Q}^z_0}$ stands for the $\overline{Q}^z_0$-expectation.
    \begin{proposition}\label{prop: mixing exponential of tau < infinity}
        \normalfont
        When $\mathbb{P}$ verifies $\textbf{(SMX)}_{C,g}$, $\exists~\gamma_0=\gamma_0(\kappa,z)>0$ and $\eta=\eta(\kappa,z)>1$ s.t. $E^{\overline{Q}^z}_0[e^{\gamma\CYRk^L\tau^{(L)}_1}|\beta^{(\zeta)}_0=\infty]<\eta$ and $E^{\overline{Q}^z}_0[ \CYRk^L\tau^{(L)}_1|\beta^{(\zeta)}_0=\infty]>\eta^{-1}$,  $\forall~\gamma<\gamma_0$ and $L\geq L_0$.
    \end{proposition}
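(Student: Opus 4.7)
The plan is to decompose the renewal time $\tau^{(L)}_1 = S(W^{(L)}_1)$ as a geometrically bounded sum of ``attempt durations'', each of typical order $\CYRk^{-L}$, and to establish an exponential moment for each attempt on the correct scale. Conditioning on the positive-probability event $\{\beta^{(\zeta)}_0 = \infty\}$ will only scale the final bound by a bounded multiplicative factor, hence plays no essential role in either inequality.

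First I would verify that the per-attempt success probability $p_\infty \coloneqq \overline{Q}^z_0(\beta^{(\zeta)}_0 = \infty)$ is strictly positive and depends only on $(\kappa, z, \zeta)$. Since $\langle z,\ell\rangle > 0$ and the $Q^z$-walk has deterministic mean $z$, choosing $\zeta$ small enough so that $z$ lies in the interior of $\CYRz(0,\zeta,\ell)$ and invoking a standard Cram\'er estimate for $(Z_n)_{n\geq0}$ gives $p_\infty > 0$. By the strong Markov property of $(Z_n,\epsilon_n)$ at $S_n$ and translation invariance of the cone family $(\CYRz(x,\zeta,\ell))_{x\in\mathbb{Z}^d}$, conditional on the failure of all prior attempts one has $\overline{Q}^z_0(\beta^{(\zeta)}_n = \infty \,|\, \mathcal{F}_{S_n},\,\beta^{(\zeta)}_j<\infty\;\forall j<n)=p_\infty$, so $W^{(L)}_1$ is geometrically distributed with parameter $p_\infty$.

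Next I would control each increment $S_{k+1}-S_k$. Since $S_{k+1}=H^{(L)}_{R_k}$ requires the walk to exceed the level $R_k$ in direction $\ell$ and the $\epsilon$-sequence to then display an $L$-block $\bar\epsilon^{(L)}$, the increment decomposes into (i) a drift-driven recovery from $Z_{\beta^{(\zeta)}_k}$ up to level $R_k$, and (ii) the pattern-waiting time $W_L$ for $\bar\epsilon^{(L)}$ in the sequence $(\epsilon_n)$ with $U(\epsilon_1=\ell)=\CYRk$. Component (i) has a uniform exponential moment via the positive drift $\langle z,\ell\rangle$ and the ellipticity bound $u_z(e)\geq c_z$. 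For (ii), a spectral analysis of the pattern-matching Markov chain on $\{0,1,\ldots,L\}$ (equivalently, an elementary renewal comparison) yields $U(W_L>t)\leq C_0 \exp(-c_0\CYRk^L t)$ with $c_0,C_0>0$ independent of $L$, so that $\sup_{L\geq L_0}E_U[e^{\gamma\CYRk^L W_L}]\to1$ as $\gamma\to0$. Setting $\phi(\gamma)\coloneqq \sup_{k,L\geq L_0}E^{\overline{Q}^z}_0[e^{\gamma\CYRk^L(S_{k+1}-S_k)}|\mathcal{F}_{S_k}]$, one concludes $\phi(\gamma)\to1$ as $\gamma\to0$. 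Iterating the conditional expectation along attempts and summing against the geometric law of $W^{(L)}_1$ gives
\begin{equation*}
E^{\overline{Q}^z}_0\big[e^{\gamma\CYRk^L\tau^{(L)}_1}\big]\leq \sum_{n\geq1}(1-p_\infty)^{n-1}p_\infty\,\phi(\gamma)^n,
\end{equation*}
finite whenever $\phi(\gamma)(1-p_\infty)<1$, i.e., for $\gamma<\gamma_0=\gamma_0(\kappa,z)>0$; dividing by $p_\infty$ yields the conditional upper bound. The matching lower bound $E^{\overline{Q}^z}_0[\CYRk^L\tau^{(L)}_1\,|\,\beta^{(\zeta)}_0=\infty]>\eta^{-1}$ follows from $\tau^{(L)}_1\geq W_L$, the identity $E_U[W_L]=(\CYRk^{-L}-1)/(1-\CYRk)\gtrsim \CYRk^{-L}$, Markov's inequality to descend to a positive-probability lower bound, and the positivity of $p_\infty$.

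The main obstacle I anticipate is synchronizing the two time scales across failed attempts. After $\beta^{(\zeta)}_k<\infty$ the walk may have regressed below the nondecreasing levels $(R_k)$, so component (i) of $S_{k+1}-S_k$ is not obviously uniform in $k$; one has to exploit the strong Markov property of $(Z_n,\epsilon_n)$ at each $\beta^{(\zeta)}_k$ together with a uniform exponential-moment bound on positive-drift hitting times to show that component (i) is absorbed into a constant on the $\CYRk^{-L}$ scale, so that the geometric-sum estimate closes with $\gamma_0$ independent of $L$. A secondary technical point is the tail bound for the pattern-waiting time on the precise scale $\CYRk^L$ rather than the coarser $\CYRk^L/L$ that a naive non-overlapping block argument would produce; the spectral/renewal approach is what delivers the uniform-in-$L$ rate required for the Proposition.
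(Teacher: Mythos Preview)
Your temporal decomposition---writing $\tau^{(L)}_1$ as a geometrically-many attempt increments $S_{k+1}-S_k$---is a legitimate alternative to the paper's route, which works spatially: it first bounds $\langle Z(\tau^{(L)}_1),\ell\rangle$ by iterating an inequality of the form $E^{\overline{Q}^z}_0[e^{\gamma\CYRk^L\langle Z_{S_{k+1}},\ell\rangle},\,S_{k+1}<\infty]\le C\,E^{\overline{Q}^z}_0[e^{\gamma\CYRk^L\langle Z_{S_k},\ell\rangle},\,S_k<\infty]\cdot E^{\overline{Q}^z}_0[e^{\gamma(R^{(\zeta)}+1)},\,\beta^{(\zeta)}_0<\infty]$, and only at the end transfers to $\tau^{(L)}_1$ via a union bound and a ballistic estimate. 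Your scheme is cleaner for the pattern-waiting piece; the paper's is cleaner for the spatial overshoot. Both, however, rest on the same hard input, and that is where your proposal has a genuine gap.

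The assertion that component~(i) ``has a uniform exponential moment via the positive drift'' is not justified. Positive drift gives $E[e^{cT_u}]\le e^{\psi(c)u}$ for the hitting time of a \emph{deterministic} level $u$, but the recovery distance $D_k=R_k-\langle Z_{\beta^{(\zeta)}_k},\ell\rangle$ is random and a~priori unbounded; you therefore need an exponential moment for $D_k$ (equivalently for $R^{(\zeta)}$) conditional on the attempt failing. Your decomposition of $S_{k+1}-S_k$ also omits the failed-attempt duration $\beta^{(\zeta)}_k-S_k$ itself, which must be included and likewise controlled. These two missing estimates are precisely what the paper spends most of its effort on: the dyadic cone/slab analysis yielding $\overline{Q}^z_0(2^n\le R^{(\zeta)}+1<2^{n+1},\,\beta_0^{(\zeta)}<\infty)\le e^{-\langle z,\ell\rangle 2^n}+\mathfrak{r}\,2^{n+1}e^{-\langle z,\ell\rangle 2^{n-1}}$, which in turn rests on Lemma~\ref{lem: D0}. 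Your final paragraph correctly flags the obstacle, but ``strong Markov plus positive-drift hitting times'' does not resolve it, because neither ingredient bounds the random target level $R_k$ nor the duration of the failed excursion. Once you import that $R^{(\zeta)}$ tail bound (or prove an equivalent), your geometric-sum argument closes and the remainder goes through.
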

    \begin{proof}
        We know that $\exists~\eta(\kappa)>1$ s.t. $Q^z_0(\beta^{(\zeta)}_0=\infty)\geq\eta^{-1}$ according to \cite[Lemma 3.4]{Bazaes/Mukherjee/Ramirez/Sagliett0}. Splitting according to different value of $W^{(L)}_1$, we observe that
        \begin{equation*}\begin{aligned}
            E^{\overline{Q}^z}_0[e^{\gamma\CYRk^L \langle Z(\tau^{(L)}_1),\ell\rangle}] \leq \sum_{k=1}^\infty E^{\overline{Q}^z}_0  [e^{\gamma\CYRk^L \langle Z_{S_k},\ell\rangle},\,W^{(L)}_1=k ] \leq \sum_{k=1}^\infty  E^{\overline{Q}^z}_0 [e^{\gamma\CYRk^L \langle Z_{S_k},\ell\rangle},\,S_k<\infty,\,R_k=\infty ].
        \end{aligned}\end{equation*}
        Hence we have $E^{\overline{Q}^z}_0[e^{\gamma \CYRk^L \langle Z(\tau^{(L)}_1),\ell\rangle}] \leq \sum_{k=1}^\infty E^{\overline{Q}^z}_0 [e^{\gamma \CYRk^L \langle Z_{S_k},\ell\rangle},\,S_k<\infty ] \overline{Q}^z_0(\beta^{(\zeta)}_0=\infty)$. Now we define $\overline{T}^{\ell}_a\coloneqq\inf\{n\geq0:\,\langle Z_n,\ell\rangle>a\}$, $\forall~a\in\mathbb{R}$, and $M^z_k\coloneqq\sup\{\langle Z_n,\ell\rangle:\,n\leq R_k \}$, $\forall~k\in\mathbb{N}$. For each $k\in\mathbb{N}$, denote the time sequence $(t^{(n)}_k)_{n\geq0}$ by
        \[
            t^{(0)}_k\coloneqq\overline{T}^\ell_{M^z_k}\qquad\text{and}\qquad t^{(n+1)}_k\coloneqq \overline{T}^\ell_{\langle Z(t^{(n)}_k),\ell\rangle},\qquad\forall~n\in\mathbb{N}.
        \]
        We also define the parametrized events $\mathcal{A}^{(L)}_{n,k}$ and $\mathcal{B}^{(L)}_{n,k}$ in $(\mathcal{W})^{\mathbb{N}}$ via
        \[
            \mathcal{A}^{(L)}_{n,k}\coloneqq\big\{\epsilon:\,(\epsilon_{t^{(n)}_k},\epsilon_{t^{(n)}_k+1},\ldots,\epsilon_{t^{(n)}_k+L-1})=\Bar{\epsilon}^{(L)}\big\},\qquad \mathcal{B}^{(L)}_{n,k}\coloneqq\big\{\epsilon:\,(\epsilon_{t^{(j)}_k},\epsilon_{t^{(j)}_k+1},\ldots,\epsilon_{t^{(j)}_k+L-1})\neq\Bar{\epsilon}^{(L)},\,\forall~j\leq n-1\big\}.
        \]
        Following similar arguments as in \cite[Section 4]{Guerra Aguilar} and \cite[Section 6]{Guerra Aguilar/Ramirez}, we observe $\{S_{k+1}<\infty\}\subseteq \cup_{n\geq0}\big\{t^{(n)}_k<\infty,\,\mathcal{A}^{(L)}_{n,k},\,\mathcal{B}^{(L)}_{n,k}\big\}$, $\forall~k\in\mathbb{N}$. And conditioned on $\mathcal{A}^{(L)}_{n,k}\cap\mathcal{B}^{(L)}_{n,k}$, we further have $S_{k+1}=t^{(n)}_k+L$, $\overline{Q}^z_0$-a.s. Hence $\forall~k\geq0$, we have
        \[
            E^{\overline{Q}^z}_0[e^{\gamma \CYRk^L \langle Z_{S_{k+1}},\ell\rangle},\,S_{k+1}<\infty] \leq \sum_{n\in\mathbb{N}} E^{\overline{Q}^z}_0 [e^{\gamma \CYRk^L \langle Z_{S_{k+1}},\ell\rangle},\,t^{(n)}_k<\infty,\,\mathcal{A}^{(L)}_{n,k},\,\mathcal{B}^{(L)}_{n,k} ].
        \]
        Conditioned on the event $\{ t^{(n)}_k<\infty,\,\mathcal{A}^{(L)}_{n,k},\,\mathcal{B}^{(L)}_{n,k}\}$ we have $\langle Z_{S_{k+1}},\ell \rangle \leq M^z_k+\abs{\ell}n +L$. Hence
        \begin{equation*}\begin{aligned}
            &E^{\overline{Q}^z}_0  [e^{\gamma \CYRk^L \langle Z_{S_{k+1}},\ell\rangle},\,S_{k+1}<\infty ] \leq \sum_{n\in\mathbb{N}} E^{\overline{Q}^z}_0 [e^{\gamma \CYRk^L(M^z_k+\abs{\ell}n +L)},\,t^{(n)}_k<\infty,\,\mathcal{A}^{(L)}_{n,k},\,\mathcal{B}^{(L)}_{n,k} ]\\
            &\qquad\leq \big(\sum_{n=0}^{L^2-1}+\sum_{n= L^2}^\infty\big) \CYRk^L E^{\overline{Q}^z}_0 [e^{\gamma \CYRk^L(M^z_k+\abs{\ell}n +L)},\,t^{(n)}_k<\infty,\,\mathcal{B}^{(L)}_{n,k} ].
        \end{aligned}\end{equation*}        
        The following is a more precise and refined argument to \cite[Equation (4.11)]{Guerra Aguilar}. In light of similar arguments to \cite[Lemma 6.2]{Guerra Aguilar/Ramirez}, for any $n\geq L^2$, we have $\overline{Q}^z_0(\mathcal{B}^{(L)}_{n,k})\leq (1-\tilde{c}L^2\CYRk^L)^{[ n/L^2]}$. Moreover, taking a closer look at the proof of \cite[Lemma 6.2]{Guerra Aguilar/Ramirez}, we know $\tilde{c}>(1-2\CYRk)/(1-\CYRk)$. Notice also that can always choose $\CYRk(\kappa,z)$ sufficiently small so that $\eta^{-1}>\CYRk/(1-\CYRk)$. Thus there exists some $\Bar{\eta}\coloneqq\Bar{\eta}(y,\kappa)>1$ with $\frac{1-1/\eta}{1-1/\Bar{\eta}}<\Tilde{c}$. Henceforth, we have the following
        \begin{equation*}\begin{aligned}
            & E^{\overline{Q}^z}_0  [e^{\gamma \CYRk^L\langle Z_{S_{k+1}},\ell \rangle},\,S_{k+1}<\infty ] \leq L^2\CYRk^L e^{\gamma \CYRk^L(L^2\abs{\ell}+L)} E^{\overline{Q}^z}_0  [ e^{\gamma \CYRk^L M^z_k},\,t^{(0)}_k<\infty  ] + e^{\gamma  L\CYRk^L} E^{\overline{Q}^z}_0 [ e^{\gamma \CYRk^L M^z_k},\,t^{(0)}_k<\infty  ] \\ 
            &\quad\vdot\sum_{n\geq L^2} \big(e^{\gamma  L^2\CYRk^L\abs{\ell} }(1-\tilde{c}L^2\CYRk^L)\big)^{[ n/L^2]}  \leq L^2\CYRk^L e^{\gamma \CYRk^L(L^2\abs{\ell} +L)} E^{\overline{Q}^z}_0 [ e^{\gamma \CYRk^L M^z_k},\,t^{(0)}_k<\infty  ]  + \CYRe^{\gamma,z}_{\CYRk,L}  E^{\overline{Q}^z}_0 [ e^{\gamma \CYRk^L M^z_k},\,t^{(0)}_k<\infty  ].
        \end{aligned}\end{equation*}
        where $\CYRe^{\gamma,z}_{\CYRk,L}\coloneqq \frac{ e^{\gamma  L\CYRk^L}  L^2\CYRk^L}{e^{-\gamma  L^2\CYRk^L\abs{\ell} } - (1-\Tilde{c}L^2\CYRk^L) }$. Now we take $\gamma\leq\gamma_0^\prime$ for some $ \gamma_0^\prime(\kappa,z)>0$, then 
        \begin{equation*}\begin{aligned}
            &E^{\overline{Q}^z}_0 [e^{\gamma \CYRk^L\langle Z_{S_{k+1}},\ell \rangle},\,S_{k+1}<\infty] \leq \big( L^2\CYRk^L e^{\gamma  \CYRk^L (L^2\abs{\ell} +L)} +\tfrac{1-1/2\Bar{\eta}}{1-1/\eta} e^{ \gamma  L\CYRk^L } \big) E^{\overline{Q}^z}_0 [ e^{\gamma \CYRk^L M^z_k},\,t^{(0)}_k<\infty  ]\\
            &\qquad\quad \leq \tfrac{1-1/3\Bar{\eta}}{1-1/\eta} E^{\overline{Q}^z}_0  [e^{\gamma \CYRk^L\langle Z_{S_{k}},\ell\rangle},\,S_{k}<\infty ] E^{\overline{Q}^z}_0 [e^{\gamma  (R^{(\zeta )}+1)},\,\beta^{(\zeta )}_0<\infty ]
        \end{aligned}\end{equation*}
        where $R^{(\zeta )}\coloneqq\sup\{\langle Z_n,\ell\rangle:\,0\leq n\leq \beta^{(\zeta )}_0\}$. Notice that for any $N\geq1$ we have
        \begin{equation}\label{eqn: mixing tau dog1}        
            E^{\overline{Q}^z}_0 [ e^{\gamma (R^{(\zeta)}+1)},\,\beta_0^{(\zeta )}<\infty ] \leq e^{\gamma N} \overline{Q}^z_0(\beta_0^{(\zeta )}<\infty) + \sum_{n\in\mathbb{N}} e^{\gamma N 2^{n+1}}\overline{Q}^z_0(2^n\leq R^{(\zeta)}+1<2^{n+1},\,\beta_0^{(\zeta)}<\infty).
        \end{equation}
        Now we define $\beta^\dagger\coloneqq\inf\{n\geq0:\,Z_n\not\in\CYRz(Z_0,\zeta,\ell)\}$, $T^{\ell,z}_u\coloneqq\inf\{ n\geq0:\,\langle Z_n,\ell\rangle\geq u \}$, $\forall~u>0$. And we use $\vartheta$ to stand for the canonical time-shift operator on $\mathbb{N}$. Then $\forall~n\in\mathbb{N}$ one can write
        \begin{equation*}\begin{aligned}
            &\overline{Q}^z_0(2^n\leq R^{(\zeta)}+1<2^{n+1},\,\beta_0^{(\zeta)}<\infty) \leq \overline{Q}^z_0(T^{\ell,z}_{2^n}\leq\beta^{(\zeta)}_0<\infty,\, \beta^\dagger\circ\vartheta_{T^{\ell,z}_{2^n}} < T^{\ell,z}_{2^{n+1}} \circ\vartheta_{T^{\ell,z}_{2^n}})\\
            &\qquad \leq \overline{Q}^z_0(  Z_{T^{\ell,z}_{2^n}} \not\in \partial^+ B^\ell_{2^n,\mathfrak{r}2^n},\,T^{\ell,z}_{2^n}\leq\beta^{(\zeta)}_0<\infty) + \overline{Q}^z_0( Z_{T^{\ell,z}_{2^n}} \in \partial^+ B^\ell_{2^n,\mathfrak{r}2^n},\,\beta^\dagger\circ\vartheta_{T^{\ell,z}_{2^n}} < T^{\ell,z}_{2^{n+1}} \circ\vartheta_{T^{\ell,z}_{2^n}}),
        \end{aligned}\end{equation*}
        where $\mathfrak{r}>0$ is specified in Lemma \ref{lem: D0}. Here we define $B^{\ell,x}_{\mu,\nu}\coloneqq x+ \mathcal{R}([-\mu,\mu)\times[-\nu,\nu)^{d-1})\cap\mathbb{Z}^d$ where $\mathcal{R}$ is the rotation of $\mathbb{R}^d$ with $\mathcal{R}(e_1)=\ell\in\mathbb{V}$, and we abbreviate $B^\ell_{\mu,\nu}\coloneqq B^{\ell,0}_{\mu,\nu}$. In light of Lemma \ref{lem: D0}, we have $\forall~n\in\mathbb{N}$ that 
        \[
            \overline{Q}^z_0( Z_{T^{\ell,z}_{2^n}} \not\in \partial^+ B^\ell_{2^n,\mathfrak{r}2^n},\,T^{\ell,z}_{2^n}\leq\beta^{(\zeta)}_0<\infty) \leq e^{-\langle z,\ell\rangle\vdot 2^n }.
        \]
        On the other hand, we define the boundary face by $F_n\coloneqq\partial^+B^\ell_{2^n,\mathfrak{r}2^n}$. The strong Markov property yields
        \[
            \overline{Q}^z_0(  Z_{T^{\ell,z}_{2^n}} \in \partial^+ B^\ell_{2^n,\mathfrak{r}2^n},\,\beta^\dagger\circ\vartheta_{T^{\ell,z}_{2^n}} < T^{\ell,z}_{2^{n+1}} \circ\vartheta_{T^{\ell,z}_{2^n}}) \leq \sum_{\nu\in F_n} \overline{Q}^z_{\nu}(\beta^\dagger< T^{\ell,z}_{2^{n+1}}).
        \]
        Hence for each $\nu\in F_n$, one has the following lower-bound: $\overline{Q}^z_0( \beta^\dagger\geq T^{\ell,z}_{2^{n+1}} )$ is no less than
        \[
             \sum_{w\in \partial^+ B^{\ell,\nu}_{2^{n-1},\mathfrak{r}2^{n-1}} } \overline{Q}^z_0\big( Z_{T_{B^{\ell,\nu}_{2^{n-1},\mathfrak{r}2^{n-1}}}} \in \partial^+ B^{\ell,\nu}_{2^{n-1},\mathfrak{r}2^{n-1}},\, Z_{T_{B^{\ell,\nu}_{2^{n-1},\mathfrak{r}2^{n-1}}}}=w,\, (Z_{T_{B^{\ell,w}_{2^{n-1},\mathfrak{r}2^{n-1}}}}\in \partial^+ B^{\ell,w}_{2^{n-1},\mathfrak{r}2^{n-1}})\circ\vartheta
            _{T_{B^{\ell,w}_{2^{n-1},\mathfrak{r}2^{n-1}}}} \big),
        \]
        where we denote the exit time $T_\Delta\coloneqq\inf\{n\geq:\,Z_n\not\in\Delta\}$, $\forall~\Delta\subseteq\mathbb{Z}^d$. Henceforth, $\overline{Q}^z_0( \beta^\dagger\geq T^{\ell,z}_{2^{n+1}} )$ is no less than
        \[
            \sum_{w\in \partial^+ B^{\ell,\nu}_{2^{n-1},\mathfrak{r}2^{n-1}} } (1-e^{-\langle z,\ell\rangle2^{n-1}})\overline{Q}^z_0(Z_{T_{B^{\ell,\nu}_{2^{n-1},\mathfrak{r}2^{n-1}}}} \in \partial^+ B^{\ell,\nu}_{2^{n-1},\mathfrak{r}2^{n-1}},\, Z_{T_{B^{\ell,\nu}_{2^{n-1},\mathfrak{r}2^{n-1}}}}=w)\geq (1-e^{-\langle z,\ell\rangle2^{n-1}})^2.
        \]
        Henceforth, $\overline{Q}^z_0(2^n\leq R^{(\zeta)}+1<2^{n+1},\,\beta_0^{(\zeta)}<\infty)\leq e^{-\langle z,\ell\rangle2^n}+\mathfrak{r}2^{n+1}e^{-\langle z,\ell\rangle2^{n-1}}$. Now take $N$ sufficiently large and $\gamma\leq\gamma^\prime_0$ for some $\gamma^\prime_0(\kappa,z)>0$, one has $E^{\overline{Q}^z}_0[e^{\gamma \CYRk^L\langle Z(\tau^{(L)}_1),\ell\rangle}]\leq\eta$. By the union bound, 
        \[
            \overline{Q}^z_0(\CYRk^L\tau^{(L)}_1>n)\leq \overline{Q}^z_0(\langle \CYRk^LZ(\tau^{(L)}_1),\ell\rangle>\tfrac{1}{2}\langle zn,\ell\rangle) + \overline{Q}^z_0(\CYRk^L\tau^{(L)}_1>n,\, \langle \CYRk^LZ(\tau^{(L)}_1),\ell\rangle\leq\tfrac{1}{2}\langle zn,\ell\rangle).
        \]
        The Chebyshev's inequality then gives us
        \[
            \overline{Q}^z_0(\langle \CYRk^LZ(\tau^{(L)}_1),\ell\rangle>\tfrac{1}{2}\langle zn,\ell\rangle) \leq e^{-\gamma^\prime_0\langle z,\ell\rangle n/2}E^{\overline{Q}^z}_0[e^{\gamma^\prime_0 \CYRk^L Z(\tau^{(L)}_1),\ell\rangle}]\leq \eta  e^{-\gamma^\prime_0\langle z,\ell\rangle n/2},\qquad\forall~n\geq1.
        \]
        On the other hand, for each $n\geq1$, 
        \[
            \overline{Q}^z_0(\CYRk^L\tau^{(L)}_1>n,\, \langle \CYRk^LZ(\tau^{(L)}_1),\ell\rangle\leq\tfrac{1}{2}\langle zn,\ell\rangle) \leq e^{-|\langle (\CYRk^L)^{-1}zn,\ell\rangle|^2/(n/ \CYRk^L)}\leq e^{-|\langle z,\ell\rangle|^2n}.
        \]
        Therefore we see that $\overline{Q}^z_0(\ \CYRk^L\tau^{(L)}_1>n)\leq Ce^{-cn}$ for constants $c,C>0$ depending only on $\kappa,z$. And $E^{\overline{Q}^z}_0[e^{\gamma_0 \CYRk^L\tau^{(L)}_1}|\beta^{(\zeta)}_0=\infty]\leq\eta$ with sufficiently small $\gamma_0=\gamma_0(\kappa,z)>0$, where we use the fact that $\overline{Q}^z_0(\beta^{(\zeta)}_0=\infty)\geq\eta^{-1}$, and the upper-bound is then verified. The lower-bound, on the other hand, follows from the estimate $\CYRk^LZ(\tau^{(L)}_1)\geq \CYRk^L$ and that we could take $\eta\CYRk^L\geq1$, verifying the assertion.
    \end{proof}

\subsection{Equality between $Q^z$-logarithmic moment generating functions}
    We formulate the following random environment $\sigma$-algebras
    \[
        \mathscr{G}_N\coloneqq\sigma\big(\omega(y,\vdot):\langle y,\ell\rangle<\langle Z(\tau^{(L)}_N),\ell\rangle-L\abs{\ell}/\abs{\ell}_1,\,(\epsilon_j)_{1\leq j\leq\tau^{(L)}_N},\,(Z_j)_{1\leq j\leq\tau^{(L)}_N}\big),\qquad\forall~N\geq1,
    \]
    as well as the localized environmental $\sigma$-algebras $\mathscr{F}^L_{x,m}\coloneqq\sigma(\omega(y,\vdot):\,\langle y-x,\ell\rangle<-L\abs{\ell}/\abs{\ell}_1,\,\epsilon_i:\,1\leq i\leq m)$ for any $x\in\mathbb{Z}^d$ and $m\geq1$. Remark that between each $\tau^{(L)}_{j-1}$ and $\tau^{(L)}_{j}$ the $\epsilon$-sequence creates a spacing where $(Z_n)_{n\geq0}$ under the law $\overline{Q}^z_0$ travels with full probability at $
    \ell$-direction of length $L$. Accordingly we define the auxiliary transition
    \[
        \psi_k(\CYRu)\coloneqq \prod_{j=\tau^{(L)}_{k-1}+1}^{\tau^{(L)}_{k}} \CYRu_j\;\;\text{with}\;\, \CYRu_j\coloneqq \mathbbm{1}_{ \{\epsilon_j=\Delta_j(Z)\} } + \mathbbm{1}_{ \{\epsilon_j=0\} }\bigg(\frac{\omega(Z_{j-1},\Delta_j(Z))}{\mathbb{E}[\omega(Z_{j-1},\Delta_j(Z))]} + \frac{\CYRk}{u_z(\Delta_j(Z))-\CYRk}\bigg( \frac{\omega(Z_{j-1},\Delta_j(Z))}{\mathbb{E}[\omega(Z_{j-1},\Delta_j(Z))]} - 1 \bigg) \bigg)
    \]
    for each $k,j\in\mathbb{N}$. Here we write $\tau^{(L)}_{0}=0$ for convenience. This auxiliary transition renders us with $E^{\overline{Q}^z}_0[e^{\langle\theta,Z_n\rangle}\prod_{j=1}^n\CYRu_j] = E^{Q^z}_0[e^{\langle\theta,Z_n\rangle}\prod_{j=1}^n\xi(Z_{j-1},\Delta_j(Z))]$, $\forall~\theta\in\mathbb{R}^d$ and $n\geq1$. In particular, we have the following crucial separation property. 

    \begin{proposition}\label{prop: mixing separation information}
        \normalfont
        When $\mathbb{P}$ verifies $\textbf{(SMX)}_{C,g}$, for any $L\geq L_0$ we have
        \[            \mathds{E}^{\overline{Q}^z}_0[f(Z_{\tau^{(L)}_{k}+n}-Z_{\tau^{(L)}_{k}},\,n\in\mathbb{N})\mathbb{E}\psi_{k+1}(\CYRu)|\mathscr{G}_{k}] = \mathds{E}^{\overline{Q}^z}_0[f(Z_n-Z_0,\,n\in\mathbb{N})\mathbb{E}\psi_1(\CYRu)],\qquad\forall~k\geq1,
        \]
        where $f:(\mathbb{Z}^d)^{\mathbb{N}}\to[0,\infty)$ is any bounded $\sigma((Z_n)_{n\geq0})$-measurable function, and $\mathds{E}^{\overline{Q}^z}_0$ is the conditional law $E^{\overline{Q}^z}_0[\vdot|\beta^{(\zeta)}_0=\infty]$.
    \end{proposition}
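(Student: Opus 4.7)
The plan is to mirror the strategy of Lemma~\ref{lem: exponential mixing inequality} (the exponential mixing inequality), now adapted to the renewal structure of the $Q^z$-walk rather than the direct-step sequence $\tau^{(L)}_n$ used there. First I would decompose according to the value of $W^{(L)}_k$. On the event $\{W^{(L)}_k=m\}$, the renewal condition forces $\beta^{(\zeta)}_m=\infty$, so every site $y$ visited by $Z$ after time $\tau^{(L)}_k$ satisfies $\langle y-Z_{\tau^{(L)}_k},\ell\rangle\geq 0$ (the shifted walk never leaves the cone $\CYRz(Z_{\tau^{(L)}_k},\zeta,\ell)$). Consequently $\psi_{k+1}(\CYRu)$ depends on $\omega(y,\vdot)$ only for $y\in\Delta\coloneqq\{y:\,\langle y,\ell\rangle\geq\langle Z_{\tau^{(L)}_k},\ell\rangle\}$, whereas $\mathscr{G}_k$ is generated by environmental coordinates at $\{y:\langle y,\ell\rangle<\langle Z_{\tau^{(L)}_k},\ell\rangle-L\abs{\ell}/\abs{\ell}_1\}$ together with the discrete $(Z_j,\epsilon_j)_{j\leq\tau^{(L)}_k}$ data. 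The spatial separation between these two regions is at least $L\abs{\ell}/\abs{\ell}_1\geq L\geq L_0$.

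Next I would freeze the discrete prefix $(Z_j,\epsilon_j)_{1\leq j\leq\tau^{(L)}_k}$ and exploit the specific form of $\textbf{(SMX)}_{C,g}$. Because the prefactor $C(\Delta,V^c)$ is identically zero once $d_1(\Delta,V^c)\geq L_0$, the Radon--Nikod\'ym derivative bounding the conditional versus marginal environmental laws on $\Delta$ equals exactly $1$, so $\mathbb{P}(\,\vdot\,|\mathscr{F}_{V^c})=\mathbb{P}(\vdot)$ on $\Delta$. Integrating out $\omega$ before integrating the walk then yields the exact factorization
\[
\mathbb{E}\big[\psi_{k+1}(\CYRu)\,\big|\,\mathscr{G}_k\big] = \mathbb{E}\big[\psi_{k+1}(\CYRu)\big]
\]
pointwise in the frozen discrete data.

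With this environmental decoupling in hand, I would invoke the strong Markov property of the genuine Markov chain $(Z,\epsilon)$ under $\overline{Q}^z_0=U\otimes\overline{Q}^z_{\epsilon,0}$ at the stopping time $\tau^{(L)}_k$. The shifted pair $\big((Z_{\tau^{(L)}_k+n}-Z_{\tau^{(L)}_k})_{n\geq0},(\epsilon_{\tau^{(L)}_k+j})_{j\geq1}\big)$ has the same law as the original process under $\overline{Q}^z_0$ started from the origin. Crucially, the defining property of $W^{(L)}_k$ is that $\beta^{(\zeta)}$ computed for the shifted walk equals $+\infty$, which means the shifted pair has the conditional law $\mathds{E}^{\overline{Q}^z}_0=E^{\overline{Q}^z}_0[\vdot\,|\,\beta^{(\zeta)}_0=\infty]$. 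Combining the Markovian shift with the environmental factorization from the previous paragraph, Fubini gives the stated identity.

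The main obstacle will be the joint handling of two ``infinite-horizon'' conditionings: the strong Markov property at $\tau^{(L)}_k$ and the extra event $\{\beta^{(\zeta)}_0=\infty\}$, neither of which is measurable with respect to any finite prefix. The trick is that these two conditionings are not independent annoyances but actually complement each other: on $\{W^{(L)}_k=m\}$, the residual cone-escape event $\beta^{(\zeta)}$ evaluated in the shifted frame is automatically $+\infty$, so conditioning on $\{\beta^{(\zeta)}_0=\infty\}$ for the shifted walk does not distort the Markov transition probabilities but only restricts to the renewal-consistent subclass, which is precisely what $\mathds{E}^{\overline{Q}^z}_0$ encodes. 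A secondary technicality is making sure the forward spatial region $\Delta$ stays uniformly separated by at least $L_0$ from the backward region underlying $\mathscr{G}_k$, which is exactly why the buffer $L\abs{\ell}/\abs{\ell}_1$ was built into the definitions of $\mathscr{G}_k$ and $\mathscr{F}^L_{x,m}$. Once both points are discharged, the proof reduces to a clean Markov-shift--plus--(SMX)-separation argument.
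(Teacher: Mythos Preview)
Your proposal is correct and follows essentially the same route as the paper: both arguments decompose according to the value of $W^{(L)}_k$ (equivalently, over the events $\{S_m<\infty,\ \beta^{(\zeta)}_m=\infty\}$), freeze the discrete prefix by further localizing on the time $m$ and position $x$ of the renewal, and then combine the strong Markov property of the pair $(Z,\epsilon)$ under $\overline{Q}^z_0$ with the exact environmental decoupling that $\textbf{(SMX)}_{C,g}$ supplies once the buffer $L\geq L_0$ separates the forward half-space from the $\sigma$-field $\mathscr{G}_k$. The paper carries this out by testing against an arbitrary bounded $\mathscr{G}_k$-measurable $h$ and replacing it on each atom by an $\mathscr{F}^L_{x,m}\otimes\sigma((Z_i)_{i\leq m})$-measurable surrogate $h_{x,k,m}$, which is exactly the ``freeze the prefix'' step you sketch.
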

    \begin{proof}
        Without loss of generality, we look at the $k=1$ scenario. The other $k>1$ cases follow similarly. Consider nonnegative bounded function $h$ which is $\mathscr{G}_1$-measurable. Then,
    \begin{equation*}\begin{aligned}
        &\mathds{E}^{\overline{Q}^z}_0[f(Z_{\tau^{(L)}_{1}+n}-Z_{\tau^{(L)}_{1}},\,n\in\mathbb{N})\mathbb{E}\psi_{2}(\CYRu)h] = \sum_{k=1}^\infty \mathbb{E} E^{\overline{Q}^z}_0[ f (Z_{S^{(L)}_k+\vdot}-Z_{S^{(L)}_k}) h \psi_{2}(\CYRu),\, S^{(L)}_k<\infty,\,R^{(L)}_k=\infty|\beta^{(\zeta)}_0=\infty]\\
        &\qquad = \sum_{k\geq1,m\geq1,x\in\mathbb{Z}^d} \mathbb{E} E^{\overline{Q}^z}_0 [ f (Z_{S^{(L)}_k+\vdot}-x) h_{x,k,m} \psi_{2}(\CYRu),\,Z_{S^{(L)}_k}=x,\, S^{(L)}_k=m,\,\beta^{(\zeta)}_0\circ\vartheta_m=\infty|\beta^{(\zeta)}_0=\infty],
    \end{aligned}\end{equation*}
    where on the event $\{Z(S^{(L)}_k)=x,\,S^{(L)}_k=m,\,\beta^{(\zeta)}_0\circ\vartheta_m=\infty\}$, $\exists~h_{x,k,m}$ bounded that is $\mathscr{F}^L_{x,m}\otimes \sigma( (X_i)_{0\leq i\leq m} )$-measurable and equals to $h$ on the same event. Henceforth, $\mathds{E}^{\overline{Q}^z}_0[f(Z_{\tau^{(L)}_{1}+n}-Z_{\tau^{(L)}_{1}},\,n\in\mathbb{N})\mathbb{E}\psi_{2}(\CYRu)h]$ is equal to
    \[
         \sum_{k\geq1,m\geq1,x\in\mathbb{Z}^d} \mathbb{E}UE^{\overline{Q}^z}_{x,\vartheta_m\epsilon} [ f (Z_{S^{(L)}_k+\vdot}-Z_0) h_{x,k,m} \psi_{2}(\CYRu_{\vartheta_{x,m}(\omega,\epsilon)})|\beta^{(\zeta)}_0=\infty] = \mathds{E}^{\overline{Q}^z}_0[f(Z_n-Z_0,\,n\in\mathbb{N})\mathbb{E}\psi_1(\CYRu)],
    \]
    verifying the assertion.
    \end{proof}
    Now observe that by the choice of $\CYRk$, we have $1-\text{dis}(\mathbb{P})/\hbar\leq\CYRu_j\leq1+\text{dis}(\mathbb{P})/\hbar$, $\forall~j\geq1$, $\mathbb{P}\otimes U$-a.s., where we write $\hbar=\hbar(\kappa,z)\coloneqq1-\CYRk/\inf\{u_\lambda(e):\,e\in\mathbb{V}\}\geq1/2$. Accordingly, let $\CYRb(\vdot)$ stand for the map $t\mapsto\text{log}(\frac{1+t/\hbar}{1-t/\hbar})$ on $\abs{t}<\hbar$. Let $L_n^{(z)}\coloneqq\inf\{j\geq0:\langle Z_j,\ell\rangle=n\}$ and 
    \[
        \CYRPhi^z_n(\theta)\coloneqq \mathds{E}^{\overline{Q}^z}_0\bigg[ e^{\langle \theta,  Z(L^{(z)}_n) \rangle - \overline{\Lambda}^a_z(\theta) L^{(z)}_n  }\prod_{j=1}^{L_n^{(z)}} \CYRu_j,\,L_n^{(z)}=\tau^{(L)}_k\;\;\text{for some}~k\geq1 \bigg],\qquad\forall~\theta\in\mathbb{R}^d \quad\text{and}\quad n\geq1.
    \]

    \begin{proposition}\label{prop: limiting expectation of Phi is positive}    
        \normalfont
        When $\mathbb{P}$ verifies $\textbf{(SMX)}_{C,g}$, there exists $\gamma_1=\gamma_1(\kappa,z)>0$ such that whenever $|\theta|\vee\text{dis}(\mathbb{P})<\gamma_1\CYRk^L$,
        we have $\varliminf_{n\to\infty}\mathbb{E} E_U [\CYRPhi^z_n(\theta)]>0$ for any $L\geq L_0$.
    \end{proposition}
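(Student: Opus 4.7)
The plan is to decompose $\mathbb{E}E_U[\CYRPhi^z_n(\theta)]$ along the renewal structure $(\tau^{(L)}_k)_{k\geq 1}$, factorize using the separation property of Proposition \ref{prop: mixing separation information}, and then invoke a discrete renewal argument together with the exponential moment bound of Proposition \ref{prop: mixing exponential of tau < infinity}. Concretely, I first write
\[
    \mathbb{E}E_U[\CYRPhi^z_n(\theta)] \;=\; \sum_{k\geq 1} \mathbb{E}\mathds{E}^{\overline{Q}^z}_0\bigg[ e^{\langle\theta,Z(\tau^{(L)}_k)\rangle - \overline{\Lambda}^a_z(\theta)\tau^{(L)}_k} \prod_{m=1}^k \psi_m(\CYRu)\,;\; L^{(z)}_n = \tau^{(L)}_k \bigg],
\]
and telescope the exponential weight as a product of block-wise factors $e^{\langle\theta, Z(\tau^{(L)}_m) - Z(\tau^{(L)}_{m-1})\rangle - \overline{\Lambda}^a_z(\theta)(\tau^{(L)}_m - \tau^{(L)}_{m-1})}$. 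The event $\{L^{(z)}_n = \tau^{(L)}_k\}$ is an intersection of $\mathscr{G}_{k-1}$-measurable constraints (the first $k-1$ blocks fail to cross level $n$) with a constraint on the $k$-th block (the $\ell$-projection is maximized at the renewal endpoint). Applying Proposition \ref{prop: mixing separation information} iteratively in $k$ collapses the joint $\mathbb{E}\mathds{E}^{\overline{Q}^z}_0$-expectation into an honest product structure over independent renewal blocks.

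The central object emerging is the single-block weight
\[
    \phi^z(\theta) \;\coloneqq\; \mathbb{E}\mathds{E}^{\overline{Q}^z}_0\big[ e^{\langle\theta,Z(\tau^{(L)}_1)\rangle - \overline{\Lambda}^a_z(\theta)\tau^{(L)}_1}\,\psi_1(\CYRu)\big].
\]
Using the pointwise bound $1-\text{dis}(\mathbb{P})/\hbar \leq \CYRu_j \leq 1+\text{dis}(\mathbb{P})/\hbar$ together with $|\langle\theta, Z(\tau^{(L)}_1)\rangle - \overline{\Lambda}^a_z(\theta)\tau^{(L)}_1| \leq C|\theta|\tau^{(L)}_1$, one has $|\log\phi^z(\theta)| \leq C(|\theta| + \text{dis}(\mathbb{P}))\,\mathds{E}^{\overline{Q}^z}_0[\tau^{(L)}_1\,|\,\beta^{(\zeta)}_0=\infty]$; Proposition \ref{prop: mixing exponential of tau < infinity} then gives $\mathds{E}^{\overline{Q}^z}_0[\tau^{(L)}_1\,|\,\beta^{(\zeta)}_0=\infty] = O(\CYRk^{-L})$ both sides, so imposing $|\theta|\vee\text{dis}(\mathbb{P}) < \gamma_1 \CYRk^L$ with $\gamma_1$ sufficiently small forces $\phi^z(\theta)$ to lie in a small neighborhood of $1$. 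Moreover the normalisation $\overline{\Lambda}^a_z$ is precisely designed so that the analogous full-time quantity $\mathbb{E}E^{\overline{Q}^z}_0[e^{\langle\theta, Z_n\rangle - n\overline{\Lambda}^a_z(\theta)}\prod\CYRu_j]$ is subexponential, which at the renewal level translates into $\phi^z(\theta)$ being asymptotically near $1$ (up to an $O((|\theta|+\text{dis}(\mathbb{P}))\CYRk^{-L})$ correction).

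With the factorisation in hand, the sum becomes a renewal-type series $\sum_{k\geq 1} (\phi^z(\theta))^{k-1} \cdot \rho_k(n,\theta)$ where $\rho_k(n,\theta)$ collects the constraint $\{L^{(z)}_n = \tau^{(L)}_k\}$ together with the ``boundary'' block. Because the increments $\langle Z(\tau^{(L)}_m) - Z(\tau^{(L)}_{m-1}),\ell\rangle$ have finite (exponential) moments by Proposition \ref{prop: mixing exponential of tau < infinity} and strictly positive mean (the cone confinement ensures each block advances by a positive amount), the discrete renewal theorem applied to the tilted random walk with per-step multiplier $\phi^z(\theta)$ delivers $\varliminf_n \mathbb{E}E_U[\CYRPhi^z_n(\theta)] \geq c(\phi^z(\theta)) > 0$, provided $\phi^z(\theta)$ is close enough to $1$ so that the tilted walk is still transient with non-degenerate renewal measure.

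The main obstacle I anticipate is the careful handling of the constraint $\{L^{(z)}_n = \tau^{(L)}_k\}$: it is not purely a product of block indicators because it requires that level $n$ is first crossed at the renewal endpoint $\tau^{(L)}_k$ rather than somewhere inside the $k$-th block. Showing that the boundary block contributes a uniformly positive fraction (not vanishingly small in $n$) requires extracting an explicit lower bound from the cone geometry: conditional on $\beta^{(\zeta)}_0=\infty$, the walk remains in the cone $\CYRz(0,\zeta,\ell)$, so the probability that its $\ell$-projection is maximized exactly at $\tau^{(L)}_1$ and crosses any prescribed level is bounded below uniformly via the strong Markov property combined with the $\CYRk$-ellipticity lower bound on $u_z$. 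Once this local positivity is secured, the global renewal argument absorbs the boundary contribution and yields the asserted $\varliminf > 0$.
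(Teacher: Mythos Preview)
Your overall skeleton---decompose along the renewal times, factorise via Proposition~\ref{prop: mixing separation information}, then apply a discrete renewal theorem---matches the paper's. The genuine gap is at the heart of the argument: you assert that the block weight
\[
\phi^z(\theta)=\mathbb{E}\,\mathds{E}^{\overline{Q}^z}_0\Big[e^{\langle\theta,Z(\tau^{(L)}_1)\rangle-\overline{\Lambda}^a_z(\theta)\tau^{(L)}_1}\psi_1(\CYRu)\Big]
\]
being \emph{close to} $1$ suffices for the renewal theorem to yield a positive limit. It does not. After factorisation the annealed expectation is a series of the form $\sum_{k\ge1}\phi^z(\theta)^{k-1}\rho_k(n)$, and the constraint $\{L^{(z)}_n=\tau^{(L)}_k\}$ forces the relevant $k$ to be of order $n$. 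If $\phi^z(\theta)<1$, even by an arbitrarily small amount, this sum decays exponentially in $n$ and the liminf is zero; if $\phi^z(\theta)>1$ the series blows up. The renewal theorem you invoke requires the increments to carry total mass exactly $1$---only then does the tilted jump law $\mu^{(\theta)}$ become a probability measure and the standard conclusion $\sum_k\mu^{(\theta)}_{*k}(\text{level }n)\to 1/\text{mean}$ apply.

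The paper secures $\phi^z(\theta)=1$ via two separate lemmas: the upper bound $\phi^z(\theta)\le1$ (by summing over possible values of $\tau^{(L)}_n$ and using the defining growth rate of $\overline{\Lambda}^a_z$), and the lower bound $\phi^z(\theta)\ge1$ (by contradiction: if it were $<1$, the renewal product would force $\sup_n E^{\overline{Q}^z}_0[e^{\langle\theta,Z_n\rangle-(\overline{\Lambda}^a_z(\theta)-\delta)n}\prod\CYRu_j]<\infty$, contradicting the definition of $\overline{\Lambda}^a_z$). Neither direction follows from your perturbative bound $|\log\phi^z(\theta)|\le C(|\theta|+\text{dis}(\mathbb{P}))\CYRk^{-L}$; in particular the lower bound needs the exponential-moment control of Proposition~\ref{prop: mixing exponential of tau < infinity} to make the contradiction argument work, and this is where the smallness hypothesis $|\theta|\vee\text{dis}(\mathbb{P})<\gamma_1\CYRk^L$ actually enters. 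Your remark that ``the normalisation $\overline{\Lambda}^a_z$ is precisely designed'' is the right intuition, but turning it into the exact identity $\phi^z(\theta)=1$ is the non-trivial content you are missing.
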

    \begin{proof}
        Combining Lemmas \ref{lem: A1}, \ref{lem: A2}, we are left to show $\lim_{n\to\infty}\mathbb{E}[\Phi^z_n(\theta,\lambda)]>0$. Now we restrict $|\theta|\vee\text{dis}(\mathbb{P})<\gamma_1$ with $\gamma_1=\gamma_1(\kappa,z)$ specified in Lemma \ref{lem: A2}. Define the probability measure $\mu^{(\theta)}$ on $\mathbb{Z}^d$ by
        \[
            \mu^{(\theta)}(x)\coloneqq \mathds{E}^{\overline{Q}^z}_0\bigg[ e^{\langle \theta,Z(\tau^{(L)}_1) \rangle - \overline{\Lambda}^a_z(\theta) \tau^{(L)}_1 } \mathbb{E} \prod_{j=1}^{\tau^{(L)}_1}\CYRu_j,\, \CYRk^L Z(\tau^{(L)}_1)\in\Box_{\ell,x} \bigg],\qquad\forall~x\in\mathbb{Z}^d,
        \]
        where $\Box_{\ell,x}\subseteq\mathbb{R}^d$ is the unit $d$-cube with integer vertices closed in $d$ faces at the direction of $\ell$, and open in $d$ faces in the other $d$ faces at the opposite $-\ell$ direction. Now consider the random walk $(Z^{(\theta)}_n)_{n\in\mathbb{N}}$ with jump distribution $\mu^{(\theta)}$. If $\Hat{E}^{(\theta)}_0$ denotes the expectation with respect to $\Hat{P}^{(\theta)}_0$, the law of $(Z^{(\theta)}_n)_{n\in\mathbb{N}}$ starting from zero, then $\lim_{n\to\infty}\mathbb{E}E_U[\CYRPhi^z_n(\theta)]$ equals to 
        \begin{equation*}\begin{aligned}
            \lim_{n\to\infty} \sum_{k=1}^\infty\mathds{E}^{\overline{Q}^z}_0\bigg[ e^{\langle \theta,Z(\tau^{(L)}_k)  \rangle - \overline{\Lambda}^a_z(\theta) \tau^{(L)}_k  } \mathbb{E} \prod_{j=1}^{\tau^{(L)}_k}\CYRu_j,\,L^{(z)}_n=\tau^{(L)}_k \bigg]  = \lim_{n\to\infty} \sum_{k=1}^\infty\Hat{P}^{(\theta)}_0(\langle  Z^{(\theta)}_k,\ell\rangle =   n \CYRk^L ) = \CYRk^L/\Hat{E}^{(\theta)}_0[\langle  Z^{(\theta)}_1,\ell\rangle] .
        \end{aligned}\end{equation*}
        It suffices to show $\Hat{E}^{(\theta)}_0[\langle Z^{(\theta)}_1,\ell\rangle]<\infty$ if $|\theta|\vee\text{dis}(\mathbb{P})<\gamma_1\CYRk^L$. Indeed,  $\langle   Z^{(\theta)}_1,\ell\rangle\leq\CYRk^L\tau^{(L)}_1\leq(\delta\CYRk^L)^{-1}e^{\delta\CYRk^L\tau^{(L)}_1}$, $\forall~\delta>0$. Then,
        \[
            \Hat{E}^{(\theta)}_0[  \langle Z^{(\theta)}_1,\ell\rangle]\leq \frac{1}{\delta\CYRk^L}\mathds{E}^{\overline{Q}^z}_0\bigg[ e^{\langle  \theta,Z(\tau^{(L)}_1)  \rangle - \overline{\Lambda}^a_z(\theta) \tau^{(L)}_1  } \mathbb{E} \prod_{j=1}^{\tau^{(L)}_1}\CYRu_j \bigg]<\infty,
        \]
        whenever $\delta$ is sufficiently small as well. Hence the assertion is verified.
    \end{proof}

    \begin{proposition}\label{prop: supremum square of Phi is finite}
        \normalfont
        When $\mathbb{P}$ verifies $\textbf{(SMX)}_{C,g}$, there exists $\gamma_2=\gamma_2(\kappa,z)>0$such that whenever $|\theta|\vee\text{dis}(\mathbb{P})<\gamma_2\CYRk^L$, we have $\sup_{n\geq1}\mathbb{E}E_U[\CYRPhi^z_n(\theta)^2]<\infty$ for any $L\geq L_0$.
    \end{proposition}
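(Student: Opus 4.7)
The plan is to adapt the two-copy $L^{2}$ argument from Lemma \ref{lem: square integrable of the martingale H} to the renewal framework of Section 5.1. Squaring $\CYRPhi^z_n(\theta)$ and using Proposition \ref{prop: mixing separation information} to factorise across renewal epochs, I would write
\[
\mathbb{E}E_U\bigl[\CYRPhi^z_n(\theta)^2\bigr] = \hat{E}^{(\theta)}_0\otimes\hat{E}^{(\theta)}_0\biggl[\exp\sum_{j=1}^{k_n} V\bigl(\Vec{Z}^{(\theta),1}_j,\Vec{Z}^{(\theta),2}_j\bigr)\biggr],
\]
where $\hat{P}^{(\theta)}_0$ is the size-biased renewal law introduced in the proof of Proposition \ref{prop: limiting expectation of Phi is positive}, $(\Vec{Z}^{(\theta),i})_{i=1,2}$ are two independent copies of its paths indexed by renewal blocks, and $V = \log\mathbb{E}[\mathscr{T}^1_j\mathscr{T}^2_j] - \log(\mathbb{E}\mathscr{T}^1_j\cdot\mathbb{E}\mathscr{T}^2_j)$ with $\mathscr{T}^i_j$ the product of the weights $\CYRu_k$ along the $j$-th renewal block of the $i$-th copy.

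The mixing condition $\textbf{(SMX)}_{C,g}$ localises the potential $V$: the correlations between the two blocks vanish once they are at distance larger than $L$, yielding the pointwise bound
\[
V\bigl(\Vec{Z}^{(\theta),1}_j,\Vec{Z}^{(\theta),2}_j\bigr) \leq \text{dis}(\mathbb{P})\,N^L_j\bigl(\Vec{Z}^{(\theta),1},\Vec{Z}^{(\theta),2}\bigr)\,\mathbbm{1}\bigl\{d_1(\Vec{Z}^{(\theta),1}_j,\Vec{Z}^{(\theta),2}_j)<L\bigr\},
\]
where $N^L_j$ counts the steps within block $j$ at which the two walks are within $\ell^1$-distance $L$. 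A Taylor-series expansion of the exponential together with \cite[Lemma A.2]{Chen}, exactly as at the end of the proof of Lemma \ref{lem: square integrable of the martingale H}, then reduces the assertion to the uniform Green's-function estimate
\[
\sup_{|\theta|<\gamma_2\CYRk^L}\sum_{j=1}^{\infty}\hat{E}^{(\theta)}_{0,0}\bigl[N^L_j\,\mathbbm{1}\{d_1(\Vec{Z}^{(\theta),1}_j,\Vec{Z}^{(\theta),2}_j)\leq L\}\bigr] \leq \bar C(\kappa,z)\,L^{2}.
\]

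The main obstacle will be establishing this uniform bound, and it is where the dimensional hypothesis $d\geq 4$ (inherited from Theorem \ref{thm: equality of LDP in interior}) enters. The drift $z$ of the $Q^z$-walk cancels when passing to the difference walk $Z^*_j = Z^{(\theta),1}_j - Z^{(\theta),2}_j$. Because the renewal construction pins the $\ell$-component of each renewal jump to a deterministic height of order $\CYRk^L$, the genuine fluctuations of $Z^*_1$ live in the $(d-1)$-dimensional transverse subspace, and uniform ellipticity combined with the explicit form of $u_z$ guarantees that the transverse covariance is strictly positive-definite, with Taylor expansion $1-\chi^{(\theta)}_\mu(\xi)\geq c|\xi|^2 - O(|\xi|^3)$ uniform in $|\theta|<\gamma_2\CYRk^L$. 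Mimicking the Fourier-inversion scheme of Lemma \ref{lem: square integrable of the martingale H} then yields
\[
\sum_{j=1}^\infty \hat{P}^{(\theta)}_{0,0}\bigl(Z^*_j\in\bar B_L\bigr) \leq C_d\,L^{d-1}\int_{\bar B_{1/L}}\frac{d\xi}{1-a\,\chi^{(\theta)}_\mu(\xi)} \leq C'(\kappa,z)\,L^{d-1}\int_0^{1/L} r^{d-4}\,dr \leq C''(\kappa,z)\,L^{2},
\]
which is finite exactly when $d\geq 4$. Choosing $\gamma_2=\gamma_2(\kappa,z)$ small enough that $\bar C L^2\,\text{dis}(\mathbb{P})<1$ and that the power-series expansion of the exponential converges geometrically then delivers the asserted uniform $L^{2}$-bound on $\CYRPhi^z_n(\theta)$.
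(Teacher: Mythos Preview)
Your proposal has a genuine gap at the factorisation step. In the boundary setting of Lemma \ref{lem: square integrable of the martingale H} two features make the block-diagonal decomposition $\sum_j V(\Vec{X}^{(\theta)}_j,\Vec{Y}^{(\theta)}_j)$ legitimate: (i) on the event $\mathbb{B}_n$ every step of the walk advances by exactly one unit in the $s$-direction, so the $s$-level of the walk coincides with the time index; and (ii) the renewal times $\tau^{(L)}_j$ there depend only on the $\epsilon$-sequence, so the two walk copies in the $L^2$ expansion share the same renewal epochs. Together these force block $j$ of copy 1 and block $j$ of copy 2 to occupy the same slab of $s$-levels, and the environmental correlation between the two products of $\bar\omega$'s factors over $j$.

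Neither feature survives in the interior setting. The $Q^z$-walk can backtrack in the $\ell$-direction before the next renewal, and the renewal times $\tau^{(L)}_k=S(W^{(L)}_k)$ defined in Section 5.1 depend on the walk itself through the cone-escape events $\{\beta^{(\zeta)}_n=\infty\}$. Two independent copies under $\mathds{E}^{\overline{Q}^z}_0$ therefore carry \emph{independent, non-aligned} renewal structures: block $j$ of copy 1 may sit at $\ell$-levels that intersect several different blocks of copy 2, or none of the block of the same index. Consequently the interaction term coming from $\mathbb{E}\prod\CYRu_j\prod\tilde\CYRu_j$ is not a sum of diagonal pieces $V(\Vec{Z}^{(\theta),1}_j,\Vec{Z}^{(\theta),2}_j)$, and your displayed identity for $\mathbb{E}E_U[\CYRPhi^z_n(\theta)^2]$ fails. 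The subsequent sentence that ``the renewal construction pins the $\ell$-component of each renewal jump to a deterministic height of order $\CYRk^L$'' is also incorrect: $\langle Z(\tau^{(L)}_1),\ell\rangle$ is genuinely random, with only exponential moment control from Proposition \ref{prop: mixing exponential of tau < infinity}.

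The paper circumvents this by working not with renewal \emph{indices} but with common renewal \emph{levels}: it introduces the random set $\mathcal{L}^{(z)}$ of $\ell$-levels that are simultaneously renewal for both copies (see \eqref{eqn: what to show1}--\eqref{eqn: what to show2}), encodes the intersection count in $I_n$, and then runs a first-passage recursion $A_{\nu,n}(\theta)\le e^{L\CYRb(1/2)}+\sum_{k\le n}B_{\nu,k}(\theta)\sup_{\nu'}A_{\nu',n-k}(\theta)$ (Lemma \ref{lem: what to show2}). The heart of the argument is then to show that the contraction factor $\mathscr{B}=\sup_{\nu,\theta}\sum_n B_{\nu,n}(\theta)$ is strictly less than $1$ for small disorder, which is done by comparison with the zero-disorder quantity $\sum_n B^*_{\nu,n}(0)\le 1-\delta$ (Lemmas \ref{lem: what to show3}--\ref{lem: estimate D term}). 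Your Fourier/Green-function machinery does reappear, but only inside this recursion---in Lemma \ref{lem: what to show3-1}, where the local CLT-type bound $\sup_x\sum_k\mu^{(\theta)}_k(x)\le K_1(1+j)^{-(d-1)/2}$ is used to control $B_{\nu,n}$---rather than as a direct global estimate on the second moment.
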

    \begin{proof}
        This is equivalent to showing $\exists~\gamma_2=\gamma_2(\kappa,z)$ such that whenever $|\theta|\vee\text{dis}(\mathbb{P})<\gamma_2\CYRk^L$, then
        \begin{equation}\label{eqn: what to show1}        
            \sup_{n\geq1,\abs{\theta}<\gamma_2}\mathds{E}^{\overline{Q}^z}_{0,0}\bigg[ e^{\langle\theta,Z(L^{(z)}_n)+\widetilde{Z}(\widetilde{L}^{(z)}_n)\rangle-\overline{\Lambda}^a_z(\theta)(L^{(z)}_n+\widetilde{L}^{(z)}_n)} \mathbb{E}\prod_{j=1}^{L^{(z)}_n}\CYRu_j\prod_{j=1}^{\widetilde{L}^{(z)}_n}\tilde{\CYRu}_j,\,[ n\CYRk^L]\in\mathcal{L}^{(z)} \bigg]<\infty
        \end{equation}
        where $(Z_n)_{n\geq0}$ and $(\widetilde{Z}_n)_{n\geq0}$ are independent copies of the conditioned random walk with law $\mathds{E}^{\overline{Q}^z}_0$, $\widetilde{L}^{(z)}_n$ and $\widetilde{\tau}^{(L)}_n$ are the analogues of $L^{(z)}_n$ and $\tau^{(L)}_n$ but for $(\widetilde{Z}_n)_{n\geq0}$, and $\mathcal{L}^{(z)}\coloneqq\{[ n\CYRk^L]\geq0:\,\langle Z_i,\ell\rangle\geq n,\;\forall~i\geq L^{(z)}_n,\;\langle\widetilde{Z}_j,\ell\rangle\geq n,\;\forall~j\geq\widetilde{L}^{(z)}_n\}$ for $\widetilde{L}^{(z)}_n$ and $\widetilde{L}^{(z)}_n$. Here $\mathds{E}^{\overline{Q}^z}_{x,\tilde{x}}\coloneqq\mathds{E}^{\overline{Q}^z}_x\otimes\mathds{E}^{\overline{Q}^z}_{\tilde{x}}$. And $\forall~x\in\mathbb{Z}^d$, let $N_x(n)\coloneqq\sum_{j=1}^n\mathbbm{1}_{B_{L\CYRk^L }(\Box_{\ell,x})}(\CYRk^L(Z_{j-1}-Z_0)+Z_0 )$, and $\widetilde{N}_x(n)$ for $(\widetilde{Z}_n)_{n\geq0}$ similarly, for each $n\geq1$. Via an analogous argument, (\ref{eqn: what to show1}) is upper-bounded by 
        \begin{equation}\label{eqn: what to show2}
            \mathscr{A}\coloneqq\sup_{n\geq1,\abs{\theta}<\gamma_2,\nu\in\mathbb{V}_d} A_{\nu,n}(\theta),\qquad\text{where}\quad A_{\nu,n}(\theta)\coloneqq\mathds{E}^{\overline{Q}^z}_{0,\nu}[F_n(\theta),\,[n\CYRk^L] \in\mathcal{L}^{(z)}]
        \end{equation}
        for all $\nu\in\mathbb{V}_d\coloneqq\{\nu\in\mathbb{Z}^d:\,\langle \nu,\ell\rangle=0\}$ and $n\geq1$, with $F_n(\theta)\coloneqq\phi_n(\theta)\widetilde{\phi}_n(\theta)e^{L\CYRk^L \CYRb(\text{dis}(\mathbb{P}))I_n}$ where
        \[
            \phi_n(\theta)\coloneqq e^{\langle \theta,(\CYRk^L (Z(L^{(z)}_n)-Z_0)\rangle -\overline{\Lambda}^a_z(\theta) L^{(z)}_n}\mathbb{E}\prod_{j=1}^{L^{(z)}_n}\CYRu_j \qquad\text{and}\qquad I_n\coloneqq\sum_{x\in\mathbb{Z}^d}[N_x(L^{(z)}_n)\wedge\widetilde{N}_x(\widetilde{L}^{(z)}_n)].
        \]
        Here $\widetilde{\phi}_n$ is defined analogously by $(\widetilde{Z}_n)_{n\geq0}$. And thus the assertion follows from Lemma \ref{lem: what to show2}.
    \end{proof}

    \begin{proposition}\label{prop: equality between log moment generating functions}
        \normalfont
        Define $\Bar{\gamma}=\Bar{\gamma}(\kappa,z)>0$ by $\Bar{\gamma}\coloneqq\gamma_1\wedge\gamma_2$. Then when $\mathbb{P}$ verifies $\textbf{(SMX)}_{C,g}$, we have $\overline{\Lambda}^q_z(\theta) = \overline{\Lambda}^a_z(\theta)$ for all $|\theta|\vee\text{dis}(\mathbb{P})<\bar{\gamma}\CYRk^L$.
    \end{proposition}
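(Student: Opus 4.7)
The plan is to split the claim into the two bounds $\overline{\Lambda}^q_z(\theta)\leq\overline{\Lambda}^a_z(\theta)$ (routine) and $\overline{\Lambda}^q_z(\theta)\geq\overline{\Lambda}^a_z(\theta)$ (the substantive half), both holding $\mathbb{P}$-a.s.~and simultaneously for all $\theta$ with $|\theta|\vee\text{dis}(\mathbb{P})<\Bar{\gamma}\CYRk^L$. The upper bound is obtained by averaging the quenched log-MGF over $\omega$ and applying Jensen's inequality, $\mathbb{E}[\log E^{Q^z}_0[e^{\langle\theta,Z_N\rangle}\prod_{j=1}^N\xi(Z_{j-1},\Delta_j(Z))]]\leq\log\mathbb{E}[E^{Q^z}_0[\cdots]]$; dividing by $N$, passing $N\to\infty$, and using the $\mathbb{P}$-a.s.~existence of $\overline{\Lambda}^q_z(\theta)$ as a deterministic limit delivers the upper inequality.

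For the lower bound I would use the $L^2$-method in three steps. First, the Paley--Zygmund inequality, combined with $\liminf_n\mathbb{E}[\CYRPhi^z_n(\theta)]\geq c>0$ from Proposition \ref{prop: limiting expectation of Phi is positive} and $\sup_n\mathbb{E}[\CYRPhi^z_n(\theta)^2]\leq C<\infty$ from Proposition \ref{prop: supremum square of Phi is finite}, gives $\mathbb{P}(\CYRPhi^z_n(\theta)\geq c/2)\geq c^2/(4C)\eqqcolon\delta>0$ along a subsequence of $n$. Second, using the separation property of Proposition \ref{prop: mixing separation information} together with the renewal sequence $(\tau^{(L)}_k)_{k\geq1}$, I would show that $(\CYRPhi^z_n(\theta))_n$ is an approximate martingale with $L^2$-bounded norms that converges both a.s.~and in $L^2$ to a limit $\CYRPhi^z_\infty(\theta)$ of the same strictly positive mean. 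The translation invariance of the event $\{\CYRPhi^z_\infty(\theta)>0\}$, coupled with the $\textbf{(SMX)}_{C,g}$-induced $0$-$1$ law, upgrades the lower bound from $\delta$-probability to full probability.

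Third, I would translate this positivity into the log-MGF inequality. By the identity $E^{\overline{Q}^z}_0[F(Z)\prod\CYRu_j]=E^{Q^z}_0[F(Z)\prod\xi(Z_{j-1},\Delta_j(Z))]$, valid for any environment-measurable $F$, one may rewrite
$$
\CYRPhi^z_n(\theta)=\mathds{E}^{Q^z}_0\Big[e^{\langle\theta,Z(L^{(z)}_n)\rangle-\overline{\Lambda}^a_z(\theta)L^{(z)}_n}\prod_{j=1}^{L^{(z)}_n}\xi(Z_{j-1},\Delta_j(Z))\mathbbm{1}\{L^{(z)}_n\in\{\tau^{(L)}_k\}_{k\geq 1}\}\Big].
$$
Taking $(1/n)\log$, invoking the $Q^z$-law of large numbers $L^{(z)}_n/n\to1/\langle z,\ell\rangle$, and using $\CYRPhi^z_\infty(\theta)>0$ $\mathbb{P}$-a.s.~then yields
$$
\liminf_{n\to\infty}\tfrac{1}{n}\log E^{Q^z}_0\Big[e^{\langle\theta,Z(L^{(z)}_n)\rangle}\prod_{j=1}^{L^{(z)}_n}\xi(Z_{j-1},\Delta_j(Z))\,\mathbbm{1}\{\cdot\}\Big]\geq\tfrac{\overline{\Lambda}^a_z(\theta)}{\langle z,\ell\rangle}\qquad\mathbb{P}\text{-a.s.}
$$
A standard passage from the level-hitting log-MGF to the fixed-time log-MGF $\overline{\Lambda}^q_z$, using the finite exponential moment of $\tau^{(L)}_1$ from Proposition \ref{prop: mixing exponential of tau < infinity} to absorb the renewal indicator and to control tail excursions, then transfers this into $\overline{\Lambda}^q_z(\theta)\geq\overline{\Lambda}^a_z(\theta)$.

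The main obstacle is expected to be verifying that $(\CYRPhi^z_n(\theta))_n$ is genuinely $L^2$-convergent: the mixing corrections of order $e^{-gL}$ between successive renewal blocks must be controlled tightly enough that they do not amplify multiplicatively when telescoping the conditional expectations across $n$, so one must exploit the $\CYRk^L$-uniformity of Proposition \ref{prop: mixing separation information} in a summable fashion rather than merely in a finite way. A secondary subtlety is the removal of the indicator $\mathbbm{1}\{L^{(z)}_n\in\{\tau^{(L)}_k\}_k\}$, which relies on the positive density of renewals guaranteed by Proposition \ref{prop: mixing exponential of tau < infinity}.
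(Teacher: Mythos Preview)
Your Jensen step for $\overline{\Lambda}^q_z\leq\overline{\Lambda}^a_z$ matches the paper. The divergence is in the lower bound, and there your step 2 is both unnecessary and likely unprovable as stated.

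The sequence $(\CYRPhi^z_n(\theta))_{n\geq1}$ is indexed by the \emph{level} $n$ that $(Z_k)$ reaches in the $\ell$-direction, not by a renewal index; there is no natural filtration in which it is an approximate martingale, and the paper never claims convergence of $\CYRPhi^z_n$, let alone $L^2$-convergence. Your own remark that this is the ``main obstacle'' is correct, but the obstacle disappears once you notice it is not needed. The paper's argument is that if $\CYRPhi^z_n\to0$ held $\mathbb{P}\otimes U$-a.s., then uniform integrability (from Proposition~\ref{prop: supremum square of Phi is finite}) would force $\mathbb{E}E_U[\CYRPhi^z_n]\to0$, contradicting Proposition~\ref{prop: limiting expectation of Phi is positive}. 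Hence $\mathbb{P}\otimes U(\varlimsup_n\CYRPhi^z_n>0)>0$. No Paley--Zygmund, no martingale structure, no $0$--$1$ law.

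The second point you miss is that a \emph{single} realization suffices, because $\overline{\Lambda}^q_z(\theta)$ is already known to be a deterministic constant. So you pick one $(\omega,\epsilon)$ in the positive-probability set above and one $n$ along which the unrestricted expectation $E^{\overline{Q}^z}_0[e^{\langle\theta,Z(L^{(z)}_n)\rangle-\overline{\Lambda}^a_z(\theta)L^{(z)}_n}\prod_{j\leq L^{(z)}_n}\CYRu_j]$ stays bounded away from zero. Now invoke Lemma~\ref{lem: A3}, which says that for every $\delta>0$ the same expectation with $\overline{\Lambda}^a_z(\theta)$ replaced by $\overline{\Lambda}^q_z(\theta)+\delta$ tends to zero $\mathbb{P}$-a.s. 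Comparing the two for large $n$ forces $\overline{\Lambda}^q_z(\theta)+\delta>\overline{\Lambda}^a_z(\theta)$; send $\delta\downarrow0$. Your step~3 (law of large numbers for $L^{(z)}_n/n$, removal of the renewal indicator, passage from level-hitting to fixed-time log-MGF) is a plausible alternative to this comparison, but it is considerably heavier machinery to reach the same conclusion that Lemma~\ref{lem: A3} delivers in two lines.
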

    \begin{proof}
        By Propositions \ref{prop: limiting expectation of Phi is positive}, \ref{prop: supremum square of Phi is finite}, $\mathbb{P}(\lim_{n\to\infty}\Phi^z_n(\theta)=0)<1$ whenever $|\theta|\vee\text{dis}(\mathbb{P})<\bar{\gamma}\CYRk^L$. Indeed, if $\CYRPhi^z_n(\theta)\to0$ $\mathbb{P}\otimes U$-a.s., then we should have $\mathbb{E}E_U[\CYRPhi^z_n(\theta)]\to0$ as $n\to\infty$, since $(\CYRPhi^z_n(\theta))_{n\geq1}$ is unif. integrable by Proposition \ref{prop: supremum square of Phi is finite}, which contradicts the conclusion in Proposition \ref{prop: limiting expectation of Phi is positive}. Thus,
        \[
            \mathbb{P}\otimes U\bigg(\varlimsup_{n\to\infty} E^{\overline{Q}^\lambda}_0\bigg[ e^{\langle \theta, Z(L^{(z)}_n)\rangle- \overline{\Lambda}^a_z(\theta,\lambda) L^{(z)}_n} \prod_{j=1}^{L^{(z)}_n}\CYRu_j \bigg]>0\bigg)>0
        \]
        whenever $|\theta|\vee\text{dis}(\mathbb{P})<\bar{\gamma}\CYRk^L$. By Lemma \ref{lem: A3}, we can find one realization $(\omega,\epsilon)$ of the environment and $n\geq1$ such that
        \[
            E^{\overline{Q}^z}_0\bigg[e^{\langle \theta, Z(L^{(z)}_n)\rangle-(\overline{\Lambda}^q_z(\theta)+ \delta)  L^{(z)}_n}\prod_{j=1}^{L^{(z)}_n}\CYRu_j\bigg]< E^{\overline{Q}^z}_0\bigg[e^{\langle \theta, Z(L^{(z)}_n)\rangle- \overline{\Lambda}^a_z(\theta) L^{(z)}_n}\prod_{j=1}^{L^{(z)}_n}\CYRu_j \bigg],\qquad\forall~L\geq L_0.
        \]
        from which it follows that $\overline{\Lambda}^q_z(\theta)+\delta>\overline{\Lambda}^a_z(\theta)$. Letting $\delta\to0$, we get $\overline{\Lambda}^q_z(\theta)\geq\overline{\Lambda}^a_z(\theta)$ when  $|\theta/\lambda^{1/2}|\vee\text{dis}(\mathbb{P})<\bar{\gamma}\CYRk^L$. Notice that $\overline{\Lambda}^q_z(\theta)\leq\overline{\Lambda}^a_z(\theta)$ for all $\theta\in\mathbb{R}^d$  by Jensen's inequality, verifying the assertion.
    \end{proof}

\subsection{Regularity of the differentials and open mappings}
    \begin{proposition}\label{prop: log MGF is differentiable}
        \normalfont
        Given the above $\gamma_1=\gamma_1(\kappa,z)$. When $\mathbb{P}$ verifies $\textbf{(SMX)}_{C,g}$, if $\text{dis}(\mathbb{P})<\gamma_1\CYRk^L$, then the mapping $\theta\mapsto\overline{\Lambda}^a_z(\theta)$ is analytic on the region $\{|\theta|<\gamma_1\CYRk^L\}\subseteq\mathbb{R}^d$ with $L\geq L_0$.
    \end{proposition}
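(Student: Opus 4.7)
The plan is to characterize $\overline{\Lambda}^a_z(\theta)$ as the root of an implicit equation and then invoke the analytic implicit function theorem. The natural candidate comes from the renewal structure in Proposition \ref{prop: mixing separation information}: blocks between consecutive times $\tau^{(L)}_{k-1}$ and $\tau^{(L)}_k$ form an i.i.d.\ sequence under $\mathbb{P}\otimes U$ when conditioned on $\beta^{(\zeta)}_0=\infty$. Following the computation in the proof of Proposition \ref{prop: limiting expectation of Phi is positive}, the limit $\lim_{n\to\infty}\mathbb{E}E_U[\CYRPhi^z_n(\theta)]$ is expressed via a renewal theorem for the random walk with step distribution $\mu^{(\theta)}$ on $\mathbb{Z}^d$. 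For $\mu^{(\theta)}$ to be a bona fide probability measure (as implicitly used in that proof), one must have the normalization
\[
    F(\theta,\alpha)\coloneqq \mathbb{E}\mathds{E}^{\overline{Q}^z}_0\bigg[e^{\langle\theta,Z(\tau^{(L)}_1)\rangle - \alpha\tau^{(L)}_1}\prod_{j=1}^{\tau^{(L)}_1}\CYRu_j\bigg] = 1
\]
at $\alpha=\overline{\Lambda}^a_z(\theta)$; conversely, strict monotonicity of $\alpha\mapsto F(\theta,\alpha)$ (shown below) makes this equation uniquely solvable. So the first step is to rigorously pin down this implicit characterization by showing both necessity (from positivity of $\lim\mathbb{E}E_U[\CYRPhi^z_n]$ via Proposition \ref{prop: limiting expectation of Phi is positive}) and sufficiency (from dominated convergence on the renewal sum via Proposition \ref{prop: supremum square of Phi is finite}).

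Next, I would extend $F$ to complex arguments $(\theta,\alpha)\in\mathbb{C}^d\times\mathbb{C}$ and verify joint analyticity on a polydisc around each real base point $\theta_0$ with $|\theta_0|<\gamma_1\CYRk^L$. The three key bounds on the integrand are: (i)~$|e^{\langle\theta,Z(\tau^{(L)}_1)\rangle}|\leq e^{|\theta|\,\tau^{(L)}_1}$ from the nearest-neighbor constraint $|Z(\tau^{(L)}_1)|_1\leq\tau^{(L)}_1$; (ii)~$|\prod_j\CYRu_j|\leq\exp(\frac{1}{2}\CYRb(\text{dis}(\mathbb{P}))\tau^{(L)}_1)$ from the $\overline{Q}^z$-a.s.\ bound $|\CYRu_j-1|\leq\text{dis}(\mathbb{P})/\hbar$; and (iii)~$|e^{-\alpha\tau^{(L)}_1}|=e^{-\mathrm{Re}(\alpha)\tau^{(L)}_1}$. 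Because Proposition \ref{prop: mixing exponential of tau < infinity} gives $E^{\overline{Q}^z}_0[e^{\gamma_0\CYRk^L\tau^{(L)}_1}|\beta^{(\zeta)}_0=\infty]\leq\eta$, if $\delta>0$ is chosen with $|\theta-\theta_0|+|\alpha-\overline{\Lambda}^a_z(\theta_0)|+\frac{1}{2}\CYRb(\text{dis}(\mathbb{P}))<(\gamma_0-\delta)\CYRk^L$, the integrand is bounded by an integrable exponential of $\tau^{(L)}_1$ uniformly on the polydisc. Dominated convergence then allows term-by-term complex differentiation in each variable, so $F$ is jointly analytic.

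Finally, differentiating under the expectation,
\[
    \partial_\alpha F(\theta,\overline{\Lambda}^a_z(\theta)) = -\mathbb{E}\mathds{E}^{\overline{Q}^z}_0\bigg[\tau^{(L)}_1 e^{\langle\theta,Z(\tau^{(L)}_1)\rangle - \overline{\Lambda}^a_z(\theta)\tau^{(L)}_1}\prod_{j=1}^{\tau^{(L)}_1}\CYRu_j\bigg]<0
\]
strictly, since $\tau^{(L)}_1\geq1$ and the integrand is strictly positive. By the analytic implicit function theorem, $\theta\mapsto\overline{\Lambda}^a_z(\theta)$ is analytic in a complex neighborhood of $\theta_0$; covering $\{|\theta|<\gamma_1\CYRk^L\}$ by such neighborhoods yields the assertion.

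The main obstacle is the first step: precisely justifying $F(\theta,\overline{\Lambda}^a_z(\theta))=1$ from the renewal-theoretic computation. One must rule out the degenerate case where $\mu^{(\theta)}$ is sub-probability (total mass strictly less than $1$), in which case $\CYRPhi^z_n(\theta)$ would decay to zero in $\mathbb{E}E_U$-mean, contradicting Proposition \ref{prop: limiting expectation of Phi is positive}; and the case where the total mass exceeds $1$, in which case the renewal sum diverges exponentially rather than producing the finite positive limit. Once this characterization is fixed, the complex-analytic machinery above is comparatively routine.
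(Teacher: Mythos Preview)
Your proposal is correct and follows essentially the same route as the paper: define the one-block partition function $F(\theta,\alpha)=\mathds{E}^{\overline{Q}^z}_0[e^{\langle\theta,Z(\tau^{(L)}_1)\rangle-\alpha\tau^{(L)}_1}\mathbb{E}\prod_j\CYRu_j]$, establish joint analyticity via the exponential-moment bound on $\tau^{(L)}_1$ from Proposition~\ref{prop: mixing exponential of tau < infinity}, verify $\partial_\alpha F<0$, and apply the analytic implicit function theorem.

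The only substantive difference is in how the identity $F(\theta,\overline{\Lambda}^a_z(\theta))=1$ is justified. You treat this as the ``main obstacle'' and propose to extract it indirectly from the renewal-theoretic limit in Propositions~\ref{prop: limiting expectation of Phi is positive}--\ref{prop: supremum square of Phi is finite}. The paper instead invokes it as an already-established fact: Lemma~\ref{lem: A1} gives $F(\theta,\overline{\Lambda}^a_z(\theta))\leq1$ unconditionally, and Lemma~\ref{lem: A2} gives $F(\theta,\overline{\Lambda}^a_z(\theta))\geq1$ whenever $|\theta|\vee\text{dis}(\mathbb{P})<\gamma_1\CYRk^L$ (these two lemmas are precisely what the proof of Proposition~\ref{prop: limiting expectation of Phi is positive} opens with). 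So there is no obstacle here---the identity is direct, and your detour through sub/super-probability alternatives for $\mu^{(\theta)}$ is unnecessary. A minor point: your bound (ii) should read $|\prod_j\CYRu_j|\leq e^{\CYRb(\text{dis}(\mathbb{P}))\tau^{(L)}_1}$ without the factor $\tfrac{1}{2}$, since $-\log(1-t/\hbar)\geq\tfrac{1}{2}\CYRb(t)$ rather than $\leq$; this does not affect the argument.
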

    \begin{proof}
        Consider the map $\tilde{\psi}^z:\mathbb{R}^d\times\mathbb{R}\to\mathbb{R}$ defined by $\tilde{\psi}^z(\theta,r)\coloneqq \mathds{E}^{\overline{Q}^z}_0 [ e^{\langle \theta,Z(\tau^{(L)}_1)\rangle-r\tau^{(L)}_1}\mathbb{E} \prod_{j=1}^{\tau^{(L)}_1}\CYRu_j ]$. Taking $r= \overline{\Lambda}^a_z(\theta) +\delta$ for some $\delta\in\mathbb{R}$, we have $|\langle\theta,Z(\tau^{(L)}_1 )\rangle-r \tau^{(L)}_1+\log \mathbb{E}\prod_{j=1}^{\tau^{(L)}_1}\CYRu_j | \le  (2|\theta|+\CYRb(\text{dis}(\mathbb{P}))+\abs{\delta})\tau^{(L)}_1$, where $\CYRb(\vdot)$ is specified above Proposition \ref{prop: limiting expectation of Phi is positive}. By the choice of $\gamma_1(\kappa,z)>0$ specified in Proposition \ref{prop: limiting expectation of Phi is positive}, we have $\mathds{E}^{\overline{Q}^z}_0[\tau^{(L)}_1 e^{\langle \theta,Z(\tau^{(L)}_1)\rangle-(\overline{\Lambda}^a_z(\theta) + \delta)\tau^{(L)}_1}\mathbb{E} \prod_{j=1}^{\tau^{(L)}_1}\CYRu_j  ]<\infty$, whenever $|\theta|\vee\text{dis}(\mathbb{P})<\gamma_1\CYRk^L$ and $\abs{\delta}<\delta_z$ for some $\delta_z=\delta_z(\kappa)>0$ independent of $L\geq L_0$. It then follows from the dominated convergence that when $\text{dis}(\mathbb{P})<\gamma_1\CYRk^L$, the map $\tilde{\psi}^z$ is analytic on the region $\mathcal{C}_z\coloneqq\{(\theta,r):\,|\theta|<\gamma_1\CYRk^L,\,|r-\overline{\Lambda}^a_z(\theta)|<\delta_z\}$ with the following series expansion
        \[
            \tilde{\psi}^z(\theta,r)=\sum_{n=0}^\infty \frac{1}{n!}\mathds{E}^{\overline{Q}^z}_0\big[ (\langle\theta, Z_{\tau^{(L)}_1}\rangle-r\tau^{(L)}_1)^n\mathbb{E}\prod_{j=1}^{\tau^{(L)}_1}\CYRu_j \big],\qquad\forall~(\theta,r)\in\mathcal{C}_z
        \]
        and its differential $\partial_r\tilde{\psi}^z$ given by
        \[
            -\frac{\partial}{\partial r} \tilde{\psi}^z(\theta,r) = \mathds{E}^{\overline{Q}^z}_0\bigg[ \tau^{(L)}_1 e^{\langle \theta,Z(\tau^{(L)}_1)\rangle-r\tau^{(L)}_1} \mathbb{E}\prod_{j=1}^{\tau^{(L)}_1}\CYRu_j \bigg],\qquad\forall~(\theta,r)\in\mathcal{C}_z.
        \]
        But observe the identity that $\tilde{\psi}^z(\theta,\widetilde{\Lambda}^a_z(\theta))=1$ whenever $|\theta|\vee\text{dis}(\mathbb{P})<\gamma_1\CYRk^L$ by Proposition \ref{prop: limiting expectation of Phi is positive}, which in turn implies that $-\partial_r\tilde{\psi}^z(\theta,\widetilde{\Lambda}^a_z(\theta))\geq\tilde{\psi}^z(\theta,\widetilde{\Lambda}^a_z(\theta))>0$. Hence the analyticity of $\overline{\Lambda}^a_z(\theta)$ for $|\theta|<\gamma_1\CYRk^L$ whenever $\text{dis}(\mathbb{P})<\gamma_1\CYRk^L$ now follows from the analytic implicit function theorem.
    \end{proof}
    
    \begin{proposition}\label{prop: distance between lambda z and differential}
        \normalfont
        When $\mathbb{P}$ verifies $\textbf{(SMX)}_{C,g}$, for any $\delta>0$, there exists $\epsilon_1=\epsilon_1(\kappa,z,\delta)>0$ such that $|\nabla\overline{\Lambda}^a_z(0)- z|<\delta$ whenever $\text{dis}(\mathbb{P})<\epsilon_1\CYRk^L$.
    \end{proposition}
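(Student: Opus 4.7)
The plan is to convert the bound into a perturbative calculation around the zero-disorder regime, using the analytic implicit function structure already built into Proposition \ref{prop: log MGF is differentiable}. Since $\tilde{\psi}^z(\theta,\overline{\Lambda}^a_z(\theta))=1$ on $\{|\theta|<\gamma_1\CYRk^L\}$, implicit differentiation at $\theta=0$ will give
\[
\nabla\overline{\Lambda}^a_z(0)=\frac{\mathds{E}^{\overline{Q}^z}_0\big[Z(\tau^{(L)}_1)\,e^{-\overline{\Lambda}^a_z(0)\tau^{(L)}_1}\,\mathbb{E}\prod_{j=1}^{\tau^{(L)}_1}\CYRu_j\big]}{\mathds{E}^{\overline{Q}^z}_0\big[\tau^{(L)}_1\,e^{-\overline{\Lambda}^a_z(0)\tau^{(L)}_1}\,\mathbb{E}\prod_{j=1}^{\tau^{(L)}_1}\CYRu_j\big]},
\]
and my goal will be to show that this ratio depends continuously on $\text{dis}(\mathbb{P})$ and equals $z$ when $\text{dis}(\mathbb{P})=0$.

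First I would treat the zero-disorder limit. When $\text{dis}(\mathbb{P})=0$, every factor $\CYRu_j$ is identically $1$ (on $\{\epsilon_j\in\mathbb{V}\}$ by definition, and on $\{\epsilon_j=0\}$ because $\omega/\mathbb{E}\omega\equiv 1$), so $\tilde{\psi}^z(0,0)=1$ and the uniqueness clause of the analytic implicit function theorem forces $\overline{\Lambda}^a_z(0)=0$. The ratio then reduces to $\mathds{E}^{\overline{Q}^z}_0[Z(\tau^{(L)}_1)]/\mathds{E}^{\overline{Q}^z}_0[\tau^{(L)}_1]$, which equals $z$ by a Wald-type identity applied to the martingale $(Z_n-nz)_{n\geq 0}$ under $Q^z_0$, the finite mean of $\tau^{(L)}_1$ being supplied by Proposition \ref{prop: mixing exponential of tau < infinity}.

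Next I would upgrade pointwise convergence at zero disorder to a quantitative bound at small disorder. Since $\CYRu_j\in[1-\text{dis}(\mathbb{P})/\hbar,1+\text{dis}(\mathbb{P})/\hbar]$, one has $|\mathbb{E}\prod_{j=1}^{\tau^{(L)}_1}\CYRu_j-1|\leq e^{\CYRb(\text{dis}(\mathbb{P}))\tau^{(L)}_1}-1$, which vanishes pointwise as $\text{dis}(\mathbb{P})\to 0$. The exponential tail $\mathds{E}^{\overline{Q}^z}_0[e^{\gamma_0\CYRk^L\tau^{(L)}_1}]\leq\eta$ supplies an integrable dominator for both numerator and denominator on a sufficiently narrow disorder window, while the lower bound $\mathds{E}^{\overline{Q}^z}_0[\CYRk^L\tau^{(L)}_1]\geq\eta^{-1}$ keeps the denominator uniformly away from zero. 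A standard dominated convergence argument then transfers continuity to the ratio, and I pick $\epsilon_1(\kappa,z,\delta)>0$ small enough that the perturbation from the zero-disorder ratio is less than $\delta$; the factor $\CYRk^L$ in the hypothesis is exactly the scale on which the exponential moments are uniform.

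The main obstacle is legitimating the Wald step under the conditional law $\mathds{E}^{\overline{Q}^z}_0=E^{\overline{Q}^z}_0[\,\cdot\,|\,\beta^{(\zeta)}_0=\infty]$, because $\{\beta^{(\zeta)}_0=\infty\}$ is not a stopping-time event, so the optional stopping theorem does not apply directly. The cleanest workaround is to run Wald on the unconditioned $Q^z_0$-expectation of $Z(\tau^{(L)}_1)$, where bounded increments plus the finite mean of $\tau^{(L)}_1$ from Proposition \ref{prop: mixing exponential of tau < infinity} makes it immediate, and then extract the conditional expectation through the renewal decomposition $\{\beta^{(\zeta)}_0=\infty\}$ versus its complement, using the tail control of $\tau^{(L)}_1$ together with $Q^z_0(\beta^{(\zeta)}_0=\infty)\geq\eta^{-1}$ to show the two-sided ratio of expectations still equals $z$. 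This is the only non-routine step; the rest is dominated convergence.
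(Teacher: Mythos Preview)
Your overall strategy matches the paper's: derive the explicit ratio formula for $\nabla\overline{\Lambda}^a_z(0)$ from $\tilde{\psi}^z(\theta,\overline{\Lambda}^a_z(\theta))=1$, identify the zero-disorder limit as $\mathds{E}^{\overline{Q}^z}_0[Z(\tau^{(L)}_1)]/\mathds{E}^{\overline{Q}^z}_0[\tau^{(L)}_1]$, and then use the exponential moments of Proposition~\ref{prop: mixing exponential of tau < infinity} together with the bound $|\mathbb{E}\prod\CYRu_j-1|\leq e^{\CYRb(\text{dis}(\mathbb{P}))\tau^{(L)}_1}-1$ and the lower bound on $\mathds{E}^{\overline{Q}^z}_0[\CYRk^L\tau^{(L)}_1]$ to pass to the limit. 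The paper does exactly this, phrasing the last step as a mean-value estimate rather than dominated convergence, but the content is the same.

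The one genuine gap is your treatment of the identity $z=\mathds{E}^{\overline{Q}^z}_0[Z(\tau^{(L)}_1)]/\mathds{E}^{\overline{Q}^z}_0[\tau^{(L)}_1]$. You correctly flag that Wald under the conditional law $\mathds{E}^{\overline{Q}^z}_0[\,\cdot\,]=E^{\overline{Q}^z}_0[\,\cdot\,|\,\beta^{(\zeta)}_0=\infty]$ is not immediate, but your proposed workaround does not close: running Wald unconditionally gives $E^{\overline{Q}^z}_0[Z(\tau^{(L)}_1)]=z\,E^{\overline{Q}^z}_0[\tau^{(L)}_1]$, and splitting each side over $\{\beta^{(\zeta)}_0=\infty\}$ versus its complement yields a single linear relation among four conditional expectations, which is not enough to force the conditional ratio to equal $z$. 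Tail control of $\tau^{(L)}_1$ and the lower bound on $Q^z_0(\beta^{(\zeta)}_0=\infty)$ do not supply the missing equation.

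The paper avoids this entirely by invoking the law of large numbers through the renewal structure. By Proposition~\ref{prop: mixing separation information}, the increments $(Z(\tau^{(L)}_{k})-Z(\tau^{(L)}_{k-1}),\,\tau^{(L)}_{k}-\tau^{(L)}_{k-1})_{k\geq 2}$ are i.i.d.\ with the conditional law $\mathds{E}^{\overline{Q}^z}_0$, so $Z(\tau^{(L)}_n)/\tau^{(L)}_n\to \mathds{E}^{\overline{Q}^z}_0[Z(\tau^{(L)}_1)]/\mathds{E}^{\overline{Q}^z}_0[\tau^{(L)}_1]$ almost surely. But $(Z(\tau^{(L)}_n)/\tau^{(L)}_n)_{n\geq 1}$ is a subsequence of $(Z_m/m)_{m\geq 1}$, which converges to $z$ almost surely under $Q^z_0$ by the ordinary strong law. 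Since $\{\beta^{(\zeta)}_0=\infty\}$ has positive probability, the two limits coincide and the identity follows. Replace your Wald paragraph with this argument and the proof is complete.
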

    \begin{proof}
        A direct computation analogous to \cite[Eqn. (4.2)]{Bazaes/Mukherjee/Ramirez/Sagliett0} gives
        \begin{equation}\label{eqn: differential of annealed log MGF}                       \nabla \overline{\Lambda}^a_z(\theta) =  \frac{\mathds{E}^{\overline{Q}^z}_0[\CYRk^LZ(\tau^{(L)}_1)e^{\langle \theta,Z(\tau^{(L)}_1)\rangle- \overline{\Lambda}^a_z(\theta)\tau^{(L)}_1}\mathbb{E}\prod_{j=1}^{\tau^{(L)}_1}\CYRu_j]}{\mathds{E}^{\overline{Q}^z}_0[\CYRk^L\tau^{(L)}_1 e^{\langle \theta, Z(\tau^{(L)}_1)\rangle- \overline{\Lambda}^a_z(\theta) \tau^{(L)}_1}\mathbb{E}\prod_{j=1}^{\tau^{(L)}_1}\CYRu_j]},\qquad\forall~|\theta|<\gamma_1\CYRk^L,
        \end{equation}
        when $\text{dis}(\mathbb{P})<\gamma_1\CYRk^L$. The law of large numbers then yields $ z = \mathds{E}^{\overline{Q}^z}_0[Z(\tau^{(L)}_1)]/\mathds{E}^{\overline{Q}^z}_0[\tau^{(L)}_1]$. In light that $\mathds{E}^{\overline{Q}^z}_0[\CYRk^L\tau^{(L)}_1]\geq c_\kappa$ for some $c_\kappa=\CYRk^L>0$, it suffices to first prove $\forall~\delta^\prime>0$, $\exists~\epsilon_1^\prime=\epsilon_1^\prime(\kappa,z,\delta^\prime)$ such that $\text{dis}(\mathbb{P})<\delta^\prime\CYRk^L$ implies
        \[
            \bigg| \mathds{E}^{\overline{Q}^z}_0\bigg[\CYRk^LZ(\tau^{(L)}_1)e^{\langle \theta, Z(\tau^{(L)}_1)\rangle- \overline{\Lambda}^a_z(\theta)\tau^{(L)}_1}\mathbb{E}\prod_{j=1}^{\tau^{(L)}_1}\CYRu_j\bigg] - \mathds{E}^{\overline{Q}^z}_0\big[\CYRk^LZ(\tau^{(L)}_1)\big] \bigg|\leq \delta^\prime,\qquad\forall~L\geq L_0
        \]
        as well as when $L\geq L_0$,
        \begin{equation}\label{eqn: 2nd to prove, intermediate}        
            \bigg| \mathds{E}^{\overline{Q}^z}_0\bigg[\CYRk^L\tau^{(L)}_1e^{\langle \theta,Z(\tau^{(L)}_1)\rangle- \overline{\Lambda}^a_z(\theta)\tau^{(L)}_1}\mathbb{E}\prod_{j=1}^{\tau^{(L)}_1}\CYRu_j\bigg] - \mathds{E}^{\overline{Q}^\lambda}_0\big[\CYRk^L\tau^{(L)}_1\big] \bigg|\leq \delta^\prime
        \end{equation}
        when $\theta=0$. Indeed, by \cite[Remark 6]{Bazaes/Mukherjee/Ramirez/Sagliett0} and the mean value theorem we have
        \[
            \bigg| \mathds{E}^{\overline{Q}^z}_0\bigg[\CYRk^L Z_{\tau^{(L)}_1}e^{- \overline{\Lambda}^a_z(0)\tau^{(L)}_1}\mathbb{E}\prod_{j=1}^{\tau^{(L)}_1}\CYRu_j\bigg] - \mathds{E}^{\overline{Q}^z}_0\big[\CYRk^L Z_{\tau^{(L)}_1}\big] \bigg|\leq \CYRb(\text{dis}(\mathbb{P})) \mathds{E}^{\overline{Q}^z}_0\bigg[ \CYRk^L (\tau^{(L)}_1)^2 e^{\CYRb(\text{dis}(\mathbb{P}))\tau^{(L)}_1}\bigg].
        \]
        Hence the upper bound now follows from Proposition \ref{prop: mixing exponential of tau < infinity}. The estimate of (\ref{eqn: 2nd to prove, intermediate}) is proceeded completely analogously. Thus, we have shown $\forall~\delta>0$, $\exists~\epsilon_1^\prime(\kappa,z,\delta)>0$ s.t. $|\nabla\overline{\Lambda}^a_z(0)-z|<\delta/2$ if $\text{dis}(\mathbb{P})<\epsilon_1^\prime\CYRk^L$. 
    \end{proof}

    \begin{proposition}\label{prop: Hessian matrix is positive definite}
        \normalfont
        Given $\gamma_1=\gamma_1(\kappa,z)$. If $|\theta|\vee\text{dis}(\mathbb{P})<\gamma_1\CYRk^L$, when $\mathbb{P}$ verifies $\textbf{(SMX)}_{C,g}$, the Hessian $H^a_z(\theta)$ of $\overline{\Lambda}^a_z(\theta)$ is positive definite. And $\mathcal{A}^{\mathbb{P}}_{z}\coloneqq\{\nabla\overline{\Lambda}^a_z(\theta):\,\abs{\theta}<\bar{\gamma}\CYRk^L\}$ is open in $\mathbb{R}^d$.
    \end{proposition}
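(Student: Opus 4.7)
My plan is to express the Hessian $H^a_z(\theta)$ as the covariance matrix of a centered random vector under a Gibbs-type tilted probability measure, and then exploit uniform ellipticity of the auxiliary $\overline{Q}^z$-walk to rule out any degeneracy of that covariance. Because $\tilde{\psi}^z(\theta,\overline{\Lambda}^a_z(\theta))=1$ holds on the region $\{|\theta|\vee\text{dis}(\mathbb{P})<\gamma_1\CYRk^L\}$ by Proposition \ref{prop: limiting expectation of Phi is positive}, I will first introduce the tilted law
\[
    dP_\theta \;\coloneqq\; e^{\langle\theta,\,Z(\tau_1^{(L)})\rangle - \overline{\Lambda}^a_z(\theta)\,\tau_1^{(L)}}\,\mathbb{E}\prod_{j=1}^{\tau_1^{(L)}}\CYRu_j\;d\mathds{P}^{\overline{Q}^z}_0,
\]
which is a bona fide probability measure on $\Omega\times(\mathcal{W})^{\mathbb{N}}\times(\mathbb{Z}^d)^{\mathbb{N}}$ by the normalization above. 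Differentiating that identity once in $\theta$ reproduces the formula \eqref{eqn: differential of annealed log MGF}, and a second differentiation, legitimized by the integrability bounds already invoked in Proposition \ref{prop: log MGF is differentiable}, will yield
\[
    H^a_z(\theta) \;=\; \frac{1}{E_{P_\theta}[\tau_1^{(L)}]}\,\text{Cov}_{P_\theta}\bigl(Z(\tau_1^{(L)}) - \nabla\overline{\Lambda}^a_z(\theta)\,\tau_1^{(L)}\bigr),
\]
which is manifestly positive semidefinite.

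The second step is to upgrade this to strict positive definiteness. For a unit vector $v\in\mathbb{R}^d$, the centered $P_\theta$-random variable $\langle v,Z(\tau_1^{(L)})\rangle - \langle v,\nabla\overline{\Lambda}^a_z(\theta)\rangle \tau_1^{(L)}$ has vanishing mean, so I need only rule out that it is $P_\theta$-almost surely constant. Since $P_\theta$ is mutually absolutely continuous with $\mathds{P}^{\overline{Q}^z}_0$ on the conditioning event $\{\beta_0^{(\zeta)}=\infty\}$, this reduces to exhibiting two admissible path realizations sharing the same renewal length $\tau_1^{(L)}$ whose endpoints differ by a vector not orthogonal to $v$. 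Swapping a $+v$ and a $-v$ step inside a bulk portion of a single excursion achieves this, both realizations carry positive $\overline{Q}^z$-probability by uniform ellipticity of the one-step kernel, and the accompanying factor $\mathbb{E}\prod\CYRu_j$ stays bounded away from zero because each $\CYRu_j\in[1-\text{dis}(\mathbb{P})/\hbar,\,1+\text{dis}(\mathbb{P})/\hbar]$ under the disorder threshold $\text{dis}(\mathbb{P})<\bar{\gamma}\CYRk^L$.

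For the openness of $\mathcal{A}^{\mathbb{P}}_z$ I will invoke the analytic inverse function theorem. Proposition \ref{prop: log MGF is differentiable} already delivers real-analyticity of $\overline{\Lambda}^a_z$ on $\{|\theta|<\gamma_1\CYRk^L\}$, so that $\nabla\overline{\Lambda}^a_z$ is itself analytic with Jacobian $H^a_z$. The preceding step asserts that $H^a_z(\theta)$ is positive definite, hence invertible, throughout $\{|\theta|<\bar{\gamma}\CYRk^L\}$. Consequently $\nabla\overline{\Lambda}^a_z$ is a local diffeomorphism at every point of this set, and its image $\mathcal{A}^{\mathbb{P}}_z$ is open in $\mathbb{R}^d$.

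The hard part will be the strict positive definiteness in the second step. Unlike the i.i.d.\ setting of \cite{Bazaes/Mukherjee/Ramirez/Sagliett0}, the tilt by $\mathbb{E}\prod\CYRu_j$ averages over the mixing environment, and I must be careful that this averaging does not concentrate the law of $Z(\tau_1^{(L)})$ onto a lower-dimensional affine subspace. The swap argument described above should handle it, but keeping the perturbed paths inside the cone $\CYRz(0,\zeta,\ell)$ and preserving the event $\beta_0^{(\zeta)}=\infty$ requires that the perturbation be performed strictly in the bulk of a typical excursion, which will in turn lean on the exponential tail control of $\tau_1^{(L)}$ supplied by Proposition \ref{prop: mixing exponential of tau < infinity}.
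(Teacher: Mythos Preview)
Your proposal is correct and follows essentially the same route as the paper: both differentiate the identity $\tilde{\psi}^z(\theta,\overline{\Lambda}^a_z(\theta))=1$ to express $\langle v,H^a_z(\theta)v\rangle$ as a (weighted) variance of $\langle v,Z(\tau_1^{(L)})-\nabla\overline{\Lambda}^a_z(\theta)\tau_1^{(L)}\rangle$, then rule out degeneracy via uniform ellipticity of the $\overline{Q}^z$-walk, and finally invoke the inverse function theorem for openness. Your swap/perturbation argument is more explicit than the paper's one-line appeal to ellipticity, and your care about preserving both the cone condition $\beta_0^{(\zeta)}=\infty$ and the value of $\tau_1^{(L)}$ under the perturbation is exactly the point the paper leaves tacit; just note that ``swapping a $+v$ and a $-v$ step'' should really read ``replacing a $+e_j$ step by a $-e_j$ step'' for some coordinate $e_j$ with $v_j\neq 0$, since $v$ need not be a lattice direction.
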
  
    \begin{proof}
        Taking derivative on (\ref{eqn: differential of annealed log MGF}), we obtain an expression for the Hessian $H^a_z(\theta)$. In particular, for any vector $v\in\mathbb{R}^d$,
        \[
            \langle v,H^a_z(\theta)\vdot v\rangle =  \frac{\mathds{E}^{\overline{Q}^z}_0[|\langle Z_{\tau^{(L)}_1}-\nabla\overline{\Lambda}^a_z(\theta)\tau^{(L)}_1,\CYRk^L v\rangle|^2 e^{\langle \theta, Z(\tau^{(L)}_1)\rangle- \overline{\Lambda}^a_z(\theta)\tau^{(L)}_1}\mathbb{E}\prod_{j=1}^{\tau^{(L)}_1}\CYRu_j]}{\mathds{E}^{\overline{Q}^z}_0[\CYRk^L\tau^{(L)}_1 e^{\langle \theta,Z(\tau^{(L)}_1)\rangle- \overline{\Lambda}^a_z(\theta)\tau^{(L)}_1}\mathbb{E}\prod_{j=1}^{\tau^{(L)}_1}\CYRu_j]}.
        \]
        whenever $|\theta|\vee\text{dis}(\mathbb{P})<\gamma_1\CYRk^L$. Henceforth $\langle v,H^a_z(\theta)\vdot v\rangle\geq0$ and equality holds iff $\langle Z(\tau^{(L)}_1)-\nabla\overline{\Lambda}^a_z(\theta)\tau^{(L)}_1,v\rangle = 0$, $\mathds{E}^{\overline{Q}^z}_0$-a.s., i.e.,~$\langle Z(\tau^{(L)}_1)/\tau^{(L)}_1,v\rangle$ is $\mathds{E}^{\overline{Q}^\lambda}_0$-a.s. constant, which is impossible since $\mathbb{P}$ is $\kappa$-uniformly elliptic. Hence $H^a_z(\theta)$ is positive definite and the openness of $\mathcal{A}^{\mathbb{P}}_{z}$ follows from inverse mapping theorem.
    \end{proof}

    \begin{proposition}\label{prop: region A contains an open ball}
        \normalfont
        When $\mathbb{P}$ verifies $\textbf{(SMX)}_{C,g}$, there exist $\epsilon=\epsilon(\kappa,z)$ and $r=r(\kappa,z)>0$ such that whenever $\text{dis}(\mathbb{P})<\epsilon\CYRk^L$, the open ball $B_r(z)\subseteq\mathcal{B}^{\mathbb{P}}_{z} \coloneqq\{\nabla\overline{\Lambda}^a_z(\theta):\,\abs{\theta}<\bar{\gamma}\CYRk^L\}$.
    \end{proposition}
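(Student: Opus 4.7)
The plan is to combine the quantitative inverse function theorem, applied to the local diffeomorphism $\nabla\overline{\Lambda}^a_z$ near the origin supplied by Propositions~\ref{prop: Hessian matrix is positive definite} and~\ref{prop: log MGF is differentiable}, with the proximity of $\nabla\overline{\Lambda}^a_z(0)$ to $z$ furnished by Proposition~\ref{prop: distance between lambda z and differential}. The map $\theta\mapsto\nabla\overline{\Lambda}^a_z(\theta)$ is analytic on $\{|\theta|<\bar{\gamma}\CYRk^L\}$ with positive definite Jacobian $H^a_z(\theta)$ there, hence $\mathcal{B}^{\mathbb{P}}_z$ is open and contains an open neighborhood of $\nabla\overline{\Lambda}^a_z(0)$. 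The remaining issue is to quantify this neighborhood so that it engulfs a ball around $z$ itself.

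First I would extract a uniform coercivity estimate from the Hessian formula in the proof of Proposition~\ref{prop: Hessian matrix is positive definite}, namely the existence of $c_0=c_0(\kappa,z)>0$ and $r_0=r_0(\kappa,z)>0$ such that
\[
\langle v,H^a_z(\theta)v\rangle \geq c_0\,\CYRk^L\,|v|^2,\qquad\forall~|\theta|\leq r_0,\;v\in\mathbb{R}^d.
\]
The numerator of that formula is a tilted variance of $\CYRk^L\langle Z(\tau^{(L)}_1)-\nabla\overline{\Lambda}^a_z(\theta)\tau^{(L)}_1,v\rangle$, and I would bound it from below by the contribution of a single $\overline{Q}^z$-step after the first renewal, using $\kappa$-uniform ellipticity and the approximate independence of the $(\tau^{(L)}_k)_{k\geq1}$-renewal blocks granted by Proposition~\ref{prop: mixing separation information}. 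The weights $\prod_{j\leq\tau^{(L)}_1}\CYRu_j$ deviate from $1$ by at most $\CYRb(\text{dis}(\mathbb{P}))\tau^{(L)}_1$, whose exponential moments are dominated by Proposition~\ref{prop: mixing exponential of tau < infinity}; together with the tilting factor $e^{\langle\theta,Z(\tau^{(L)}_1)\rangle-\overline{\Lambda}^a_z(\theta)\tau^{(L)}_1}$, this ensures the normalization $\mathds{E}^{\overline{Q}^z}_0[\CYRk^L\tau^{(L)}_1\cdots]$ stays bounded above and below uniformly in $|\theta|\leq r_0$.

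With the coercivity in hand, the quantitative inverse function theorem applied to $\nabla\overline{\Lambda}^a_z$ on $B_{r_0}(0)$ yields $r_1=r_1(\kappa,z)>0$ (of the order of $c_0\CYRk^L r_0$) with
\[
B_{r_1}\bigl(\nabla\overline{\Lambda}^a_z(0)\bigr)\subseteq\nabla\overline{\Lambda}^a_z\bigl(B_{r_0}(0)\bigr)\subseteq\mathcal{B}^{\mathbb{P}}_z.
\]
The required Lipschitz control on $H^a_z$ is supplied by the analyticity of Proposition~\ref{prop: log MGF is differentiable}. Finally I invoke Proposition~\ref{prop: distance between lambda z and differential} with $\delta=r_1/2$ to obtain $\epsilon=\epsilon(\kappa,z)>0$ such that $|\nabla\overline{\Lambda}^a_z(0)-z|<r_1/2$ whenever $\text{dis}(\mathbb{P})<\epsilon\CYRk^L$, giving $B_{r_1/2}(z)\subseteq B_{r_1}(\nabla\overline{\Lambda}^a_z(0))\subseteq\mathcal{B}^{\mathbb{P}}_z$ and finishing the argument with $r\coloneqq r_1/2$ and the threshold $\epsilon\wedge\bar{\gamma}$.

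The main obstacle is the uniform coercivity of $H^a_z(\theta)$ on a $\theta$-ball of nontrivial radius $r_0$ independent of the disorder, since Proposition~\ref{prop: Hessian matrix is positive definite} gives only pointwise positivity. One has to separate the genuine randomness of $\CYRk^L Z(\tau^{(L)}_1)$, which persists as long as $\kappa$-ellipticity does, from the weights $\mathbb{E}\prod_j\CYRu_j$ and from the tilt $e^{\langle\theta,Z(\tau^{(L)}_1)\rangle-\overline{\Lambda}^a_z(\theta)\tau^{(L)}_1}$. I expect a Cauchy--Schwarz split, together with the exponential tail estimates from Proposition~\ref{prop: mixing exponential of tau < infinity} and the small-disorder bound $|\CYRu_j-1|\leq\text{dis}(\mathbb{P})/\hbar$, to deliver the required lower bound; propagating that estimate through Newton's iteration under the analyticity of Proposition~\ref{prop: log MGF is differentiable} is then routine.
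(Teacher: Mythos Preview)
Your overall architecture matches the paper's: apply a quantitative inverse function theorem to $\nabla\overline{\Lambda}^a_z$ on a small $\theta$-ball, obtain a ball in the image about $\nabla\overline{\Lambda}^a_z(0)$, and then recentre at $z$ via Proposition~\ref{prop: distance between lambda z and differential}. The divergence is precisely at the point you flag as ``the main obstacle'': the uniform-in-$\mathbb{P}$ lower bound on the Hessian.

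The paper does \emph{not} attempt a direct coercivity estimate from the variance formula. Instead it introduces the \emph{zero-disorder} Hessian
\[
H^*_z(0)=\frac{\mathds{E}^{\overline{Q}^z}_0\big[( Z(\tau^{(L)}_1)-z\,\tau^{(L)}_1)^{T}( Z(\tau^{(L)}_1)-z\,\tau^{(L)}_1)\big]}{\mathds{E}^{\overline{Q}^z}_0[\tau^{(L)}_1]},
\]
which depends on $\mathbb{P}$ only through the mean vector $\alpha=(\mathbb{E}[\omega(0,e)])_{e\in\mathbb{V}}\in\mathcal{M}^\kappa(\mathbb{V})$. Two separate lemmas then do the work: (i) a perturbation bound $\normx{H^a_z(0)-H^*_z(0)}<c$ for small disorder (Lemma~\ref{lem: B2}), and (ii) the bound $\sup_{\mathbb{P}}\normx{H^*_z(0)^{-1}}<\infty$ obtained by continuity of $\alpha\mapsto H^*_z(0)$ on the \emph{compact} simplex $\mathcal{M}^\kappa(\mathbb{V})$ (Lemma~\ref{lem: B3}). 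Combining these via the resolvent identity yields $\normx{H^a_z(0)^{-1}}\leq 2\sup_{\mathbb{P}}\normx{H^*_z(0)^{-1}}$ uniformly, which is exactly the input (I1) to the inverse-function bound \cite[Theorem 4.5]{Bazaes/Mukherjee/Ramirez/Sagliett0}; the $\theta$-continuity (I2) is handled by a further perturbation (Lemma~\ref{lem: B1}).

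Your direct route---extracting nondegeneracy from a single elliptic step inside the renewal block and controlling the tilted weights by Cauchy--Schwarz---is plausible and would likely succeed once $L$ is fixed, but it forces you to track the dependence on $\mathbb{P}$ through the full tilted functional, which is exactly what makes the uniformity delicate. The paper's comparison-to-zero-disorder trick converts that uniformity into a compactness statement on a finite-dimensional simplex and bypasses your obstacle entirely; it is worth internalising as the canonical way to get disorder-uniform nondegeneracy in this setting.
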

    \begin{proof}
        The first step is to verify the analogous assertion $B_{r^\prime}(z)\subseteq\mathcal{A}^{\mathbb{P}}_{z} = \{\nabla\overline{\Lambda}^a_z(\theta):\,\abs{\theta}<\bar{\gamma}\CYRk^L\}$ for some radius $r^\prime(\kappa,z)>0$, and then translate into the main result via Lemma \ref{lem: B0}. Now, it suffices to verify $\exists~\epsilon^\prime=\epsilon^\prime(\kappa,z)>0$ s.t. the family of $C^1$ functions $\mathcal{F}^{(z)}\coloneqq\{\nabla\overline{\Lambda}^z_a:\,\text{dis}(\mathbb{P})<\epsilon^\prime\}$ satisfies the conditions of \cite[Theorem 4.5]{Bazaes/Mukherjee/Ramirez/Sagliett0}. Indeed, the following Lemma \ref{lem: B1} says that $\forall~c>0$, $\exists~\epsilon_1=\epsilon_1(\kappa,z,c)>0$ such that $\sup_{|\theta|\vee\text{dis}(\mathbb{P})<\epsilon_1}|\nabla\overline{\Lambda}^z_a(\theta)-\nabla\overline{\Lambda}^z_a(0)|<c$. In addition, in light of Lemma \ref{lem: B2} we observe that $\forall~c>0$, $\exists~\epsilon_2=\epsilon_2(\kappa,z,c)>0$ such that if $\text{dis}(\mathbb{P})<\epsilon_2$, then
        \begin{equation}\label{eqn: mixing dogdog2*}
            \normx{H^a_z(0)-H^*_a(0)}<c,\qquad  H^*_z(0)\coloneqq\frac{\mathds{E}^{\overline{Q}^z}_0[( Z(\tau^{(L)}_1)-z \tau^{(L)}_1)^T ( Z(\tau^{(L)}_1)-z \tau^{(L)}_1)]}{\mathds{E}^{\overline{Q}^z}_0[  \tau^{(L)}_1 ]}.
        \end{equation}
        Furthermore, by Lemma \ref{lem: B3} we know that $\sup_{\mathbb{P}}\normx{H^*_z(0)^{-1}}<\infty$, where the supremum is taken over all $\kappa$-uniformly elliptic $\mathbb{P}$ that verifies $\textbf{(SMX)}_{C,g}$. Together with (\ref{eqn: dogdog2}), we can find $\epsilon_2^\prime=\epsilon_2^\prime(\kappa,z)>0$ such that $\normx{H^a_z(0)-H^*_z(0)}\leq(2\sup_{\mathbb{P}}\normx{H^*_z(0)^{-1}})^{-1}$ whenever $\text{dis}(\mathbb{P})<\epsilon_2^\prime$. Hence,
        \[
            \normx{H^a_z(0)^{-1}-H^*_z(0)^{-1}}\leq \normx{H^a_z(0)^{-1}}\normx{H^a_z(0)-H^*_z(0)}\normx{H^*_z(0)^{-1}}\leq \normx{H^a_z(0)^{-1}}/2.
        \]
        Thus $\normx{H^a_z(0)^{-1}}\leq\normx{H^a_z(0)^{-1}}/2+\normx{H^*_z(0)^{-1}}$, which implies $\normx{H^a_z(0)^{-1}}\leq 2\sup_{\mathbb{P}}\normx{H^*_a(0)^{-1}}$, verifying \cite[Theorem 4.5, Condition (I1)]{Bazaes/Mukherjee/Ramirez/Sagliett0}. Define now
        \[
            \Gamma(v)\coloneqq(  Z_{\tau^{(L)}_1}-v  \tau^{(L)}_1)^T( Z_{\tau^{(L)}_1}-v \tau^{(L)}_1),\qquad \varphi(\theta)\coloneqq e^{\langle \theta, Z(\tau^{(L)}_1)\rangle - \overline{\Lambda}^a_z(\theta) \tau^{(L)}_1}\mathbb{E}\prod_{j=1}^{\tau^{(L)}_1}\CYRu_j,\qquad\forall~v,\theta\in\mathbb{R}^d.
        \]
        Now  $\normx{\Gamma(\nabla\overline{\Lambda}^a_z(\theta)) - \Gamma(\nabla\overline{\Lambda}^a_z(0))}\leq 5( | \nabla\overline{\Lambda}^a_z(\theta) - \nabla\overline{\Lambda}^a_z(0) |)(\tau^{(L)}_1)^2$ and $|\varphi(\theta)-\varphi(0)|\leq 2(|\theta|+\CYRb(\text{dis}(\mathbb{P}) )) \tau^{(L)}_1 e^{2(|\theta|+\CYRb(\text{dis}(\mathbb{P}) ))\tau^{(L)}_1}$. Invoking (\ref{eqn: dogdog1}) and Proposition \ref{prop: mixing exponential of tau < infinity}, we derive $\forall~c>0$ that
        \[
            \exists~\epsilon^\prime_1=\epsilon^\prime_1(\kappa,z,c)>0\qquad\text{s.t.}\quad \sup_{ |\theta|\vee\text{dis}(\mathbb{P})<\epsilon^\prime_1\CYRk^L} \normy{ \mathds{E}^{\overline{Q}^z}_0[ \Gamma(\nabla\overline{\Lambda}^a_z(\theta))\varphi(\theta)] - \mathds{E}^{\overline{Q}^z}_0[ \Gamma(\nabla\overline{\Lambda}^a_z(0))\varphi(0)] }< c,
        \]
        together with $\mathds{E}^{\overline{Q}^z}_0[\CYRk^L\tau^{(L)}_1]\geq c_\kappa$, we verified \cite[Theorem 4.5, Condition (I2)]{Bazaes/Mukherjee/Ramirez/Sagliett0}, and an application of \cite[Theorem 9.24, Eqn. (46-49)]{Rudin} yields $B_{ r^{\prime\prime}}(\nabla\overline{\Lambda}^a_z(0))\subseteq\mathcal{A}^{\mathbb{P}}_{z}$ for some $r^{\prime\prime}(\kappa,z)>0$. The proof of Proposition \ref{prop: distance between lambda z and differential} then reveals that we can find some $r^\prime(\kappa,z)<r^{\prime\prime}$ so that $B_{ r^{\prime }}( z)\subseteq\mathcal{A}^{\mathbb{P}}_{z }$. And the assertion is verified.
    \end{proof}

    \begin{proof}[Proof of Theorem \ref{thm: equality of LDP in interior}]
        Now $\forall~x\in B_{ r}(z)$, define $\tilde{I}_q(x)\coloneqq\sup_{\theta\in\mathbb{R}^d}\{\langle\theta,x\rangle-\overline{\Lambda}^q_z(\theta)\}$ and $\tilde{I}_a(x)\coloneqq\sup_{\theta\in\mathbb{R}^d}\{\langle\theta,x\rangle-\overline{\Lambda}^a_z(\theta)\}$. A standard derivation of \cite[Lemma 2.3.9]{Dembo/Zeitouni} tells $\tilde{I}_q(x)=\langle\theta^{q}_{\mathbb{P}},x\rangle-\overline{\Lambda}^q_z(\theta^{q}_{\mathbb{P}})$ and $\tilde{I}_a(x)=\langle\theta^{a}_{\mathbb{P}},x\rangle-\overline{\Lambda}^a_z(\theta^{a}_{\mathbb{P}})$ for any $\theta^{q}_{\mathbb{P}}$ and $\theta^{a}_{\mathbb{P}}$ satisfying $\nabla\overline{\Lambda}^q_z(\theta^{q}_{\mathbb{P}}) = \nabla\overline{\Lambda}^a_z(\theta^{a}_{\mathbb{P}},)=x$. Notice that such $\theta^{a}_{\mathbb{P}}$ exists and satisfies $|\theta^{a}_{\mathbb{P}}|<\bar{\gamma}\CYRk^L$ since $x\in\mathcal{A}^{\mathbb{P}}_{z}$. Also, $\theta^{q}_{\mathbb{P}}$ exists and equals to $\theta^{a}_{\mathbb{P}}$ since $\overline{\Lambda}^q_z(\theta)=\overline{\Lambda}^a_z(\theta)$, $\forall~|\theta|<\bar{\gamma}\CYRk^L$. Then, $\tilde{I}_q(x)=\tilde{I}_a(x)$, $\forall~x\in B_{ r}( z)$. But
        \[
            \Tilde{I}_q(x)+\log \mathcal{D}_{z,\mathbb{P}} +\langle\theta^{\mathbb{P}}_{z },x\rangle = \sup_{\theta\in\mathbb{R}^d}\{\langle \theta+ \theta^{\mathbb{P}}_{z },x\rangle - \Lambda_q(\theta+ \theta^{\mathbb{P}}_{z })  \} = I_q(x),\; \Tilde{I}_a(x)+\log \mathcal{D}_{z,\mathbb{P}} +\langle\theta^{\mathbb{P}}_{z },x\rangle =I_a(x),\quad \forall~x\in B_{ r}(z).
        \]
        Henceforth whenever $\text{dis}(\mathbb{P})<\bar{\gamma}\CYRk^L$ we conclude $I_q(x)=I_a(x)$, $\forall~x\in B_{ r}( z)$, for any $z\in\text{int}(\mathbb{D})$ with $\abs{z}>0$. Now we fix $L>L_0$ depending on $\kappa,z$, for any compact $\mathcal{K}\subseteq\text{int}(\mathbb{D})\backslash\{0\}$, select finitely many nonzero $z_1,\ldots,z_m\in\mathcal{K}$ such that $\mathcal{K}\subseteq\cup_{i\leq m}B_{\tilde{r}(z_i)}(z_i)$. Taking $\gamma_1(d,\kappa,\mathcal{K})\coloneqq \bar{\gamma}(z_1)\wedge\cdots\wedge\bar{\gamma}(z_m)>0$, we then have $I_a =I_q $ on $\mathcal{K}$ provided $\text{dis}(\mathbb{P})<\gamma_1\CYRk^L$ with fixed $L$ Hence the assertion is proved.
    \end{proof}

\section{Technical lemmas: I}
    \begin{lemma}\label{lem: A1}
        \normalfont
       When $\mathbb{P}$ verifies $\textbf{(SMX)}_{C,g}$, $\forall~\theta\in\mathbb{R}^d$ and $L\geq L_0$, we have $\mathds{E}^{\overline{Q}^z}_0 [  e^{\langle  \theta, Z(\tau^{(L)}_1)  \rangle -  \overline{\Lambda}^a_z(\theta) \tau^{(L)}_1 } \mathbb{E} \prod_{j=1}^{\tau^{(L)}_1}\CYRu_j ]\leq 1$.
    \end{lemma}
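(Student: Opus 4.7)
The plan is to parametrize the left-hand side by a dummy variable $\lambda \geq \overline{\Lambda}^a_z(\theta)$, namely
$$\Psi(\lambda) \coloneqq \mathds{E}^{\overline{Q}^z}_0\bigl[e^{\langle\theta, Z(\tau^{(L)}_1)\rangle - \lambda\tau^{(L)}_1}\mathbb{E}\prod_{j=1}^{\tau^{(L)}_1}\CYRu_j\bigr],$$
show that $\Psi(\lambda) < 1$ whenever $\lambda > \overline{\Lambda}^a_z(\theta)$, and then let $\lambda \searrow \overline{\Lambda}^a_z(\theta)$ by monotone convergence to extract the desired $\Psi(\overline{\Lambda}^a_z(\theta))\leq 1$.

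First I would establish an \emph{exact} block factorization of the full environment average. Because the condition $\textbf{(SMX)}_{C,g}$ with $L\geq L_0$ forces $C(\Delta,V^c)=0$ once $d_1(\Delta,V^c)\geq L_0$, the environment is genuinely independent across the vertex sets visited in distinct renewal blocks; consequently $\mathbb{E}\prod_{j=1}^{\tau^{(L)}_k}\CYRu_j=\prod_{m=1}^k\mathbb{E}\psi_m(\CYRu)$, so that the quantity $F_k\coloneqq e^{\langle\theta,Z(\tau^{(L)}_k)\rangle-\lambda\tau^{(L)}_k}\mathbb{E}\prod_{j=1}^{\tau^{(L)}_k}\CYRu_j$ writes as a product $F_k=\prod_{m=1}^k G_m$ with $G_m\coloneqq e^{\langle\theta,Z(\tau^{(L)}_m)-Z(\tau^{(L)}_{m-1})\rangle-\lambda(\tau^{(L)}_m-\tau^{(L)}_{m-1})}\mathbb{E}\psi_m(\CYRu)$. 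Combining with the walk-level decoupling of Proposition~\ref{prop: mixing separation information}, which yields $\mathds{E}^{\overline{Q}^z}_0[G_m\mid\mathscr{G}_{m-1}]=\Psi(\lambda)$, a straightforward induction in $k$ gives
$$\mathds{E}^{\overline{Q}^z}_0[F_k]=\Psi(\lambda)^k \quad \text{for every } k\geq 1.$$

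Next, I would compare the sum $\sum_k\Psi(\lambda)^k$ with the annealed log-MGF. Writing $p\coloneqq\overline{Q}^z_0(\beta^{(\zeta)}_0=\infty)>0$, unfolding $\mathds{E}^{\overline{Q}^z}_0=E^{\overline{Q}^z}_0[\cdot\mid\beta^{(\zeta)}_0=\infty]$, and using that the renewal times are distinct so that $\sum_k I_{\{\tau^{(L)}_k=n\}}\leq 1$ for every $n$, I would obtain by swapping the two summations
$$p\sum_{k=1}^\infty\Psi(\lambda)^k \leq \sum_{n=1}^\infty e^{-\lambda n}E^{\overline{Q}^z}_0\bigl[e^{\langle\theta,Z_n\rangle}\mathbb{E}\prod_{j=1}^n\CYRu_j\bigr] = \sum_{n=1}^\infty e^{-\lambda n}E^{Q^z}_0\bigl[e^{\langle\theta,Z_n\rangle}\mathbb{E}\prod_{j=1}^n\xi(Z_{j-1},\Delta_j(Z))\bigr],$$
where the last equality uses the identity from the $\CYRu_j$ construction. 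By the very definition of $\overline{\Lambda}^a_z(\theta)$ as the exponential growth rate of the annealed log-MGF, the right-hand side is finite for every $\lambda>\overline{\Lambda}^a_z(\theta)$, which forces $\sum_k\Psi(\lambda)^k<\infty$ and hence $\Psi(\lambda)<1$. Since $\lambda\mapsto\Psi(\lambda)$ is non-increasing, monotone convergence as $\lambda\searrow\overline{\Lambda}^a_z(\theta)$ delivers $\Psi(\overline{\Lambda}^a_z(\theta))\leq 1$.

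The hard part is the exact block factorization of the environment average. Proposition~\ref{prop: mixing separation information} already provides the walk-level decoupling, but upgrading it to the identity $\mathbb{E}\prod_j\CYRu_j=\prod_m\mathbb{E}\psi_m$ requires careful bookkeeping on the \emph{random} vertex sets traversed by $(Z_n)_{n\geq 0}$ in each renewal block, together with an appropriate choice of the localizing region $V$ in the $\textbf{(SMX)}_{C,g}$ definition so that the blocks are separated by at least $L_0$ in the sense of $d_1$. Once this genuine decoupling (rather than a merely approximate one) is in place, the rest of the argument is a clean Laplace-transform / monotone-limit exercise that does not require any restriction on $\theta$.
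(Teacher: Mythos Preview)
Your proposal is correct and takes essentially the same route as the paper: perturb to $\lambda=\overline{\Lambda}^a_z(\theta)+\delta$, use the renewal factorization (Proposition~\ref{prop: mixing separation information} together with the block-wise environmental independence forced by $\textbf{(SMX)}_{C,g}$ at scale $L\geq L_0$) to produce a geometric quantity $\Psi(\lambda)^k$, dominate it by the finite sum $\sum_n e^{-\lambda n}E^{Q^z}_0\bigl[e^{\langle\theta,Z_n\rangle}\mathbb{E}\prod_j\xi\bigr]$, conclude $\Psi(\lambda)<1$, and pass to the limit by monotone convergence. The paper's only cosmetic variation is that it packages this via $\CYRbb_n^\delta\coloneqq E^{\overline{Q}^z}_0[e^{\langle\theta,Z(\tau^{(L)}_n)\rangle-\lambda\tau^{(L)}_n}\mathbb{E}\prod_j\CYRu_j]$ and the pair of bounds $\CYRbb_n^\delta\geq\CYRbb_1^\delta\,\Psi^{n-1}$ and $\CYRbb_n^\delta\leq\sum_{k\geq n}e^{-\lambda k}[\ldots]$ (the latter from $\tau^{(L)}_n\geq n$) instead of summing the geometric series directly.
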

    \begin{proof}
        Given $\delta>0$, write $\eta^z_{\theta,\delta}\coloneqq\overline{\Lambda}^a_z(\theta)+\delta$. Now for each $n\geq1$, define $\CYRbb_{n}^\delta(\theta) \coloneqq E^{\overline{Q}^z}_0[  e^{\langle \theta,Z(\tau^{(L)}_n)  \rangle -  \eta^z_{\theta,\delta}\tau^{(L)}_n  }\mathbb{E}\prod_{j=1}^{\tau^{(L)}_n}\CYRu_j]$. Then by splitting the expectation in $\CYRbb_{n}^\delta(\theta)$ in terms of different realizations of $\tau^{(L)}_n$, we get
        \begin{equation*}\begin{aligned}
            \CYRbb_{n}^\delta(\theta) \leq  \sum_{k=n}^\infty e^{- \eta^z_{\theta,\delta}k} E^{Q^z}_0\bigg[ e^{\langle  \theta,Z_k\rangle} \mathbb{E}\prod_{j=1}^{k}\xi(Z_{j-1},\Delta_j(Z)) \bigg] \leq \sum_{k=n}^\infty e^{-\delta k/2} \leq \frac{ e^{-\delta n/2}}{1- e^{-\delta /2}},\qquad\forall~n\geq N(\theta,\delta).
        \end{aligned}\end{equation*}        
        where we use the fact that $E^{Q^z}_0[ e^{\langle  \theta,Z_k\rangle} \mathbb{E}\prod_{j=1}^{k}\xi(Z_{j-1},\Delta_j(Z)) ] = e^{(\overline{\Lambda}^a_z(\theta,\lambda)+o(1))k}$ as $k\to\infty$ which guarantees the existence of some sufficiently large $N(\theta,\delta)\in\mathbb{N}$ as above. The renewal structure Proposition \ref{prop: mixing separation information} of the mixing environment yields
        \[
            \CYRbb_{n}^\delta(\theta) \geq  \CYRbb_{1}^\delta(\theta) \bigg( \mathds{E}^{\overline{Q}^z}_0\bigg[  e^{\langle \theta,Z(\tau^{(L)}_1) \rangle -  \eta^z_{\theta,\delta}\tau^{(L)}_1 } \mathbb{E}\prod_{j=1}^{\tau^{(L)}_1}\CYRu_j \bigg]\bigg)^{n-1},\qquad\forall~n\geq1.
        \]
        In light of previous arguments, we conclude that $\mathds{E}^{\overline{Q}^z}_0 [  e^{\langle  \theta,Z(\tau^{(L)}_1)  \rangle -  \eta^z_{\theta,\delta} \tau^{(L)}_1 }\mathbb{E}\prod_{j=1}^{\tau^{(L)}_1}\CYRu_j]\leq e^{-\delta/2}$. Letting $\delta\to0$ and invoking the monotone convergence theorem, the assertion is verified.
    \end{proof}

    \begin{lemma}\label{lem: A2}
        \normalfont
        When $\mathbb{P}$ verifies $\textbf{(SMX)}_{C,g}$, $\exists~\gamma_1=\gamma_1(\kappa,z)>0$ such that $ \mathds{E}^{\overline{Q}^z}_0[  e^{\langle  \theta, Z(\tau^{(L)}_1)  \rangle -  \overline{\Lambda}^a_z(\theta) \tau^{(L)}_1 } \mathbb{E} \prod_{j=1}^{\tau^{(L)}_1}\CYRu_j ]\geq 1$, $\forall~L\geq L_0$ and whenever $|\theta|\vee\text{dis}(\mathbb{P})<\gamma_1\CYRk^L$.
    \end{lemma}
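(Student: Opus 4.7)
The plan is to establish the lower bound by contradiction, mirroring the strategy of Lemma~\ref{lem: A1}: there, the renewal inequality $\CYRbb_n^\delta(\theta)\geq\CYRbb_1^\delta(\theta)\cdot\psi(\theta,r)^{n-1}$ combined with the exponential decay $\CYRbb_n^\delta(\theta)\leq Ce^{-\delta n/2}$ forced $\psi(\theta,\overline{\Lambda}^a_z(\theta))\leq 1$. Here I reverse the direction of the comparison, producing instead a $T$-uniform \emph{upper} bound on the full annealed MGF that contradicts its known exponential growth rate.

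Write $\psi(\theta,r)\coloneqq\mathds{E}^{\overline{Q}^z}_0\bigl[e^{\langle\theta,Z(\tau^{(L)}_1)\rangle-r\tau^{(L)}_1}\mathbb{E}\prod_{j=1}^{\tau^{(L)}_1}\CYRu_j\bigr]$. Proposition~\ref{prop: mixing exponential of tau < infinity} together with the pointwise control $\CYRu_j\in[1-\text{dis}(\mathbb{P})/\hbar,1+\text{dis}(\mathbb{P})/\hbar]$ shows that $r\mapsto\psi(\theta,r)$ is finite and continuous in a neighbourhood of $\overline{\Lambda}^a_z(\theta)$, provided $|\theta|\vee\text{dis}(\mathbb{P})\leq\gamma_1\CYRk^L$ with $\gamma_1=\gamma_1(\kappa,z)\leq\gamma_0$ chosen small enough. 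Suppose for contradiction that $\psi(\theta,\overline{\Lambda}^a_z(\theta))<1$; then there exist $\delta,\varepsilon>0$ with $\psi(\theta,\overline{\Lambda}^a_z(\theta)-\delta)\leq 1-\varepsilon$, and I set $r\coloneqq\overline{\Lambda}^a_z(\theta)-\delta$.

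Next I would decompose the full annealed MGF at time $T$ along the regeneration sequence. Using the identification $E^{Q^z}_0[e^{\langle\theta,Z_T\rangle}\mathbb{E}\prod_{j=1}^T\xi(Z_{j-1},\Delta_j(Z))]=E^{\overline{Q}^z}_0[e^{\langle\theta,Z_T\rangle}\mathbb{E}\prod_{j=1}^T\CYRu_j]$ and partitioning according to the unique $n$ with $\tau^{(L)}_n\leq T<\tau^{(L)}_{n+1}$, then iterating Proposition~\ref{prop: mixing separation information} on the completed blocks $(\tau^{(L)}_{k-1},\tau^{(L)}_k]$, each full block contributes a factor bounded by $\psi(\theta,r)\leq 1-\varepsilon$, while the leftover block $(\tau^{(L)}_n,T]$ contributes a constant $K=K(\kappa,z,\theta,\delta)<\infty$ obtained from the exponential moment bound of Proposition~\ref{prop: mixing exponential of tau < infinity}. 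Summing over $n$ yields the $T$-uniform inequality
\[
E^{Q^z}_0\bigl[e^{\langle\theta,Z_T\rangle-rT}\mathbb{E}\textstyle\prod_{j=1}^T\xi(Z_{j-1},\Delta_j(Z))\bigr]\leq K\sum_{n\geq 0}(1-\varepsilon)^n=K/\varepsilon.
\]
On the other hand, the defining asymptotics $E^{Q^z}_0[e^{\langle\theta,Z_T\rangle}\mathbb{E}\prod_{j=1}^T\xi(Z_{j-1},\Delta_j(Z))]=e^{(\overline{\Lambda}^a_z(\theta)+o(1))T}$ make the left-hand side equal to $e^{(\delta+o(1))T}$, which diverges as $T\to\infty$. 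This contradicts the displayed bound and forces $\psi(\theta,\overline{\Lambda}^a_z(\theta))\geq 1$, which is the claim.

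The principal obstacle is the leftover-block estimate: Proposition~\ref{prop: mixing separation information} provides the renewal separation only on the conditioned event $\{\beta^{(\zeta)}_0=\infty\}$, so restoring the unconditioned $E^{\overline{Q}^z}_0$ requires summing over the possible positions of $\tau^{(L)}_n$ and absorbing the unrestricted contribution of the terminal segment into $K$ by means of the geometric tails of $\tau^{(L)}_1$. This is precisely where the restriction $|\theta|\vee\text{dis}(\mathbb{P})<\gamma_1\CYRk^L$ enters: it ensures that both $\prod\CYRu_j\leq e^{\CYRb(\text{dis}(\mathbb{P}))\tau^{(L)}_1}$ and $e^{\langle\theta,Z(\tau^{(L)}_1)\rangle}$ remain integrable against $e^{-r\tau^{(L)}_1}$ for $r$ in a neighbourhood of $\overline{\Lambda}^a_z(\theta)$, with all constants independent of $T$.
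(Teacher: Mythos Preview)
Your proposal is correct and follows essentially the same approach as the paper. Both arguments set $r=\overline{\Lambda}^a_z(\theta)-\delta$, assume the conditioned block expectation $\psi(\theta,r)<1$, use the regeneration structure (Proposition~\ref{prop: mixing separation information}) to bound the full MGF $E^{\overline{Q}^z}_0[\CYRe^r_T(\theta)]$ uniformly in $T$ by a geometric series in $\psi(\theta,r)$, and then contradict the known growth rate $e^{(\delta+o(1))T}$; the leftover/initial block is absorbed via the exponential moment bound of Proposition~\ref{prop: mixing exponential of tau < infinity}, which is exactly where the restriction $|\theta|\vee\text{dis}(\mathbb{P})<\gamma_1\CYRk^L$ enters. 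The only cosmetic difference is that you invoke continuity of $r\mapsto\psi(\theta,r)$ to pass from $\psi(\theta,\overline{\Lambda}^a_z(\theta))<1$ to $\psi(\theta,r)<1$, whereas the paper proves $\psi(\theta,\overline{\Lambda}^a_z(\theta)-\delta)\geq 1$ directly for each $\delta>0$ and sends $\delta\to 0$ by dominated convergence.
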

    \begin{proof}
        Given any $n\geq1$ and $\theta\in\mathbb{R}^d$, we write $\CYRe^r_n(\theta)\coloneqq e^{\langle  \theta,Z_n \rangle - r  n  } \mathbb{E}\prod_{j=1}^{n}\CYRu_j$. Then split $E^{\overline{Q}^z}_0[\CYRe^r_n(\theta)]$ via the disjoint events $\{\tau^{(L)}_m<n\leq\tau^{(L)}_{m+1},\,m=\tau^{(L)}_m+i\}$ for $0\leq m<n$ and $0<i\leq n$. Thus,
        \begin{equation*}\begin{aligned}
            &\qquad  E^{\overline{Q}^z}_0 [\CYRe^r_n(\theta)]\leq \sum_{m=0}^{n-1}\sum_{i=1}^n E^{\overline{Q}^z}_0[\CYRe_{\tau^{(L)}_m}^r(\theta),\,\tau^{(L)}_m=n-i]\mathds{E}^{\overline{Q}^z}_0[\CYRe^r_i(\theta),\,\tau^{(L)}_1>i]\\
            &\leq  \sum_{m=0}^{n-1} E^{\overline{Q}^z}_0[\CYRe_{\tau^{(L)}_m}^r(\theta)] \mathds{E}^{\overline{Q}^z}_0[\sup_{i\leq\tau^{(L)}_1}\CYRe_{i}^r(\theta)] \leq \mathds{E}^{\overline{Q}^z}_0[\sup_{i\leq\tau^{(L)}_1}\CYRe_{i}^r(\theta)] \bigg(1+ E^{\overline{Q}^z}_0[\sup_{i\leq\tau^{(L)}_1}\CYRe^r_i(\theta)]\sum_{m=1}^\infty(\mathds{E}^{\overline{Q}^z}_0[\CYRe^r_{\tau^{(L)}_1}(\theta)])^{m-1}\bigg)
        \end{aligned}\end{equation*}
        where, in order to obtain the last inequality, we use that  $E^{\overline{Q}^z}_0[\CYRe^r_{\tau^{(L)}_m}(\theta)] =  E^{\overline{Q}^z}_0[\CYRe^r_{\tau^{(L)}_1}(\theta)]( \mathds{E}^{\overline{Q}^z}_0[\CYRe^r_{\tau^{(L)}_1}(\theta)] )^{m-1}$, $\forall~m\geq1$, which follows from the renewal structure. Notice also that
        \begin{equation}
            \label{eqn: temp}E^{\overline{Q}^z}_0[\sup_{i\leq\tau^{(L)}_1}\CYRe^r_i(\theta)] = E_UQ^z_{\epsilon,0}(\beta^{(\zeta)}_0=\infty) \mathds{E}^{\overline{Q}^z}_0[\sup_{i\leq\tau^{(L)}_1}\CYRe^r_i(\theta)] + E_UQ^z_{\epsilon,0}(\beta^{(\zeta)}_0<\infty) \mathds{E}^{\overline{Q}^z}_0[\sup_{i\leq\tau^{(L)}_1}\CYRe^r_i(\theta)|\beta^{(\zeta)}_0<\infty].
        \end{equation}
        Now we take $r=\overline{\Lambda}^a_z(\theta)-\delta$ for some $\delta>0$ and we have $\log\CYRe^r_i(\theta)\leq (2 |\theta|  + \CYRb(\text{dis}(\mathbb{P})) + \delta)i$, $\forall~i\geq1$. If we choose $\gamma_1$ and $\delta(\kappa,z)$ small enough so that $2|\theta|+\CYRb(\text{dis}(\mathbb{P}))<\gamma_0\CYRk^L$ when $|\theta|\vee\text{dis}(\mathbb{P})<\gamma_1\CYRk^L$, then by Proposition \ref{prop: mixing exponential of tau < infinity} to estimate  $E^{\overline{Q}^z}_0[e^{\gamma_0\CYRk^L\tau^{(L)}_1  } ]$, we have $E_UE^{\overline{Q}^z}_{\epsilon,0}[\sup_{i\leq\tau^{(L)}_1}\CYRe^r_i(\theta)]\leq E_UE^{\overline{Q}^z}_{\epsilon,0}[e^{\gamma_0\CYRk^L\tau^{(L)}_1/2}]<\infty$, and  $\mathds{E}^{\overline{Q}^z}_{0}[\sup_{i\leq\tau^{(L)}_1}\CYRe^r_i(\theta)]<\infty$ as well. Suppose now that $\mathds{E}^{\overline{Q}^z}_0[\CYRe^r_{\tau^{(L)}_1}(\theta)]<1$, then we have $\sup_{n\geq1}E^{\overline{Q}^\lambda}_0[\CYRe^r_{n}(\theta)]<\infty$, yielding $ \varlimsup_{n}\frac{1}{n}\log E^{\overline{Q}^z}_n[\CYRe^r_n(\theta)] \leq 0$. However, observe that by the choice of $r$, we should have
        \[
            \lim_{n\to\infty}\frac{1}{n}\log E^{\overline{Q}^z}_n[\CYRe^r_n(\theta)] = \delta + \lim_{n\to\infty}\frac{1}{n}\log E^{\overline{Q}^z}_n[\CYRe^{\overline{\Lambda}^a_z(\theta)}_n(\theta)] = \delta>0,
        \]
        leading to contradiction. Hence we must have $\mathds{E}^{\overline{Q}^z}_0[\CYRe^r_{\tau^{(L)}_1}(\theta)] = \mathds{E}^{\overline{Q}^z}_0[  e^{\langle \theta,Z(\tau^{(L)}_1) \rangle - (\overline{\Lambda}^a_z(\theta)+\delta) \tau^{(L)}_1 } \mathbb{E}\prod_{j=1}^{\tau^{(L)}_1}\CYRu_j ]\geq1$ whenever $|\theta|\vee\text{dis}(\mathbb{P})<\gamma_1\CYRk^L$. Letting $\delta\to0$, invoking the dominated convergence theorem and then the assertion is verified.
    \end{proof} 

    \begin{lemma}\label{lem: A3}
        \normalfont
        When $\mathbb{P}$ verifies $\textbf{(SMX)}_{C,g}$,  $\lim_{n\to\infty} 
            E^{\overline{Q}^z}_0 [  e^{\langle  \theta,  Z(L^{(z)}_n)  \rangle -  (\overline{\Lambda}^q_z(\theta) + \delta )  L^{(z)}_n   }  \prod_{j=1}^{L^{(z)}_n}\CYRu_j ] = 0$, $\mathbb{P}$-a.s., $\forall~\theta\in\mathbb{R}^d$ and $\delta>0$. 
    \end{lemma}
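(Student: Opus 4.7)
The plan is to reduce the statement to an elementary geometric tail bound, using the $\mathbb{P}$-a.s.\ existence of the quenched log-moment-generating function $\overline{\Lambda}^q_z(\theta)$. Since $\ell\in\mathbb{V}$, each step of $(Z_j)$ changes $\langle Z_j,\ell\rangle$ by at most one, so that $L^{(z)}_n\ge n$ on $\{L^{(z)}_n<\infty\}$. First, I would decompose the expectation by conditioning on $L^{(z)}_n=k$ and then drop the indicator, obtaining the upper bound
\[
    E^{\overline{Q}^z}_0\Big[e^{\langle\theta,Z(L^{(z)}_n)\rangle-(\overline{\Lambda}^q_z(\theta)+\delta)L^{(z)}_n}\prod_{j=1}^{L^{(z)}_n}\CYRu_j\Big] \le \sum_{k\ge n} e^{-(\overline{\Lambda}^q_z(\theta)+\delta)k}\,E^{\overline{Q}^z}_0\Big[e^{\langle\theta,Z_k\rangle}\prod_{j=1}^k\CYRu_j\Big].
\]

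Second, I would identify the inner expectation with the quenched log-MGF of $(X_n)$. The identity $E^{\overline{Q}^z}_0[e^{\langle\theta,Z_k\rangle}\prod_{j=1}^k\CYRu_j]=E^{Q^z}_0[e^{\langle\theta,Z_k\rangle}\prod_{j=1}^k\xi(Z_{j-1},\Delta_j(Z))]$ recorded in the paper, together with the quenched relation in (\ref{eqn: Q walk form1, annealed}), immediately yields
\[
    E^{\overline{Q}^z}_0\Big[e^{\langle\theta,Z_k\rangle}\prod_{j=1}^k\CYRu_j\Big] = \mathcal{D}_{z,\mathbb{P}}^k\,E_{0,\omega}\big[e^{\langle\theta+\theta^{\mathbb{P}}_z,X_k\rangle}\big].
\]
Appealing to the $\mathbb{P}$-a.s.\ existence of $\Lambda_q$ under $\textbf{(SMX)}_{C,g}$ (already established in Section 3 and reiterated in Lemma \ref{lem: boundary LDP general existence}), one concludes that $k^{-1}\log E^{\overline{Q}^z}_0[e^{\langle\theta,Z_k\rangle}\prod_{j=1}^k\CYRu_j]$ converges, $\mathbb{P}$-a.s., to $\log\mathcal{D}_{z,\mathbb{P}}+\Lambda_q(\theta+\theta^{\mathbb{P}}_z)=\overline{\Lambda}^q_z(\theta)$.

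Third, given $\delta>0$, the previous convergence supplies a $\mathbb{P}$-a.s.\ finite $N(\omega,\delta)$ such that $E^{\overline{Q}^z}_0[e^{\langle\theta,Z_k\rangle}\prod_{j=1}^k\CYRu_j]\le e^{k(\overline{\Lambda}^q_z(\theta)+\delta/2)}$ for every $k\ge N(\omega,\delta)$. Substituting this into the tail sum above and cancelling the $\overline{\Lambda}^q_z(\theta)$ terms, for any $n\ge N(\omega,\delta)$ I obtain the purely geometric estimate $\sum_{k\ge n}e^{-\delta k/2}=e^{-\delta n/2}/(1-e^{-\delta/2})\to 0$ as $n\to\infty$, completing the argument. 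I do not anticipate any serious obstacle here; the only nontrivial ingredient is the $\mathbb{P}$-a.s.\ existence of $\Lambda_q$ under the mixing hypothesis, which has already been supplied by the quenched LDP framework of Section 3.
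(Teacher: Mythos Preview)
Your proposal is correct and follows essentially the same approach as the paper's proof: decompose over the values of $L^{(z)}_n$, drop the indicator to obtain a tail sum over $k\ge n$, invoke the $\mathbb{P}$-a.s.\ convergence $k^{-1}\log E^{Q^z}_0[e^{\langle\theta,Z_k\rangle}\prod_{j=1}^k\xi(Z_{j-1},\Delta_j(Z))]\to\overline{\Lambda}^q_z(\theta)$, and conclude with the geometric tail bound $\sum_{k\ge n}e^{-\delta k/2}$. Your write-up is in fact slightly more explicit than the paper's, spelling out why $L^{(z)}_n\ge n$ and how the $\CYRu_j$-expectation under $\overline{Q}^z_0$ matches the $\xi$-expectation under $Q^z_0$ via the identity recorded before Proposition~\ref{prop: mixing separation information}.
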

    \begin{proof}
        Write $\rho^z_{\theta,\delta}\coloneqq\overline{\Lambda}^q_z(\theta)+\delta$. Then by splitting for different values of $L^{(z)}_n$, we have
        \begin{equation*}\begin{aligned}
            E^{\overline{Q}^z}_0\bigg[ e^{\langle \theta, Z(L^{(z)}_n)\rangle- (\overline{\Lambda}^a_z(\theta)+\delta) L^{(z)}_n}\prod_{j=1}^{L^{(z)}_n}\CYRu_j \bigg] \leq  \sum_{k=n}^\infty e^{-\rho^z_{\theta,\delta}}E^{Q^z}_0\bigg[e^{\langle\theta,Z_k\rangle}\prod_{j=1}^k\xi(Z_{j-1} , \Delta_j(Z) )\bigg]\leq \sum_{k=n}^\infty e^{- \delta k/2} \leq \frac{e^{- \delta n/2}}{1-e^{- \delta /2}},
        \end{aligned}\end{equation*}
        for any $n\geq N_\omega(\theta,\delta)$, $\exists~N_\omega(\theta,\delta)\in\mathbb{N}$,
        where we use $E^{Q^z}_0[ e^{\langle\theta,Z_k\rangle} \prod_{j=1}^k\xi(Z_{j-1},\Delta_j(Z)) ] = e^{(\overline{\Lambda}^q_z(\theta)+o(1))k}$, $\mathbb{P}$-a.s. Let $n\to\infty$ and invoke the monotone convergence theorem, the assertion is verified.
    \end{proof}

    \begin{lemma}\label{lem: what to show2}
        \normalfont
        When $\mathbb{P}$ verifies $\textbf{(SMX)}_{C,g}$, $\exists~\gamma_2=\gamma_2(\kappa,z)>0$ such that (\ref{eqn: what to show2}) is finite whenever $\text{dis}(\mathbb{P})<\gamma_2\CYRk^L$. 
    \end{lemma}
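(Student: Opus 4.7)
My plan is to Taylor-expand the exponential factor $e^{L\CYRk^L \CYRb(\text{dis}(\mathbb{P})) I_n}$ appearing inside $F_n(\theta)$ and reduce the problem to controlling joint moments of the intersection-type functional $I_n = \sum_{x \in \mathbb{Z}^d} [N_x(L^{(z)}_n) \wedge \widetilde{N}_x(\widetilde{L}^{(z)}_n)]$ under the tilted measure. Write
\[
    \mathds{E}^{\overline{Q}^z}_{0,\nu}\big[\phi_n(\theta)\widetilde{\phi}_n(\theta)e^{L\CYRk^L\CYRb(\text{dis}(\mathbb{P}))I_n},\,[n\CYRk^L]\in\mathcal{L}^{(z)}\big] = \sum_{k=0}^\infty \frac{(L\CYRk^L\CYRb(\text{dis}(\mathbb{P})))^k}{k!}\mathds{E}^{\overline{Q}^z}_{0,\nu}\big[\phi_n(\theta)\widetilde{\phi}_n(\theta)I_n^k,\,[n\CYRk^L]\in\mathcal{L}^{(z)}\big].
\]
Since $\CYRb(t) = O(t)$ as $t \to 0$, the prefactor decays like $\text{dis}(\mathbb{P})^k$, and the assertion reduces to a factorial-type bound $\mathds{E}^{\overline{Q}^z}_{0,\nu}[\phi_n(\theta)\widetilde{\phi}_n(\theta)I_n^k] \leq C_0^k\,k!$ uniformly in $n, \nu, |\theta| < \gamma_2$, for some constant $C_0 = C_0(\kappa,z,L)$.

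To obtain such a bound, I first observe that Lemma \ref{lem: A1} together with the renewal structure of Proposition \ref{prop: mixing separation information} ensures that $\phi_n(\theta)\,d\mathds{E}^{\overline{Q}^z}_0$ defines a sub-probability measure on paths for $|\theta| < \gamma_1\CYRk^L$ (and likewise for $\widetilde{\phi}_n$). Tilting by $\phi_n(\theta)$ produces a random walk on blocks $(Z(\tau^{(L)}_k))_{k\geq1}$ that inherits a positive drift in the direction of $\ell$, by Proposition \ref{prop: distance between lambda z and differential} and the choice of $\gamma_2$ small. Expanding $I_n^k = k!\sum_{x_1,\ldots,x_k} \prod_{i=1}^k [N_{x_i}(L^{(z)}_n)\wedge \widetilde{N}_{x_i}(\widetilde{L}^{(z)}_n)]$ over ordered tuples and applying $a\wedge b \leq \sqrt{ab}$, one reduces to a product of local-time moments for the two independent tilted walks.

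The core estimate is then a coarse-grained intersection bound: under the tilted product measure, two independent copies of the block walk with drift $z$ visit any given unit $d$-cube $\Box_{\ell,x}$ only finitely often, and the number of common cubes visited by both has finite exponential moments. The required input is a Green-function bound for the tilted block walk showing that, for any $x,y \in \mathbb{Z}^d$, $\sum_n\mathds{E}^{\overline{Q}^z}_{x,y}[N_0(L^{(z)}_n)\widetilde{N}_0(\widetilde{L}^{(z)}_n)\phi_n(\theta)\widetilde{\phi}_n(\theta)]$ is bounded by a constant independent of $x, y$ — this is where the dimensional hypothesis $d \geq 4$ enters, via transience of the intersection of two independent tilted walks with nondegenerate covariance (Proposition \ref{prop: Hessian matrix is positive definite}). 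A standard iteration using the strong Markov property at renewal epochs $\tau^{(L)}_k$ and $\widetilde{\tau}^{(L)}_k$ then yields the $k!$ factor, giving the desired $C_0^k k!$ growth.

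The main obstacle will be the intersection-Green-function estimate in the mixing setting, since the renewal blocks are correlated under $\mathbb{P}$. However, the renewal separation Proposition \ref{prop: mixing separation information} decouples environmental dependencies between consecutive blocks up to the localized $\sigma$-algebras $\mathscr{G}_N$, so an induction on $k$ along the renewal epochs reproduces the classical intersection-local-time recursion almost verbatim, with only finitely many correction terms per step. Choosing $\gamma_2 > 0$ small enough that $L\CYRk^L\CYRb(\gamma_2\CYRk^L) C_0 < 1$ makes the series converge, and since $\CYRb$ vanishes linearly at zero, this condition is met by shrinking $\text{dis}(\mathbb{P})$ below $\gamma_2 \CYRk^L$ for a constant $\gamma_2 = \gamma_2(\kappa,z) > 0$, as claimed.
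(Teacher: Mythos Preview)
Your strategy differs substantially from the paper's. The paper does \emph{not} Taylor-expand $e^{L\CYRk^L\CYRb(\text{dis}(\mathbb{P}))I_n}$; instead it introduces a first-close-approach level $\zeta^{(z)}$ and a first-separation-after-approach level $\sigma^{(z)}$, then derives the renewal inequality
\[
A_{\nu,n}(\theta)\;\leq\;e^{L\CYRb(1/2)}\;+\;\sum_{1\leq k\leq[n\CYRk^L]}B_{\nu,k}(\theta)\,\sup_{\nu'\in\mathbb{V}_d}A_{\nu',\,n-k}(\theta),
\]
where $B_{\nu,k}(\theta)=\mathds{E}^{\overline{Q}^z}_{0,\nu}[F_k(\theta),\,\sigma^{(z)}=k]$. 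The whole weight of the argument is then pushed into Lemma~\ref{lem: what to show3}, which shows $\mathscr{B}=\sup_{\nu,\theta}\sum_kB_{\nu,k}(\theta)<1$, whence $\mathscr{A}\leq(1-\mathscr{B})^{-1}e^{L\CYRb(1/2)}<\infty$. This is a Khas'minskii-type recursion executed at the level of renewal blocks, not a moment expansion of $I_n$.

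Your plan has two concrete problems. First, the displayed identity $I_n^k=k!\sum_{x_1,\ldots,x_k}\prod_i[N_{x_i}\wedge\widetilde N_{x_i}]$ ``over ordered tuples'' is not correct: $(\sum_x a_x)^k=\sum_{x_1,\ldots,x_k}\prod_i a_{x_i}$ with the sum over \emph{all} tuples and no $k!$; the factorial only appears after a genuine Markov iteration that peels off one factor at a time, and that iteration must be justified under the \emph{tilted} block measure, not assumed. Second, the step $a\wedge b\leq\sqrt{ab}$ followed by Cauchy--Schwarz decouples the two walks in the wrong way: you end up bounding $I_n$ by $\sqrt{(\sum_x N_x)(\sum_x\widetilde N_x)}$, which is of order $\sqrt{L^{(z)}_n\widetilde L^{(z)}_n}$ and grows linearly in $n$, destroying uniformity. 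The relevant domination is $N_x\wedge\widetilde N_x\leq N_x\mathbbm{1}_{\{\widetilde N_x>0\}}$, which keeps the two-walk intersection structure intact.

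Even after fixing these, the ``core estimate'' you invoke---a Green-function bound for the intersection of two tilted block walks---is precisely the content of the chain Lemmas~\ref{lem: what to show3}--\ref{lem: estimate D term} in the paper, which require comparison to the zero-disorder walk and delicate control of the perturbation. The sentence ``an induction on $k$ along the renewal epochs reproduces the classical intersection-local-time recursion almost verbatim'' hides the real work: $I_n$ counts visits at the fine time scale $j\leq L^{(z)}_n$, whereas the tilted measure $\phi_n(\theta)\,d\mathds{E}^{\overline{Q}^z}_0$ is only renewal-Markovian at the coarse epochs $\tau^{(L)}_k$, so one must first coarse-grain $I_n$ to block variables and absorb the random block lengths $\tau^{(L)}_{k+1}-\tau^{(L)}_k$ into the estimate. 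The paper's recursion on $A_{\nu,n}$ is exactly the clean way to do this; your plan does not yet supply a workable substitute.
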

    \begin{proof}
        In order to prove this assertion, set $\zeta^{(z)}\coloneqq\inf\{[k\CYRk^L]\geq0:\,\exists~i,j\geq1\;\text{s.t.}\; d_1(\CYRk^L Z_i-z, \CYRk^L(\widetilde{Z}_j-z))\leq L\CYRk^L\;\text{and}\;\langle Z_i,\ell\rangle=k\}$. Hence whenever $|\theta|\vee\text{dis}(\mathbb{P})<\gamma_1\CYRk^L\wedge1/2$, one gets
        \[
            \mathds{E}^{\overline{Q}^z}_{0,\nu} [F_n(\theta),\,[n\CYRk^L]\in\mathcal{L}^{(z)},\,[n\CYRk^L] \leq \zeta^{(z)}] \leq \mathds{E}^{\overline{Q}^z}_{0,\nu}[\phi_n(\theta)\widetilde{\phi}_n(\theta)e^{ L \CYRb(1/2)},\,[n\CYRk^L]\in\mathcal{L}^{(z)}] \leq \mathbb{E}E_U [\CYRPhi^z_n(\theta)]^2 e^{ L\CYRb(1/2)}\leq \exp( L \CYRb(1/2))
        \]        
        in light of Lemma \ref{lem: A1}. Set $\sigma^{(z)}$ as \cite[Eqn. (5.20)]{Bazaes/Mukherjee/Ramirez/Sagliett0}. Then  $A_{\nu,n}(\theta)\leq \exp( L \CYRb(1/2)) + \sum_{1\leq k\leq [n\CYRk^L]}  B_{\nu,k}(\theta)\sup_{\nu^\prime\in\mathbb{V}_d} A_{\nu^\prime,n-k}(\theta)$, where $B_{\nu,k}(\theta)\coloneqq\mathds{E}^{\overline{Q}^z}_{0,\nu}[F_n(\theta),\,\sigma^{(z)}=  n]$. By the following Lemma \ref{lem: what to show3}, $\exists~\gamma_3=\gamma_3(\kappa,z)>0$ s.t. as long as $|\theta|\vee\text{dis}(\mathbb{P})<\gamma_3\CYRk^L$, we have $\mathscr{B}\coloneqq\sup_{\abs{\theta}<\gamma_3\CYRk^L, \nu\in\mathbb{V}_d}\sum_{n=1}^\infty B_{\nu,n}(\theta)<1$. Now picking $|\theta|\vee\text{dis}(\mathbb{P})<\gamma_1\CYRk^L\wedge\gamma_3\CYRk^L\wedge1/2\eqqcolon\gamma_2\CYRk^L$,
        \[ 
            A_{\nu,n}(\theta)\leq  \exp( L \CYRb(1/2)) + \bigg( \sup_{j\leq N,\nu^\prime\in\mathbb{V}_d} A_{\nu^\prime,j}(\theta) \bigg)\sum_{1\leq k\leq [N\CYRk^L ]}  B_{\nu,k}(\theta),\qquad\forall~n\leq N,\;
            \;N\geq1.
        \]
        Take supremum on $n\leq N$ and then let $N\to\infty$, we have $\mathscr{A}\leq(1-\mathscr{B})\text{exp} L \CYRb(1/2))<\infty$.
    \end{proof}

    \begin{lemma}\label{lem: what to show3}
        \normalfont
        When $\mathbb{P}$ verifies $\textbf{(SMX)}_{C,g}$, $\exists~\gamma_3=\gamma_3(\kappa,z)>0$ s.t. $\mathscr{B}<1$ whenever $\text{dis}(\mathbb{P})<\gamma_3\CYRk^L$.
    \end{lemma}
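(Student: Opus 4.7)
The plan is a perturbative continuity argument centered at zero disorder, paralleling the strategy of \cite[Lemma 5.5]{Bazaes/Mukherjee/Ramirez/Sagliett0} but invoking the renewal structure from Proposition \ref{prop: mixing separation information} to handle the environmental correlations. At the reference point $\theta = 0$ and $\text{dis}(\mathbb{P}) = 0$, one has $\CYRu_j \equiv 1$, so $\phi_n(0) = \widetilde{\phi}_n(0) = 1$ and $\CYRb(0) = 0$, whence $F_n(0) = 1$. Thus $B_{\nu,n}(0) = \mathds{E}^{\overline{Q}^z}_{0,\nu}[\mathbbm{1}_{\{\sigma^{(z)} = n\}}]$ and $\sum_{n \geq 1} B_{\nu,n}(0)$ is exactly the probability that the first-meeting time $\sigma^{(z)}$ of two independent $Q^z$-walks (with common drift $z \neq 0$ along $\ell$) is finite. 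Since the transverse effective dimension is $d - 1 \geq 3$, the difference walk is transient, and an argument analogous to \cite[Lemma 5.3]{Bazaes/Mukherjee/Ramirez/Sagliett0} produces $\rho = \rho(\kappa, z) > 0$ with $\sup_{\nu \in \mathbb{V}_d} \sum_{n \geq 1} B_{\nu,n}(0) \leq 1 - 2\rho$.

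Next I would establish continuity of $(\theta, \text{dis}(\mathbb{P})) \mapsto \sum_n B_{\nu,n}(\theta)$ at the origin, uniformly in $\nu \in \mathbb{V}_d$. Cauchy--Schwarz gives $B_{\nu,n}(\theta) \leq \mathds{E}^{\overline{Q}^z}_{0,\nu}[\phi_n(\theta)^2 \widetilde{\phi}_n(\theta)^2,\, \sigma^{(z)} = n]^{1/2} \cdot \mathds{E}^{\overline{Q}^z}_{0,\nu}[e^{2L\CYRk^L \CYRb(\text{dis}(\mathbb{P})) I_n}]^{1/2}$. The first factor is controlled via Lemma \ref{lem: A1} applied at parameter $2\theta$ together with Proposition \ref{prop: mixing exponential of tau < infinity}, exploiting the analyticity from Proposition \ref{prop: log MGF is differentiable} which gives $\overline{\Lambda}^a_z(2\theta) - 2\overline{\Lambda}^a_z(\theta) \to 0$ as $\theta \to 0$. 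The second factor, which encodes the overlap-induced correction, is handled by the overlap-count estimate (\ref{eqn: left to prove in step III}) extended to the renewal intervals (summed over possible meeting sites), yielding $\sup_n \mathds{E}^{\overline{Q}^z}_{0,\nu}[e^{2L\CYRk^L \CYRb(\text{dis}(\mathbb{P})) I_n}] \to 1$ as $\text{dis}(\mathbb{P}) \to 0$, uniformly in $\nu$.

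Combining the two steps, there exists $\gamma_3 = \gamma_3(\kappa, z) > 0$ such that for $|\theta| \vee \text{dis}(\mathbb{P}) < \gamma_3 \CYRk^L$ one has $\sup_{\nu \in \mathbb{V}_d} \bigl| \sum_n B_{\nu,n}(\theta) - \sum_n B_{\nu,n}(0) \bigr| \leq \rho$, whence $\mathscr{B} \leq 1 - \rho < 1$ as required. The principal obstacle is the zero-disorder baseline $\sup_\nu \sum_n B_{\nu,n}(0) < 1$: while transience of the difference walk in $d \geq 4$ is morally responsible, making this quantitative and uniform in $\nu \in \mathbb{V}_d$ requires a local limit theorem for the renewal-sampled process $(Z_{\tau^{(L)}_n})_{n \geq 0}$ with nondegenerate transverse covariance, analogous to the positive-definite matrix argument at the end of the proof of Lemma \ref{lem: square integrable of the martingale H}. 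Once the baseline is secured, the perturbation step is routine.
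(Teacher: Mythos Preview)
Your overall architecture matches the paper's: establish the zero-disorder baseline $\sup_{\nu}\sum_n B^*_{\nu,n}(0)<1$ (the paper's Lemma~\ref{lem: estimate B*}), then perturb. Your identification of the baseline as a first-meeting probability for two independent $Q^z$-walks is exactly right.

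The gap is in the perturbation step, which you call ``routine.'' Your Cauchy--Schwarz bound
\[
B_{\nu,n}(\theta)\;\le\;\mathds{E}^{\overline{Q}^z}_{0,\nu}\bigl[\phi_n(\theta)^2\widetilde\phi_n(\theta)^2,\,\sigma^{(z)}=n\bigr]^{1/2}\cdot\mathds{E}^{\overline{Q}^z}_{0,\nu}\bigl[e^{2L\CYRk^L\CYRb(\text{dis}(\mathbb P))I_n}\bigr]^{1/2}
\]
is an \emph{upper bound}, not a bound on the difference $|B_{\nu,n}(\theta)-B^*_{\nu,n}(0)|$. At the reference point the first factor degenerates to $\mathds{P}(\sigma^{(z)}=n)^{1/2}$, whose sum over $n$ can vastly exceed $\sum_n\mathds{P}(\sigma^{(z)}=n)$, so the Cauchy--Schwarz route cannot recover the baseline value in the limit. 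More seriously, to pass from termwise continuity $B_{\nu,n}(\theta)\to B^*_{\nu,n}(0)$ to continuity of the infinite sum you need a dominating summable envelope uniform in $(\theta,\text{dis}(\mathbb P))$, and nothing in your sketch produces one.

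The paper proceeds differently: it bounds the \emph{difference} $|B_{\nu,n}(\theta)-B^*_{\nu,n}(0)|$ pointwise in $n$ via the mean-value theorem (yielding a factor $|\theta|+\CYRb(\text{dis}(\mathbb P))$), and then handles the sum by a finite/tail split. The tail is controlled by a separate, non-trivial summability result (Lemma~\ref{lem: what to show3-1}, itself resting on Lemma~\ref{lem: estimate D term}) giving $\sum_n\sup_{\theta,\nu}B_{\nu,n}(\theta)\le K_0<\infty$ uniformly; this lets one choose $N$ so the tail contributes at most $\delta/4$, after which the finitely many $n\le N$ are handled by the pointwise mean-value bound. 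The perturbation step is therefore where the real technical work lies, not the baseline; your sketch inverts the difficulty.
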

    \begin{proof}
        Set $B^*_{\nu,n}(\theta)$ as the zero-disorder version of $B_{\nu,n}(\theta)$, analogously to \cite[Section 5.3]{Bazaes/Mukherjee/Ramirez/Sagliett0}, then by \cite[Remark 6]{Bazaes/Mukherjee/Ramirez/Sagliett0} and the mean value theorem, $|B_{\nu,n}(\theta)-B^*_{\nu,n}(0)|\leq (|\theta|+\CYRb(\text{dis}(\mathbb{P})))\mathds{E}^{\overline{Q}^z}_{0,\nu}[(\tau^{(L)}_n+\widetilde{\tau}^{(L)}_n)e^{ (|\theta|+\CYRb(\text{dis}(\mathbb{P})))(\tau^{(L)}_n+\widetilde{\tau}^{(L)}_n)}]$, which is no less than
        \[              
             2(|\theta|+ \CYRb(\text{dis}(\mathbb{P}))) \mathds{E}^{\overline{Q}^z}_0 \big[\tau^{(L)}_n e^{ (|\theta|+\CYRb(\text{dis}(\mathbb{P}))) \tau^{(L)}_n}\big] \mathds{E}^{\overline{Q}^z}_0\big[e^{ (|\theta|+\CYRb(\text{dis}(\mathbb{P})))\tau^{(L)}_n}\big]\leq  2(|\theta|+ \CYRb(\text{dis}(\mathbb{P}))) \mathds{E}^{\overline{Q}^z}_0\big[ e^{ 2(|\theta|+\CYRb(\text{dis}(\mathbb{P})))\tau^{(L)}_1}\big]^{2n},
        \]
        for any $\nu\in\mathbb{V}_d$. In particular, $\forall~\eta>0$ and $n\geq1$ we can find $\gamma_4(n)=\gamma_4(\kappa,z,n)>0$ such that
        \begin{equation}\label{eqn: a1}
            \sup_{|\theta|<\gamma_4(n)\CYRk^L,~\nu\in\mathbb{V}_d}|B_{\nu,n}(\theta)-B^*_{\nu,n}(0)| \leq \eta,\qquad\text{whenever}\quad L\geq L_0,\quad \text{dis}(\mathbb{P})<\gamma_4(n)\CYRk^L.
        \end{equation}
        Moreover, by Lemma \ref{lem: what to show3-1}, $\exists~\gamma_5(\kappa,z)>0$ and $K_0(\kappa,z)>0$ s.t. $\sum_{n=1}^\infty \sup_{\text{dis}(\mathbb{P})<\gamma_5\CYRk^L}\sup_{\substack{\nu\in\mathbb{V}_d\\ |\theta|<\gamma_5\CYRk^L}} B_{\nu,n}(\theta)\leq K_0$. Furthermore, by Lemma \ref{lem: estimate B*}, we can find $\delta=\delta(\kappa,z)>0$ such that $\sup_{\nu\in\mathbb{V}_d}\sum_{n=1}^\infty B^*_{\nu,n}(0)\leq1-\delta$. By Lemma \ref{lem: what to show3-1}, $\exists~N=N(\kappa,z)\in\mathbb{N}$ s.t. 
        \[
            \sum_{n>N}\sup_{\substack{\nu\in\mathbb{V}_d\\ |\theta|<\gamma_5\CYRk^L}}|B_{\nu,n}(\theta)-B^*_{\nu,n}(0)|\leq \sum_{n>N} \sup_{\substack{\nu\in\mathbb{V}_d\\ |\theta|<\gamma_5\CYRk^L}} B_{\nu,n}(\theta)\leq\delta(\kappa,z)/4.
        \]
        Furthermore, invoking (\ref{eqn: a1}) for $\{1,\ldots,N(\kappa,z)\}$ we get $\sum_{n\leq N} \sup_{\nu\in\mathbb{V}_d, |\theta|<\gamma_4^\prime\CYRk^L} |B_{\nu,n}(\theta)-B^*_{\nu,n}(0)| \leq 1-\delta(\kappa,z)/4$, whenever $\text{dis}(\mathbb{P})<\gamma_4^\prime\CYRk^L$ for some $\gamma_4^\prime(\kappa,z)>0$. Letting $\text{dis}(\mathbb{P})<\gamma_3\CYRk^L\coloneqq\gamma_4^\prime\CYRk^L\wedge\gamma_5\CYRk^L$, we observe that $\mathscr{B}$ is no larger than
        \[
            \sup_{\nu\in\mathbb{V}_d} \sum_{n=1}^\infty B^*_{\nu,n}(0) + \big(\sum_{n\leq N}+\sum_{n> N}\big)\sup_{\substack{\nu\in\mathbb{V}_d\\ |\theta|<\gamma_3\CYRk^L}} |B_{\nu,n}(\theta)-B^*_{\nu,n}(0)| \leq 1-\delta(\kappa,z)/2,
        \]
        which verifies the assertion.
    \end{proof}

    \begin{lemma}\label{lem: what to show3-1}
        \normalfont
        When $\mathbb{P}$ verifies $\textbf{(SMX)}_{C,g}$,  $\exists~\gamma_5=\gamma_5(\kappa,z)>0$ and $K_0=K_0(\kappa,z)>0$ such that 
        \[
            \sum_{n=1}^\infty \sup_{\text{dis}(\mathbb{P})<\gamma_5\CYRk^L} \sup_{\substack{\nu\in\mathbb{V}_d\\ |\theta|<\gamma_5\CYRk^L}} B_{\nu,n}(\theta)\leq K_0.
        \]
    \end{lemma}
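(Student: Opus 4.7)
The idea is to rewrite $\sum_{n\geq 1} B_{\nu,n}(\theta)$ as a single stopped expectation and then apply Cauchy--Schwarz to decouple the tilted path weights $\phi_n\widetilde{\phi}_n$ from the disorder penalty $e^{L\CYRk^L\CYRb(\text{dis}(\mathbb{P}))I_n}$. Since the events $\{\sigma^{(z)}=n\}$ partition $\{\sigma^{(z)}<\infty\}$,
\[
\sum_{n\geq 1} B_{\nu,n}(\theta) \;=\; \mathds{E}^{\overline{Q}^z}_{0,\nu}\bigl[F_{\sigma^{(z)}}(\theta),\,\sigma^{(z)}<\infty\bigr] \;\leq\; \mathscr{T}_1^{1/2}\,\mathscr{T}_2^{1/2},
\]
where $\mathscr{T}_1 \coloneqq \mathds{E}^{\overline{Q}^z}_{0,\nu}[\phi_{\sigma^{(z)}}^2(\theta)\widetilde{\phi}_{\sigma^{(z)}}^2(\theta),\,\sigma^{(z)}<\infty]$ and $\mathscr{T}_2 \coloneqq \mathds{E}^{\overline{Q}^z}_{0,\nu}[e^{2L\CYRk^L\CYRb(\text{dis}(\mathbb{P}))I_{\sigma^{(z)}}},\,\sigma^{(z)}<\infty]$.

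The term $\mathscr{T}_1$ is handled verbatim as in Proposition \ref{prop: supremum square of Phi is finite}: dropping the event $\{\sigma^{(z)}<\infty\}$ only increases the expectation, and independence of $Z$ and $\widetilde{Z}$ under $\mathds{E}^{\overline{Q}^z}_{0,\nu}$ factorizes the square into single-walk moments. Lemma \ref{lem: A1} absorbs the tilted exponential factor $e^{\langle\theta,\CYRk^L(Z(L^{(z)}_n)-Z_0)\rangle - \overline{\Lambda}^a_z(\theta) L^{(z)}_n}$, while the residual contribution from squaring $\prod\CYRu_j$ is dominated using $\CYRb(\text{dis}(\mathbb{P}))\leq 2\text{dis}(\mathbb{P})/\hbar$ together with Proposition \ref{prop: mixing exponential of tau < infinity} to control $E^{\overline{Q}^z}_0[e^{\gamma_0\CYRk^L\tau^{(L)}_1}\,|\,\beta^{(\zeta)}_0=\infty]\leq \eta$. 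Taking $\gamma_5(\kappa,z)\leq \gamma_2$ sufficiently small then yields $\mathscr{T}_1\leq K_1(\kappa,z)$ uniformly in $\nu\in\mathbb{V}_d$, in $|\theta|<\gamma_5\CYRk^L$, and in $\text{dis}(\mathbb{P})<\gamma_5\CYRk^L$.

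For $\mathscr{T}_2$ I would bound $I_{\sigma^{(z)}}$ above by the total intersection count $\sum_{j\geq 1} N^L_j(\vec{Z},\vec{\widetilde{Z}})\mathbbm{1}\{d_1(\vec{Z}_j,\vec{\widetilde{Z}}_j)<L\}$ of the two independent $Q^z$-walks in $L\CYRk^L$-neighborhoods and invoke the Khas'minskii-type expansion already used in the proof of Lemma \ref{lem: square integrable of the martingale H}. The key input is the moment bound (\ref{eqn: left to prove in step III}), which relies on Fourier inversion for the mean-zero difference walk $Z-\widetilde{Z}$ and requires $d\geq 4$ for summability of the Green-function integral. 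Writing
\[
\mathscr{T}_2 \;\leq\; \sum_{k\geq 0}\frac{(2L\CYRk^L\CYRb(\text{dis}(\mathbb{P})))^k}{k!}\, \mathds{E}^{\overline{Q}^z}_{0,\nu}\Bigl[\Bigl(\sum_{j\geq 1} N^L_j\mathbbm{1}\{d_1(\vec{Z}_j,\vec{\widetilde{Z}}_j)<L\}\Bigr)^k\Bigr] \;\leq\; \frac{1}{1 - 2CL^3\CYRk^L\CYRb(\text{dis}(\mathbb{P}))/\hbar},
\]
one sees that the series converges to a finite constant $K_2(\kappa,z)$ provided $\gamma_5$ is taken small enough that the denominator is bounded below by $1/2$. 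Setting $K_0\coloneqq \sqrt{K_1K_2}$ then completes the bound.

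The main obstacle is transferring the Fourier-inversion moment estimate (\ref{eqn: left to prove in step III}) from the unconditional product $Q^z_0\otimes Q^z_\nu$ to the conditional law $\mathds{E}^{\overline{Q}^z}_{0,\nu}$, since the conditioning on $\{\beta^{(\zeta)}_0=\infty\}$ built into $\mathds{E}^{\overline{Q}^z}$ interacts non-trivially with the coalescence stopping rule $\sigma^{(z)}$. This is resolved by dominating $\mathds{E}^{\overline{Q}^z}_{0,\nu}[\,\cdot\,]\leq \eta^2\,E^{\overline{Q}^z}_0\otimes E^{\overline{Q}^z}_\nu[\,\cdot\,]$ via Proposition \ref{prop: mixing exponential of tau < infinity}, at the cost of a multiplicative constant depending only on $\kappa$ and $z$, after which the Fourier moment bound applies directly to the unconditioned two-walk system.
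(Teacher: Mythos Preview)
Your argument proves the wrong inequality. The lemma asks for
\[
\sum_{n\geq 1}\;\sup_{\nu,\theta,\mathbb{P}}\; B_{\nu,n}(\theta)\;\leq\;K_0,
\]
i.e.\ a \emph{termwise} bound uniform in $\nu,\theta,\mathbb{P}$ whose sum over $n$ is finite. What you establish, by writing $\sum_n B_{\nu,n}(\theta)=\mathds{E}^{\overline{Q}^z}_{0,\nu}[F_{\sigma^{(z)}}(\theta),\,\sigma^{(z)}<\infty]$ and applying Cauchy--Schwarz, is at best
\[
\sup_{\nu,\theta,\mathbb{P}}\;\sum_{n\geq 1} B_{\nu,n}(\theta)\;\leq\;K_0,
\]
with the supremum \emph{outside} the sum. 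These are not equivalent: the former is strictly stronger, and it is precisely the termwise version that is used downstream in Lemma~\ref{lem: what to show3}, where one needs to choose $N$ so that the tail $\sum_{n>N}\sup_{\nu,\theta}B_{\nu,n}(\theta)<\delta/4$. A global bound on the stopped expectation gives no control over how the mass is distributed across $n$.

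The paper's proof is organized to produce a termwise bound. It decomposes $B_{\nu,n}(\theta)$ according to the last joint renewal level $\psi^{(z)}\leq\zeta^{(z)}$ before coalescence, so that on $\{\psi^{(z)}=j\}$ the interaction term satisfies $I_j\leq 1$ and the two walks decouple up to level $j$. The contribution at level $j$ is then controlled, uniformly in $\nu$ and $\theta$, by a local-limit/Green-function estimate $\sup_x\sum_k\mu^{(\theta)}_k(x)\leq K_1(1+j)^{-(d-1)/2}$ for the tilted increment law $\mu^{(\theta)}$; the post-coalescence part is the sequence $D_{\nu',n-j}(\theta)$, handled by Lemma~\ref{lem: estimate D term}. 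The resulting convolution structure gives $\sup_{\nu,\theta}B_{\nu,n}(\theta)\leq C\sum_{j<n}(1+j)^{-(d-1)/2}\sum_{\nu'}\sup_\theta D_{\nu',n-j}(\theta)$, and summing over $n$ is then a product of two convergent series. Your Cauchy--Schwarz shortcut bypasses exactly this decay mechanism.

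A secondary issue: your treatment of $\mathscr{T}_1$ does not factor as claimed. The stopping level $\sigma^{(z)}$ depends on \emph{both} walks, so $\phi_{\sigma^{(z)}}$ and $\widetilde\phi_{\sigma^{(z)}}$ are not independent under $\mathds{E}^{\overline{Q}^z}_{0,\nu}$, and Lemma~\ref{lem: A1} (which concerns a single walk at a single renewal time $\tau^{(L)}_1$) does not directly absorb $\phi_{\sigma^{(z)}}^2$.
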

    \begin{proof}
        Set $\psi^{(z)}\coloneqq\sup\{k\in\mathcal{L}^{(z)}:\,k\leq\zeta^{(z)}\}$. Similar to \cite[Eqn. (5.28)]{Bazaes/Mukherjee/Ramirez/Sagliett0}, we have
        \[
            B_{\nu,n}(\theta)\leq \sum_{j<n} \sup_{\nu^\prime\in\mathbb{V}_{d}}\mathds{E}^{\overline{Q}^z}_{0,\nu}[F_j(\theta),\,\widetilde{Z}_{\widetilde{L}^{(z)}_j}-Z_{L^{(z)}_j}=\nu^\prime,\,\psi^{(z)}=j]\sum_{\nu^\prime\in\mathbb{V}_d}D_{\nu^\prime,n-k}(\theta),
        \]
        where $\forall~n\geq1$ and $\nu \in\mathbb{V}_d$ we write $D_{\nu,n}(\theta)\coloneqq\mathds{E}^{\overline{Q}^z}_{0,\nu}[F_n(\theta),\,n=\inf\{j\in\mathcal{L}^{(z)}:\,j>0\}>\zeta^{(z)}]$. Notice that $\psi^{(z)}=j$ implies that $I_j\leq1$. Hence $\forall~j\geq1$, we have
        \begin{equation}\begin{aligned}\label{eqn: dog1}
            &\mathds{E}^{\overline{Q}^z}_{0,\nu}[F_j(\theta),\,\widetilde{Z}_{\widetilde{L}^{(z)}_j}-Z_{L^{(z)}_j}=\nu^\prime,\,\psi^{(z)}=j] \leq  \mathds{E}^{\overline{Q}^z}_{0,\nu}[\phi_j(\theta)\widetilde{\phi}_j(\theta),\, \widetilde{Z}_{\widetilde{L}^{(z)}_j}-Z_{L^{(z)}_j}=\nu^\prime,\,j\in\mathcal{L}^{(z)}]\\
            &\qquad\leq  e^{L \CYRb(\text{dis}(\mathbb{P}))} \sum_{\langle x,\ell\rangle= [j/\CYRk^L]} \sum_{k\in\mathbb{N}} \hat{P}^{(\theta)}_{0}(\langle Z^{(\theta)}_k,\ell\rangle= j) \leq  \sup_{\langle x,\ell\rangle= [j/\CYRk^L] } \sum_{k\in\mathbb{N}}  \mu^{(\theta)}_{k}(x),
        \end{aligned}\end{equation}
        where $\mu^{(\theta)}_{k}(\vdot)$ denotes the $k$-fold convolution of the law $\mu^{(\theta)}(\vdot)$. The above derivation is also seen in \cite[p. 58]{Chen}. And
        \[
            \big| \sum_{x\in\mathbb{Z}^d} e^{c\abs{x}}\mu^{(\theta)}(x) -1 \big|\leq \mathds{E}^{\overline{Q}^z}_0\big[ ((2|\theta|+\CYRb(\text{dis}(\mathbb{P}))+c) \tau^{(L)}_1) e^ {(2|\theta|+\CYRb(\text{dis}(\mathbb{P}))+c) \tau^{(L)}_1}\big].
        \]
        Fixing some $c_1=c_1(\kappa,z)>0$, $|\theta|\vee\text{dis}(\mathbb{P})<\nu_1\CYRk^L$ for some $\nu_1=\nu_1(\kappa,z)>0$, we get $\sum_{x\in\mathbb{Z}^d} e^{c_1\abs{x}}\mu^{(\theta)}(x)\leq2$. Notice $ \Sigma_{\mu^{(\theta)}}=H^a_z(\theta) \mathds{E}^{\overline{Q}^z}_0[\tau^{(L)}_1\phi_1(\theta)]$. Then by the proof of Proposition \ref{prop: region A contains an open ball}, $\exists~\nu_2^\prime(\kappa,z)$, $c(\kappa,z)>0$ s.t. $\sup_{\text{dis}(\mathbb{P})<\nu_2^\prime\CYRk^L}\normx{H^a_z(0)^{-1}}\leq c$ and $\sup_{|\theta|\vee\text{dis}(\mathbb{P})<\nu_2^\prime\CYRk^L}\normx{H^a_z(\theta)-H^a_z(0)}\leq(2c)^{-1}$. Via analogous arguments of \cite[Eqn. (5.34)]{Bazaes/Mukherjee/Ramirez/Sagliett0},  $\sup_{|\theta|\vee\text{dis}(\mathbb{P})<\nu_2\CYRk^L} \normx{\widetilde{H}^a_z(\theta)^{-1}}\leq 2c$. Notice also $\mathds{E}^{\overline{Q}^z}_0[\tau^{(L)}_1\phi_1(\theta)]^{-1}\leq \mathds{E}^{\overline{Q}^z}_0[(\tau^{(L)}_1)^{-1}\phi_1(\theta)^{-1}]\leq c+1$ when $|\theta|\vee\text{dis}(\mathbb{P})<\nu_2\CYRk^L$ for some $0<\nu_2(\kappa,z)<\nu_2^\prime$, which yields the estimate $\inf_{|\theta|\vee\text{dis}(\mathbb{P})<\nu_2\CYRk^L}\sigma_{\text{min}}(\Sigma_{\mu^{(\theta)}})\geq c_2$ for some $c_2=c_2(\kappa,z)>0$, where $\sigma_{\text{min}}(\vdot)$ stands for the minimal singular value of a real matrix. Furthermore, $\langle  Z^{(\theta)}_1,\ell\rangle\geq \CYRk^L$, $\hat{P}^{(\theta)}_0$-a.s. Hence, $|\sum_{x\in\mathbb{Z}^d}\langle x,\ell\rangle\mu^{(\theta)}(x)|\geq \hat{E}^{(\theta)}_0[\langle Z^{(\theta)}_1,\ell\rangle]\geq \CYRk^L\eqqcolon c_3$. In light of \cite[p. 58]{Chen}, we have $\sup_{\langle x,\ell\rangle= [j/\CYRk^L]}\sum_{k\in\mathbb{N}}\mu^{(\theta)}(x)\leq K_1(1+[j/\CYRk^L])^{-(d-1)/2}$ for some $K_1=K_1(\kappa,z)>0$, $\forall~L\geq L_0$ and $j\geq1$. Thus, with $\nu(\kappa,z)\coloneqq\nu_1\wedge\nu_2\wedge1/2$,
        \[
            \sup_{\text{dis}(\mathbb{P})<\nu\CYRk^L}\sup_{\substack{\nu\in\mathbb{V}_d\\ |\theta/\CYRk^L|<\nu}} B_{\nu,n}(\theta) \leq e^{L\CYRb(1/2)} K_1\sum_{j<n}(1+[\CYRk^{-L}j])^{-(d-1)/2}\sum_{\nu^\prime\in\mathbb{V}_d}\sup_{\substack{\text{dis}(\mathbb{P})<\nu\CYRk^L\\ |\theta/\CYRk^L|<\nu}} D_{\nu^\prime,n-j}(\theta)
        \]
        which implies
        \[
            \sum_{n=1}^\infty \sup_{\text{dis}(\mathbb{P})<\nu\CYRk^L}\sup_{\substack{\nu\in\mathbb{V}_d\\ |\theta/\CYRk^L|<\nu}} B_{\nu,n}(\theta) \leq e^{L\CYRb(1/2)} K_1 \sum_{j=1}^\infty (1+j)^{-(d-1)/2} \sum_{k=1}^\infty \sum_{\nu \in\mathbb{V}_d} \sup_{\substack{\text{dis}(\mathbb{P})<\nu\CYRk^L\\ |\theta/\CYRk^L|<\nu}} D_{\nu,k}(\theta).
        \]
        Invoking Lemma \ref{lem: estimate D term}, an estimate on $D_{\nu,k}(\theta)$ then yields the assertion.
    \end{proof}

    \begin{lemma}\label{lem: estimate B*}
        \normalfont
        When $\mathbb{P}$ verifies $\textbf{(SMX)}_{C,g}$, $\exists~\delta=\delta(\kappa,z)>0$ such that $\sup_{\nu\in\mathbb{V}_d}\sum_{n=1}^\infty B^*_{\nu,n}(0)\leq1-\delta$.
    \end{lemma}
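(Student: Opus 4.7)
The plan is to identify the sum as a return probability for the difference of two independent copies of the renewal walk, and then to use transience of random walks in dimensions $\geq 3$.

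First, I would note that at zero disorder one has $\omega(x,e)\equiv\mathbb{E}[\omega(x,e)]$, so the auxiliary transitions satisfy $\CYRu_j\equiv 1$, whence $F_n(0)\equiv 1$ and
\[
   B^*_{\nu,n}(0)=\mathds{P}^{\overline{Q}^z}_{0,\nu}(\sigma^{(z)}=n),\qquad \sum_{n=1}^\infty B^*_{\nu,n}(0)=\mathds{P}^{\overline{Q}^z}_{0,\nu}\big(\sigma^{(z)}<\infty\big).
\]
Thus the task reduces to showing that for two independent copies $(Z_n)$ and $(\widetilde{Z}_n)$ of the conditioned renewal walk under $\mathds{E}^{\overline{Q}^z}_0$, starting respectively from $0$ and $\nu\in\mathbb{V}_d$, the probability of them eventually coming within distance $L\CYRk^L$ of each other at matched $\ell$-heights of renewal epochs is bounded away from $1$ uniformly in $\nu$.

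Second, by the renewal separation provided by Proposition \ref{prop: mixing separation information}, under $\mathds{E}^{\overline{Q}^z}_0$ the increments $\Delta Z_k\coloneqq Z(\tau^{(L)}_k)-Z(\tau^{(L)}_{k-1})$ form an i.i.d. sequence; and Proposition \ref{prop: mixing exponential of tau < infinity} yields a finite exponential moment, hence every polynomial moment, for $\Delta Z_k$. Decomposing $\Delta Z_k=(\langle\Delta Z_k,\ell\rangle/|\ell|^2)\,\ell+\pi_\perp(\Delta Z_k)$, the projection $\pi_\perp(\Delta Z_k)$ lives in $\mathbb{V}_d\simeq\mathbb{R}^{d-1}$ with mean zero (by symmetry of $u_z$ in transverse directions after re-centering by $z$) and finite positive-definite covariance $\Sigma$; positive-definiteness uses the uniform lower bound $u_z(e)\geq c_z(\kappa)>0$ for every $e\in\mathbb{V}$, which guarantees that the support of $\pi_\perp(\Delta Z_1)$ spans $\mathbb{R}^{d-1}$. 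The auxiliary process
\[
   W_k\coloneqq\pi_\perp(Z(\tau^{(L)}_k))-\pi_\perp(\widetilde{Z}(\widetilde{\tau}^{(L)}_k)),\qquad W_0=-\pi_\perp(\nu),
\]
synchronized at matched $\ell$-heights through the usual ladder coupling, is then a centered random walk in $\mathbb{R}^{d-1}$ with positive-definite covariance $2\Sigma$.

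Third, since $d\geq 4$ one has $d-1\geq 3$, so the Chung--Fuchs theorem guarantees that $(W_k)$ is transient. Consequently, for the bounded transverse ball $\mathcal{B}_{L\CYRk^L}\subseteq\mathbb{R}^{d-1}$ encoded into the definition of $\sigma^{(z)}$, the probability
\[
   h(w)\coloneqq\Pr\big(W_k\in\mathcal{B}_{L\CYRk^L}\text{ for some }k\geq 1\;\big|\;W_0=w\big)
\]
satisfies $\sup_{w\in\mathbb{R}^{d-1}}h(w)\leq 1-\delta$ for some $\delta=\delta(\kappa,z)>0$: the sup on bounded $w$ comes from genuine transience (positive escape probability, uniform by a Green-function argument), while for $|w|\to\infty$ the standard potential-theoretic decay $h(w)=O(|w|^{-(d-3)})$ drives the probability to zero. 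Since $\{\sigma^{(z)}<\infty\}\subseteq\{\exists k\geq 1:W_k\in\mathcal{B}_{L\CYRk^L}\}$, this yields $\sup_{\nu\in\mathbb{V}_d}\sum_{n\geq 1}B^*_{\nu,n}(0)\leq 1-\delta$, as desired.

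The main obstacle I anticipate is the uniformity in $\nu$ combined with the uniformity in the parameters $(\kappa,z,L)$: one must ensure that the covariance $\Sigma$ is bounded below by a constant depending only on $\kappa$ and $z$ (not on $L$), and that the Green-function decay estimate for $(W_k)$ is applied with a constant depending only on the ellipticity class, not on the individual realization. The needed uniformity follows from the $L$-independent lower bound $u_z(e)\geq c_z(\kappa)>0$ and the fact that the renewal structure of Section 5 is insensitive to $L\geq L_0$ once $L_0$ has been fixed, but verifying these uniformities cleanly is the technically delicate step.
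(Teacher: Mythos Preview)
Your core strategy matches the paper's: both identify $\sum_n B^*_{\nu,n}(0)=\mathds{E}^{\overline{Q}^z}_{0,\nu}(\sigma^{(z)}<\infty)$ and bound this away from $1$ via transience of a $(d-1)$-dimensional difference walk built from the renewal increments (which have all polynomial moments by Proposition~\ref{prop: mixing exponential of tau < infinity}). The paper outsources the quantitative work to \cite[Propositions~3.1,~3.4]{Berger/Zeitouni} and \cite[Lemma~5.5]{Bazaes/Mukherjee/Ramirez/Sagliett0}, first recording a local-limit bound $\sup_\nu \mu^*_n(\nu)\leq \eta'' n^{-d/2}$ and then obtaining $\sup_{|\nu|\geq 2N}\mathds{E}^{\overline{Q}^z}_{0,\nu}(\sigma^{(z)}<\infty)\leq 1/2$ before handling bounded $\nu$ by an intersection-counting argument; your Chung--Fuchs plus Green-function route is a self-contained repackaging of the same content.

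One point needs correcting. As written, $W_k=\pi_\perp(Z(\tau^{(L)}_k))-\pi_\perp(\widetilde Z(\widetilde\tau^{(L)}_k))$ compares the two walks at their \emph{own} $k$-th renewals, which generically sit at \emph{different} $\ell$-heights; the inclusion $\{\sigma^{(z)}<\infty\}\subseteq\{\exists k: W_k\in\mathcal{B}_{L\CYRk^L}\}$ then fails, since a collision at a common level $m\in\mathcal{L}^{(z)}$ corresponds to $Z(\tau^{(L)}_{k_1})$ and $\widetilde Z(\widetilde\tau^{(L)}_{k_2})$ with $k_1\neq k_2$ in general. What your ``usual ladder coupling'' should denote is indexing by the \emph{joint} regeneration levels in $\mathcal{L}^{(z)}$ and taking the transverse displacement there; this is precisely the Berger--Zeitouni joint-regeneration construction the paper invokes, and those increments are again i.i.d.\ with the required moments and nondegenerate covariance. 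A minor slip: the single-walk transverse projection $\pi_\perp(\Delta Z_k)$ is \emph{not} centered in general, since $z$ need not be parallel to $\ell$; only the \emph{difference} of two independent copies is, which is all you actually use.
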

    \begin{proof}
        Indeed it is obvious that $\sum_{n=1}^\infty B^*_{\nu,n}(0) =   \mathds{E}^{\overline{Q}^z}_{0,\nu}(\sigma^{(z)}<\infty)$. Notice also that $Q^z_0(\beta^{(\zeta)}_0=\infty)\geq\eta^{-1}$. And we have $E^{Q^z}_0[(\CYRk^L\tau^{(L)}_1)^\alpha]\leq\eta^\prime(\alpha)$ for some $\eta^\prime(\alpha)>0$, $\forall~\alpha\geq1$. Furthermore, by the argument following (\ref{eqn: dog1}), we know $\sup_{\nu\in\mathbb{Z}^d}\mathds{E}^{\overline{Q}^z}_0(\CYRk^L Z(\tau^{(L)}_n)\in\Box_{\ell,\nu})=\sup_{\nu\in\mathbb{Z}^d}\mu^*_{n}(\nu)\leq \eta^{\prime\prime}n^{-d/2}$, $\exists~\eta^{\prime\prime}=\eta^{\prime\prime}(\kappa,z)>0$, where $\mu_{\lambda,L}^*$ corresponds to the zero-disorder version of $\mu^{(0)}$. Invoking a slightly revised version of \cite[Propositions 3.1, 3.4]{Berger/Zeitouni}, where we have used the estimate $|Z(\tau^{(L)}_1)|\leq\tau^{(L)}_1$, we have $\sup_{\abs{z}\geq2N}\mathds{E}^{\overline{Q}^z}_{0,z}(\sigma^{(z)}<\infty)\leq 1/2$, for some $N=N(\kappa,z)\geq1$. The rest argument follows similarly as \cite[Lemma 5.5]{Bazaes/Mukherjee/Ramirez/Sagliett0}, where we count the intersections between $\CYRk^L(Z_i-Z_0)-Z_0$ and $\CYRk^L(\widetilde{Z}_j-\widetilde{Z}_0)+\widetilde{Z}_0$ instead of that between $Z_j$ and $Z_j$. And the assertion is then verified.
    \end{proof}

    \begin{lemma}\label{lem: estimate D term}
        \normalfont
        When $\mathbb{P}$ verifies $\textbf{(SMX)}_{C,g}$, there exist $\gamma_6=\gamma_6(\kappa,z)>0$ and $K^\prime=K^\prime(\kappa,z)>0$ such that
        \[
            \sum_{n=1}^\infty \sum_{\nu\in\mathbb{V}_d} \sup_{|\theta|\vee\text{dis}(\mathbb{P})<\gamma_6\CYRk^L} D_{\nu,n}(\theta)\leq K^\prime,\qquad\forall~L\geq L_0.
        \]
    \end{lemma}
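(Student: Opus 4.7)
The plan is to bound $D_{\nu,n}(\theta)$ by adapting the change-of-measure strategy developed for $B_{\nu,n}(\theta)$ in Lemma \ref{lem: what to show3-1}. On the event defining $D_{\nu,n}$, the two copies $(Z_j)$ and $(\widetilde{Z}_j)$ of the $\overline{Q}^z$-walk have already come within distance $L\CYRk^L$ (the event $\{\zeta^{(z)}<\infty\}$) before the first joint level-renewal at level $n$ occurs, so morally $D_{\nu,n}$ quantifies the mass accumulated during the ``coupled phase'' of the two walks. Summability over $n$ and $\nu$ therefore amounts to a Green's-function-type estimate for the difference of two independent copies of an effectively $(d-1)$-dimensional random walk, which is a convergent quantity when $d\geq4$.

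The first step is to trade the factor $e^{L\CYRk^L\CYRb(\text{dis}(\mathbb{P}))I_n}$ contained in $F_n(\theta)$ for a universal multiplicative constant. The smallness assumption $|\theta|\vee\text{dis}(\mathbb{P})<\gamma_6\CYRk^L$, with $\gamma_6$ chosen sufficiently small, makes $L\CYRk^L\CYRb(\text{dis}(\mathbb{P}))$ tiny; combined with the square-integrability control on $(\phi_n(\theta))_{n\geq1}$ and $(\widetilde{\phi}_n(\theta))_{n\geq1}$ inherited from Proposition \ref{prop: supremum square of Phi is finite} via Cauchy--Schwarz, one can absorb the $I_n$-weighting at the cost of a constant depending only on $\kappa,z$. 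This reduces the task to controlling $\mathds{E}^{\overline{Q}^z}_{0,\nu}[\phi_n(\theta)\widetilde{\phi}_n(\theta),\,\sigma^{(z)}=n]$ for each $\nu$ and each $n$.

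Second, I would switch to the random-walk representation via the measure $\mu^{(\theta)}$ introduced in the proof of Proposition \ref{prop: limiting expectation of Phi is positive}. Under this change of measure, $\phi_n$ and $\widetilde{\phi}_n$ turn into step-distributions for independent copies $Z^{(\theta)}$ and $\widetilde{Z}^{(\theta)}$, and the event ``first positive joint renewal occurs after the intersection'' translates into requiring that these two walks, started at $0$ and $\nu$ respectively, be within an $O(L)$-box of each other at the level-$n$ renewal. Applying the uniform local CLT bound $\sup_x \mu^{(\theta)}_k(x)\leq K_1(1+k)^{-d/2}$ established inside Lemma \ref{lem: what to show3-1}, together with the uniform control on $\normx{H^a_z(0)^{-1}}$ from the proof of Proposition \ref{prop: region A contains an open ball}, the double sum $\sum_n\sum_\nu D_{\nu,n}(\theta)$ is then dominated by $\sum_k k^{-(d-1)/2}\cdot\sum_\nu(1+|\nu|)^{-(d-1)}$, which converges uniformly in $\theta$ provided $d\geq4$.

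The main obstacle will be the last step, namely making the absorption of the $I_n$-factor uniform in $n$, since a priori $I_n$ can grow with $n$ and is not obviously dominated by any function of $L^{(z)}_n\wedge\widetilde{L}^{(z)}_n$ alone. I expect to handle this by splitting $\{I_n\leq M\}$ and $\{I_n>M\}$ for a large constant $M=M(\kappa,z)$: on the former the exponential weight is harmless, while on the latter the bound $L\CYRk^L\CYRb(\text{dis}(\mathbb{P}))\leq\gamma_6$ combined with a direct $L^2$-estimate on the product martingale $\phi_n\widetilde{\phi}_n$ produces a summable tail. Once this is in place, choosing $\gamma_6(\kappa,z)>0$ small enough and defining $K'(\kappa,z)$ as the resulting universal bound yields the statement.
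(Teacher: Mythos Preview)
Your plan has a circularity problem and misses the two mechanisms that actually make the sum converge.

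\textbf{Circularity.} You invoke the $L^2$ control of Proposition \ref{prop: supremum square of Phi is finite} to absorb the $e^{L\CYRk^L\CYRb(\text{dis}(\mathbb{P}))I_n}$ factor. But the proof of that proposition goes through Lemma \ref{lem: what to show2}, then Lemma \ref{lem: what to show3}, then Lemma \ref{lem: what to show3-1}, and the latter explicitly calls the present lemma to bound $\sum_k\sum_\nu D_{\nu,k}(\theta)$. So you cannot use Proposition \ref{prop: supremum square of Phi is finite} here; the $F_n(\theta)^2$ term has to be handled by something more primitive, namely the crude bound $\log F_n(\theta)\leq C(|\theta|+\CYRb(\text{dis}(\mathbb{P})))(L^{(z)}_n+\widetilde L^{(z)}_n)$ together with Proposition \ref{prop: mixing exponential of tau < infinity}, which gives a factor growing like $\eta^{cn(|\theta|+\CYRb(\text{dis}(\mathbb{P})))/\CYRk^L}$, i.e.\ sub-exponentially in $n$ once $|\theta|\vee\text{dis}(\mathbb{P})$ is small compared to $\CYRk^L$.

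\textbf{What produces summability.} The paper does not use a local-CLT/Green-function bound for the sum over $\nu$; in fact your claimed decay $(1+|\nu|)^{-(d-1)}$ would not be summable over the $(d-1)$-dimensional hyperplane $\mathbb{V}_d$. Instead, the paper applies Cauchy--Schwarz to split $D_{\nu,n}(\theta)$ into three pieces: the moment factor just discussed; a factor $\mathds{P}^{\overline Q^z}_{0,\nu}(n>\zeta^{(z)})^{1/4}$, which decays like $(|\nu|\vee1)^{-(d+1/4)}$ because the two walks, started $|\nu|$ apart, can only come within $L\CYRk^L$ if one of them has run for at least $|\nu|/(2\CYRk^L)$ steps, and this is then controlled by Markov's inequality with a high moment of $\tau^{(L)}_n$; and a factor $\mathds{P}^{\overline Q^z}_{0,\nu}(n=\inf\{k\in\mathcal L^{(z)}:k>0\})^{1/4}$, which decays \emph{exponentially} in $n$ via a direct renewal-type estimate on the first joint ladder level $s_*$. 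It is this last exponential-in-$n$ decay (absent from your outline) that beats the sub-exponential growth of the moment factor and the polynomial-in-$n$ growth coming from the second factor; and it is the $(|\nu|\vee1)^{-(d+1/4)}$ decay, not a local CLT, that makes the $\nu$-sum finite.
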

    \begin{proof}
        The Cauchy--Schwarz inequality gives 
        \begin{equation}\label{eqn: dog2}
            D_{\nu,n}(\theta)\leq \mathds{E}^{\overline{Q}^z}_{0,\nu}[F_n(\theta)^2]^{1/2} \mathds{E}^{\overline{Q}^z}_{0,\nu}(n>\zeta^{(z)})^{1/4} \mathds{E}^{\overline{Q}^z}_{0,\nu}(n=\inf\{k\in\mathcal{L}^{(z)}:\,k>0\})^{1/4}.
        \end{equation}
        For $\forall~n\geq1$, the first term on the RHS of (\ref{eqn: dog2}) is bounded by
        \begin{equation}\label{eqn: dog2 a}  \mathds{E}^{\overline{Q}^z}_0 [e^{4(|\theta|+\CYRb(\text{dis}(\mathbb{P})))\tau^{(L)}_1}]^{n/2} \leq \mathds{E}^{\overline{Q}^z}_0[e^{\gamma_0\CYRk^L\tau^{(L)}_1}]^{4(|\theta|+\CYRb(\text{dis}(\mathbb{P})))n/\gamma_0\CYRk^L}\leq \eta^{4(|\theta|+\CYRb(\text{dis}(\mathbb{P})))n/\gamma_0\CYRk^L},
        \end{equation}
        if $|\theta|\vee\text{dis}(\mathbb{P})<\gamma_0\CYRk^L$, $L\geq L_0$. On other hands, when $n>\zeta^{(z)}$ we have $\CYRk^L Z_i-z=\CYRk^L(\widetilde{Z}_j-z)$ for some $1\leq i\leq\tau^{(L)}_n$, $1\leq j\leq\widetilde{\tau}^{(L)}_n$ if $z\neq0$. In particular, it implies $\CYRk^L\max\{\tau^{(L)}_n,\widetilde{\tau}^{(L)}_n \}\geq\abs{z}/2$. Hence,
        \begin{equation}\label{eqn: dog2 b}
            \mathds{E}^{\overline{Q}^z}_{0,\nu}[n>\zeta^{(z)}]^{1/4}\leq 2\mathds{E}^{\overline{Q}^z}_0[\CYRk^L\tau^{(L)}_n\geq(\abs{z}\vee1)/2]\leq 2(2/(\abs{z}\vee1))^{d+1/4}\mathds{E}^{\overline{Q}^z}_0[(\CYRk^L\tau^{(L)}_n)^{4d+1}]^{1/4}\leq K_1^\prime n^d (\abs{z}\vee1)^{-(d+1/4)}
        \end{equation}
        for some $K_1^\prime(\kappa,z)>0$. To control the last term of (\ref{eqn: dog2}), we define $s_*\coloneqq\inf\{[\CYRk^{-L}k]:\,k\in\mathcal{L}^{(z)},\,k>0\}$, $\beta^{(\zeta)}(m)\coloneqq\inf\{n\geq L^{(z)}_m:\,\langle Z_n,\ell\rangle<m\}$ and $R^{(z)}(m)\coloneqq\sup\{\langle Z_n,\ell\rangle:\,L^{(z)}_m\leq n<\beta^{(z)}_0(m)\}$, together with the quantities $\widetilde{\beta}^{(\zeta)}(m)$, $\widetilde{R}^{(z)}(m)$ for $(\widetilde{Z}_n)_{n\geq0}$, $\forall~m\geq0$. We also define $(s_j)_{j\geq1}$ recursively by $s_1=1$, $s_{j+1}=R^{(z)}(j)\wedge\widetilde{R}^{(z)}(j)+1$ if $s_j<\infty$, and $s_{j+1}=\infty$ if $s_j=\infty$, $\forall~j\geq1$. In particular, $R^{(z)}_s\coloneqq R^{(z)}(0)\wedge\widetilde{R}^{(z)}(0)+1$. Now,
        \[
            E^{Q^z}_{0,\nu}[ e^{\gamma\CYRk^L R^{(z)}_s},\,R^{(z)}_s<\infty ]\leq E^{Q^z}_{0,\nu}[ e^{\gamma_0\CYRk^L R^{(z)}_s},\,R^{(z)}_s<\infty ]^{\gamma/\gamma_0} Q^z_{0,\nu}(R^{(z)}_s<\infty)^{1-\gamma/\gamma_0},\qquad\forall~\nu\in\mathbb{V}_d.
        \]
        Here we have $s_*=\sup\{R^{(z)}(j):\,R^{(\lambda)}(j)<\infty\}$. Since $R^{(z)}(0)+1\leq\tau^{(L)}_1$ on $\{\beta^{(\zeta)}_0<\infty\}$, we have
        \[
            E^{Q^z}_{0,\nu}[e^{\gamma_0\CYRk^L R^{(z)}_s},\,R^{(z)}_s<\infty]\leq 2E^{Q^z}_{0}[e^{\gamma_0\CYRk^L (R^{(z)}(0)+1)},\,\beta^{(\zeta)}_0 <\infty]\leq 2E^{Q^z}_0[e^{\gamma_0\CYRk^L\tau^{(L)}_1}] \leq 2\eta,
        \]
        for any small $\gamma$. Since $Q_{0,\nu}(R^{(z)}_s<\infty)\leq Q_{0,\nu}(\beta^{(\zeta)}_0<\infty~\text{or}~\widetilde{\beta}^{(\zeta)}_0<\infty)\leq 1-\eta^{-2}$, choosing small  $\gamma^\prime(\kappa,z)>0$ we have $\sup_{\nu\in\mathbb{V}_d}E^{Q^z}_{0,\nu}[e^{\gamma_0\CYRk^L R^{(z)}_s},\,R^{(z)}_s<\infty]\leq 1-\eta^{-2}/2$. Hence,
        \[
            E^{Q^z}_{0,\nu}[e^{\gamma^\prime\CYRk^L s_*}]\leq\sum_{j=1}^\infty E^{Q^z}_{0,\nu}[e^{\gamma^\prime\CYRk^L R^{(z)}(j)},\,R^{(z)}(j)<\infty] \leq  \sum_{j=1}^\infty E^{Q^z}_{0,\nu}[e^{\gamma^\prime\CYRk^L R^{(z)}(j-1)},\,R^{(z)}(j-1)<\infty] (1-\eta^{-2}/2)\leq 2\eta^2 e^{\gamma^\prime},
        \]
        for any $z\in\mathbb{V}_d$. Therefore, we derive $\mathds{E}^{\overline{Q}^z}_{0,\nu}(n=\infty\{k\in\mathcal{L}^{(z)}:\,k>0\})^{1/4}\leq \mathds{E}^{\overline{Q}^z}_{0,\nu}(s_*\geq n)^{1/4}\leq 2\eta^{1/2}e^{\gamma^\prime/4} e^{-\gamma^\prime n/4}$. Combining with (\ref{eqn: dog2 a}) and (\ref{eqn: dog2 b}) to an estimate of $D_{\nu,n}(\theta)$, we verified the assertion.
    \end{proof}

\section{Technical lemmas: II}

\begin{lemma}\label{lem: B0}
    \normalfont
    When $\mathbb{P}$ verifies $\textbf{(SMX)}_{C,g}$, let $\lambda_N(\theta)\coloneqq\frac{1}{N}\log E_0 [e^{\langle\theta,S_N\rangle}I_{\mathbb{B}_N}]$,  $\forall~\theta\in\mathbb{R}^d$, $N\geq1$. Then $\Lambda_a:\theta\mapsto\lim_N\lambda_N(\theta)$ is differentiable on $\mathbb{R}^{d-1}$ and $\partial_\theta\lambda_N(\theta)\rightrightarrows\nabla\Lambda_a(\theta)$ uniformly on compacts.
\end{lemma}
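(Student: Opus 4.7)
The plan is to combine convexity of the finite-$N$ log-MGFs $\lambda_N$ with equi-Lipschitz bounds coming from the restricted jump space $\mathbb{B}_N$, and then pass to the limit using standard convex analysis. First, each $\lambda_N:\mathbb{R}^{d-1}\to\mathbb{R}$ is convex and real-analytic as the normalized log of a finite sum of positive exponentials. Differentiating,
\[
\nabla\lambda_N(\theta) = \frac{1}{N}\,E_0^{(N,\theta)}\big[S_N\big],\qquad E_0^{(N,\theta)}[\,\cdot\,] \coloneqq \frac{E_0[\,\cdot\, e^{\langle\theta,S_N\rangle}I_{\mathbb{B}_N}]}{E_0[e^{\langle\theta,S_N\rangle}I_{\mathbb{B}_N}]},
\]
and since on $\mathbb{B}_N$ every increment lies in $\mathbb{V}_s$ so that $|S_N|_1\leq (d-1)N$, the gradients are uniformly bounded in $N$ and $\theta$; the family $(\lambda_N)_{N\geq 1}$ is therefore equi-Lipschitz on $\mathbb{R}^{d-1}$. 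Combined with the pointwise limit $\lambda_N(\theta)\to\Lambda_a(\theta)$ supplied by Lemma~\ref{lem: boundary LDP general existence}, Arzelà--Ascoli upgrades this to locally uniform convergence on every compact $K\subseteq\mathbb{R}^{d-1}$, and the limit $\Lambda_a$ inherits convexity and Lipschitz regularity.

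The crux is to show that $\Lambda_a$ is differentiable everywhere on $\mathbb{R}^{d-1}$, after which uniform convergence of gradients on compacts becomes automatic. To this end, I exploit the tilted-measure interpretation $\nabla\lambda_N(\theta) = E_0^{(N,\theta)}[S_N/N]$: for fixed $\theta$, I claim the annealed walk tilted by $e^{\langle\theta,S_N\rangle}I_{\mathbb{B}_N}$ still obeys a law of large numbers $S_N/N\to v(\theta)$ under $E_0^{(N,\theta)}$. Once this is established, $\nabla\lambda_N(\theta)\to v(\theta)$, and since any element of $\partial\Lambda_a(\theta)$ must coincide with a subsequential limit of $\nabla\lambda_N(\theta)$ (by convexity), the subdifferential reduces to $\{v(\theta)\}$; hence $\Lambda_a$ is differentiable at $\theta$ with $\nabla\Lambda_a(\theta)=v(\theta)$. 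To justify the LLN I will adapt the approximate renewal machinery of Section~\ref{sec: chapter 4}: the stopping times $\tau_n^{(L)}$ decorrelate the environment across successive blocks, and the bounded multiplicative perturbation $e^{\langle\theta,S_N\rangle}$ (for $\theta$ fixed) does not destroy the exponential mixing decay used in Lemma~\ref{lem: exponential mixing inequality}, so a block-martingale argument parallel to Lemma~\ref{lem: square integrable of the martingale H} identifies $v(\theta)$ as a deterministic limit.

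With $\Lambda_a$ shown to be differentiable and with $\lambda_N\to\Lambda_a$ locally uniformly from the first paragraph, the classical convex-analysis theorem \cite[Theorem~25.7]{Rockafellar} then yields $\nabla\lambda_N\rightrightarrows\nabla\Lambda_a$ uniformly on every compact $K\subseteq\mathbb{R}^{d-1}$, completing the proof. The hard part will be the LLN step for the tilted annealed walk under $\textbf{(SMX)}_{C,g}$: tilting by $e^{\langle\theta,S_N\rangle}$ destroys strict stationarity, and one must argue that enough of the exponential mixing structure survives to pin down a single deterministic velocity $v(\theta)$ rather than a mere range of subsequential limits. This is precisely where the renewal framework of Section~\ref{sec: chapter 4}, in particular the decorrelation provided by the sequence $(\tau_n^{(L)})_{n\geq 0}$, enters as the essential technical input.
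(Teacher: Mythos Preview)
Your reduction via Rockafellar's Theorem~25.7 is sound, and the overall scheme---establish differentiability of $\Lambda_a$ first, then read off uniform convergence of gradients from the locally uniform convergence of convex $\lambda_N$---is a legitimate alternative. But the route you propose to differentiability is substantially heavier than the paper's, and your justification for the LLN step is miscalibrated. You point to ``a block-martingale argument parallel to Lemma~\ref{lem: square integrable of the martingale H}'', but that lemma is an $L^2$ bound that only holds under the low-disorder hypothesis $\text{dis}(\mathbb{P})<\epsilon_0$, whereas Lemma~\ref{lem: B0} carries no disorder restriction. What would actually drive your LLN is simpler and does not need that lemma: under $\textbf{(SMX)}_{C,g}$ the blocks between successive $\tau_n^{(L)}$ are exactly i.i.d.\ under the annealed law on $\mathbb{B}_n$ (Lemma~\ref{lem: exponential mixing inequality}), and the tilt $e^{\langle\theta,S_N\rangle}$ factorizes over blocks, so an i.i.d.-blocks LLN pins down $v(\theta)$. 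You would still owe a careful treatment of the $N$-dependence of $E_0^{(N,\theta)}$ and the random number of blocks up to time $N$, but this is routine.

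The paper takes a much more direct route that you have missed: it exploits the \emph{finite-range} feature of $\textbf{(SMX)}_{C,g}$ (correlations vanish beyond distance $L_0$) to bound the Hessian of $\lambda_N$ directly. Writing $\partial_\theta^2\lambda_N(\theta)$ as $N^{-1}$ times a double sum of tilted-measure covariances $\text{Cov}^{(N,\theta)}(\Delta_i(S),\Delta_j(S))$ over $1\le i,j\le N$, one checks that each such covariance vanishes whenever $|i-j|\ge L$, so only $O(NL)$ terms survive and $\|\partial_\theta^2\lambda_N\|\le C(L)$ uniformly in $N$ and locally in $\theta$. This uniform Hessian bound makes $(\lambda_N)$ simultaneously convex and uniformly semiconcave, so the pointwise limit $\Lambda_a$ is $C^{1,1}$; equicontinuity of the gradients then upgrades pointwise to uniform convergence on compacts. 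Your approach would eventually recover differentiability of $\Lambda_a$ but not this quantitative $C^{1,1}$ control, and it imports renewal machinery where a two-line covariance computation suffices.
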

\begin{proof}
    Clearly each $\lambda_N(\vdot)\in C^\infty(\mathbb{R}^d)$ and they converge pointwise to $\Lambda_a(\theta)$ as $N\to\infty$ by Lemma \ref{lem: boundary LDP general existence}. It suffices to check $\partial_\theta\lambda_N(\theta)\rightrightarrows\nabla\Lambda_a(\theta,\lambda)$ uniformly on $\forall~K\subseteq\mathbb{R}^{d-1}$ compact, and then by \cite[Theorem 7.17]{Rudin} we can interchange limit and differentials. To this end, we show $\partial_\theta\lambda_N(\theta)$ are equi-continuous on $K$. For any $\theta\in \mathbb{R}^{d-1}$,
    \[
        \partial_\theta\lambda_N(\theta) = \frac{1}{N}\frac{E_0[S_NI_N]}{E_0[e^{\langle\theta,S_N\rangle}I_{\mathbb{B}_N}]},\qquad \partial^2_\theta\lambda_N(\theta) = \frac{1}{N}\bigg( \frac{E_0[S_NS_N^TI_N]}{E_0[e^{\langle\theta,S_N\rangle}I_{\mathbb{B}_N}]} - \frac{E_0[S_NI_N]E_0[S_N^TI_N]}{E_0[e^{\langle\theta,S_N\rangle}I_{\mathbb{B}_N}]^2} \bigg),\qquad\forall~N\geq1.
    \]
    Here we adopt the following notation, $I_N\coloneqq e^{\langle\theta,S_N\rangle}I_{\mathbb{B}_N}$, $I_{i,j}\coloneqq e^{\langle\theta,\Delta_i(S)+\Delta_j(S)\rangle}I_{\mathbb{B}_N}$ and $I^{i,j}_N\coloneqq e^{\sum_{k\neq i,j}\langle\theta,\Delta_k(S)\rangle}I_{\mathbb{B}_N}$, $\forall~i,j\leq N$. Therefore we can express $\partial^2_\theta\lambda_N$ as
    \begin{equation*}\begin{aligned}
        \partial^2_\theta\lambda_N(\theta) &= \sum_{i,j=1}^N \frac{1}{N} \bigg( \frac{E_0[\Delta_i(S)\Delta_j(S)^TI_N]E_0[I_N]}{E_0[e^{\langle\theta,S_N\rangle}I_{\mathbb{B}_N}]^2} - \frac{E_0[\Delta_i(S)I_N]E_0[\Delta_j(S)^TI_N]}{E_0[e^{\langle\theta,S_N\rangle}I_{\mathbb{B}_N}]^2} \bigg)\\
        &= \sum_{i,j=1}^N \mu_{i,j}^2 \frac{1}{N} \frac{E_0\otimes\widetilde{E}_0[\Delta_i(S)\Delta_j(S)^TI_N\tilde{I}_N - \Delta_i(S)I_N\Delta_j(\widetilde{S})^T\tilde{I}_N]}{E_0[I_{i,j}]^2 E_0[I^{i,j}_N]^2 },\qquad\forall~\theta\in\mathbb{R}^{d-1}.
    \end{aligned}\end{equation*}
    Here $(\widetilde{S}_n)_{n\geq0}$ is an independent copy of $(S_n)_{n\geq0}$ under law $\widetilde{P}_0$ and $\tilde{I}_N$ is defined with $(\widetilde{S}_n)_{n\geq0}$ similar as $I_N$ with $(S_n)_{n\geq0}$. The factor $\mu_{i,j}$ comes from a correlation in the denominator and has the estimate $1-\text{dis}(\mathbb{P})\leq \mu_{i,j}^{1/2}\leq1+\text{dis}(\mathbb{P})$. Hence,
    \[
        \partial^2_\theta\lambda_N(\theta) = \sum_{i,j=1}^N\mu_{i,j}^2\nu_{i,j}^2\frac{1}{N} \frac{E_0\otimes \widetilde{E}_0[\Delta_i(S)\Delta_j(S)^TI_{i,j}\tilde{I}_{i,j} - \Delta_i(S)I_{i,j}\Delta_j(\widetilde{S})^T\tilde{I}_{i,j} ] E_0[I_N^{i,j}]^2 }{ E_0[I_{i,j}]^2E_0[I^{i,j}_N]^2 },\qquad\forall~\theta\in\mathbb{R}^{d-1}.
    \]
    Here $\nu_{i,j}$ comes from a correlation in the denominator and has the estimate $1-\text{dis}(\mathbb{P})\leq \nu_{i,j}^{1/2}\leq1+\text{dis}(\mathbb{P})$, $\forall~i,j\leq N$. Let $P^{i,j}_{\theta,N}$ be the law absolutely continuous with respect to $P_0$ with Radon--Nikodym derivative $dP^{i,j}_{\theta,N}/dP_0 = I_{i,j}/E_0[I_{i,j}] $. Then,
    \[
        \partial^2_\theta\lambda_N(\theta)\sum_{i,j=1}^N \mu_{i,j}^2\nu_{i,j}^2\frac{1}{N} \big( E^{i,j}_{\theta,N}[ \Delta_i(S)\Delta_j(S)^T ] - E^{i,j}_{\theta,N}[\Delta_i(S)] E^{i,j}_{\theta,N}[\Delta_j(S)^T] \big),\qquad\forall~\theta\in\mathbb{R}^{d-1}.
    \]
    Observe that $E^{i,j}_{\theta,N}[ \Delta_i(S)\Delta_j(S)^T ] = E^{i,j}_{\theta,N}[\Delta_i(S)] E^{i,j}_{\theta,N}[\Delta_j(S)^T]$ whenever $\abs{i-j}\geq L$, by the nature of $\textbf{(SMX)}_{C,g}$. Thus,
    \[
        |\partial^2_\theta\lambda_N(\theta)| \leq (1+\text{dis}(\mathbb{P}))^4\frac{1}{N}\sum_{i,j=1}^{N\wedge L} \big( E^{i,j}_{\theta,N}[ \Delta_i(S)\Delta_j(S)^T ] - E^{i,j}_{\theta,N}[\Delta_i(S)] E^{i,j}_{\theta,N}[\Delta_j(S)^T] \big) \leq (1+\text{dis}(\mathbb{P}))^4\frac{1}{N}\sum_{i,j=1}^{N\wedge L} 2Id < \infty,
    \]
    uniformly for all $\theta\in K$ , verifying the equi-continuity of $\partial_\theta\lambda_N(\theta)$ on $K$. Now, $\forall~\epsilon>0$, $\exists~\delta>0$ s.t. $|\lambda^\prime_N(\theta)-\lambda^\prime_N(\theta^\prime)|$ for any $\theta^\prime\in B_\delta(\theta)$, $\forall~N\geq1$. Now $\forall~\theta\in K$, $\exists~N_\theta\geq1$ s.t. $|\lambda^\prime_m(\theta)-\lambda^\prime_n(\theta)|<\epsilon/3$, $\forall~m,n\geq N_\theta$. Thus,
    \[
        |\lambda^\prime_m(\theta^\prime)-\lambda^\prime_n(\theta^\prime)|\leq |\lambda^\prime_m(\theta^\prime)-\lambda^\prime_m(\theta)| + |\lambda^\prime_m(\theta)-\lambda^\prime_n(\theta)| + |\lambda^\prime_n(\theta^\prime)-\lambda^\prime_n(\theta)| < \epsilon,\qquad\forall~\theta^\prime\in B_\delta(\theta),\quad m,n\geq N_\theta.
    \]
    Choosing finitely many $\{\theta_1,\ldots,\theta_k\}$ to cover $K$ by $(B_\delta(\theta_i))_{i=1,\ldots,k}$, we select $N_U\coloneqq\max\{N_{\theta_1},\ldots,N_{\theta_k}\}$ and then $ |\lambda^\prime_m(\theta)-\lambda^\prime_n(\theta)|<\epsilon$, $\forall~\theta\in K$ and $m,n\geq N_U$. And the assertion is then verified.

\end{proof}

\begin{lemma}\label{lem: B1}
    \normalfont
    When $\mathbb{P}$ verifies $\textbf{(SMX)}_{C,g}$, for any $c>0$, there exists $\epsilon_1=\epsilon_1(\kappa,z,c)>0$ such that
    \begin{equation}\label{eqn: dogdog1}
        \sup_{|\theta|\vee\text{dis}(\mathbb{P})<\epsilon_1\CYRk^L}|\nabla\overline{\Lambda}^a_z(\theta)-\nabla\overline{\Lambda}^a_z(0,)|<c.
    \end{equation}
\end{lemma}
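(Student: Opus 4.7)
The plan is to prove the uniform estimate via a mean value argument: bound the Hessian operator norm $\|H^a_z(\theta)\|$ uniformly over all admissible $\theta$ and $\mathbb{P}$, then integrate along the segment from $0$ to $\theta$, which lies in the analyticity domain guaranteed by Proposition \ref{prop: log MGF is differentiable}. From the proof of Proposition \ref{prop: Hessian matrix is positive definite}, the quadratic form admits the representation
\[
v^\top H^a_z(\theta)\,v \;=\; \frac{\mathds{E}^{\overline{Q}^z}_0\big[|\langle Z(\tau^{(L)}_1)-\nabla\overline{\Lambda}^a_z(\theta)\tau^{(L)}_1,\,\CYRk^L v\rangle|^2\,\varphi_\theta\big]}{\mathds{E}^{\overline{Q}^z}_0\big[\CYRk^L\tau^{(L)}_1\,\varphi_\theta\big]},\qquad v\in\mathbb{R}^d,
\]
where $\varphi_\theta\coloneqq e^{\langle\theta,Z(\tau^{(L)}_1)\rangle-\overline{\Lambda}^a_z(\theta)\tau^{(L)}_1}\mathbb{E}\prod_{j\leq\tau^{(L)}_1}\CYRu_j$. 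Since $|Z(\tau^{(L)}_1)|\le\tau^{(L)}_1$ and $\nabla\overline{\Lambda}^a_z(\theta)\in\overline{\mathbb{D}}$, the numerator is dominated by $4|v|^2\,\mathds{E}^{\overline{Q}^z}_0[(\CYRk^L\tau^{(L)}_1)^2\,\varphi_\theta]$, reducing the task to controlling the ratio of a quadratic and linear moment of $\CYRk^L\tau^{(L)}_1$ weighted by $\varphi_\theta$, uniformly in $(\theta,\mathbb{P})$.

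The next step is to supply uniform bounds on the numerator from above and the denominator from below in terms of $\kappa$ and $z$. Since $\CYRu_j\in[1-\text{dis}(\mathbb{P})/\hbar,\,1+\text{dis}(\mathbb{P})/\hbar]$ and $|\overline{\Lambda}^a_z(\theta)|\le|\theta|+C(\kappa)\,\text{dis}(\mathbb{P})$ by a short computation on the definition of $\overline{\Lambda}^a_z$ using $|\log\xi|\le C(\kappa)\,\text{dis}(\mathbb{P})$, the exponent $\log\varphi_\theta$ is majorized by $(|\theta|+\CYRb(\text{dis}(\mathbb{P}))+|\overline{\Lambda}^a_z(\theta)|)\tau^{(L)}_1$, which can be made smaller than $(\gamma_0/4)\CYRk^L\tau^{(L)}_1$ once $|\theta|\vee\text{dis}(\mathbb{P})<\epsilon_1^\star\CYRk^L$ for some $\epsilon_1^\star=\epsilon_1^\star(\kappa,z)>0$, where $\gamma_0=\gamma_0(\kappa,z)$ comes from Proposition \ref{prop: mixing exponential of tau < infinity}. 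The elementary inequality $x^2\le C\gamma_0^{-2}e^{\gamma_0 x/2}$ with $x=\CYRk^L\tau^{(L)}_1$, combined with Proposition \ref{prop: mixing exponential of tau < infinity}, then yields $\mathds{E}^{\overline{Q}^z}_0[(\CYRk^L\tau^{(L)}_1)^2\varphi_\theta]\le C_1(\kappa,z)$. For the denominator $\mathcal{D}(\theta)\coloneqq\mathds{E}^{\overline{Q}^z}_0[\CYRk^L\tau^{(L)}_1\varphi_\theta]$, an argument modelled on that of Proposition \ref{prop: distance between lambda z and differential} shows $|\mathcal{D}(\theta)-\mathds{E}^{\overline{Q}^z}_0[\CYRk^L\tau^{(L)}_1]|\le C_2(\kappa,z)(|\theta|+\text{dis}(\mathbb{P}))$, and combined with $\mathds{E}^{\overline{Q}^z}_0[\CYRk^L\tau^{(L)}_1]\ge\eta^{-1}$ from Proposition \ref{prop: mixing exponential of tau < infinity}, this gives $\mathcal{D}(\theta)\ge(2\eta)^{-1}$ after possibly shrinking $\epsilon_1^\star$.

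Combining the two estimates yields $\|H^a_z(\theta)\|\le C_3(\kappa,z)$ on the prescribed region, and the mean value theorem delivers
\[
|\nabla\overline{\Lambda}^a_z(\theta)-\nabla\overline{\Lambda}^a_z(0)|\;\le\; C_3\,|\theta|\;\le\; C_3\,\epsilon_1\CYRk^L\;\le\; C_3\,\epsilon_1,
\]
which is less than $c$ upon taking $\epsilon_1(\kappa,z,c)\coloneqq\epsilon_1^\star\wedge(c/C_3)$. The principal obstacle is securing the uniform-in-$\mathbb{P}$ lower bound on $\mathcal{D}(\theta)$, since $\mathcal{D}$ depends implicitly on $\mathbb{P}$ through $\overline{\Lambda}^a_z$ itself and through the $\CYRu_j$-correction; the decisive input is that the constants $\gamma_0,\eta$ of Proposition \ref{prop: mixing exponential of tau < infinity} depend only on $(\kappa,z)$, so that every subsequent propagation — of exponential moments, of Hessian bounds, of the MVT increment — preserves uniformity across the class of strongly mixing environments with small disorder.
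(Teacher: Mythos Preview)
Your argument is correct but takes a genuinely different route from the paper. The paper works directly at the level of the \emph{gradient} formula~(\ref{eqn: differential of annealed log MGF}): it shows that both the numerator $\mathds{E}^{\overline{Q}^z}_0[\CYRk^L Z(\tau^{(L)}_1)\,\varphi_\theta]$ and the denominator $\mathds{E}^{\overline{Q}^z}_0[\CYRk^L\tau^{(L)}_1\,\varphi_\theta]$ are uniformly close to their values at $\theta=0$, by applying the mean value theorem to the exponent $\langle\theta,Z(\tau^{(L)}_1)\rangle-\overline{\Lambda}^a_z(\theta)\tau^{(L)}_1$ (which differs from its value at $0$ by at most $2(|\theta|+\CYRb(\text{dis}(\mathbb{P})))\tau^{(L)}_1$) and then invoking the exponential moment bound of Proposition~\ref{prop: mixing exponential of tau < infinity}; no Hessian is touched. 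You instead ascend one derivative, establish a uniform operator-norm bound $\|H^a_z(\theta)\|\le C_3(\kappa,z)$ on the whole region, and descend by the mean value theorem on $\nabla\overline{\Lambda}^a_z$ itself. The paper's approach is slightly more elementary in that it never invokes the Hessian representation from Proposition~\ref{prop: Hessian matrix is positive definite}; your approach is more structural and has the advantage that the uniform Hessian bound you extract is essentially the same ingredient needed later in Proposition~\ref{prop: region A contains an open ball} (via Lemma~\ref{lem: B2}), so it is reusable. Both routes rest on the same key input: the $(\kappa,z)$-only constants $\gamma_0,\eta$ of Proposition~\ref{prop: mixing exponential of tau < infinity}, which is exactly the uniformity you correctly flag as the principal obstacle.
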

\begin{proof}
    In light that $\mathds{E}^{\overline{Q}^z}_0[\CYRk^L\tau^{(L)}_1]\geq c_\kappa$ for some $c_\kappa(z)>0$, it suffices to check $\forall~\delta>0$, $\exists~\epsilon_1^\prime=\epsilon_1^\prime(\kappa,z,\delta)>0$ such that we have $ | \mathds{E}^{\overline{Q}^z}_0[ \CYRk^L Z_{\tau^{(L)}_1}\mathbb{E}\prod_{j=1}^{\tau^{(L)}_1}\CYRu_j(e^{\langle  \theta, Z(\tau^{(L)}_1)\rangle- \overline{\Lambda}^a_z(\theta) \tau^{(L)}_1} - e^{- \overline{\Lambda}^a_z(0) \tau^{(L)}_1} ) ] |<\delta$, whenever $|\theta|\vee\text{dis}(\mathbb{P})<\epsilon_1^\prime\CYRk^L$, as well as in the same regime, $| \mathds{E}^{\overline{Q}^z}_0 [  \CYRk^L \tau^{(L)}_1 \mathbb{E}\prod_{j=1}^{\tau^{(L)}_1}\CYRu_j (e^{\langle  \theta,  Z(\tau^{(L)}_1)\rangle- \overline{\Lambda}^a_z(\theta) \tau^{(L)}_1} - e^{- \overline{\Lambda}^a_z(\theta) \tau^{(L)}_1} ) ] |<\delta$. The crucial estimates for them are  $|\langle \theta, Z(\tau^{(L)}_1)\rangle- \overline{\Lambda}^a_z(\theta) \tau^{(L)}_1+ \overline{\Lambda}^a_z(0) \tau^{(L)}_1 |\leq 2(|\theta|+\CYRb(\text{dis}(\mathbb{P}))) \CYRk^L\tau^{(L)}_1$ as well as $ e^{-\overline{\Lambda}^z_a(0) \tau^{(L)}_1} \mathbb{E}\prod_{j=1}^{\tau^{(L)}_1}\CYRu_j \leq e^{\CYRb(\text{dis}(\mathbb{P})) \CYRk^L \tau^{(L)}_1}$, and the rest follows exactly as Proposition \ref{prop: distance between lambda z and differential}.
\end{proof}

\begin{lemma}\label{lem: B2}
    \normalfont
    When $\mathbb{P}$ verifies $\textbf{(SMX)}_{C,g}$, $\forall~c>0$, $\exists~\epsilon_2=\epsilon_2(\kappa,z,c)>0$ such that if $\text{dis}(\mathbb{P})<\epsilon_2$, then
        \begin{equation}\label{eqn: dogdog2}
            \normx{H^a_z(0)-H^*_z(0)}<c.
        \end{equation}
\end{lemma}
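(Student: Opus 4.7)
My plan is to write both matrices as ratios of $\mathds{E}^{\overline{Q}^z}_0$-expectations over the single renewal block $\tau^{(L)}_1$ and then control the difference via pointwise Lipschitz bounds on the integrands, which will be absorbed using the uniform exponential moments of $\CYRk^L\tau^{(L)}_1$ furnished by Proposition \ref{prop: mixing exponential of tau < infinity}. From the formula for $\langle v,H^a_z(0)\cdot v\rangle$ displayed in the proof of Proposition \ref{prop: Hessian matrix is positive definite}, $H^a_z(0)$ is a ratio $N/D$ whose numerator $N$ integrates $(Z_{\tau^{(L)}_1}-\nabla\overline{\Lambda}^a_z(0)\tau^{(L)}_1)(Z_{\tau^{(L)}_1}-\nabla\overline{\Lambda}^a_z(0)\tau^{(L)}_1)^T$ against the weight $e^{-\overline{\Lambda}^a_z(0)\tau^{(L)}_1}\mathbb{E}\prod_{j=1}^{\tau^{(L)}_1}\CYRu_j$, while $D$ integrates $\tau^{(L)}_1$ against the same weight. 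The matrix $H^*_z(0)$ shares the same structure $N_*/D_*$ but with $\overline{\Lambda}^a_z(0)$, $\nabla\overline{\Lambda}^a_z(0)$ and $\mathbb{E}\prod\CYRu_j$ replaced by their zero-disorder values $0$, $z$ and $1$.

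Next I would split $N/D-N_*/D_*=(N-N_*)/D+N_*(D_*-D)/(DD_*)$, reducing the problem to three Lipschitz errors against the common reference law $\mathds{E}^{\overline{Q}^z}_0$. First, by the mean value theorem applied to $\sum\log\CYRu_j$ together with the two-sided estimate $|\log\CYRu_j|\leq\CYRb(\text{dis}(\mathbb{P}))/2$ from the paragraph preceding Lemma \ref{lem: A1}, one obtains $|\mathbb{E}\prod_{j=1}^{\tau^{(L)}_1}\CYRu_j-1|\leq\CYRb(\text{dis}(\mathbb{P}))\tau^{(L)}_1 e^{\CYRb(\text{dis}(\mathbb{P}))\tau^{(L)}_1}$. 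Second, analogously, $|e^{-\overline{\Lambda}^a_z(0)\tau^{(L)}_1}-1|\leq|\overline{\Lambda}^a_z(0)|\tau^{(L)}_1 e^{|\overline{\Lambda}^a_z(0)|\tau^{(L)}_1}$, and $|\overline{\Lambda}^a_z(0)|\to 0$ as $\text{dis}(\mathbb{P})\to 0$ by an argument analogous to Proposition \ref{prop: distance between lambda z and differential} applied to the identity $\tilde{\psi}^z(0,\overline{\Lambda}^a_z(0))=1$ derived from Lemmas \ref{lem: A1}--\ref{lem: A2}. Third, the quadratic factor satisfies $\bigl|(Z_{\tau^{(L)}_1}-\nabla\overline{\Lambda}^a_z(0)\tau^{(L)}_1)(Z_{\tau^{(L)}_1}-\nabla\overline{\Lambda}^a_z(0)\tau^{(L)}_1)^T-(Z_{\tau^{(L)}_1}-z\tau^{(L)}_1)(Z_{\tau^{(L)}_1}-z\tau^{(L)}_1)^T\bigr|\leq C|\nabla\overline{\Lambda}^a_z(0)-z|(\tau^{(L)}_1)^2$, directly controlled by Proposition \ref{prop: distance between lambda z and differential}.

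The main obstacle will be that these Lipschitz expansions produce polynomial factors up to $(\tau^{(L)}_1)^3$ multiplied by subexponentials of the form $e^{C\CYRb(\text{dis}(\mathbb{P}))\tau^{(L)}_1}$, which must remain uniformly integrable with respect to $\mathds{E}^{\overline{Q}^z}_0$. To handle this I would first restrict to $\text{dis}(\mathbb{P})$ small enough that $C\CYRb(\text{dis}(\mathbb{P}))+|\overline{\Lambda}^a_z(0)|<\gamma_0\CYRk^L/2$, where $\gamma_0=\gamma_0(\kappa,z)$ is the exponential-moment threshold of Proposition \ref{prop: mixing exponential of tau < infinity}; then standard Chebyshev-type arguments yield $\mathds{E}^{\overline{Q}^z}_0[(\tau^{(L)}_1)^k e^{C\CYRb(\text{dis}(\mathbb{P}))\tau^{(L)}_1}]\leq C_k(\kappa,z,L)$ for each fixed $k\leq 3$. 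Meanwhile the denominator $D$ remains uniformly bounded below once $\text{dis}(\mathbb{P})$ is small, since the same estimates show $D$ is within $o(1)$ of $D_*\geq\mathds{E}^{\overline{Q}^z}_0[\CYRk^L\tau^{(L)}_1]\geq\eta^{-1}$ by the lower bound in Proposition \ref{prop: mixing exponential of tau < infinity}.

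Combining all pieces delivers $\normx{H^a_z(0)-H^*_z(0)}\leq C(\kappa,z)\bigl(\CYRb(\text{dis}(\mathbb{P}))+|\overline{\Lambda}^a_z(0)|+|\nabla\overline{\Lambda}^a_z(0)-z|\bigr)$. The conclusion then follows from a two-stage choice of $\epsilon_2=\epsilon_2(\kappa,z,c)>0$: first invoke Proposition \ref{prop: distance between lambda z and differential} to force $|\nabla\overline{\Lambda}^a_z(0)-z|<c/(3C)$, and then shrink $\epsilon_2$ further so that $\CYRb(\epsilon_2)+|\overline{\Lambda}^a_z(0)|<2c/(3C)$, giving the strict inequality $\normx{H^a_z(0)-H^*_z(0)}<c$ as required.
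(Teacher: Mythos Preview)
Your proposal is correct and follows essentially the same approach as the paper. The paper streamlines slightly by bundling the weight into the single function $\varphi(0)=e^{-\overline{\Lambda}^a_z(0)\tau^{(L)}_1}\mathbb{E}\prod_{j=1}^{\tau^{(L)}_1}\CYRu_j$ and using the direct bound $|\varphi(0)-1|\leq\CYRb(\text{dis}(\mathbb{P}))\tau^{(L)}_1 e^{\CYRb(\text{dis}(\mathbb{P}))\tau^{(L)}_1}$ (which already absorbs $|\overline{\Lambda}^a_z(0)|\leq\CYRb(\text{dis}(\mathbb{P}))$), but your separation of this weight into its two factors and your explicit treatment of the denominator difference are equally valid and lead to the same Lipschitz-plus-exponential-moments argument.
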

\begin{proof}
    In view of $ \normx{\mathds{E}^{\overline{Q}^z}_0[\Gamma( z)]}^2\leq \mathds{E}^{\overline{Q}^z}_0[| Z(\tau^{(L)}_1)- z\tau^{(L)}_1|^2]\leq 2\abs{z}^2\mathds{E}^{\overline{Q}^z}_0[(\tau^{(L)}_1)^2] $ and  $\mathds{E}^{\overline{Q}^z}_0[\tau^{(L)}_1]\geq c_\kappa/\CYRk^L$, it suffices to show the numerators of the two matrices are close, i.e.~that $\forall~\delta>0$, $\exists~\epsilon_2^\prime=\epsilon_2^\prime(\kappa,z,\delta)>0$ s.t. $\normx{\mathds{E}^{\overline{Q}^z}_0[\Gamma(\nabla\overline{\Lambda}^a_z(0))\varphi(0)]-\mathds{E}^{\overline{Q}^z}_0[\Gamma(z,)]}<\delta$ whenever $\text{dis}(\mathbb{P})<\epsilon_2^\prime\CYRk^L$. Indeed, the LHS is bounded by $\mathds{E}^{\overline{Q}^z}_0[\normx{\Gamma(\nabla\overline{\Lambda}^a_z(0))-\Gamma(z)} \abs{\varphi(0)}]+\mathds{E}^{\overline{Q}^z}_0[\normx{\Gamma(z)}\abs{\varphi(0,)-1}]$. Notice further that 
    \[
        \abs{\varphi(0)-1}\leq \CYRb(\text{dis}(\mathbb{P})) \tau^{(L)}_1 e^{\CYRb(\text{dis}(\mathbb{P})) \tau^{(L)}_1},\qquad \normx{\Gamma(\nabla\overline{\Lambda}^a_z(0))-\Gamma(z)}\leq 5|\nabla\overline{\Lambda}^a_z(0)-z|(\tau^{(L)}_1)^2
    \]
    and that $\normx{\Gamma(z)}\leq | Z(\tau^{(L)}_1)- z\tau^{(L)}_1|^2\leq (1+ \abs{z})^2(\tau^{(L)}_1)^2$. The main assertion then follows from the above estimates.
\end{proof}

\begin{lemma}\label{lem: B3}
    \normalfont
    There exists $K=K(\kappa,z)>0$ such that $\sup_{\mathbb{P}} \normx{H^*_z(0)^{-1}}\leq K$, where the supremum is taken over all $\kappa$-uniformly elliptic $\mathbb{P}$ that verifies $\textbf{(SMX)}_{C,g}$.
\end{lemma}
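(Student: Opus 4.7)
First I would observe that $H^*_z(0)$ depends on $\mathbb{P}$ only through the one-point marginals $p_e \coloneqq \mathbb{E}[\omega(0,e)]$, $e \in \mathbb{V}$: the auxiliary quantities $u_z$ and $C^{\mathbb{P}}_z$ are explicit functions of $(p_e)_{e\in\mathbb{V}}$, and so are the product law $\overline{Q}^z_0 = U\otimes\overline{Q}^z_{\epsilon,0}$ and the stopping rule defining $\tau^{(L)}_1$. Under $\kappa$-uniform ellipticity and the normalisation $\sum_e p_e=1$, the vector $(p_e)_{e\in\mathbb{V}}$ lies in the compact set $\mathcal{P}_\kappa\coloneqq\{(p_e)\in[\kappa,1-(2d-1)\kappa]^{2d}:\,\sum_e p_e=1\}$. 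The assertion then reduces to proving (a) pointwise positive-definiteness of $H^*_z(0)$ on $\mathcal{P}_\kappa$, and (b) joint continuity of $(p,v)\mapsto\langle v,H^*_z(0)v\rangle$ on $\mathcal{P}_\kappa\times\mathbb{S}^{d-1}$; the uniform bound $\sup_{\mathbb{P}}\normx{H^*_z(0)^{-1}}\leq K(\kappa,z)$ then follows by compactness of $\mathcal{P}_\kappa\times\mathbb{S}^{d-1}$.

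For (a), note that the denominator $\mathds{E}^{\overline{Q}^z}_0[\tau^{(L)}_1]$ is finite by Proposition \ref{prop: mixing exponential of tau < infinity}, so strict positivity of $\langle v,H^*_z(0)v\rangle$ amounts to strict positivity of the numerator $\mathds{E}^{\overline{Q}^z}_0[\langle Z(\tau^{(L)}_1)-z\tau^{(L)}_1,v\rangle^2]$. Since $u_z(e)\geq c_z(\kappa,z)>0$ for every $e\in\mathbb{V}$, and $\overline{Q}^z_0(\beta^{(\zeta)}_0=\infty)\geq\eta^{-1}$ by Proposition \ref{prop: mixing exponential of tau < infinity}, every possible first step $\Delta_1(Z)=e$ has conditional probability bounded below under $\mathds{E}^{\overline{Q}^z}_0$. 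As the family $\{e-e':\,e,e'\in\mathbb{V}\}$ spans $\mathbb{R}^d$, for any unit $v\in\mathbb{S}^{d-1}$ one can find $e\neq e'\in\mathbb{V}$ with $\langle e-e',v\rangle\neq 0$. Pairing the two scenarios in which $Z_1=e$ versus $Z_1=e'$ followed by an identical continuation ensuring $\beta^{(\zeta)}_0=\infty$ with a common realisation of $\tau^{(L)}_1$ yields two positive-probability outcomes on which $\langle Z(\tau^{(L)}_1)-z\tau^{(L)}_1,v\rangle$ differs by $\langle e-e',v\rangle\neq 0$, forcing the conditional variance to be strictly positive.

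For (b), the moment estimates $\mathds{E}^{\overline{Q}^z}_0[(\tau^{(L)}_1)^k]<\infty$ for $k=1,2$, provided by Proposition \ref{prop: mixing exponential of tau < infinity} with constants depending only on $\kappa,z,L$, furnish the uniform integrability needed to pass $(p_e)\mapsto\mathds{E}^{\overline{Q}^z}_0[\,\cdot\,]$ through dominated convergence. Combined with the explicit continuous dependence of $u_z$ and $C^{\mathbb{P}}_z$ on $(p_e)$, this yields continuity of every entry of $H^*_z(0)$ on $\mathcal{P}_\kappa$. Together with (a), the continuous strictly positive function $(p,v)\mapsto\langle v,H^*_z(0)v\rangle$ on the compact set $\mathcal{P}_\kappa\times\mathbb{S}^{d-1}$ attains its infimum, which is therefore some $1/K(\kappa,z)>0$, yielding $\sup_{\mathbb{P}}\normx{H^*_z(0)^{-1}}\leq K$.

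The main obstacle will be the continuity step (b): the stopping time $\tau^{(L)}_1$ has a compound structure interweaving the $Z$- and $\epsilon$-trajectories and depends on $(p_e)$ in a nonlocal manner through the $Z$-transition kernel. Making the dominated convergence argument rigorous calls for a coupling between the $Z$-walks driven by two nearby $u_z$-distributions, ensuring the disagreement probability over the interval $[0,\tau^{(L)}_1]$ vanishes as the marginals converge, while the moment tails of $\tau^{(L)}_1$ remain controlled uniformly by Proposition \ref{prop: mixing exponential of tau < infinity}.
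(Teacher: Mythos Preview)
Your proposal is correct and follows essentially the same route as the paper: reduce the dependence on $\mathbb{P}$ to the one-point marginals $(p_e)_{e\in\mathbb{V}}$, observe that these range over the compact simplex $\mathcal{M}^\kappa(\mathbb{V})$, and combine pointwise invertibility of $H^*_z(0)$ with continuity of the map $\alpha\mapsto H^*_z(0)$ to conclude by compactness. The paper is terser, deferring the continuity step to \cite[Lemma 4.8]{Bazaes/Mukherjee/Ramirez/Sagliett0} and composing with $A\mapsto\normx{A^{-1}}$ directly, whereas you spell out the positive-definiteness argument and work instead with the quadratic form on $\mathcal{P}_\kappa\times\mathbb{S}^{d-1}$; both are equivalent.
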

\begin{proof}
    Let $\mathcal{M}(\mathbb{V})$ stand for all probability vectors on $\mathbb{V}$, and let $\mathcal{M}^\kappa(\mathbb{V})\coloneqq\{p\in\mathcal{M}(\mathbb{V}):\,p(e)\geq\kappa,\,\forall~e\in\mathbb{V}\}$ stand for all such $\kappa$-uniformly elliptic probability vectors. Following the same steps as \cite[Lemma 4.8]{Bazaes/Mukherjee/Ramirez/Sagliett0}, we can show $\alpha\in\mathcal{M}(\mathbb{V})\mapsto \mathds{E}^{\overline{Q}^z}_0[\tau^{(L)}_1]$ and $\alpha\in\mathcal{M}(\mathbb{V})\mapsto \mathds{E}^{\overline{Q}^z}_0[(Z(\tau^{(L)}_1)-z\tau^{(L)}_1)^T(Z(\tau^{(L)}_1)-z\tau^{(L)}_1)]$ are continuous mappings. Furthermore, the mapping $\alpha\mapsto\normx{H^*_z(0)^{-1}}$ as a composite of continuous mappings $A\mapsto A^{-1}$ and $A\mapsto\normx{A}$ is a continuous mapping on $\mathcal{M}(\mathbb{V})$ as well. In particular, this mapping achieves maximum in the compact subspace $\mathcal{M}^\kappa(\mathbb{V})$ where $\sup_{\mathbb{P}}\normx{H^*_z(0)^{-1}}= \sup_{\alpha\in\mathcal{M}^\kappa(\mathbb{V})} \normx{H^*_z(0)^{-1}}<\infty$. Hence, the assertion follows.
\end{proof}

\begin{lemma}\label{lem: D0}
    \normalfont
    When $\mathbb{P}$ verifies $\textbf{(SMX)}_{C,g}$, $\exists~\mathfrak{r}>0$ s.t. $\varlimsup_{k\to\infty}k^{-1}\log \overline{Q}^z_0(Z_{T^{\ell,z}_{k}} \not\in \partial^+ B^\ell_{k,\mathfrak{r}k})\leq-\langle z,\ell\rangle$, where $T^{\ell,z}_{k}$ and $B^\ell_{k,\mathfrak{r}k}$ are defined in the course of Proposition \ref{prop: mixing exponential of tau < infinity}.
\end{lemma}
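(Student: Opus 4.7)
The plan is to reduce the bound to a one-dimensional Cram\'er-type large deviation estimate for the $\ell$-projection of the walk. Since the marginal law of $(Z_n)_{n\geq 0}$ under $\overline{Q}^z_0$ coincides with its law under $Q^z_0$, I will argue under the simpler $Q^z_0$. The key geometric observation is that each step of $(Z_n)$ is a unit lattice vector in $\mathbb{V}$ and that $\ell\in\mathbb{V}$ is axis-aligned, so $\langle Z_{T^{\ell,z}_k},\ell\rangle = k$ almost surely. Consequently, on the event $\{Z_{T^{\ell,z}_k}\notin\partial^+ B^\ell_{k,\mathfrak{r}k}\}$, at least one of the $d-1$ transverse coordinates $\langle Z_{T^{\ell,z}_k},\mathcal{R}(e_j)\rangle$ (for some $j\geq 2$) must satisfy $|\langle Z_{T^{\ell,z}_k},\mathcal{R}(e_j)\rangle|\geq \mathfrak{r}k$. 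Since the projection of any single step onto $\mathcal{R}(e_j)$ has absolute value at most $1$, this forces $T^{\ell,z}_k\geq\mathfrak{r}k$, and the definition of the hitting time then yields
\[
\overline{Q}^z_0\bigl(Z_{T^{\ell,z}_k}\notin\partial^+ B^\ell_{k,\mathfrak{r}k}\bigr) \leq Q^z_0\bigl(T^{\ell,z}_k \geq \mathfrak{r}k\bigr) \leq Q^z_0\bigl(\langle Z_{\lfloor\mathfrak{r}k\rfloor},\ell\rangle < k\bigr).
\]

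Next I will bound the last probability by the exponential Chebyshev inequality. Under $Q^z$, the process $Y_n\coloneqq \langle Z_n,\ell\rangle$ is a sum of i.i.d.\ increments supported on $\{-1,0,1\}$ with $Q^z(Y_1-Y_0=\pm 1) = u_z(\pm\ell)$, and with drift $u_z(\ell)-u_z(-\ell) = \langle z,\ell\rangle$ as an easy consequence of the defining formula for $u_z$. Writing $\chi(\gamma)\coloneqq \log\bigl(u_z(\ell)e^\gamma+u_z(-\ell)e^{-\gamma}+1-u_z(\ell)-u_z(-\ell)\bigr)$, Chebyshev with any $\gamma<0$ gives $Q^z_0(Y_{\lfloor\mathfrak{r}k\rfloor} < k) \leq e^{-\gamma k + \lfloor\mathfrak{r}k\rfloor\chi(\gamma)}$, so infimizing over $\gamma<0$ and invoking the Cram\'er upper bound one obtains
\[
\varlimsup_{k\to\infty} k^{-1}\log \overline{Q}^z_0\bigl(Z_{T^{\ell,z}_k}\notin\partial^+ B^\ell_{k,\mathfrak{r}k}\bigr) \leq -\mathfrak{r}\, I(1/\mathfrak{r}),\qquad I(v)\coloneqq \sup_{\gamma\in\mathbb{R}}\bigl(\gamma v-\chi(\gamma)\bigr).
\]

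Finally I select $\mathfrak{r}$. The rate function $I$ is convex and nonnegative with $I(\langle z,\ell\rangle)=0$, and a direct computation gives $I(0) = -\inf_\gamma \chi(\gamma) = -\log\bigl(2\sqrt{u_z(\ell)u_z(-\ell)}+1-u_z(\ell)-u_z(-\ell)\bigr) > 0$, strictly positive because $u_z(\ell)\neq u_z(-\ell)$. By lower semicontinuity $\liminf_{\mathfrak{r}\to\infty}I(1/\mathfrak{r}) \geq I(0) > 0$, so $\mathfrak{r}\, I(1/\mathfrak{r}) \to \infty$ and any sufficiently large $\mathfrak{r}=\mathfrak{r}(\kappa,z)$ satisfies $\mathfrak{r}\, I(1/\mathfrak{r})\geq \langle z,\ell\rangle$, which closes the proof. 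The only real obstacle is bookkeeping: verifying that the geometric one-step exit inclusion in the first paragraph is tight (it is, thanks to unit step size together with $\ell$ being axis-aligned) and confirming that the lower bound on $I(0)$ is quantitative in $(\kappa,z)$ alone, which follows from the explicit formula $2u_z(e)=\langle z,e\rangle+\sqrt{|\langle z,e\rangle|^2+4C^\mathbb{P}_z\mathbb{E}[\omega(0,e)]\mathbb{E}[\omega(0,-e)]}$ combined with uniform ellipticity $\mathbb{E}[\omega(0,e)]\geq\kappa$.
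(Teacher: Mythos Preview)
Your argument is correct and takes a cleaner route than the paper's. The paper first computes a gambler's-ruin probability for the one-dimensional projection $\langle Z_n,\ell\rangle$ exiting the slab $[-k,k]$ on the wrong side, and then invokes an external result (Lemma~2.2(iii) of Guerra Aguilar) to convert that backtracking estimate into the desired exit-position bound. You sidestep the external input entirely: the geometric observation that a large transverse displacement at time $T^{\ell,z}_k$ forces $T^{\ell,z}_k$ itself to be at least $\mathfrak{r}k$ reduces everything to a one-dimensional Cram\'er bound on the hitting time, which you close directly via exponential Chebyshev. Both approaches use only the law of the $Q^z$-walk (the environmental assumption $\textbf{(SMX)}_{C,g}$ plays no role here); yours is self-contained, while the paper's ties the estimate into renewal machinery already developed for mixing RWRE. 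One cosmetic point: the inclusion $\{T^{\ell,z}_k\geq\mathfrak{r}k\}\subseteq\{\langle Z_{\lfloor\mathfrak{r}k\rfloor},\ell\rangle<k\}$ can fail by one step when $\mathfrak{r}k\in\mathbb{N}$; writing $\lceil\mathfrak{r}k\rceil-1$ in place of $\lfloor\mathfrak{r}k\rfloor$ repairs this without affecting the asymptotics.
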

\begin{proof}
    Let $T^k\coloneqq\inf\{n\geq0:\,\abs{\langle Z_n,\ell\rangle}\geq k\}$, $\forall~k\in\mathbb{N}$. Then by the standard gambler's ruin problem  \cite[Section 4.2]{Lalley},
    \[
        \overline{Q}^z_0(\langle Z_{T^k},\ell\rangle=k) = \frac{1-(q/p)^k}{1-(q/p)^{2k}},\qquad\text{where}\quad 2q\coloneqq -\langle z,\ell\rangle + \sqrt{\abs{\langle z,\ell\rangle}^2+1},\quad p\coloneqq1-q.
    \]
    Using Taylor's expansion and letting $k\to\infty$, we get $\lim_{k\to\infty}k^{-1}\log \overline{Q}^z_0(\langle Z_{T^k},\ell\rangle =k)=-2 \langle z,\ell\rangle$. Invoking \cite[Lemma 2.2 (iii)]{Guerra Aguilar}, we can find some sufficiently large $\mathfrak{r}=\mathfrak{r}(\kappa,z)>0$ which verifies our assertion.
\end{proof}


\bibliographystyle{plain}
\begin{spacing}{1}

\end{spacing}

\end{document}